\newcommand{\IR}{{\mathbb{R}}}
\newcommand{\eChar}{\begin{enumerate}[(i)]}
\newcommand{\eCharR}{\begin{enumerate}[(a)]}
\newcommand{\eBr}{\begin{enumerate}[(1)]}
\newcommand{\verteq}{\rotatebox{90}{$\,=$}}
\newcommand{\equalto}[2]{\underset{\displaystyle\overset{\mkern4mu\verteq}{#2}}{#1}}
\title
{Rigidity of the Bonnet-Myers inequality for graphs with respect to Ollivier Ricci curvature}
\author[1]{D. Cushing}
\author[1]{S. Kamtue}
\author[2]{J. Koolen}
\author[2]{S. Liu}
\author[3]{F. M\"unch}
\author[1]{N. Peyerimhoff}
\affil[1]{Department of Mathematical Sciences, Durham University}
\affil[2]{School of Mathematical Sciences, University of Science and Technology of China, and Wu Wen-Tsun Key Laboratory of Mathematics of CAS, Hefei} 
\affil[3]{Institute of Mathematics, Universit\"at Potsdam }
\date{\today}
\theoremstyle{plain}
\newtheorem{lemma}{Lemma}[section]
\newtheorem{theorem}[lemma]{Theorem}
\newtheorem{proposition}[lemma]{Proposition}
\newtheorem{corollary}[lemma]{Corollary}
\newtheorem*{theoremlichn}{Theorem \ref{thm:lichn}}
\newtheorem*{theoremdivis}{Theorem \ref{thm:DL_rel}}
\newtheorem*{theorembmbesharp}{Theorem \ref{thm:aBM-BEcurv}}
\newtheorem*{conjectureBM}{Conjecture \ref{conj:BM}}
\theoremstyle{definition}
\newtheorem{conjecture}[lemma]{Conjecture}
\newtheorem{definition}[lemma]{Definition}
\newtheorem{rem}[lemma]{Remark}
\newtheorem*{question}{Question}
\numberwithin{equation}{section}
\begin{document}

\maketitle

\begin{abstract}
  We introduce the notion of Bonnet-Myers and Lichnerowicz sharpness in
  the Ollivier Ricci curvature sense. Our main result is a
  classification of all self-centered Bonnet-Myers sharp graphs
  (hypercubes, cocktail party graphs, even-dimensional demi-cubes,
  Johnson graphs $J(2n,n)$, the Gosset graph and suitable Cartesian
  products). We also present a purely combinatorial reformulation
  of this result. We show that Bonnet-Myers sharpness implies Lichnerowicz
  sharpness. We also relate Bonnet-Myers sharpness to an upper bound
  of Bakry-\'Emery $\infty$-curvature, which motivates a general
  conjecture about Bakry-\'Emery $\infty$-curvature.
\end{abstract}

\section{Introduction and statement of results}

A fundamental question in geometry is in which way local properties
determine the global structure of a space. A famous result of this
kind is the Bonnet-Myers Theorem \cite{My41} for complete $n$-dimensional
Riemannian manifolds $M$ with $K = \inf {\rm Ric}_M(v) > 0$ (a
condition on the local invariant ${\rm Ric}_M ={\rm Tr}(R_M)$), where
the infimum is taken over all unit tangent vectors $v$ of $M$. Under
this condition, $M$ is compact and its diameter satisfies
\begin{equation} \label{eq:BM_RG_ineq}
{\rm diam}(M) \le \pi \sqrt{\frac{n-1}{K}}. 
\end{equation}
Moreover, Cheng's Rigidity Theorem \cite{Cheng75} states that this
diameter estimate is sharp if and only if $M$ is the $n$-dimensional
round sphere. Note that inequality \eqref{eq:BM_RG_ineq} can be
reformulated as an upper bound on the infimum of the Ricci curvature
in terms of the diameter, and this reformulation is the viewpoint we
will assume in this paper.

In the discrete setting of graphs there are several analogs of Ricci
curvature notions providing Bonnet-Myers type theorems (see, e.g.,
\cite{FS,HLLY,LMP,LLY11,Ol09},
...). In view of Cheng's rigidity result, it is natural to ask for
which graphs the Bonnet-Myers estimates is sharp. We call such graphs
\emph{Bonnet-Myers sharp} graphs. For example, in the case of
Bakry-\'Emery $\infty$-curvature, Bonnet-Myers sharp graphs have been
fully characterised and are only the hypercubes (see \cite{LMP2}).

The motivation of this paper is to study Bonnet-Myers sharpness with
respect to another curvature notion, namely, \emph{Ollivier Ricci
  curvature}. (In fact, we will consider a modification of Ollivier's
definition introduced in \cite{LLY11}.) Henceforth, all graphs
$G = (V,E)$ with vertex set $V$ and edge set $E$ will be simple
(loopless without multiple edges) and edges can be identified with
$2$-element subsets of $V$. In this paper, we will only formulate and
derive our results for regular graphs, that is, all vertices have the
same valency, even though similar questions can be posed for
non-regular graphs.

Ollivier Ricci curvature $\kappa(x,y)$ is a notion based on optimal
transport and is defined on pairs of different vertices $x,y \in
V$. The precise definition requires a longer introduction and is given
in Subsection \ref{sec:OllivKant}. Generally, $\kappa(x,y)$ is
positive if the average distance between corresponding neighbours of
$x$ and $y$ is smaller than $d(x,y)$. For now, we confine ourselves to
provide a useful connection of this curvature with a particular
combinatorial property, to provide the readers with some understanding
of this notion. Note that this Proposition follows directly from
Proposition \ref{prop:curvcalc0} by choosing $m = \frac{2D}{L} - 2$.

\begin{proposition} \label{prop:curvcalc}
  Let $G=(V,E)$ be a $D$-regular graph of diameter $L$ and
  $e=\{x,y\} \in E$. Assume that $e$ is contained in precisely
  $\frac{2D}{L}-2$ triangles and there is a perfect matching between
  the neighbours of $x$ and the neighbours of $y$ which are not involved
  in these triangles. Then we have
  $$ \kappa(x,y) = \frac{2}{L}. $$
\end{proposition}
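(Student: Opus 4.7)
The plan is to compute $\kappa(x,y)$ via Kantorovich--Rubinstein duality. Since $d(x,y)=1$, the Lin--Lu--Yau asymptotic formulation rewrites the curvature as
\[
\kappa(x,y) \;=\; 1 - \sup_{f}\left(\frac{1}{D}\sum_{x' \sim x} f(x') \;-\; \frac{1}{D}\sum_{y' \sim y} f(y')\right),
\]
where the supremum ranges over $1$-Lipschitz functions $f : V \to \mathbb{R}$ with $f(x) - f(y) = 1$. I would set $m := \frac{2D}{L} - 2$ and partition the neighbors of $x$ into $\{y\}$, the set $T$ of $m$ common neighbors, and the $D-1-m$ non-triangle neighbors $u_1,\ldots,u_{D-1-m}$; similarly the neighbors of $y$ split into $\{x\}$, $T$, and matched partners $v_i := \phi(u_i)$, where $\phi$ denotes the assumed perfect matching.

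For the inequality $\kappa(x,y) \ge \frac{2}{L}$, I would bound the supremum from above. The contributions from $T$ cancel, and what remains inside the bracket equals $-1 + \sum_i [f(u_i) - f(v_i)]$. Because the matching is realized by edges of $G$, $u_i \sim v_i$, so $1$-Lipschitzness forces $f(u_i) - f(v_i) \le 1$. Hence the bracket is bounded by $D - 2 - m$, which gives $\kappa(x,y) \ge \frac{m+2}{D} = \frac{2}{L}$.

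For the reverse inequality I would exhibit an $f$ attaining equality. Define $f$ on the $2$-ball around $\{x,y\}$ by $f(x) = 1$, $f(y) = 0$, $f(u_i) = 2$, $f(v_i) = 1$, and $f(t) = 1$ for $t \in T$. The delicate Lipschitz check is $|f(u_i) - f(y)| = 2 \le d(u_i, y)$, which uses precisely the non-triangle hypothesis: each $u_i$ is not a common neighbor, so $u_i \not\sim y$ and thus $d(u_i, y) \ge 2$. All remaining pairwise checks inside $\{x, y\} \cup T \cup \{u_i\} \cup \{v_i\}$ are immediate from the assigned values differing by at most the graph distance. Extending $f$ to $V$ via the McShane--Whitney formula $\tilde{f}(z) = \min_w [f(w) + d(z,w)]$ preserves $1$-Lipschitzness, and a direct count gives $\frac{1}{D}\bigl[\sum_{x' \sim x} f(x') - \sum_{y' \sim y} f(y')\bigr] = 1 - \frac{2}{L}$, so $\kappa(x,y) \le \frac{2}{L}$.

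The principal obstacle is the local $1$-Lipschitz verification of the explicit $f$, together with the observation that the triangle-avoidance hypothesis is exactly what permits the assignment $f(u_i) = 2$ by forcing $d(u_i, y) \ge 2$. Once that step is in hand the computation is routine, and the same argument template should yield the more general Proposition \ref{prop:curvcalc0}.
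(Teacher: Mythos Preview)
Your proof is correct, but it takes the dual route via Kantorovich potentials, whereas the paper argues on the primal side via transport plans. In the paper's proof of the more general Proposition~\ref{prop:curvcalc0}, one observes that the measures $\mu_x,\mu_y$ have equal point masses, so an optimal plan is induced by a transport map; invoking \cite[Lemma~4.1]{Idle} one may assume this map fixes $N_{xy}\cup\{x,y\}$ pointwise, and then the cost is at least $\frac{D-1-m}{D+1}$ with equality precisely when the remaining neighbours are matched by edges. This yields both the inequality $\kappa(x,y)\le\frac{2+m}{D}$ and the characterisation of equality in one stroke.

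The trade-off is this: your argument is more self-contained (no appeal to the Monge structure or to \cite{Idle}), but the paper's construction produces the explicit optimal transport plan of Definition~\ref{def:plan_tpm}, which is the basic building block for the transport-geodesic machinery in Sections~\ref{sec:transpgeod}--\ref{sec:antBMstrspher}. So the primal proof is doing double duty. A minor simplification on your side: the test function you write down is nothing other than $f(z)=d(z,y)$ restricted to $B_1(x)\cup B_1(y)$, which is globally $1$-Lipschitz from the outset, so the McShane--Whitney extension and the pairwise Lipschitz checks are unnecessary.
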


Let us now state the discrete Bonnet-Myers Theorem for Ollivier Ricci
curvature and introduce the associated notion of Bonnet-Myers sharpness
for this curvature notion:

\begin{theorem}[Discrete Bonnet-Myers, see \cite{Ol09,LLY11}]
  \label{thm:DBM}
  Let $G= (V,E)$ be a connected $D$-regular graph and $\inf_{x \sim y}
  \kappa(x,y) > 0$. Then $G$ has finite diameter $L = {\rm diam}(G) <
  \infty$ and  
 \begin{equation} \label{eq:BM_est} 
  \inf_{x \sim y} \kappa(x,y) \le \frac{2}{L}. 
  \end{equation}
  We say that such a graph $G$ is
  {\bf{\textit{$\boldsymbol{(D,L)}$-Bonnet-Myers sharp}}} (with respect
    to Ollivier Ricci curvature) if \eqref{eq:BM_est} holds with
    equality.
\end{theorem}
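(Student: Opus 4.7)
The plan is to combine two standard ingredients from optimal transport on graphs: an extension principle propagating a lower curvature bound from edges to arbitrary pairs, and an explicit upper bound on the curvature for a pair of vertices realizing the diameter. Both rely on the Kantorovich--Wasserstein characterization of $\kappa$ that will be introduced in Subsection \ref{sec:OllivKant}.

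The first ingredient is the extension principle: if $\kappa(x',y') \geq K$ on every edge $\{x',y'\} \in E$, then $\kappa(x,y) \geq K$ for every pair of distinct vertices $x, y \in V$. To prove this I would fix a shortest path $x = z_0 \sim z_1 \sim \cdots \sim z_d = y$, take for each $i$ an optimal coupling between the lazy-walk probability measures at $z_i$ and $z_{i+1}$ whose transport cost is controlled by the edge curvature bound, concatenate these couplings into a single transport plan between the measures at $x$ and $y$, and pass to the LLY limit.

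The second ingredient is the pointwise upper bound $\kappa(x,y) \leq 2/d(x,y)$ for any pair $x \neq y$. For this I would apply Kantorovich duality with the $1$-Lipschitz test function $f(z) := d(z,y)$: every neighbour of $x$ has distance at least $d(x,y) - 1$ from $y$, whereas every neighbour of $y$ has distance $1$ from $y$. Inserting these estimates into the Wasserstein computation for the lazy-walk measures with laziness parameter $\alpha$ yields $W_1 \geq d(x,y) - 2(1-\alpha)$, and dividing by $1-\alpha$ and taking $\alpha \to 1^-$ in the LLY definition produces the bound.

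Combining the two: if $K := \inf_{x' \sim y'} \kappa(x',y') > 0$, then for any pair $x, y \in V$ at distance $d$ we have $K \leq \kappa(x,y) \leq 2/d$, forcing $d \leq 2/K$. Hence the diameter $L = \diam(G)$ is finite, and applying the same chain to a diameter-realizing pair yields $K \leq 2/L$, which is \eqref{eq:BM_est}. The main subtlety lies in the extension principle, since the LLY curvature is defined through a limit in the laziness parameter, so the concatenation of couplings must be performed at the level of the laziness-dependent measures before passing to the limit; modulo this, the rest of the argument is routine.
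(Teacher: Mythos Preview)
Your proposal is correct and follows essentially the same route as the paper: the paper derives Theorem~\ref{thm:DBM} directly from the two facts \eqref{eq:infeqinf} (the edge infimum controls the infimum over all pairs, which is just the triangle inequality for $W_1$) and \eqref{eq:kapzw_ineq} (the bound $\kappa(z,w)\le 2/d(z,w)$, cited from \cite{Ol09}), applied to a diameter-realizing pair. Your two ingredients are exactly these, with the additional detail of how each is proved; note that in the paper's convention $\kappa$ is defined at the fixed idleness $p=\tfrac{1}{D+1}$ rather than via the LLY limit, so no passage to the limit is actually needed in the concatenation step.
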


Many of our results require the additional condition of
self-centeredness. Note that a graph $G=(V,E)$ is called
self-centered if, for every vertex $x \in V$, there exists a
vertex $\overline{x} \in V$ such that
$d(x,\overline{x}) = {\rm diam}(G)$ (see Subsection \ref{sec:graph_notation} for its definition).  Let us now state the main results of this paper.

\begin{itemize}
\item[(a)] \emph{Cartesian products:}
  $G_1 \times G_2 \times \cdots \times G_k$ is Bonnet-Myers sharp if and only if
  all factors $G_i$ are Bonnet-Myers sharp and satisfy
  \begin{equation} \label{eq:cartprod_cond0} 
    \frac{D_1}{L_1} = \frac{D_2}{L_2} =\cdots = \frac{D_k}{L_k},
  \end{equation} 
  where $D_i$ and $L_i$ are the vertex degrees and the diameters of the
  graphs $G_i$, respectively (see Theorem \ref{thm:cartprod}).
\item[(b)] Every Bonnet-Myers sharp graph is Lichnerowicz sharp (see
  Theorem \ref{thm:lichn}).
\item[(c)] \emph{Classification of self-centered Bonnet-Myers sharp graphs:}
  Self-centered Bonnet-Myers sharp graphs are precisely the following
  ones: Hypercubes, cocktail party graphs, the Johnson graphs
  $J(2n,n)$, even-dimensional demi-cubes, the Gosset graph and
  Cartesian products of them satisfying \eqref{eq:cartprod_cond0} (see Theorem
  \ref{thm:main}).
\item[(d)] Self-centered $(D,L)$-Bonnet-Myers sharp graphs are
  Bakry-{\'E}mery $\infty$-curvature sharp in all vertices with
  normalized $\infty$-curvature value $\frac{1}{D} + \frac{1}{L}$ (see
  Theorem \ref{thm:aBM-BEcurv}).
\end{itemize}

We provide more detailed information about these results in the next subsection. In particular, result (c) above is based on another result which can be reformulated in purely combinatorial terms. This combinatorial reformulation is derived in Subsection \ref{subsec:comb_res}.

\subsection{Our results on Bonnet-Myers sharp graphs}

It is useful to know that the vertex degree $D$ and the diameter $L$ of a
Bonnet-Myers sharp graph cannot be arbitrary:

\begin{theorem} \label{thm:DL_rel}
  Any $(D,L)$-Bonnet-Myers sharp graph satisfies $L \le D$. Moreover $L$
  must divide $2D$. 
\end{theorem}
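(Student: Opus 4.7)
The plan is to specialise the standard proof of the discrete Bonnet-Myers inequality (Theorem \ref{thm:DBM}) to a diameter-realising geodesic and extract a rigidity identity from the equality case. I would fix $p, q \in V$ with $d(p,q) = L$ and a shortest path $p = x_0, x_1, \ldots, x_L = q$, and partition the $D$ neighbours of each $x_i$ by their distance from $p$: write $a_i$, $b_i$, $c_i$ for the number of $z \sim x_i$ with $d(z,p) = i-1$, $i$, $i+1$ respectively. The boundary data are $a_0 = 0$, $c_0 = D$, $a_1 = 1$ (since $p$ itself is the unique neighbour of $x_1$ at distance $0$ from $p$), and $c_L = 0$ (because $\diam(G) = L$ excludes distance $L+1$).

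The first step is a per-edge Kantorovich-duality upper bound: test the 1-Lipschitz function $f = d(\cdot, p)$ against the dual LP for $W_1(\mu_{x_i}^\alpha, \mu_{x_{i+1}}^\alpha)$ and pass to the Lin-Lu-Yau limit $\alpha \to 1$. A short computation gives
$$\kappa(x_i, x_{i+1}) \le \frac{(c_i - c_{i+1}) + (a_{i+1} - a_i)}{D}.$$
The right-hand side telescopes across the geodesic, and inserting the boundary values yields
$$\sum_{i=0}^{L-1} \kappa(x_i, x_{i+1}) \le \frac{c_0 - c_L + a_L - a_0}{D} = 1 + \frac{a_L}{D} \le 2,$$
recovering the discrete Bonnet-Myers inequality $\inf \kappa \le 2/L$.

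The second step is to read this chain backwards under the sharpness hypothesis. For a $(D,L)$-Bonnet-Myers sharp graph every edge has $\kappa \ge 2/L$, so the sum is at least $L \cdot \tfrac{2}{L} = 2$; hence all inequalities collapse to equalities, forcing $a_L = D$ and the per-edge Kantorovich bound to be tight at each edge of the geodesic. Applying this equality at the first edge $\{x_0, x_1\}$ gives
$$\frac{2}{L} \;=\; \kappa(x_0, x_1) \;=\; \frac{(c_0 - c_1) + (a_1 - a_0)}{D} \;=\; \frac{D + 1 - c_1}{D},$$
so that $c_1 = D + 1 - 2D/L$, and the number of common neighbours $b_1 = D - a_1 - c_1$ of $x_0$ and $x_1$ is exactly $\tfrac{2D}{L} - 2$. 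Since $b_1$ is a non-negative integer, non-negativity gives $L \le D$ and integrality gives $L \mid 2D$.

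The main obstacle is the Kantorovich-duality computation producing the per-edge upper bound in the $\alpha \to 1$ limit, which requires a careful first-order expansion of $W_1(\mu_{x_i}^\alpha, \mu_{x_{i+1}}^\alpha)$ near $\alpha = 1$ and the observation that the telescoping sum over the geodesic recovers the standard Bonnet-Myers bound exactly. Once this essentially standard ingredient is in place, the rigidity conclusion falls out as a one-line algebraic consequence at the first edge of the geodesic.
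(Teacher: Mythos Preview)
Your proof is correct and takes essentially the same route as the paper: both test the $1$-Lipschitz function $f = d(\cdot, p)$ via Kantorovich duality along a diameter-realising geodesic and extract from the equality case at the first edge that $b_1 = d_p^0(x_1) = \tfrac{2D}{L} - 2$ must be a non-negative integer. The paper packages this computation into the chain Theorem~\ref{ineq} $\to$ Lemma~\ref{geosharp} $\to$ Theorem~\ref{Ftrick} $\to$ Theorem~\ref{recursionformulas} rather than your direct telescoping sum, and concludes $L \le D$ less economically than you do, by ruling out the residual case $L = 2D$ via a separate degree count at $k = L-1$ instead of simply invoking $b_1 \ge 0$.
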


This theorem is proved in Section \ref{sec:self-centBMsh}.

Another fundamental result between local and global properties of a
closed $n$-dimensional Riemannian manifold $M$ is Lichnerowicz' Theorem
\cite[p. 135]{Li58} which states that, under the condition
$K = \inf {\rm Ric}_M(v) > 0$, the smallest positive Laplace-Beltrami
eigenvalue $\lambda_1(M)$ satisfies
$$ \frac{n}{n-1}K \le \lambda_1. $$
The associated rigidity result is Obata's Theorem \cite{Ob62}, which
states that this eigenvalue estimate is sharp if and only if $M$ is
the $n$-dimensional round sphere. There is a discrete analogue of
Lichnerowicz' Theorem for Ollivier Ricci Curvature and the normalized
Laplacian $\Delta_G = D^{-1} A_G - {\rm Id}$ of an arbitrary graph
$G=(V,E)$, where $A_G$ denotes the adjacency matrix of $G$ and $D$ is
here a diagonal matrix whose entries are the valencies $d_x$ of the
vertices $x \in V$. In the case of a $D$-regular graph this matrix is
just given by $D \cdot {\rm Id}$. Alternatively, $\Delta_G$ can be
written as an operator acting on functions $f$ defined on the vertices
$V$ via
\begin{equation} \label{eq:Deltanorm} 
\Delta_G f(x) = \frac{1}{d_x} \sum_{y \sim x} (f(y)-f(x)). 
\end{equation}
The discrete Lichnerowicz' Theorem gives naturally rise to the
definition of Lichnerowicz sharpness:

\begin{theorem}[Discrete Lichnerowicz, see \cite{LLY11}] \label{thm:Lich}
  Let $G=(V,E)$ be a finite connected $D$-regular graph. Then we have
  for the smallest positive solution $\lambda_1$ of
  $\Delta_G f + \lambda f = 0$ with $\Delta_G$ given by
  \eqref{eq:Deltanorm},
  \begin{equation} \label{eq:L_est} 
  \inf_{x \sim y} \kappa(x,y) \le \lambda_1. 
  \end{equation}
  We say that the graph $G$ is
  {\bf{\textit{Lichnerowicz sharp}}} (with respect to
  Ollivier Ricci curvature) if \eqref{eq:L_est} holds with
  equality.
\end{theorem}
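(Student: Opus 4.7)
The plan is to compare the dynamics of the $\epsilon$-lazy random walks (which define the Lin--Lu--Yau curvature) with the action of the normalised Laplacian on an eigenfunction realising $\lambda_1$, connecting the two via Kantorovich--Rubinstein duality.

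I would first let $f \colon V \to \IR$ be a non-constant solution of $\Delta_G f + \lambda_1 f = 0$, and then pick an edge $\{x,y\}$ that maximises $|f(u)-f(v)|$ over adjacent pairs. After rescaling I may assume $f(x) - f(y) = 1$. Since geodesics in a graph are concatenations of edges, this edge-maximality immediately upgrades to $|f(u) - f(v)| \leq d(u,v)$ for all $u,v$, so $f$ is $1$-Lipschitz with respect to the graph distance and is therefore admissible in the Kantorovich dual of $W_1$.

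Next, for the $\epsilon$-lazy walk $\mu_u^\epsilon = (1-\epsilon)\delta_u + \frac{\epsilon}{D}\sum_{v \sim u} \delta_v$, a short and exact computation (using \eqref{eq:Deltanorm} and the eigenfunction equation) yields
\begin{equation*}
\sum_v f(v)\bigl(\mu_x^\epsilon(v) - \mu_y^\epsilon(v)\bigr) = (f(x) - f(y)) + \epsilon\bigl(\Delta_G f(x) - \Delta_G f(y)\bigr) = 1 - \epsilon \lambda_1.
\end{equation*}
Kantorovich--Rubinstein duality then gives $1 - \epsilon \lambda_1 \leq W_1(\mu_x^\epsilon, \mu_y^\epsilon)$, while the definition of the Lin--Lu--Yau curvature (to be recalled in Subsection \ref{sec:OllivKant}) gives $W_1(\mu_x^\epsilon, \mu_y^\epsilon) = 1 - \epsilon \kappa(x,y) + o(\epsilon)$ as $\epsilon \to 0^+$. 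Combining these, subtracting $1$, dividing by $\epsilon > 0$, and passing to the limit produces
\begin{equation*}
\lambda_1 \geq \kappa(x,y) \geq \inf_{u \sim v} \kappa(u,v),
\end{equation*}
which is precisely \eqref{eq:L_est}.

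The argument is short once the Lin--Lu--Yau definition is in place, so I regard the main subtlety not as a hard estimate but as the justification of the step from ``edge-Lipschitz'' to ``$d$-Lipschitz'' for the chosen eigenfunction $f$: without this observation, $f$ (normalised via its edge-gradient) would not be a legitimate test function in the Kantorovich dual, and the whole chain of inequalities would collapse. Everything else is either a one-line computation or an immediate consequence of the definitions.
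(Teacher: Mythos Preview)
Your argument is correct. Note, however, that the paper does not supply its own proof of this statement: Theorem~\ref{thm:Lich} is quoted as a known result from \cite{LLY11}, so there is no proof in the paper to compare against. That said, your approach is the standard one, and the key computation you perform (expressing $\sum_v f(v)(\mu_x^\epsilon(v)-\mu_y^\epsilon(v))$ in terms of $\Delta_G f$) is exactly the content of Lemma~\ref{lem:W1_Delta}, which the paper uses for related purposes elsewhere.

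One minor remark on presentation: the step you flag as the ``main subtlety'' (edge-Lipschitz $\Rightarrow$ $d$-Lipschitz) is indeed necessary, but it is also entirely routine, being an immediate consequence of the triangle inequality along a geodesic. You might also note that, by the piecewise-linearity of $p \mapsto \kappa_p(x,y)$ established in \cite{Idle}, the $o(\epsilon)$ term in your expansion of $W_1(\mu_x^\epsilon,\mu_y^\epsilon)$ actually vanishes for all sufficiently small $\epsilon>0$, so the limit is not strictly needed; but your version with the limit is of course valid.
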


We have the following relation between these two sharpness properties, proved
in Section \ref{sec:curvandeig}:

\begin{theorem} \label{thm:lichn}
  Every Bonnet-Myers sharp graph is Lichnerowicz sharp.
\end{theorem}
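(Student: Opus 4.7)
By Lichnerowicz (Theorem~\ref{thm:Lich}), any $(D,L)$-Bonnet-Myers sharp graph already satisfies $\lambda_1 \geq \inf_{x\sim y}\kappa(x,y) = 2/L$. To prove Lichnerowicz sharpness it therefore suffices to exhibit a nonconstant eigenfunction of $\Delta_G$ with eigenvalue exactly $2/L$, giving the matching upper bound $\lambda_1 \le 2/L$.

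The key ingredient is an infinitesimal Lipschitz contraction coming from Kantorovich-Rubinstein duality and the LLY definition of $\kappa$: for any $1$-Lipschitz function $g\colon V\to\IR$ and any edge $\{x,y\}$ with $g(x)-g(y)=1$,
\[
 \Delta_G g(x) - \Delta_G g(y) \;\leq\; -\kappa(x,y).
\]
Indeed, testing $g$ against the lazy measures $\mu_x^{1-\varepsilon},\mu_y^{1-\varepsilon}$ yields $\alpha+(1-\alpha)(Pg(x)-Pg(y)) \leq W_1(\mu_x^\alpha,\mu_y^\alpha) = 1 - \kappa_\alpha(x,y)$ with $\alpha = 1-\varepsilon$; substituting $Pg(x)-Pg(y)-1 = \Delta_G g(x)-\Delta_G g(y)$, dividing by $\varepsilon$ and sending $\varepsilon\to 0^+$ gives the stated bound. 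Since $G$ is Bonnet-Myers sharp, $\kappa(x,y) \geq 2/L$ on every edge, so the right-hand side is at most $-2/L$.

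Fix a pair $x_0,y_0\in V$ realising the diameter $L$ and set $g_1 := d(x_0,\cdot)$. Telescoping the contraction along any geodesic from $x_0$ to $z\in V$, and using $\Delta_G g_1(x_0) = 1$, yields the pointwise super-solution bound
\[
 \Delta_G g_1(z) \leq 1 - \tfrac{2}{L}g_1(z), \qquad \text{equivalently} \qquad \Delta_G\phi_1 \leq -\tfrac{2}{L}\phi_1 \text{ for } \phi_1 := g_1 - L/2.
\]
Applying the same argument with $y_0$ in place of $x_0$ and flipping signs shows $\phi_2 := L/2 - d(y_0,\cdot)$ is a sub-solution: $\Delta_G\phi_2 \geq -\tfrac{2}{L}\phi_2$.

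The final step combines these into a genuine eigenfunction. Set $h := \phi_1 - \phi_2 = d(x_0,\cdot) + d(y_0,\cdot) - L$; then $h \geq 0$ by the triangle inequality and $\Delta_G h \leq -\tfrac{2}{L}h$ by subtraction. Summing over $V$ and using $\sum_{z\in V}\Delta_G h(z) = 0$ forces $\sum_{z\in V}h(z) \leq 0$, hence $h\equiv 0$: every vertex lies on a geodesic between $x_0$ and $y_0$. The super-solution at $y_0$ reads $\Delta_G g_1(y_0) \leq -1$, while trivially $\Delta_G g_1(y_0) \geq -1$, so $\Delta_G g_1(y_0) = -1$. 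Along any geodesic $x_0 = w_0,\ldots,w_L = y_0$ the per-edge bounds $\Delta_G g_1(w_{i+1}) - \Delta_G g_1(w_i) \leq -2/L$ then sum to exactly $-2$, forcing edgewise equality and hence $\Delta_G g_1(z) = 1 - \tfrac{2}{L}g_1(z)$ pointwise. Consequently $\phi_1$ is a nonconstant eigenfunction with eigenvalue $2/L$, finishing the proof. The most delicate point, in my view, is this averaging argument that converts the sub-/super-solution pair into an exact pointwise eigenfunction identity.
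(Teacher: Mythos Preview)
Your proof is correct and targets the same eigenfunction as the paper, namely $\phi_1 = d(x_0,\cdot)-L/2$; the overall architecture (super-solution plus matching sub-solution from the other pole, then upgrade to an exact eigenfunction) mirrors the paper's use of Theorems~\ref{ineq}, \ref{Ftrick} and~\ref{lieongeos}. Two technical steps differ. First, you derive the pointwise inequality $\Delta g_1(z)\le 1-\tfrac{2}{L}g_1(z)$ by telescoping the edge-wise contraction $\Delta g(x)-\Delta g(y)\le -\kappa(x,y)$ along a geodesic from $x_0$ to $z$, whereas the paper obtains the same bound in one shot (Theorem~\ref{ineq}) by applying Kantorovich duality directly between $\mu_{x_0}$ and $\mu_z$ and then invoking Lemma~\ref{geosharp} to identify $\kappa(x_0,z)=2/L$. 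Second, and more interestingly, your argument that $h=d(x_0,\cdot)+d(y_0,\cdot)-L\equiv 0$ via $\sum_z \Delta h(z)=0$ is an averaging/integration argument, while the paper's Theorem~\ref{lieongeos} reaches the same conclusion by a minimum-principle argument (pick the vertex closest to $x_0$ where $d(x_0,\cdot)$ and $L-d(\cdot,y_0)$ disagree and derive a contradiction from $\Delta f(w)=\Delta g(w)$ at the preceding vertex). Your route is slightly more self-contained for the purpose of this particular theorem; the paper's route yields the intermediate results~\ref{geosharp}--\ref{lieongeos} as standalone statements that are reused elsewhere.
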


Note, however, that the family of all Lichnerowicz sharp graphs is
much larger than the family of all Bonnet-Myers sharp graphs (see
Remark \ref{rem:lichcartsharp}). In Section \ref{sec:curvandeig}, we
classify a special subclass of Lichnerowicz sharp graphs, namely, all
distance regular Lichnerowicz sharp graphs with an additional spectral
condition.

Our main theorem is the following classification result of
self-centered Bonnet-Myers sharp graphs:

\begin{theorem} \label{thm:main}
  Self-centered Bonnet-Myers sharp graphs are precisely the following
  graphs:
  \begin{enumerate}
  \item hypercubes $Q^n$, $n \ge 1$;
  \item cocktail party graphs $CP(n)$, $n \ge 3$; 
  \item the Johnson graphs $J(2n,n)$, $n \ge 3$;
  \item even-dimensional demi-cubes $Q^{2n}_{(2)}$, $n \ge 3$; 
  \item the Gosset graph;
  \end{enumerate}
  and Cartesian products of 1.-5. satisfying the condition
  \eqref{eq:cartprod_cond0}.
\end{theorem}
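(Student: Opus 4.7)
The plan splits into a sufficiency direction (essentially a verification) and a necessity direction (the substantive part). For sufficiency, I verify Bonnet-Myers sharpness for each of the five listed families. For each family, after recording the valency $D$ and the diameter $L$, I check on a single representative edge the two combinatorial conditions of Proposition \ref{prop:curvcalc}: that the edge is contained in exactly $2D/L-2$ triangles and that there is a perfect matching between the non-triangle neighbours of its endpoints. Edge-transitivity of each family then yields $\kappa(x,y) = 2/L$ on every edge and hence Bonnet-Myers sharpness. The Cartesian product theorem (item (a) of the introduction) extends the conclusion to all products satisfying \eqref{eq:cartprod_cond0}.

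For necessity, let $G$ be a self-centered $(D,L)$-Bonnet-Myers sharp graph. The first step is to upgrade the infimum identity $\inf_{x \sim y} \kappa(x,y) = 2/L$ to the much stronger statement that $\kappa(x,y) = 2/L$ on \emph{every} edge of $G$. The argument uses a telescoping chain of Wasserstein contractions: for a geodesic $x = x_0 \sim \cdots \sim x_L = \overline{x}$ between any antipodal pair $(x,\overline{x})$, one has
\begin{equation*}
L - 2 \;\le\; W_1(\mu_x, \mu_{\overline{x}}) \;\le\; \sum_{i=0}^{L-1} W_1(\mu_{x_i}, \mu_{x_{i+1}}) \;=\; \sum_{i=0}^{L-1}\bigl(1 - \kappa(x_i, x_{i+1})\bigr) \;\le\; L\bigl(1 - \tfrac{2}{L}\bigr) = L - 2,
\end{equation*}
which collapses to equality and forces $\kappa(x_i, x_{i+1}) = 2/L$ on every edge of any antipodal geodesic. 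Because $G$ is self-centered, a propagation argument (choosing suitable antipodes from various vertices) shows that every edge of $G$ lies on some such geodesic. The reverse direction of Proposition \ref{prop:curvcalc} then imposes on every edge of $G$ the rigid local structure of exactly $2D/L - 2$ triangles plus a perfect matching between the remaining neighbours.

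The remaining task, and the main obstacle, is the purely combinatorial classification of connected regular self-centered graphs carrying this local edge structure. My plan is to use the combinatorial reformulation advertised in Subsection \ref{subsec:comb_res} together with Theorem \ref{thm:DL_rel}, apply the Cartesian product theorem to reduce to prime (indecomposable) factors, and then analyse the distance partition around a fixed antipodal pair $(x,\overline{x})$. Deriving the intersection numbers of this partition should show that each prime factor is distance-regular with a very constrained intersection array, and matching these arrays against known classification results in algebraic graph theory should recover exactly $Q^n$, $CP(n)$, $J(2n,n)$, $Q^{2n}_{(2)}$, and the Gosset graph. The sporadic appearance of the Gosset graph signals that the final step must include some exceptional small-parameter case analysis in the spirit of Terwilliger-type arguments on local graph structure.
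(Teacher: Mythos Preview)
Your sufficiency direction and the constant-curvature upgrade are essentially the paper's arguments (Section~\ref{sec:examples} for the examples, Lemma~\ref{geosharp} plus Theorem~\ref{lieongeos} for curvature constancy). One cosmetic correction: with $\mu_x = \mu_x^{1/(D+1)}$ the telescoping bound is $L - \tfrac{2D}{D+1}$, not $L-2$, but this changes nothing structurally.

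The genuine gap is in your classification step. You propose to reduce to Cartesian-prime factors (legitimate, via Sabidussi--Vizing and Theorem~\ref{thm:cartprod}), then \emph{prove that each prime factor is distance-regular} by analysing the distance partition, and finally invoke a Terwilliger-type classification. But you give no mechanism for deducing distance-regularity. The degree relations of Theorem~\ref{recursionformulas} fix only $d_x^+(y)-d_x^-(y)$ in terms of $d(x,y)$, not the three degrees individually; the triangle count and the cocktail-party structure of $\mu$-graphs (Corollary~\ref{cor:numtriangle}, Corollary~\ref{cor: mu_CP}) control only the first two layers. Bridging from this local edge structure to full distance-regularity is exactly the hard part, and ``deriving the intersection numbers \dots should show'' is not an argument.

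The paper bypasses distance-regularity entirely. Its key step (Sections~\ref{sec:transpgeod} and~\ref{sec:antBMstrspher}) is a transport-geodesic technique: concatenating the optimal transport maps along a full-length geodesic produces, for each neighbour of a pole, a companion geodesic that identifies antipoles inside every interval $[x_0,x_k]$ (Theorem~\ref{thm:uniq_ant}). This yields first that all $\mu$-graphs are cocktail parties, and then, via an inductive interval-swap argument (Theorems~\ref{thm_spherical_step1}--\ref{thm_spherical}), that every interval of $G$ is antipodal, i.e.\ $G$ is \emph{strongly spherical}. The classification then comes for free from the Koolen--Moulton--Stevanovi\'c theorem (Theorem~\ref{thm:strongspher_class}), which lists strongly spherical graphs as precisely the desired Cartesian products. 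The paper explicitly remarks (end of Subsection~\ref{sec:revisex}) that the distance-regular route via \cite[Theorem~4.4.11]{BCN89} would work \emph{if} distance-regularity were known, but it deliberately avoids that assumption. Your proposal needs either a proof of distance-regularity or an alternative structural invariant with an existing classification; the paper chose the latter, via strong sphericity.
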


Let us explain the proof of Theorem \ref{thm:main}: In Section
\ref{sec:examples} we show that all graphs in the list of Theorem
\ref{thm:main} are self-centered Bonnet-Myers sharp.
In Sections \ref{sec:transpgeod} and
\ref{sec:antBMstrspher} we show that every self-centered Bonnet-Myers sharp
graph is strongly spherical (this is the statement of Theorem
\ref{thm_spherical}; the notion of strongly spherical graphs is
introduced in Definition \ref{def:strspher}). 
Then we use the classification result \cite{Koo} (see Theorem
\ref{thm:strongspher_class} below), which states that strongly
spherical graphs are precisely the Cartesian products in the list
provided in Theorem \ref{thm:main}, thus completing the proof. \qed

Finally, we like to present connections between Bonnet-Myers sharpness
with respect to Ollivier Ricci curvature and normalized
Bakry-{\'E}mery $\infty$-curvature. Generally, relations between
different curvature notions of discrete spaces are a very interesting
and challenging topic. Normalized Bakry-{\'Emery} $\infty$-curvature
is defined on the vertices of a graph $G=(V,E)$ and denoted by
${\mathcal K}^{\rm n}_{G,x}(\infty)$, $x \in V$. For further details
and the precise definition, we refer the readers to Subsection
\ref{sec:BE-curvature}. We have the following results:

\begin{theorem} \label{thm:BMsharp-BEeq}
  Every $(D,L)$-Bonnet-Myers sharp graph $G=(V,E)$ satisfies
  \begin{equation} \label{eq:BM-BakryEm}
  \inf_{x \in V} {\mathcal K}^{\rm n}_{G,x}(\infty) \le 
  \frac{1}{D} + \frac{1}{L}. 
   \end{equation}
\end{theorem}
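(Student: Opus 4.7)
The plan is to exhibit a single vertex at which the normalized Bakry-\'Emery $\infty$-curvature is already at most $\frac{1}{D}+\frac{1}{L}$. By the Bonnet-Myers sharpness hypothesis \eqref{eq:BM_est} there exists an edge $\{x,y\}\in E$ with $\kappa(x,y)=\frac{2}{L}$; I would show that $\mathcal{K}^{\rm n}_{G,x}(\infty)\le \frac{1}{D}+\frac{1}{L}$ at this specific $x$, after which taking the infimum over all vertices yields \eqref{eq:BM-BakryEm}.

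The tool is the variational characterization $\mathcal{K}^{\rm n}_{G,x}(\infty)=\inf_f \Gamma_2^{\rm n}(f)(x)/\Gamma^{\rm n}(f)(x)$ (over test functions $f$ with $\Gamma^{\rm n}(f)(x)>0$), so a single well-chosen $f$ suffices. Proposition \ref{prop:curvcalc0} together with $\kappa(x,y)=2/L$ forces strong local structure on the edge: precisely $\frac{2D}{L}-2$ triangles through $\{x,y\}$ and a perfect matching between the remaining neighbours of $x$ and those of $y$, aligned with the optimal Kantorovich coupling defining $\kappa(x,y)$. A Kantorovich-dual function (a $1$-Lipschitz $f$ with $f(x)=0$, $f(y)=1$, tuned on the common neighbours of $x$ and $y$ and then extended along the matching so that the transport plan becomes transparent) is the natural candidate. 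Computing $\Gamma^{\rm n}(f)(x)$ only involves the one-neighbourhood of $x$ and is straightforward, while $\Gamma_2^{\rm n}(f)(x)$ becomes a weighted sum over $B_2(x)$ that splits into a piece for triangles, a piece for matched spokes, and a piece for further two-step vertices.

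The main obstacle I expect is the $\Gamma_2^{\rm n}$-bookkeeping: each of the three classes of vertices contributes with a different combinatorial weight, and the delicate point is to verify that, given the triangle count $\frac{2D}{L}-2$ dictated by Bonnet-Myers sharpness, the three contributions combine to yield
$$ \Gamma_2^{\rm n}(f)(x)\;\le\;\Bigl(\tfrac{1}{D}+\tfrac{1}{L}\Bigr)\,\Gamma^{\rm n}(f)(x). $$
The inequality is tight at exactly this triangle count, so the rigidity provided by Proposition \ref{prop:curvcalc0} must be used in an essential way; any slack in either the number of triangles or the matching would break the calculation. Once this ratio estimate is in hand, the theorem follows in a single line.
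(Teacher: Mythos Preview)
There is a genuine gap at the very first structural step. From Bonnet--Myers sharpness you correctly extract an edge $\{x,y\}$ with $\kappa(x,y)=\frac{2}{L}$, but Proposition~\ref{prop:curvcalc0} only gives the \emph{upper} bound $\kappa(x,y)\le\frac{2+\#_\Delta(x,y)}{D}$. Plugging in $\kappa(x,y)=\frac{2}{L}$ yields $\#_\Delta(x,y)\ge\frac{2D}{L}-2$, not equality; and the perfect matching is guaranteed only in the equality case of Proposition~\ref{prop:curvcalc0}. So neither ``precisely $\frac{2D}{L}-2$ triangles'' nor the matching is justified at this stage. There is a second, related problem: the quantity $\Gamma_2^{\rm n}(f)(x)$ you want to estimate depends on $f$ across the entire $2$-ball $B_2(x)$, hence on the triangle counts of \emph{all} $D$ edges at $x$, not just the single edge $\{x,y\}$. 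Even granting the claimed structure on one edge, you have said nothing about the remaining $D-1$ edges incident to $x$, and excess triangles there would push your $\Gamma_2$-computation the wrong way.

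The paper circumvents both issues by choosing $x$ to be a \emph{pole} rather than an endpoint of a minimum-curvature edge. At a pole, Theorem~\ref{recursionformulas} (applied with $k=1$, via the eigenfunction $d(x,\cdot)-\frac{L}{2}$) gives $d_x^+(y)=D+1-\frac{2D}{L}$ for \emph{every} neighbour $y$, hence $\#_\Delta(x,y)=\frac{2D}{L}-2$ for every edge at $x$ and $\#_\Delta(x)=D\bigl(\frac{D}{L}-1\bigr)$. Feeding this into the general upper bound \eqref{upperbound},
\[
\mathcal{K}^{\rm n}_{G,x}(\infty)\;\le\;\frac{2}{D}+\frac{\#_\Delta(x)}{D^2}\;=\;\frac{1}{D}+\frac{1}{L},
\]
finishes the proof immediately (this is Theorem~\ref{thm:curv_relation}). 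Your test-function idea is in spirit a rederivation of \eqref{upperbound} in this special case, but it cannot get off the ground without first securing the global triangle count at $x$, and for that you need the pole property, not just one minimal-curvature edge.
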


This result follows immediately from Theorem \ref{thm:curv_relation}
later in the paper.

As a consequence of Theorem \ref{thm:main} we also obtain
  
\begin{theorem} \label{thm:aBM-BEcurv}
  Let $G =(V,E)$ be a self-centered $(D,L)$-Bonnet-Myers sharp graph. Then $G$
  is Bakry-{\'E}mery $\infty$-curvature sharp at all vertices
  $x \in V$ and we have
  $$ 
  {\mathcal K}^{\rm n}_{G,x}(\infty) = \frac{1}{D} + \frac{1}{L}. 
  $$
\end{theorem}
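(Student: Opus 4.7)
The plan is to combine the classification in Theorem~\ref{thm:main} with the upper bound already available from Theorem~\ref{thm:BMsharp-BEeq} and a vertex-transitivity argument, and then establish the matching lower bound by a case analysis on the five basic families together with a tensorization step for Cartesian products.

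By Theorem~\ref{thm:main}, $G = G_1 \times \cdots \times G_k$ with each factor one of the five listed families, and the compatibility condition \eqref{eq:cartprod_cond0} forces $D_i / L_i = D/L$ for every $i$. All five basic families are distance-transitive, hence vertex-transitive, and a Cartesian product of vertex-transitive graphs is vertex-transitive; so $\mathcal{K}^{\rm n}_{G,x}(\infty)$ does not depend on $x \in V$. Combining this with Theorem~\ref{thm:BMsharp-BEeq} already gives the upper bound $\mathcal{K}^{\rm n}_{G,x}(\infty) \le \frac{1}{D}+\frac{1}{L}$ at \emph{every} vertex, not merely in the infimum.

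To obtain the matching lower bound and $\infty$-curvature sharpness, I would first verify for each of the five basic families $G_i$ that it is $\infty$-curvature sharp at one (and hence every) vertex with value $\frac{1}{D_i}+\frac{1}{L_i}$; this amounts to solving the semidefinite program defining $\mathcal{K}^{\rm n}(\infty)$ at a representative vertex. The hypercube case is treated in \cite{LMP2}; the cocktail party, Johnson, and even-dimensional demi-cube cases reduce to finite spectral calculations on the well-known local graph of each family; the Gosset graph case rests on a direct analysis of its local structure (the Schl\"afli graph). Second, I would show that $\infty$-curvature sharpness tensorizes under Cartesian products satisfying \eqref{eq:cartprod_cond0}: the normalized $\Gamma_2$-form splits over the factors in a degree-weighted way, and the condition $D_i/L_i = D/L$ is precisely what is needed for the factor values $\frac{1}{D_i}+\frac{1}{L_i}$ to align to the single product value $\frac{1}{D}+\frac{1}{L}$, with the factor curvature-sharp vectors assembling into one on $G$.

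The principal obstacle is the explicit $\infty$-curvature verification on the Gosset graph: it has no convenient inductive description, its local graph is already exceptional, and the associated semidefinite problem must be handled either through its association-scheme structure or by a computer-algebra check. Once the five basic computations are in hand, the Cartesian product step is routine provided one carefully writes down the tensorization formula for the normalized $\Gamma_2$ operator and tracks how the weighting by degrees interacts with the hypothesis $D_i/L_i = D/L$.
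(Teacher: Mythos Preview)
Your outline is correct in principle and shares with the paper both the reduction via Theorem~\ref{thm:main} and the Cartesian product step (the paper proves this as Proposition~\ref{prop:cartprodBEcurv}, using the known formula ${\mathcal K}^{\rm n}_{G_1\times G_2,(x_1,x_2)}(\infty)=\frac{1}{D_1+D_2}\min_i D_i\,{\mathcal K}^{\rm n}_{G_i,x_i}(\infty)$ from \cite{CLP2018}). The upper bound is even easier than you suggest: by Theorem~\ref{thm:curv_relation} it holds at every \emph{pole}, and in a self-centered graph every vertex is a pole, so vertex-transitivity is not needed.

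The genuine difference lies in the lower bound on the irreducible factors. You propose five separate semidefinite computations, flagging the Gosset graph as the hard case. The paper instead gives a single unified argument covering all five families at once. The key observation is that all of them share the same local structure: every $\mu$-graph is a cocktail party graph $CP(m)$ with $m=\frac{D-L}{L(L-1)}+1$, and every $1$-sphere is strongly regular with parameters determined by $(D,L)$ via Proposition~\ref{prop:cocktail-implies-str}. The paper then invokes the criterion from \cite[Theorem~9.1]{CLP2018} that $\infty$-curvature sharpness at an $S_1$-out regular vertex is equivalent to $\lambda_1(\Delta_{S_1''(x)})\ge D/2$, writes the weighted adjacency matrix $A'$ of $S_1'(x)$ explicitly as a polynomial in $A$, ${\rm Id}$, $J$ (using only the parameters $(D,L,m)$ and the strongly regular structure), reads off $\lambda_1(\Delta_{S_1''(x)})$ from the second-largest eigenvalue of $A$ given in Proposition~\ref{prop:cocktail-implies-str}, and reduces the desired inequality to the elementary fact $(L(L-2)+D)(D-L)\ge 0$. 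This uniform treatment removes the need for any exceptional analysis of the Gosset graph; the obstacle you identified simply does not arise.
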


The proof of this theorem is given in Subsection \ref{sec:BEcurvBMsharp}.

These last two results provide a better understanding why Bonnet-Myers
sharpness with respect to Bakry-{\'E}mery $\infty$-curvature is much
more restrictive: the only graphs with this property are the
hypercubes (see \cite{LMP2}). This result classifies all $D$-regular
graphs of diameter $L$ satisfying the condition
$$ 
\inf_{x \in V}{\mathcal K}^{\rm n}_{G,x}(\infty) = \frac{2}{L}, 
$$
which is much stronger than the equality condition of \eqref{eq:BM-BakryEm},
that is
\begin{equation} \label{eq:BM-BakryEmeq}
\inf_{x \in V}{\mathcal K}^{\rm n}_{G,x}(\infty) = \frac{1}{D} + \frac{1}{L},
\end{equation}
since $L \le D$, by Theorem \ref{thm:DL_rel}. By considering the
weaker condition \eqref{eq:BM-BakryEmeq} we encounter other graphs
like the ones given in the list of Theorem \ref{thm:main}. It is an
open question whether these graphs and suitable Cartesian products of
them are the only self-centered examples of $D$-regular graphs of diameter
$L$ satisfying \eqref{eq:BM-BakryEmeq}.

Moreover, we are not aware of any $D$-regular graph $G$ of diameter
$L$ which violates the condition \eqref{eq:BM-BakryEm}. This led us to
formulate the following conjecture:

\begin{conjecture} \label{conj:BM}
  Let $G=(V,E)$ be a finite connected $D$-regular graph of diameter $L$.
  Then we have
  $$ \inf_{x \in V} {\mathcal K}^{\rm n}_{G,x}(\infty) \le 
  \frac{1}{D} + \frac{1}{L}. $$ 
\end{conjecture}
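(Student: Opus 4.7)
The plan is to reduce the conjecture to a quantitative Bonnet-Myers type diameter estimate for normalized Bakry-\'Emery $\infty$-curvature. Concretely, I would attempt to show that if $G = (V,E)$ is a finite connected $D$-regular graph with $\mathcal{K}^{\rm n}_{G,x}(\infty) \ge K$ at every $x \in V$, then either $K \le 1/D$ (in which case the conjecture is trivial), or the diameter satisfies
\[
\mathrm{diam}(G) \;\le\; \frac{1}{\,K - 1/D\,}.
\]
Inverting this bound yields $K \le 1/D + 1/L$. The shape of the estimate --- linear in $1/L$ with a $1/D$ offset encoding the ``unavoidable'' local curvature contribution of a regular graph --- is the signature feature; it is not a direct analogue of smooth Bonnet-Myers and must be extracted from genuinely discrete structure, as suggested by the equality cases produced in Theorem \ref{thm:aBM-BEcurv}.

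My first approach would be via the variational description
\[
\mathcal{K}^{\rm n}_{G,x}(\infty) \;=\; \inf_{\Gamma f(x) > 0} \frac{\Gamma_2(f,f)(x)}{\Gamma(f,f)(x)}.
\]
Fix a diametral pair $x_0, y_0$ with $d(x_0,y_0)=L$, a shortest path $x_0 = z_0, z_1, \ldots, z_L = y_0$, and choose a test function that is affine along the geodesic (for instance $f(z_i) = i$) extended to $V$ in a controlled way (e.g.\ via $f(v) = d(v,x_0)$ or via a $1$-Lipschitz potential saturating Kantorovich duality at the diametral pair). The Bakry-\'Emery identity $2\Gamma_2 f = \Delta \Gamma f - 2 \Gamma(f, \Delta f)$ at an interior vertex $z_i$ should produce $\Gamma f(z_i)$ of order $1$ and $\Gamma_2 f(z_i)$ of order $1/D + 1/L$, from which the bound would follow by evaluating the ratio. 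An alternative semigroup route iterates the gradient estimate $\Gamma(P_t f) \le e^{-2Kt} P_t \Gamma f$ along the geodesic, with each discrete time step contributing a factor involving $1/D$ from the local normalized Laplacian.

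The principal obstacle is controlling the $\Gamma_2$-contribution of vertices \emph{off} the geodesic, where the local neighborhood and second-neighbor structure are essentially arbitrary. Standard Bonnet-Myers arguments for $\infty$-curvature tend to yield diameter bounds that are only quadratic in $1/\sqrt{K}$, and extracting a sharp linear-in-$1/L$ estimate requires exploiting something specifically discrete. A possible workaround is to route through the Lichnerowicz inequality $\mathcal{K}^{\rm n}_{G,x}(\infty) \le \lambda_1(-\Delta_G)$ and then prove the spectral-diameter bound $\lambda_1(-\Delta_G) \le 1/D + 1/L$ for the normalized Laplacian on any $D$-regular graph of diameter $L$. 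This latter statement is itself a nontrivial spectral graph theory question, but it isolates the difficulty to a purely spectral one and is consistent with the known sharp examples in Theorem \ref{thm:main}, all of which already saturate $\lambda_1 = 1/D + 1/L$ by virtue of Theorem \ref{thm:lichn} and Theorem \ref{thm:aBM-BEcurv}.
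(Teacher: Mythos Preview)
The statement you are attempting to prove is Conjecture~\ref{conj:BM}; the paper does \emph{not} prove it. What the paper provides is the weaker inequality~\eqref{eq:almostconj} with the extra triangle term, together with a list of special families (triangle-free graphs, strongly regular graphs, Johnson graphs, demi-cubes, Cartesian products, and anything with $D \le L$) for which the conjecture can be verified directly. So there is no ``paper's own proof'' to compare against, and your proposal should be read as an attack on an open problem.

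Your alternative route via Lichnerowicz fails outright. The inequality $\inf_x \mathcal{K}^{\rm n}_{G,x}(\infty) \le \lambda_1$ is fine, but the claimed spectral bound $\lambda_1(-\Delta_G) \le \tfrac{1}{D}+\tfrac{1}{L}$ is false: for the complete bipartite graph $K_{n,n}$ one has $D=n$, $L=2$, and $\lambda_1 = 1$, which exceeds $\tfrac{1}{n}+\tfrac{1}{2}$ for all $n \ge 3$. (The conjecture itself still holds for $K_{n,n}$, since it is triangle-free and \eqref{upperbound} gives $\mathcal{K}^{\rm n}_{G,x}(\infty) \le \tfrac{2}{n}$; the point is that the slack is on the curvature side, not the spectral side.) Relatedly, your claim that the sharp examples in Theorem~\ref{thm:main} satisfy $\lambda_1 = \tfrac{1}{D}+\tfrac{1}{L}$ is incorrect: Theorem~\ref{thm:lichn} gives $\lambda_1 = \tfrac{2}{L}$ for these graphs, which equals $\tfrac{1}{D}+\tfrac{1}{L}$ only when $D=L$, i.e.\ only for hypercubes. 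For $CP(n)$ with $n \ge 3$ one has $\lambda_1 = 1 > \tfrac{1}{2(n-1)}+\tfrac{1}{2}$.

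Your primary approach --- testing $\Gamma_2/\Gamma$ on a distance-type function along a diametral geodesic --- is not yet a proof sketch, because you have not indicated how to control the off-geodesic contributions that you yourself flag as the obstacle. The known Bonnet-Myers bound for Bakry-\'Emery $\infty$-curvature is $\inf_x \mathcal{K}^{\rm n}_{G,x}(\infty) \le \tfrac{2}{L}$ (see \eqref{eq:BM_BE}), and improving this to $\tfrac{1}{D}+\tfrac{1}{L}$ in the regime $L < D$ is exactly what the conjecture asks. Nothing in your outline explains where the additional $\tfrac{1}{D}$ would come from beyond the heuristic that normalized quantities carry factors of $1/D$.
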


Let us compare this conjecture with the combinatorial Bonnet-Myers
Theorem for normalized Bakry-{\'E}mery $\infty$-curvature proved in
\cite{LMP}, namely,
\begin{equation} \label{eq:BM_BE}
\inf_{x \in V} {\mathcal K}^{\rm n}_{G,x}(\infty) \le \frac{2}{L}. 
\end{equation}
Our conjecture can be viewed as a strengthening of \eqref{eq:BM_BE} in
the case $L \le D$.

\subsection{A combinatorial result}
\label{subsec:comb_res}

For readers interested in combinatorial graph theory, this
subsection provides a combinatorial reformulation of Theorem
\ref{thm_spherical}. The combinatorial version is given in Theorem
\ref{thm:main_comb} below and we think that it is of own interest.

We start with a combinatorial
property closely related to Ollivier Ricci curvature (see Proposition
\ref{prop:curvcalc0} below for the connection).

\begin{definition} \label{def:Lambda}
  Let $G=(V,E)$ be a regular graph. We say $G$ \emph{satisfies
  $\Lambda(m)$} at an edge $e = \{x,y\} \in E$ if the following holds:
  \begin{itemize}
  \item[(i)] $e$ is contained in at least $m$ triangles and
  \item[(ii)] there is a perfect matching between the neighbours of $x$ and
    the neighbours of $y$ not involved in these triangles.
  \end{itemize}
  We say that $G$ satisfies $\Lambda(m)$ if it satisfies $\Lambda(m)$
  at each edge.
\end{definition}

Next, we introduce the notions of antipodal and
strongly spherical graphs, which are based on intervals $[x,y]$ which are the set of all vertices lying on geodesics from $x$ to $y$ (see Subsection \ref{sec:graph_notation} for the precise definition of intervals). 

\begin{definition} \label{def:strspher}
  A graph $G = (V,E)$ is called \emph{antipodal} if, for
  every vertex $x \in V$, there exists another vertex
  $\overline{x} \in V$ satisfying $[x,\overline{x}]=V$.
  
  A graph $G = (V,E)$ is called \emph{strongly spherical} if $G$ and the induced subgraphs of all its intervals are antipodal. That is, the following two properties are necessary and sufficient conditions for strongly spherical graphs
  $G = (V,E)$:
  \begin{itemize}
  \item For every $x \in V$ there exists $\overline{x} \in V$ such
    that $ G = [x,\overline{x}]$.
  \item For every pair $x,y \in V$, $x \neq y$, and every
    $z \in [x,y]$ there exists $\overline{z} \in [x,y]$ such that
    $[x,y] = [z,\overline{z}]$.
  \end{itemize}
\end{definition}
  


To reformulate Theorem \ref{thm_spherical} in purely combinatorial
terms we need the following result which allows us to replace the
curvature condition by a combinatorial condition.

\begin{proposition} \label{prop:antBMcomb}
  Let $G$ be a $D$-regular finite connected graph of diameter $L$. The
  following are equivalent:
  \begin{itemize}
  \item $G$ is self-centered Bonnet-Myers sharp.
  \item $G$ is self-centered and satisfies $\Lambda\left(\frac{2D}{L}-2\right)$.
  \end{itemize}
Moreover, if any of these equivalent properties holds, then every edge of $G$ lies in precisely $\frac{2D}{L}-2$ triangles.
\end{proposition}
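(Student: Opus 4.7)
The proof naturally splits into two implications, and the bulk of the work lies in the forward direction.

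For the backward direction, suppose $G$ is self-centered and satisfies $\Lambda(2D/L-2)$ at every edge. Applying the general curvature formula in Proposition \ref{prop:curvcalc0} with $m=2D/L-2$ (of which Proposition \ref{prop:curvcalc} is the equality case) yields $\kappa(x,y) \ge 2/L$ for every edge $\{x,y\}\in E$. Theorem \ref{thm:DBM} supplies the reverse inequality $\inf_{x\sim y}\kappa(x,y)\le 2/L$. Combining the two gives $\inf_{x\sim y}\kappa(x,y)= 2/L$, so $G$ is $(D,L)$-Bonnet-Myers sharp.

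For the forward direction, assume $G$ is self-centered and $(D,L)$-Bonnet-Myers sharp. The central claim is that in fact $\kappa(x,y)=2/L$ at \emph{every} edge, not merely at the minimizing ones. I would establish this in two steps. First, I would show that on any diametrical geodesic $v_0\sim v_1\sim\cdots\sim v_L$ of length $L$, every edge $\{v_i,v_{i+1}\}$ satisfies $\kappa(v_i,v_{i+1})=2/L$: the standard chaining argument behind Theorem \ref{thm:DBM} estimates $L=d(v_0,v_L)$ by a telescoping sum controlled by the curvatures $\kappa(v_i,v_{i+1})$, and Bonnet-Myers sharpness combined with $\inf\kappa=2/L$ forces every link in this chain to be tight. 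Second, I would use self-centeredness to show that every edge $\{x,y\}$ lies on some such diametrical geodesic: picking an antipode $\bar x$ of $x$, one has $d(y,\bar x)\in\{L-1,L\}$; if $d(y,\bar x)=L-1$ then $x,y$ followed by a geodesic from $y$ to $\bar x$ gives the desired length-$L$ path, while the case $d(y,\bar x)=L$ (where $\bar x$ is also an antipode of $y$) is handled by choosing an antipode $\bar y$ of $y$ and considering the symmetric situation, or by a refined choice of antipode using the interval machinery from Definition \ref{def:strspher}. Once $\kappa(x,y)=2/L$ is established at every edge, the equality case of Proposition \ref{prop:curvcalc0} forces $\{x,y\}$ to lie in at least $2D/L-2$ triangles together with the required perfect matching on the remaining neighbours — this is exactly $\Lambda(2D/L-2)$. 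Moreover, the same proposition shows that the curvature is a strictly increasing function of the triangle count when the matching condition is present, so the equality $\kappa(x,y)=2/L$ forces the triangle count to equal $2D/L-2$ exactly, yielding the ``moreover'' clause.

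The main obstacle is the second step above: verifying that every edge of a self-centered graph extends to a diametrical geodesic. The case $d(y,\bar x)=L$ is delicate since a single antipode of $x$ need not interact well with $y$, and a systematic treatment likely requires either multiple antipodes or the antipodal-interval structure established in Sections \ref{sec:transpgeod}--\ref{sec:antBMstrspher}. A secondary technical point is extracting the precise functional dependence of $\kappa(x,y)$ on the triangle count from Proposition \ref{prop:curvcalc0}, so as to pin down ``exactly $2D/L-2$ triangles'' from the equality $\kappa(x,y)=2/L$.
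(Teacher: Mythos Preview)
Your backward direction is correct and matches the paper's argument.

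Your forward direction has a genuine gap in the final step. Knowing $\kappa(x,y)=2/L$ at an edge does \emph{not}, via Proposition~\ref{prop:curvcalc0} alone, pin down the triangle count or the matching. That proposition only gives the upper bound $\kappa(x,y)\le (2+\#_\Delta(x,y))/D$, with equality \emph{iff} the matching exists. From $\kappa(x,y)=2/L$ you obtain $\#_\Delta(x,y)\ge 2D/L-2$, but nothing rules out $\#_\Delta(x,y)>2D/L-2$; in that case $\kappa(x,y)=2/L<(2+\#_\Delta(x,y))/D$ is a strict inequality, so Proposition~\ref{prop:curvcalc0} says \emph{no} matching exists, and $\Lambda(2D/L-2)$ would fail. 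Your argument for the ``moreover'' clause is circular: you invoke the matching to deduce the exact triangle count, but you needed the exact triangle count to deduce the matching.

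The paper closes this gap by a different route: Corollary~\ref{cor:numtriangle} (via Theorem~\ref{onespheredegree}) computes $d_x^0(y)=2D/L-2$ directly from the Laplacian eigenfunction relation $\Delta f+\tfrac{2}{L}f=0$ with $f=d(x,\cdot)-L/2$ (Theorem~\ref{Ftrick}). This gives the exact triangle count first; then, combining it with $\kappa(x,y)=2/L$, one is genuinely in the equality case of Proposition~\ref{prop:curvcalc0} and the matching follows.

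A smaller remark on your Step~2: the ``delicate'' case $d(y,\bar x)=L$ never occurs. Theorem~\ref{lieongeos} (which uses only the Section~\ref{sec:genfacts} machinery, not Sections~\ref{sec:transpgeod}--\ref{sec:antBMstrspher}) shows $[x,\bar x]=V$, so $y\in[x,\bar x]$ and hence $d(y,\bar x)=L-1$. Thus every edge incident to a pole lies on a diametrical geodesic, and by self-centeredness every vertex is a pole.
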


This proposition is a consequence of Corollary \ref{cor:numtriangle}
and Proposition \ref{prop:curvcalc0} later in the paper.

Using Proposition \ref{prop:antBMcomb}, Theorem \ref{thm_spherical}
translates then into the following equivalent combinatorial result:

\begin{theorem} \label{thm:main_comb}
  Let $G$ be a $D$-regular finite connected graph of diameter
  $L$. Assume that $G$ is self-centered and satisfies
  $\Lambda\left(\frac{2D}{L}-2\right)$. Then $G$ is strongly
  spherical.
\end{theorem}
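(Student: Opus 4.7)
The plan is to leverage the local matching data from $\Lambda\left(\frac{2D}{L}-2\right)$ to define a form of ``parallel transport'' along edges, and then combine this with self-centeredness to force the antipodality conditions that define strong sphericality.

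I would begin by unpacking the local structure. By the moreover clause of Proposition \ref{prop:antBMcomb}, every edge $\{x,y\}$ of $G$ lies in exactly $m := \frac{2D}{L} - 2$ triangles, so $\Lambda(m)$ supplies a canonical perfect matching $\phi_{xy}$ between the $D - 1 - m$ neighbors of $x$ not adjacent to $y$ and the $D - 1 - m$ neighbors of $y$ not adjacent to $x$. Each matched pair $(x', \phi_{xy}(x'))$ sits in an induced $4$-cycle $x\, x'\, \phi_{xy}(x')\, y$, since $x'$ being a non-triangle neighbor excludes it from $N(y)$, and similarly for $\phi_{xy}(x')$. Thus each edge of $G$ is canonically decorated with $m$ triangles and $D-1-m$ squares whose non-endpoint vertices partition the remaining neighbors.

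Next I would use this decoration to establish global antipodality. Fix $x \in V$ and an antipode $\overline{x}$ (which exists by self-centeredness). The goal is to show, by induction on $d(x, v)$, that every vertex $v$ satisfies $d(x,v) + d(v, \overline{x}) = L$, proving $V = [x, \overline{x}]$. For the inductive step, given $v$ with $d(x,v) = k$, pick a neighbor $u$ of $v$ with $d(x,u) = k-1$; then either the triangle structure at $\{u,v\}$ (if $v$ is a triangle partner of $u$ along some edge of a geodesic to $\overline{x}$) or the matching $\phi_{uv}$ transports a geodesic from $u$ to $\overline{x}$ of length $L-k+1$ into a geodesic from $v$ to $\overline{x}$ of length $L-k$. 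The exact value $m = \frac{2D}{L}-2$ is what makes the underlying counting feasible.

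For the second strongly spherical condition, I would restrict the same argument to intervals. Given $x \neq y$ and $z \in [x, y]$, the triangles and $4$-cycles incident to any edge inside $[x, y]$ remain inside $[x, y]$ (their vertices still lie on $x$-to-$y$ geodesics), so the local matching data descends to the induced subgraph on $[x, y]$. Running the transport argument within $[x, y]$, starting from $z$ along an $x$-to-$y$ geodesic through $z$, produces $\overline{z} \in [x, y]$ with $d(z, \overline{z}) = d(x, y)$ and $[z, \overline{z}] = [x, y]$.

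The main obstacle will be making the transport step rigorous: one has to show that the matchings $\phi_{xy}$ at consecutive edges of a geodesic fit together consistently so that the transport is well-defined and preserves distances to both $x$ and $\overline{x}$. This reduces to a detailed combinatorial analysis of how the $m$ triangles and the $D-1-m$ squares at two incident edges interact. The precise value $m = \frac{2D}{L}-2$, which corresponds to the extremal configuration realizing $\kappa = 2/L$ in Proposition \ref{prop:curvcalc}, is exactly what forces the required counting identities to hold and simultaneously ensures that descending to intervals preserves the matching structure. Once compatibility of transport is established, both antipodality of $G$ and antipodality of every interval follow by the same balance argument.
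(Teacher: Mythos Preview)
Your proposal is a plan rather than a proof, and the plan has a genuine gap at the step you flag only in passing. You assert that ``the triangles and $4$-cycles incident to any edge inside $[x,y]$ remain inside $[x,y]$ (their vertices still lie on $x$-to-$y$ geodesics), so the local matching data descends to the induced subgraph on $[x,y]$.'' This is not automatic and is essentially the heart of the matter: for an edge $\{u,v\}\subset[x,y]$, a common neighbour $w$ of $u$ and $v$, or the matched vertex $\phi_{uv}(x')$, need not lie on any $x$--$y$ geodesic. Without this, your transport argument cannot even get started inside an interval, and your global antipodality step (``transporting a geodesic from $u$ to $\overline{x}$ into one from $v$ to $\overline{x}$'') is equally unsupported. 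You also call the matching $\phi_{xy}$ ``canonical,'' but $\Lambda(m)$ only asserts existence of \emph{some} perfect matching; uniqueness is not given and in the paper is only obtained \emph{a posteriori}, after the $\mu$-graphs are shown to be cocktail party graphs.

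By contrast, the paper first uses Proposition~\ref{prop:antBMcomb} to convert the hypotheses into self-centered Bonnet--Myers sharpness, which unlocks Ollivier Ricci curvature tools. Global antipodality $V=[x,\overline{x}]$ is proved via a Laplacian comparison (Theorems~\ref{Ftrick} and~\ref{lieongeos}), not by transport. The transport maps $T_j$ are then concatenated along a \emph{full-length} geodesic, and the fact that the concatenation is a geodesic is forced by a Wasserstein cost equality (Propositions~\ref{prop:transgeod} and~\ref{prop:transgeod_BMsharp}), not by local compatibility of matchings. Antipoles inside intervals are then identified via these transport geodesics (Theorem~\ref{thm:uniq_ant}), and interval antipodality is obtained by a delicate four-step induction (Section~\ref{sec:antBMstrspher}) that never relies on the local triangle/square data descending to intervals. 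Your proposal would need a substitute for the cost argument and for the Laplacian step; as written it does not supply one.
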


It is an interesting question whether the condition of self-centeredness in
Theorem \ref{thm:main_comb} can be removed. If this were possible,
we could view this result as another example where \emph{local}
properties have a strong \emph{global} implication.

Finally, we would like to present the following classification of strongly
spherical graphs.

\begin{theorem}[Classification of strongly spherical graphs, see \cite{Koo}]
  \label{thm:strongspher_class}
  Strongly spherical graphs are precisely the Cartesian products
  $G_1 \times G_2 \times \cdots \times G_k$, where each factor $G_i$ is
  either a hypercube, a cocktail party graph, a Johnson graph
  $J(2n,n)$, an even dimensional demi-cube, or the Gosset graph.
\end{theorem}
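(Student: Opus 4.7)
The plan has two directions. For the verification (``if'') direction, I would show that each graph in the given list is strongly spherical and that the Cartesian product of strongly spherical graphs is again strongly spherical. The latter reduces to the observation that any interval $[(x_1,x_2),(y_1,y_2)]$ in $G_1 \times G_2$ equals $[x_1,y_1] \times [x_2,y_2]$, so that coordinate-wise antipodes in the factor intervals give an antipode in the product interval. For each individual family one then exhibits the antipode explicitly: in $Q^n$ every interval is a sub-hypercube with coordinate-flipping as the antipode; in $CP(n)$ proper intervals are short enough for direct inspection; in $J(2n,n)$ intervals factor as products of smaller Johnson and cocktail-party pieces; and in the even-dimensional demi-cubes and the Gosset graph vertex- and distance-transitivity reduce the verification to finitely many cases.

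For the classification (``only if'') direction, suppose $G$ is strongly spherical. My approach splits into three stages.

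Stage 1 (regularity): From the fact that every interval is antipodal, extract strong metric regularity. The identity $[x,y]=[z,\bar z]$ for every $z \in [x,y]$ forces $G$ to be interval-regular in the sense of Mulder, and combined with the global antipodal condition this upgrades to distance-regularity with a tightly constrained intersection array. In particular one obtains uniform sizes of intervals of a fixed diameter and uniform numbers of geodesics between antipodal pairs.

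Stage 2 (Cartesian decomposition): Introduce an equivalence relation on the edges induced by the interval structure --- roughly, two edges are declared equivalent if they are opposite sides of a common $4$-cycle lying in some interval, and the relation is then propagated transitively. Show that this relation partitions the edges into classes realising a Cartesian factorisation $G = G_1 \times \cdots \times G_k$ into irreducible strongly spherical factors, in the spirit of the canonical Cartesian decomposition for partial cubes and median graphs, with strong sphericity replacing the median axiom.

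Stage 3 (classification of irreducible factors): By Stage 1 each irreducible factor is a distance-regular antipodal graph, all of whose intervals are antipodal, and which admits no further Cartesian splitting. Using the intersection array constraints together with the standard feasibility tools of algebraic combinatorics (eigenvalue multiplicities, Krein conditions, local structure at a vertex), match each irreducible factor to one of $Q^1$, $CP(n)$, $J(2n,n)$, $Q^{2n}_{(2)}$, or the Gosset graph. The main obstacle is precisely this step: excluding exotic distance-regular candidates and recognising the Gosset graph from its parameters is the hard combinatorial work carried out in \cite{Koo}. Stages~1 and~2, while technical, follow reasonably standard patterns once the strong sphericity axioms are unpacked.
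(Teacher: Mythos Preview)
The paper does not prove this theorem at all: it is quoted verbatim from \cite{Koo} and used as a black box in the proof of Theorem~\ref{thm:main}. There is therefore no proof in the paper to compare your proposal against.

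Your outline is a plausible high-level strategy, and you correctly flag that the decisive work (Stage~3, and to some extent Stage~2) is exactly the content of \cite{Koo}. But as written it is a plan, not a proof: Stage~1's claim that strong sphericity forces distance-regularity is asserted without argument, and Stage~2's edge relation and the assertion that it yields a Cartesian factorisation into strongly spherical irreducibles would each require substantial justification. If you intend to actually prove the theorem rather than cite it, you would need to carry out the arguments of \cite{Koo} (or an alternative); if you only intend to use the result, then a citation is all the paper provides and all that is needed here.
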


\subsection{Outline of the paper}
\label{subsec:outline}

Here is an overview about the content of the following sections:
\begin{itemize}
\item Section \ref{sec:defn}: Basic graph theoretical notation and introduction
  into Ollivier Ricci curvature.
\item Section \ref{sec:cart}: Cartesian products preserve Bonnet-Myers sharpness
  under particular conditions.
\item Section \ref{sec:examples}: Presentation of all known examples of Bonnet-Myers
  sharp graphs.
\item Section \ref{sec:genfacts}: Discussion of various consequences of a useful
  relation between the Laplacian and Ollivier Ricci curvature. For
  example, all vertices of a Bonnet-Myers sharp graph lie on geodesics
  between any pair of antipoles.
\item Section \ref{sec:curvandeig}: Proof that Bonnet-Myers sharp graphs are Lichnerowicz
  sharp and classification of all distance-regular Lichnerowicz sharp
  graphs under an additional spectral condition.
\item Sections \ref{sec:transpgeod} and \ref{sec:antBMstrspher}: Proof of our main result: Classification of all
  self-centered Bonnet-Myers sharp graphs.
\item Section \ref{sec:BMextrdiam}: Classification of all Bonnet-Myers sharp graphs with
  extremal diameters $L=2$ and $L=D$.
\item Section \ref{sec:BM-BE-curvature}: Relations between Bonnet-Myers sharpness and an
  upper bound on the Bakry-\'Emery $\infty$-curvature. 
\end{itemize}

\section{Basic definitions, concepts and notation}
\label{sec:defn}

Throughout this paper, we restrict our graphs $G=(V,E)$ to be
undirected, simple, unweighted, finite, and connected. 

\subsection{Graph theoretical notation} \label{sec:graph_notation}
We write $x \sim y$ if there exists an edge between the vertices $x$ and $y$. The degree of a vertex $x \in V$ is denoted by $d_x$.
For a set of vertices $A\subseteq V$, the induced subgraph ${\rm Ind}_G(A)$ is the subgraph of $G$ whose vertex set is $A$ and whose edge set consists of all edges in $G$ that have both endpoints in $A$. 

For any two vertices $x,y\in V$, the (combinatorial) distance $d(x,y)$ is the length (i.e. the number of edges) in a shortest path from $x$ to $y$. Such paths of minimal length are also called {\it geodesics} from $x$ to $y$. An \emph{interval} $[x,y]$ is the set of all vertices lying on geodesics from $x$ to $y$, that is
$$ [x,y] = \{ z \in V \mid d(x,z) + d(z,y) = d(x,y) \}. $$

The {\it diameter} of $G$ is denoted by ${\rm diam}(G)= \max_{x,y\in V} d(x,y)$. A vertex $x\in V$ is called a {\it pole} if there exists a vertex $y\in V$ such that $d(x,y)={\rm diam}(G)$, in which case $y$ will be called an {\it antipole} of $x$ (with respect to $G$). A graph $G$ is called {\it self-centered} if every vertex is a pole. 

For $k\in\mathbb{N}$ and $x\in V$ we define the {\it $k$-sphere} of
$x$ as $S_k(x) = \{z : d(z,x) = k\}$ and the {\it $k$-ball} of $x$ as
$B_k(x) = \{z: d(z,x) \le k\}$. In particular, $S_1(x)$ is also
denoted as $N_x$, the set of all neighbors of $x$. Denote also
$N_{xy}=S_1(x)\cap S_1(y)$, the common neighbors of $x$ and
$y$. Especially when $d(x,y)=2$, the induced subgraph
${\rm Ind}_G (N_{xy})$ is called $\mu$-graph of $x$ and $y$. In case
that $\mu$-graphs are isomorphic to a graph $H$ for all $x,y$ with
$d(x,y)=2$, we call the graph $H$ the {\it $\mu$-graphs} of $G$.

For a vertex $x\in V$, let $\#_{\Delta}(x)$ denote the number of
triangles containing $x$. Similarly, for an edge $e = \{x,y\} \in E$,
let $\#_{\Delta}(x,y)$ denote the number of triangles containing
$e$. We have the following relation:
\begin{equation} \label{eq:triangles}
\#_{\Delta}(x) = \frac{1}{2} \sum_{y: y \sim x} \#_{\Delta}(x,y). 
\end{equation}

For $x,y\in V$, we also define
\begin{align*}
d_{x}^{-}(y) &= |\{z\sim y : d(x,y) = d(x,z) + 1\}|,\\
d_{x}^{0}(y) &= |\{z\sim y : d(x,y) = d(x,z) \}|,\\
d_{x}^{+}(y) &= |\{z\sim y : d(x,y) = d(x,z) - 1\}|,
\end{align*}
which we call the {\it in degree, spherical degree, out degree} of
$y$, respectively. The averages of these degrees are then defined as:
\begin{align} \label{eq:averdeg}
av_k^-(x) = \frac{1}{|S_k(x)|}\sum_{y \in S_k(x)}d_{x}^{-}(y),\nonumber\\ 
av_k^0(x) = \frac{1}{|S_k(x)|}\sum_{y \in S_k(x)}d_{x}^{0}(y),\\ av_k^+(x) = \frac{1}{|S_k(x)|}\sum_{y \in S_k(x)}d_{x}^{+}(y). \nonumber
\end{align}

By abuse of notation, we sometimes identify $S_k(x)$ and $B_k(x)$ with the induced subgraphs ${\rm Ind}_G(S_k(x))$ and ${\rm Ind}_G(B_k(x))$,
respectively.

\subsection{Ollivier Ricci curvature and Kantorovich duality}
\label{sec:OllivKant}

A fundamental concept in Optimal Transport Theory is the Wasserstein
distance defined on probability measures.

\begin{definition}
  Let $G = (V,E)$ be a graph. Let $\mu_{1},\mu_{2}$ be two probability
  measures on $V$. The \emph{Wasserstein distance}
  $W_1(\mu_{1},\mu_{2})$ between $\mu_{1}$ and $\mu_{2}$ is defined as
  \begin{equation} \label{eq:W1def} 
    W_1(\mu_{1},\mu_{2}):=\inf_{\pi \in \Pi(\mu_1,\mu_2)} {\rm cost}(\pi),
  \end{equation}
  where $\pi$ runs over all transport plans in
  \[
    \Pi(\mu_1,\mu_2) = \left\{ \pi: V \times V \to [0,1] :
      \mu_{1}(x)=\sum_{y\in V}\pi(x,y), \; \mu_{2}(y)=\sum_{x\in
        V}\pi(x,y) \right\}.
  \]
  and the \emph{cost} of $\pi$ is defined as
  $$ {\rm cost}(\pi) := \sum_{x \in V} \sum_{y \in V} d(x,y)
  \pi(x,y). $$
\end{definition}

The transportation plan $\pi$ in the above definition moves a mass
distribution given by $\mu_1$ into a mass distribution given by
$\mu_2$, and $W_1(\mu_1,\mu_2)$ is a measure for the minimal effort
which is required for such a transition. If $\pi$ attains the infimum
in \eqref{eq:W1def} we call it an {\it optimal transport plan}
transporting $\mu_{1}$ to $\mu_{2}$. We define the following
probability measures $\mu_x^p$ for any $x\in V,\: p\in[0,1]$:
$$ \mu_x^p(z) = \begin{cases} p & \text{if $z = x$,} \\
  \frac{1-p}{d_x} & \text{if $z\sim x$,} \\
  0 & \mbox{otherwise.} \end{cases} $$ 
In this paper we will pay particular attention to $D$-regular graphs
and the idleness parameter $p = \frac{1}{D+1}$ and write $\mu_{x}$ for
$\mu^{\frac{1}{D+1}}_{x}$, for simplicity.

\begin{definition}[\cite{Ol09}]
  Let $p\in [0,1]$. The \emph{$p$-Ollivier Ricci curvature} between two
  different vertices $x,y \in V$ is
  $$ \kappa_{ p}(x,y) = 1 - \frac{W_1(\mu^{ p}_x,\mu^{ p}_y)}{d(x,y)}, $$
  where $p$ is called the {\it idleness}.

  If $G$ is $D$-regular then we define the curvature, $\kappa$, as
  \begin{equation} \label{kpklly} 
    \kappa(x,y) = \frac{D+1}{D}\kappa_{\frac{1}{D+1}}(x,y).
  \end{equation}
\end{definition}

\begin{rem}
  The motivation for \eqref{kpklly} is that $\kappa(x,y)$ agrees with
  the modified curvature $\kappa_{LLY}(x,y)$ introduced by Lin/Lu/Yau
  in \cite{LLY11} for neighbours $x \sim y$, due to \cite[Theorem
  1.1]{Idle}. That is, we have for neighbours $x \sim y$ in
  $D$-regular graphs,
  \begin{equation} \label{eq:kappaLLY} \kappa(x,y) =
    \frac{\kappa_{\frac{1}{D+1}}(x,y)}{1-\frac{1}{D+1}} = \lim_{p \to 1}
    \frac{\kappa_p(x,y)}{1-p} =: \kappa_{LLY}(x,y).
  \end{equation}
\end{rem}

A straightforward consequence of the fact that Ollivier Ricci curvature
is defined via a distance function (Wasserstein distance) is the following:
\begin{equation} \label{eq:infeqinf}
\inf_{x \sim y} \kappa(x,y) \le \inf_{z \neq w} \kappa(z,w), 
\end{equation}
namely that it makes no difference to take the infimum of the curvature
over all pairs of different vertices or only over neighbours. Moreover,
the Discrete Bonnet-Myers Theorem \ref{thm:DBM} follows directly from \eqref{eq:infeqinf}
and the following stronger inequality for any pair of different vertices $z,w \in V$ (see \cite{Ol09}):
\begin{equation} \label{eq:kapzw_ineq}
\kappa(z,w) \le \frac{2}{d(z,w)},
\end{equation}
by choosing a pair of vertices $z,w \in V$ of maximal distance, that
is, $d(z,w) = {\rm diam}(G)$.

Another fundamental concept in Optimal Transport Theory is Kantorovich
duality. First we recall the notion of 1--Lipschitz functions and then
state the Kantorovich Duality Theorem.

\begin{definition}
  Let $G=(V,E)$ be a graph and $\phi: V \rightarrow \IR$. We say that
  $\phi$ is \emph{$1$-Lipschitz} if
  $$ |\phi(x) - \phi(y)| \leq d(x,y) \quad \text{for all $x,y\in V$.} $$
  We denote the set of all $1$--Lipschitz functions by
  $\textrm{\rm{1}--{\rm Lip}}(V)$.
\end{definition}

For an arbitrary graph $G = (V,E)$, we denote by $\ell_\infty(V)$ the
space of all bounded function $f: V \to \IR$ and by $C_c(V)$ the
subspace of all functions with finite support.

\begin{theorem}[Kantorovich duality \cite{Vill03}] \label{Kantorovich}
  Let $G = (V,E)$ be a graph. Let $\mu_{1},\mu_{2}$ be
  two probability measures on $V$. Then
  $$ W_1(\mu_{1},\mu_{2}) = 
  \sup_{\phi \in \textrm{\rm{1}--{\rm Lip}}(V)\, \cap\, \ell_\infty(V)}   
  \sum_{x\in V}\phi(x)(\mu_{1}(x)-\mu_{2}(x)).
  $$
  If $\phi$ attains the supremum we
  call it an \emph{optimal Kantorovich potential} transporting
  $\mu_{1}$ to $\mu_{2}$.
\end{theorem}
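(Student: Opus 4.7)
The plan is to derive Kantorovich duality from strong linear programming duality applied to the finite-dimensional LP underlying $W_1$, followed by a $c$-transform symmetrization that exploits the metric structure of $d$. Under the standing assumption that $V$ is finite, the definition \eqref{eq:W1def} is a linear program in the variables $\pi(x,y) \ge 0$ with equality constraints $\sum_y \pi(x,y) = \mu_1(x)$ and $\sum_x \pi(x,y) = \mu_2(y)$ and objective $\sum_{x,y} d(x,y)\pi(x,y)$. The primal is feasible (via the product $\pi(x,y) = \mu_1(x)\mu_2(y)$) and bounded below by $0$, so strong LP duality applies. Introducing Lagrange multipliers $\phi(x), \psi(y) \in \IR$ for the two marginal constraints, the dual problem is to maximize $\sum_x \phi(x)\mu_1(x) + \sum_y \psi(y)\mu_2(y)$ subject to $\phi(x) + \psi(y) \le d(x,y)$ for every pair $x,y \in V$, and it attains the same optimal value as the primal.

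Next I would reduce this two-function supremum to a one-function supremum over $1$-Lipschitz test functions via a $c$-transform. Given any feasible pair $(\phi, \psi)$, define $\tilde\phi(x) := \min_{y \in V}(d(x,y) - \psi(y))$. By the triangle inequality $\tilde\phi$ is $1$-Lipschitz, and the feasibility $\phi(x) \le d(x,y) - \psi(y)$ for all $y$ gives $\tilde\phi \ge \phi$ pointwise. Evaluating the defining minimum at $y' = y$ shows $\tilde\phi(y) \le -\psi(y)$, hence $-\tilde\phi \ge \psi$ pointwise as well. Finally, $1$-Lipschitzness of $\tilde\phi$ together with $d(y,y)=0$ yields $\min_x(d(x,y) - \tilde\phi(x)) = -\tilde\phi(y)$, attained at $x=y$, so $(\tilde\phi, -\tilde\phi)$ is feasible and its dual objective dominates that of $(\phi, \psi)$. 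The dual optimum is therefore
$$ \sup_{\tilde\phi \in \textrm{1-Lip}(V)} \sum_{x \in V} \tilde\phi(x)\bigl(\mu_1(x) - \mu_2(x)\bigr), $$
and since $V$ is finite, boundedness of $\tilde\phi$ is automatic, so the $\ell_\infty(V)$ condition is vacuous. Combining the two steps yields the claimed identity.

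The main obstacle is mostly bookkeeping: keeping sign conventions consistent when passing from primal to dual, and carrying out the $c$-transform reduction cleanly so that the dual constraint becomes tight precisely when $\psi = -\tilde\phi$. Attainment of the supremum by an \emph{optimal Kantorovich potential} then follows from compactness: after normalizing $\tilde\phi(x_0) = 0$ at a fixed base point, the sup is taken over a bounded closed subset of a finite-dimensional vector space, and the objective is continuous. For infinite $V$ one would need an approximation or weak-$\ast$ compactness argument, but this is not needed under the paper's standing finiteness assumption.
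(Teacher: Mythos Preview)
The paper does not prove this theorem at all: it is stated as a known result with a citation to Villani's book, so there is no ``paper's own proof'' to compare against. Your argument is the standard and correct derivation for finite $V$: strong LP duality for the transport program, followed by the $c$-transform reduction $(\phi,\psi)\mapsto(\tilde\phi,-\tilde\phi)$ to collapse the two-potential dual into a supremum over $1$-Lipschitz functions, with attainment coming from finite-dimensional compactness after pinning a base value. All the steps you outline (feasibility of $\mu_1\otimes\mu_2$, the inequalities $\tilde\phi\ge\phi$ and $-\tilde\phi\ge\psi$, and feasibility of $(\tilde\phi,-\tilde\phi)$ via the triangle inequality) are correct, so the proposal is sound.
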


The following fact from \cite[Theorem 2.1]{MW17} is at the heart of
all the results presented in Section \ref{sec:genfacts}. Moreover, it
provides an alternative definition of Ollivier Ricci curvature as
a notion induced by a given Laplace operator. This alternative
viewpoint allows a more general definition of Ollivier Ricci curvature
for various kinds of Laplacians.

\begin{theorem}[see \cite{MW17}]
  Let $G=(V,E)$ be a connected graph and $f: V \to \IR$ be a function
  on the vertices.  We define the \emph{gradient} of $f$ as
  $$ \nabla_{xy} f = \frac{f(x)-f(y)}{d(x,y)} \quad 
  \text{for all $x,y \in V$, $x \neq y$}. $$ 
  Let $\Delta_G$ be the normalized Laplacian of $G$ defined in
  \eqref{eq:Deltanorm}. Then, for any pair $x,y \in V$ of different
  vertices, we have
  \begin{equation} \label{eq:kappally_alt} 
  \kappa_{LLY}(x,y) = 
  \inf_{\substack{\phi \in \textrm{\rm{1}--{\rm Lip}}(V)\, \cap\, C_c(V)\\ \nabla_{yx}\phi = 1}} 
  \nabla_{xy} (\Delta_G \phi), 
  \end{equation}
  where $\kappa_{LLY}$ was defined in \eqref{eq:kappaLLY}.
\end{theorem}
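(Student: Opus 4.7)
The plan is to apply Kantorovich duality (Theorem \ref{Kantorovich}) to $W_1(\mu_x^p,\mu_y^p)$, rewrite the dual pairing via the normalized Laplacian $\Delta_G$, and then pass to the limit $p\to 1^-$ in the definition \eqref{eq:kappaLLY} of $\kappa_{LLY}$. The first step is a one-line computation: since $\mu_x^p$ has mass $p$ at $x$ and mass $(1-p)/d_x$ at each neighbour of $x$, formula \eqref{eq:Deltanorm} gives
$$\sum_{z \in V}\phi(z)\mu_x^p(z) = p\phi(x) + \frac{1-p}{d_x}\sum_{z \sim x}\phi(z) = \phi(x) + (1-p)\Delta_G\phi(x)$$
for every $\phi \in C_c(V)$. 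Subtracting the analogous identity at $y$, dividing by $d(x,y)$, applying Kantorovich duality, and replacing $\phi$ by $-\phi$ (which preserves the $1$-Lipschitz class), I obtain the intermediate formula
$$\kappa_p(x,y) = \inf_{\phi \in 1\text{-Lip}(V)\,\cap\, C_c(V)} \bigl[(1 - \nabla_{xy}\phi) - (1-p)\nabla_{xy}(\Delta_G\phi)\bigr].$$

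Next I divide by $(1-p)$ and let $p\to 1^-$. For $\phi$ with $\nabla_{xy}\phi = 1$ the bracket equals $-\nabla_{xy}(\Delta_G\phi) = \nabla_{yx}(\Delta_G\phi)$ independently of $p$, whereas for $\phi$ with $\nabla_{xy}\phi < 1$ the quantity $(1-\nabla_{xy}\phi)/(1-p)$ blows up to $+\infty$. If the limit can be exchanged with the infimum, this produces
$$\kappa_{LLY}(x,y) \;=\; \inf_{\phi \in 1\text{-Lip}(V)\,\cap\, C_c(V),\ \nabla_{xy}\phi = 1} \nabla_{yx}(\Delta_G\phi).$$
A final substitution $\phi \mapsto -\phi$ (which sends $\nabla_{xy}\phi = 1$ to $\nabla_{yx}\phi = 1$ and turns $\nabla_{yx}(\Delta_G\phi)$ into $\nabla_{xy}(\Delta_G\phi)$) then yields the formula \eqref{eq:kappally_alt} of the theorem.

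The main obstacle is justifying the exchange of $\lim_{p\to 1}$ and $\inf_\phi$, i.e.\ showing that the effective constraint $\nabla_{xy}\phi = 1$ already governs the infimum for $p$ close enough to $1$. The cleanest route is to invoke the idleness result of Bourne--Cushing--Liu--M\"unch--Peyerimhoff cited in \eqref{eq:kappaLLY}, which guarantees that the function $p\mapsto \kappa_p(x,y)/(1-p)$ is constant on some interval $[p_0,1)$. Fixing $p \in [p_0,1)$, a minimiser $\phi_p$ exists in the finite-graph dual problem, and if $\nabla_{xy}\phi_p = 1-\delta$ with $\delta>0$, then the bracket evaluated at $\phi_p$ contributes at least $\delta/(1-p) - \nabla_{xy}(\Delta_G\phi_p)$, which can be driven above $\kappa_{p_0}(x,y)/(1-p_0)$ by pushing $p$ closer to $1$ while keeping the constant value $\kappa_p(x,y)/(1-p)$ unchanged, a contradiction. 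This pins the infimum to the constraint surface $\{\nabla_{xy}\phi=1\}$, after which the limit interchange is immediate and the rest is bookkeeping.
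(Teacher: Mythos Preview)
The paper does not prove this theorem; it is quoted from \cite{MW17} (their Theorem~2.1) without proof. So there is no ``paper's own proof'' to compare against, and I can only assess your argument on its merits.

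Your overall strategy is correct and is in fact exactly the one underlying Lemma~\ref{lem:W1_Delta} in the paper: write the Kantorovich dual pairing against $\mu_x^p-\mu_y^p$ as $\phi(x)-\phi(y)+(1-p)\big(\Delta_G\phi(x)-\Delta_G\phi(y)\big)$, divide by $d(x,y)$, and send $p\to 1$. The algebra and the final sign-flip $\phi\mapsto-\phi$ are fine. The one genuine gap is in your justification of the limit/infimum exchange. You fix $p\in[p_0,1)$, pick a minimiser $\phi_p$, set $\delta=1-\nabla_{xy}\phi_p$, and then ``push $p$ closer to $1$'' to make $\delta/(1-p)$ blow up. But $\phi_p$, and hence $\delta$, depends on $p$; as $p$ increases the minimiser may change and $\delta$ may shrink, so the blow-up is not automatic from what you wrote.

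Two clean fixes are available. First, since the graph is finite (the standing hypothesis of the paper), optimal Kantorovich potentials can be chosen among the finitely many vertices of the $1$--Lipschitz polytope $\{\phi:\phi(x_0)=0,\ |\phi(u)-\phi(v)|\le 1\ \forall\,u\sim v\}$; hence some fixed $\phi^\ast$ is optimal for a sequence $p_n\to 1$, and your argument with a \emph{fixed} $\delta$ then goes through verbatim to force $\delta=0$. Second, and more directly, use the complementary slackness observation that the paper itself invokes (see the proof of the theorem immediately after Lemma~\ref{lem:W1_Delta}): for $p$ large enough (indeed $p>1/2$ suffices, since then the mass $p$ sitting at $x$ exceeds the total mass $1-p$ available at the neighbours of $y$), every optimal plan satisfies $\pi(x,y)>0$, which forces any optimal potential to obey $\phi_p(x)-\phi_p(y)=d(x,y)$, i.e.\ $\nabla_{xy}\phi_p=1$ exactly. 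Either route closes the gap and completes your proof.
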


\subsection{Transport plans based on triangles and a perfect matching}
\label{sec:transportplanstrianglesmatching}

Recall the definition of the combinatorial property $\Lambda(m)$ given
in Definition \ref{def:Lambda}. The following proposition explains its
curvature implication and is useful for our curvature calculations in
Section \ref{sec:examples}. As mentioned before, it also implies
Proposition \ref{prop:curvcalc} directly by choosing
$m = \frac{2D}{L} - 2$.  Besides presenting this proposition and its
proof, we also introduce in this subsection the notion of a
\emph{transport plan based on triangles and a perfect matching}.


\begin{proposition} \label{prop:curvcalc0}
  Let $G=(V,E)$ be a $D$-regular graph and $e = \{x,y\} \in E$ an edge.
  Then we have
  \begin{equation} \label{eq:kappa_est}
  \kappa(x,y) \le \frac{2+\#_\Delta(x,y)}{D} 
  \end{equation}
  Moreover, the following are equivalent:
  \begin{itemize}
  \item[(a)] $\kappa(x,y) = \frac{2+\#_\Delta(x,y)}{D}$;
  \item[(b)] there is a perfect matching between the vertex sets
    $N_x \backslash (N_{xy} \cup \{y\})$ and
    $N_y \backslash (N_{xy} \cup \{x\})$.
  \end{itemize}
\end{proposition}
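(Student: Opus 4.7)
My plan is to work directly with the Wasserstein definition of the Ollivier curvature, exploiting a convenient cancellation at the specific idleness $p=\tfrac{1}{D+1}$ used in the normalisation of $\kappa$. At this $p$ we have $(1-p)/D = p$, so each of $\mu_x = \mu_x^{p}$ and $\mu_y = \mu_y^{p}$ is the uniform probability measure of mass $p$ on $\{x\}\cup N_x$ and $\{y\}\cup N_y$ respectively; moreover, since $x\sim y$, they agree at $x$, $y$, and every common neighbour, differing only on the two sets $A := N_x \setminus (N_{xy} \cup \{y\})$ and $B := N_y \setminus (N_{xy} \cup \{x\})$, which by $D$-regularity have equal cardinality $|A| = |B| = D - 1 - \#_\Delta(x,y)$.

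For the lower bound I would plug the $1$-Lipschitz function $\phi(z) = d(z,y)$ into Kantorovich duality (Theorem~\ref{Kantorovich}). Direct inspection gives $\phi(y) = 0$, $\phi(x) = 1$, $\phi|_{N_{xy} \cup B} = 1$; and for $z \in A \subseteq N_x\setminus N_y$ with $z\neq y$ the distance $d(z,y)$ must equal $2$, so $\phi|_A = 2$. This produces
\[
W_1(\mu_x,\mu_y) \;\ge\; \sum_{z \in V} \phi(z)\bigl(\mu_x(z) - \mu_y(z)\bigr) \;=\; 2p|A| - p|B| \;=\; p|A| \;=\; \frac{D-1-\#_\Delta(x,y)}{D+1},
\]
which, when substituted into $\kappa(x,y) = \tfrac{D+1}{D}\bigl(1 - W_1(\mu_x,\mu_y)\bigr)$, yields the inequality \eqref{eq:kappa_est}.

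For the equality characterisation, the direction (b)~$\Rightarrow$~(a) is straightforward: given a perfect matching $\sigma\colon A \to B$ with $z \sim \sigma(z)$, the explicit coupling that keeps the mass at $\{x,y\}\cup N_{xy}$ in place and transports the mass $p$ at each $z\in A$ along the edge $\{z,\sigma(z)\}$ attains cost $p|A|$ and so matches the lower bound. Conversely, assuming equality, any optimal coupling $\pi$ must simultaneously move every unit of mass originating in $A$ by exactly distance $1$ and keep all other mass stationary (otherwise the total cost strictly exceeds $p|A|$). In particular the atoms at $\{x,y\}\cup N_{xy}$ in $\mu_y$ are saturated by the mass that begins there, so the mass from each $z\in A$ can only flow to a neighbour of $z$ lying in $B$. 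Hence $\pi/p$ restricted to $A\times B$ is a doubly-stochastic matrix supported on the bipartite adjacency graph between $A$ and $B$; by Birkhoff--von Neumann (or Hall's theorem applied to its support) it contains a permutation matrix, giving the desired perfect matching. The main obstacle is this last step: verifying that equality really does force the mass in $A$ to flow only to \emph{adjacent} vertices in $B$, which requires the careful saturation argument above; the rest is a short duality computation and bookkeeping on distances in a $D$-regular graph.
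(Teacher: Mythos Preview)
Your argument is correct. The paper's proof proceeds differently: it invokes the Monge structure (since all atoms have equal mass $\tfrac{1}{D+1}$) together with \cite[Lemma~4.1]{Idle} to select an optimal transport plan that is already a bijection fixing $\{x,y\}\cup N_{xy}$ pointwise; the inequality $W_1\ge \frac{D-1-m}{D+1}$ and the matching characterisation then drop out simultaneously from this single plan by inspecting whether each remaining pair $(x_i,y_i)$ satisfies $d(x_i,y_i)=1$.

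Your route is genuinely different and more self-contained. You obtain the lower bound on $W_1$ via the dual side, with the explicit potential $\phi=d(\cdot,y)$, rather than through a special primal plan. For (a)$\Rightarrow$(b) you do not need the external lemma: your saturation argument (mass on $C=\{x,y\}\cup N_{xy}$ must be stationary in any optimal plan, hence the $\mu_y$-atoms on $C$ are already filled, forcing all mass from $A$ into adjacent points of $B$) is valid as stated, and the doubly-stochastic matrix you obtain is indeed supported on the bipartite adjacency graph, so Birkhoff--von~Neumann (or Hall) yields the matching. The trade-off: the paper's approach is shorter once one accepts the cited lemma and the Monge solvability, while yours uses only Kantorovich duality and a classical combinatorial fact, and as a bonus exhibits an explicit optimal Kantorovich potential.
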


\begin{proof}
  Assume that $G=(V,E)$ is $D$-regular and $e= \{x,y\} \in E$ is
  contained in $m = \#_\Delta(x,y)$ triangles. Then we have
  $$ s = |N_{x}\setminus (N_{xy}\cup\{y\})| = 
  |N_{y}\setminus (N_{xy}\cup\{x\})| = D-1-m. $$ Since all masses are
  equal to $\frac{1}{D+1}$, we are faced with a Monge Problem and
  there exists an optimal transport plan $\pi$ transporting $\mu_x$ to
  $\mu_y$ with $\pi(z,w) \in \{0, \frac{1}{D+1}\}$ for all
  $z,w \in V$. (That is, $\pi$ is induced by a bijective optimal
  transport map $T: B_1(x) \to B_1(y)$; for more details see
  Subsection \ref{sec:concat_tramap}.) By \cite[Lemma 4.1]{Idle}, we
  can choose $\pi$ to satisfy $\pi(z,z) = \frac{1}{D+1}$ for all
  $z \in N_{xy} \cup \{x,y\}$, that is there is no mass transport on
  these vertices, and enumerate the vertices in
  $N_{x}\setminus (N_{xy}\cup\{y\})$ by $\{x_{i}\}_{i=1}^s$ and in
  $N_{y}\setminus (N_{xy}\cup\{x\})$ by $\{y_{j}\}_{j=1}^s$ such that
  $\pi(x_i,y_j) = \frac{1}{D+1} \delta_{ij}$.  Explicitly, we have
  \begin{equation} \label{eq:traplan}
  \pi(u,v) = \begin{cases} \frac{1}{D+1}, & \text{if $(u,v) = (x_i,y_i)$,} \\
  \frac{1}{D+1}, & \text{if $u=v \in N_{xy} \cup \{x,y\}$,} \\
  0, & \text{otherwise.} \end{cases}
  \end{equation}
  Then
  \begin{equation} \label{eq:W1calc}
  W_{1}(\mu_{x},\mu_{y}) = \sum_{u\in V} \sum_{v\in V} \pi(u,v)d(u,v) \ge 
  \frac{s}{D+1} = \frac{D-1-m}{D+1},
  \end{equation}
  with equality iff $d(x_i,y_i) = 1$, that is, there exists a perfect
  matching between the vertex sets $N_{x}\setminus (N_{xy}\cup\{y\})$
  and $N_{y}\setminus (N_{xy}\cup\{x\})$. Consequently, we have the
  following curvature estimate with the same matching property in case
  of equality:
  \begin{equation} \label{eq:kappa_calc} \kappa(x,y) = \frac{D+1}{D} \left(1 -
      W_1(\mu_x,\mu_y)\right) \le \frac{2+m}{D}.
  \end{equation}
\end{proof}

The transport plan $\pi$ chosen in the proof is of a particular
structure which will also be important later. Therefore we introduce
the following definition:

\begin{definition} \label{def:plan_tpm}
  Let $e = \{x,y\} \in E$ be an edge of a $D$-regular graph $G=(V,E)$ with
  the following property: There is a perfect matching between the sets
  $N_{x}\setminus (N_{xy}\cup\{y\})$ and
  $N_{y}\setminus (N_{xy}\cup\{x\})$ given by $x_i \sim y_i$, where
  $x_i$ and $y_i$ are defined as in the above proof. We say that the
  transport plan $\pi$ defined by \eqref{eq:traplan} is \emph{based on
    triangles and a perfect matching}.
\end{definition}

\section{Cartesian products}
\label{sec:cart}

In this section we show under what conditions Bonnet-Myers sharpness is preserved under taking Cartesian products. First we recall the following result of Lin, Lu and Yau from \cite{LLY11}.
\begin{theorem}[\cite{LLY11}]\label{LLYcartprods}
	Let  $G=(V_{G},E_{G})$ be a $d_{G}$-regular graph and $H=(V_{H},E_{H})$ be a $d_{H}$-regular graph. Let $x_{1},x_{2}\in V_{G}$ with $x_{1}\sim x_{2}$ and $y_{1},y_{2}\in V_{H}$ with $y_{1}\sim y_{2}$. Then
	\begin{align*}
	\kappa^{G\times H}((x_{1},y_{1}),(x_{2},y_{1})) & = \frac{d_{G}}{d_{G}+d_{H}} \kappa^{G}(x_{1},x_{2}),
	\\
	\kappa^{G\times H}((x_{1},y_{1}),(x_{1},y_{2})) & = \frac{d_{H}}{d_{G}+d_{H}} \kappa^{H}(y_{1},y_{2}),
	\end{align*}
\end{theorem}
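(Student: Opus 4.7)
The plan is to use the Kantorovich-dual characterization \eqref{eq:kappally_alt} of $\kappa_{LLY}$, which is available for every graph. Because $(x_1,y_1)$ and $(x_2,y_1)$ are adjacent in $G\times H$, the identity $\kappa = \kappa_{LLY}$ for neighbours from \eqref{eq:kappaLLY} applies on both sides, reducing the first identity to showing
\[
\kappa_{LLY}^{G\times H}((x_1,y_1),(x_2,y_1)) \;=\; \frac{d_G}{d_G+d_H}\,\kappa_{LLY}^G(x_1,x_2),
\]
the second identity being symmetric in the roles of $G$ and $H$. The key structural observation is that the normalized Laplacian on $G\times H$ splits into a ``$G$-part'' plus an ``$H$-part'' according to which coordinate varies, and the $1$-Lipschitz condition with respect to the product distance $d_G+d_H$ controls both pieces.

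First I would prove the inequality ``$\le$'' by lifting. Given any $1$-Lipschitz $\phi_G$ on $G$ with $\nabla_{x_2,x_1}\phi_G=1$, I set $\tilde\phi(x,y):=\phi_G(x)$. Since $d_{G\times H}((x,y),(x',y'))=d_G(x,x')+d_H(y,y')\ge d_G(x,x')$, the lift $\tilde\phi$ is still $1$-Lipschitz on $G\times H$, and the gradient normalization is preserved. The $H$-part of $\Delta_{G\times H}\tilde\phi$ vanishes because $\tilde\phi$ does not depend on $y$, leaving $\Delta_{G\times H}\tilde\phi(x,y)=\frac{d_G}{d_G+d_H}\Delta_G\phi_G(x)$. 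Hence $\nabla_{(x_1,y_1),(x_2,y_1)}(\Delta_{G\times H}\tilde\phi)=\frac{d_G}{d_G+d_H}\nabla_{x_1,x_2}(\Delta_G\phi_G)$, and taking the infimum over such $\phi_G$ yields the upper bound via \eqref{eq:kappally_alt}.

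For the reverse inequality I would show that every admissible $\phi$ on $G\times H$ is ``no better than a lift''. Restrict $\phi$ to $V_G\times\{y_1\}$ to obtain $\psi(x):=\phi(x,y_1)$, which is automatically $1$-Lipschitz on $G$ and satisfies $\psi(x_2)-\psi(x_1)=1$. Splitting $\nabla_{(x_1,y_1),(x_2,y_1)}(\Delta_{G\times H}\phi)$ into its $G$-part and $H$-part, the $G$-part equals $\frac{d_G}{d_G+d_H}\nabla_{x_1,x_2}(\Delta_G\psi)$, which is bounded below by $\frac{d_G}{d_G+d_H}\kappa_{LLY}^G(x_1,x_2)$ since $\psi$ is admissible in the infimum defining $\kappa_{LLY}^G(x_1,x_2)$. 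Using $\phi(x_1,y_1)-\phi(x_2,y_1)=-1$, the $H$-part simplifies to
\[
\frac{1}{d_G+d_H}\sum_{y'\sim_H y_1}\bigl[(\phi(x_1,y')-\phi(x_2,y'))+1\bigr],
\]
and each summand is nonnegative because the $1$-Lipschitz property on $G\times H$ forces $\phi(x_1,y')-\phi(x_2,y')\ge -d_{G\times H}((x_1,y'),(x_2,y'))=-1$.

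The hard part is precisely the nonnegativity of this $H$-part: it hinges on the tight numerical match between the gradient normalization $=1$ at the edge $\{(x_1,y_1),(x_2,y_1)\}$ and the Lipschitz bound $=1$ on every parallel edge $\{(x_1,y'),(x_2,y')\}$ with $y'\sim_H y_1$. Once this is noted, combining the two inequalities yields equality, and the second identity in the theorem follows immediately by exchanging the roles of $G$ and $H$.
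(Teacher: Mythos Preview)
Your argument is correct. Both inequalities go through exactly as you outline: the lift $\tilde\phi(x,y)=\phi_G(x)$ is admissible and produces the upper bound, while for any admissible $\phi$ on $G\times H$ the restriction $\psi(\cdot)=\phi(\cdot,y_1)$ is admissible for the $G$-infimum and the residual $H$-part is termwise nonnegative by the $1$-Lipschitz bound on parallel edges.

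Note, however, that the paper does not prove this theorem at all: it is quoted from \cite{LLY11} without argument and used as a black box. The original proof in \cite{LLY11} works directly on the Wasserstein side, building transport plans on $G\times H$ from plans on the factors and analysing the limit $p\to 1$. Your route via the Laplacian dual formula \eqref{eq:kappally_alt} is genuinely different and arguably cleaner for this particular statement: the splitting of $\Delta_{G\times H}$ into $G$- and $H$-parts is transparent, and the nonnegativity of the $H$-part falls out in one line from the Lipschitz constraint, avoiding any explicit construction or decomposition of optimal couplings. The trade-off is that you are invoking the more recent characterisation from \cite{MW17}, which postdates \cite{LLY11}; so your proof is not self-contained relative to the original source, but it fits naturally within the toolkit the present paper already uses.
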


\begin{theorem} \label{thm:cartprod}
	Let $\{G_{i} = (V_{i}, E_{i})\}_{i=1}^{N}$ be a family of regular graphs where $G_{i}$ has valency $D_{i}$ for each $i.$ Let $L_{i}$ be the diameter of $G_{i}.$ Let $G = G_{1}\times\cdots\times G_{N}.$ The following are equivalent:
	\begin{enumerate}[(i)]
		\item
		$G$ is Bonnet-Myers sharp.
		\item
		Each $G_{i}$ is Bonnet-Myers sharp and $\frac{D_{1}}{L_{1}}=\cdots=\frac{D_{N}}{L_{N}}.$  
	\end{enumerate} 
\end{theorem}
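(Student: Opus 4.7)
The plan is to reduce the theorem to a short calculation combining the Lin--Lu--Yau curvature formula for Cartesian products (Theorem \ref{LLYcartprods}) with the mediant inequality on ratios. First I would record the standard identities $D = D_1 + \cdots + D_N$ for the valency of $G$ and $L = L_1 + \cdots + L_N$ for its diameter. Iterating Theorem \ref{LLYcartprods} via the associativity of the Cartesian product, I would verify that every edge $\{u,v\}$ of $G$ lies in a unique factor direction $i$ and, projected to an edge $\{x_1,x_2\}$ of $G_i$, satisfies $\kappa^G(u,v) = \frac{D_i}{D}\,\kappa^{G_i}(x_1,x_2)$. Writing $\kappa_i^\star := \inf_{x \sim y}\kappa^{G_i}(x,y)$, this yields the key identity
$$ \inf_{u \sim v} \kappa^G(u,v) \;=\; \frac{1}{D}\,\min_{1 \le i \le N} D_i\,\kappa_i^\star. $$

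For the implication (ii)$\Rightarrow$(i), I would simply substitute $\kappa_i^\star = 2/L_i$ into this identity; the common-ratio hypothesis makes $D_i\kappa_i^\star = 2D/L$ independent of $i$, so the infimum equals $2/L$ and $G$ is $(D,L)$-Bonnet-Myers sharp.

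For (i)$\Rightarrow$(ii), the plan is a two-step squeeze. On one hand, the Bonnet-Myers inequality \eqref{eq:BM_est} applied to each factor gives $\kappa_i^\star \le 2/L_i$; combined with the key identity and the sharpness hypothesis $\inf \kappa^G = 2/L$, this yields $D/L \le \min_i D_i/L_i$. On the other hand, the mediant inequality applied to $D/L = (D_1+\cdots+D_N)/(L_1+\cdots+L_N)$ gives $D/L \ge \min_i D_i/L_i$, with equality forcing $D_1/L_1 = \cdots = D_N/L_N$. Feeding this common value back into the chain $2D_i/L_i = 2D/L \le D_i\kappa_i^\star \le 2D_i/L_i$ pins down $\kappa_i^\star = 2/L_i$ for every $i$, so each $G_i$ is Bonnet-Myers sharp.

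The main obstacle is really only the bookkeeping in extending Theorem \ref{LLYcartprods} from two factors to $N$ factors, which is routine by induction on $N$: writing $G = G_1 \times H$ with $H = G_2 \times \cdots \times G_N$ of valency $D - D_1$, the two-factor formula together with the inductive hypothesis produces exactly the coefficient $D_i/D$ on an edge in the $i$-th direction. Once this curvature identity is in hand, the remainder of the argument is mechanical arithmetic around the mediant inequality.
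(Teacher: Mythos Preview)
Your argument is correct and follows essentially the same strategy as the paper: both rely on the Lin--Lu--Yau product formula (Theorem \ref{LLYcartprods}) to reduce the curvature infimum of $G$ to a minimum over the factors, and then squeeze using the Bonnet-Myers inequality on each $G_i$. The only packaging difference is that the paper inducts down to the case $N=2$ and runs the two one-sided inequalities $D_1/L_1 \le D_2/L_2$ and $D_2/L_2 \le D_1/L_1$ separately, whereas you treat all $N$ factors at once and invoke the mediant inequality for the same squeeze; these are equivalent bookkeeping choices, not different ideas.
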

\begin{proof}
	Since $G_{1}\times\cdots\times G_{N} = G_{1}\times(G_{2}\times\cdots\times G_{N})$ we may assume that $N=2$ and use induction.
	
	By Theorem \ref{LLYcartprods},
	\begin{align} \label{eq:cart_expand}
	\inf_{\substack{u,v \in V(G) \\ u\sim v}}\kappa^{G}(u,v) 
	& = \min\left\{\inf_{\substack{x_{1},x_{2}\in V_{1}\\x_{1}\sim x_{2}\\ y \in V_{2}}} \kappa^{G}((x_{1},y),(x_{2},y)), \inf_{\substack{y_{1},y_{2}\in V_{2}\\y_{1}\sim y_{2}\\ x \in V_{1}}} \kappa^{G}((x,y_{1}),(x,y_{2}))\right\}
	\nonumber \\
	& = \min\left\{\inf_{\substack{x_{1},x_{2}\in V_{1}\\x_{1}\sim x_{2}}} \frac{D_{1}}{D_{1}+D_{2}}\kappa^{G_{1}}(x_{1},x_{2}), \inf_{\substack{y_{1},y_{2}\in V_{2}\\y_{1}\sim y_{2}}} \frac{D_{2}}{D_{1}+D_{2}}\kappa^{G_{2}}(y_{1},y_{2})\right\}.
	\end{align}
	
	First we prove that (i) implies (ii). Since $G$ is Bonnet-Myers sharp, we have, by Bonnet-Myers theorem applied on $G_1$: $$\frac{2}{L_1+L_2}=\inf_{\substack{u,v \in V(G) \\ u\sim v}}\kappa^{G}(u,v) 
	\le \inf_{\substack{x_{1},x_{2}\in V_{1}\\x_{1}\sim x_{2}}} \frac{D_{1}}{D_{1}+D_{2}}\kappa^{G_{1}}(x_{1},x_{2}) \le \frac{D_1}{D_1+D_2}\cdot\frac{2}{L_1},$$
	which is equivalent to $\frac{D_2}{L_2} \le \frac{D_1}{L_1}$.
	
	On the other hand, Bonnet-Myers on $G_2$ gives
	$$\frac{2}{L_1+L_2}=\inf_{\substack{u,v \in V(G) \\ u\sim v}}\kappa^{G}(u,v) 
	\le \inf_{\substack{y_{1},y_{2}\in V_{2}\\y_{1}\sim y_{2}}} \frac{D_{2}}{D_{1}+D_{2}}\kappa^{G_{2}}(y_{1},y_{2}) \le \frac{D_2}{D_1+D_2}\cdot\frac{2}{L_2},$$
	which is equivalent to $\frac{D_1}{L_1} \le \frac{D_2}{L_2}$.
	
	Therefore, we can conclude that $\frac{D_1}{L_1} = \frac{D_2}{L_2}$, and all the inequalities above are sharp, that is $G_1$ and $G_2$ are Bonnet-Myers sharp as well.
	\\
	\\
	To prove (ii) implies (i): we simply plug into \eqref{eq:cart_expand}
	$$ \inf_{\substack{x_{1},x_{2}\in V_{1}\\x_{1}\sim x_{2}}} \kappa^{G_{1}}(x_{1},x_{2})=\frac{2}{L_1} \quad \textup{and} \quad \inf_{\substack{y_{1},y_{2}\in V_{2}\\y_{1}\sim y_{2}}} \kappa^{G_{2}}(y_{1},y_{2})=\frac{2}{L_2} $$
	and use the assumption that $\frac{D_1}{L_1}=\frac{D_2}{L_2}$. As a result, we obtain $\inf_{\substack{u,v \in V(G) \\ u\sim v}}\kappa^{G}(u,v)=\frac{2}{L_1+L_2}$.
\end{proof}

\begin{rem} \label{rem:lichcartsharp}
	In contrast to the necessary and sufficient condition for Bonnet-Myers sharpness in Theorem \ref{thm:cartprod}, much less is required for the Cartesian product $G=G_1\times G_2$ to be Lichnerowicz sharp. In fact, $G$ is Lichnerowicz sharp already if $G_1$ is Lichenerowicz sharp and $G_2$ is an arbitrary graph with its curvature lower bound large enough, as explained in the following argument.

	Let $\lambda_{1}^{G_{1}}, \lambda_{1}^{G_{2}}, \lambda_{1}^{G}$ be the smallest positive eigenvalues of the Laplacians on $G_{1},G_{2},$ $G$. We have
	
	\begin{align} \label{eqn:lich_cart}
	\inf_{\substack{u,v \in V(G) \\ u\sim v}}\kappa^{G}(u,v) & = \min\left\{\inf_{\substack{x_{1},x_{2}\in V_{1}\\x_{1}\sim x_{2}\\ y \in V_{2}}} \kappa^{G}((x_{1},y),(x_{2},y)), \inf_{\substack{y_{1},y_{2}\in V_{2}\\y_{1}\sim y_{2}\\ x \in V_{1}}} \kappa^{G}((x,y_{1}),(x,y_{2}))\right\}
	\nonumber\\
	& = \min\left\{\inf_{\substack{x_{1},x_{2}\in V_{1}\\x_{1}\sim x_{2}}} \frac{D_{1}}{D_{1}+D_{2}}\kappa^{G_{1}}(x_{1},x_{2}), \inf_{\substack{y_{1},y_{2}\in V_{2}\\y_{1}\sim y_{2}}} \frac{D_{2}}{D_{1}+D_{2}}\kappa^{G_{2}}(y_{1},y_{2})\right\}
	\nonumber \\
	& \le \min\left\{\frac{D_{1}}{D_{1}+D_{2}}\lambda_{1}^{G_{1}}, \frac{D_{2}}{D_{1}+D_{2}}\lambda_{2}^{G_{2}}\right\} = \lambda_{1}^{G}.
	\end{align}
where the inequality comes from Lichnerowicz' Theorem on each graph $G_i$: $ \inf \kappa^{G_i} \le \lambda_1^{G_i} $. In order to obtain the equality in \eqref{eqn:lich_cart}, a sufficient condition is $ \inf \kappa^{G_1} = \lambda_1^{G_1} $ (i.e. $G_1$ is Lichnerowicz sharp) and $\frac{D_1}{D_2}\inf \kappa^{G_1} \le \inf \kappa^{G_2}$.
\end{rem}

\section{Examples of Bonnet-Myers sharp graphs}
\label{sec:examples}

Here we present various examples of Bonnet-Myers sharp graphs and
study their properties. Interestingly, the $\mu$-graphs in each of the following examples are cocktail party graphs and the $1$-spheres are
strongly regular.

Note that a finite simple graph $G=(V,E)$ is called \emph{strongly regular}
with parameters $(\nu,k,\lambda,\mu)$ if $G$ is not a complete graph
and the following holds true:
\begin{itemize}
\item $V$ has cardinality $\nu$,
\item every vertex has degree $k$,
\item each pair of adjacent vertices has precisely
  $\lambda$ common neighbours,
\item each pair of non-adjacent vertices has precisely
  $\mu$ common neighbours.
\end{itemize}
(In contrast to the usual definition, we also consider a set of $n$
isolated points to be a strongly regular graph with parameters
$(\nu,k,\lambda,\mu) = (n,0,*,0)$ where $*$ can be any integer.) We
say that a strongly regular graph $G$ with these parameters is
${\rm{srg}}(\nu,k,\lambda,\mu)$.

\subsection{Hypercubes $Q^n$, $n \ge 1$} \label{subsec:ex_hypcubes}

The hypercube $Q^n$ can be viewed as the graph whose vertices are
elements of $\{0,1\}^n$, and two vertices $x,y \in \{0,1\}^n$ are
adjacent if and only if their Hamming distance is one. 
In \cite{LLY11} the authors showed that the hypercube $Q^{n}$ has
constant curvature $\frac{2}{n}$. Since $Q^{n}$ has diameter
$n$, it follows that it is Bonnet-Myers sharp.

It is obvious that every $\mu$-graph of $Q^n$ consists of two isolated
points and that every $1$-sphere or $Q^n$ consists of $n$ isolated
points. Moreover, hypercubes are self-centered and the antipole of the
vertex $(x_1,\dots,x_n) \in \{0,1\}^n$ is given by
$(1-x_1,\dots,1-x_n)$. 

We will show in Section \ref{sec:BMextrdiam} that the hypercubes are
the only Bonnet-Myers sharps graph where their valency is equal to
their diameter.

\subsection{Cocktail party graphs $CP(n)$, $n \ge
  3$} \label{subsec:ex_cocktailparty}

The cocktail party graph $CP(n)$ is defined to have vertex set
$\{u_{1},\ldots,u_{n},v_{1},\ldots,v_{n}\}$ where all pairs of
vertices are adjacent unless they share the same subscript. Note that
$CP(n)$ has diameter $L=2$ and is regular with valency $D=2n-2$. $CP(n)$
is self-centered and $u_i$ and $v_i$ are antipoles of each other.

\begin{lemma}
  Let $x\sim y$ be an edge in the cocktail party graph $CP(n)$. Then
  $\kappa(x,y) = 1$, which implies that $CP(n)$ is
  Bonnet-Myers sharp.
\end{lemma}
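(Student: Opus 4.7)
The plan is to invoke Proposition \ref{prop:curvcalc} directly, so the task reduces to verifying the combinatorial hypothesis $\Lambda(2D/L - 2)$ at every edge of $CP(n)$. Here $L = 2$ (any two non-adjacent vertices $u_i, v_i$ share common neighbours, so distance $2$ is attained) and $D = 2n-2$, so the target number of triangles through each edge is $2D/L - 2 = 2n-4$, and the target curvature is $2/L = 1$.

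By the obvious vertex- and edge-transitivity of $CP(n)$ (the group generated by the coordinate permutations $u_i \leftrightarrow v_i$ together with permutations of the index set $\{1,\dots,n\}$ acts transitively on edges), it suffices to verify the two conditions of $\Lambda(2n-4)$ for one representative edge, say $e = \{u_1,u_2\}$. First I compute the common neighbourhood: since the only non-neighbour of $u_i$ is $v_i$, we get
$$N_{u_1u_2} = V \setminus \{u_1,u_2,v_1,v_2\} = \{u_3,\dots,u_n,v_3,\dots,v_n\},$$
which has $2n-4$ elements, so $e$ lies in exactly $2n-4$ triangles. Second, the ``leftover'' neighbour sets are
$$N_{u_1}\setminus (N_{u_1u_2}\cup\{u_2\}) = \{v_2\}, \qquad N_{u_2}\setminus (N_{u_1u_2}\cup\{u_1\}) = \{v_1\},$$
and $v_1 \sim v_2$ (they have different subscripts), so the singleton pairing is a perfect matching.

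Thus $CP(n)$ satisfies $\Lambda(2n-4)$ at $e$, and Proposition \ref{prop:curvcalc} yields $\kappa(u_1,u_2) = 2/L = 1$. By edge-transitivity this holds for every edge, so $\inf_{x\sim y}\kappa(x,y) = 1 = 2/L$, i.e.\ $CP(n)$ is $(2n-2,2)$-Bonnet-Myers sharp. No serious obstacle arises; the only point requiring a little care is the book-keeping of which vertices are adjacent to which, but the cocktail party structure makes this immediate.
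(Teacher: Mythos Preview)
Your proof is correct. The paper takes a slightly more bare-hands route: it computes $W_1(\mu_{u_1},\mu_{u_2})$ directly by observing that the two measures agree everywhere except that $\mu_{u_1}(v_1)=0$, $\mu_{u_1}(v_2)=\frac{1}{2n-1}$ and vice versa for $\mu_{u_2}$, so the only transport needed is a mass of $\frac{1}{2n-1}$ from $v_2$ to $v_1$ over distance $1$, giving $\kappa_{\frac{1}{2n-1}}(u_1,u_2)=\frac{2n-2}{2n-1}$ and hence $\kappa=1$. Your argument instead verifies the combinatorial hypotheses of Proposition~\ref{prop:curvcalc} (exactly $2n-4$ triangles through the edge, and a one-vertex perfect matching $v_2\leftrightarrow v_1$) and reads off $\kappa=2/L=1$. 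The two approaches are equivalent in content---Proposition~\ref{prop:curvcalc} is precisely the abstraction of this kind of transport computation---and your version is in fact the method the paper itself uses for the remaining examples (Johnson graphs, demi-cubes, Gosset graph) via Proposition~\ref{prop:curvcalc0}.
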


\begin{proof}
  Since $CP(n)$ is edge-transitive it suffices to show that
  $\kappa(u_{1},u_{2}) = 1$, which is equivalent to showing that
  $\kappa_{\frac{1}{2n-1}}(u_{1},u_{2}) = \frac{2n-2}{2n-1}$. Note
  that $\mu_{u_{1}}(v_{1}) = 0$ and $\mu_{u_{1}}$ equals
  $\frac{1}{2n-1}$ otherwise. Likewise $\mu_{u_{2}}(v_{2}) = 0$ and
  $\mu_{2}$ equals $\frac{1}{2n-1}$ otherwise. Therefore the only mass
  that must be transported is a mass of size $\frac{1}{2n-1}$ from
  $v_{2}$ to $v_{1}$ over a distance of 1. Therefore
  $$ \kappa_{\frac{1}{2n-1}}(u_{1},u_{2}) = 1 - W_{1}(\mu_{u_{1}},\mu_{u_{2}}) 
  = 1 - \frac{1}{2n-1} = \frac{2n-2}{2n-1}, $$
  as required.
\end{proof}

It is easily checked that every $\mu$-graph of $CP(n)$ as well as any
induced $1$-sphere is isomorphic to $CP(n-1)$ and therefore
${\rm{srg}}(2n-2,2n-4,2n-6,2n-4)$.

\subsection{Johnson graphs $J(2n,n)$, $n \ge 3$} \label{subsec:ex_johnson}

The Johnson graphs are a family of graphs that can be seen as a
generalisation of the complete graphs. See \cite{CLP2018} where their
Bakry-\'Emery curavture is calculated and compared to the curvature of
the complete graphs.

The vertices of the Johnson graph $J(n,k)$ are all the subsets of
$\{1,\ldots,n\}$ with $1 \le k \le n-1$ elements. Two vertices $u$ and $v$ are
connected by an edge if $|u\cap v| = k-1$. Observe that $J(n,1)$ is
isomorphic to the complete graph $K_n$ on $n$ vertices.

The Johnson graph $J(n,k)$ is $D=k(n-k)$-regular and has diameter
$L=\min\{k,(n-k)\}.$ The smallest non-zero eigenvalue $\lambda_{1}$ of
the Laplacian on $J(n,k)$ is $\frac{n}{k(n-k)}$.

\begin{lemma}
  Let $n,k\in\mathbb{N}$, $1 \leq k \leq n-1$. Let $x,y$ be to
  adjacent vertices in $J(n,k)$. Then
  $$ \kappa(x,y) = \frac{n}{k(n-k)}. $$
  Therefore $J(n,k)$ is Lichnerowicz sharp. Furthermore, $J(2n,n)$ is
  Bonnet-Myers sharp.
\end{lemma}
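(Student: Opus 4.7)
My plan is to apply Proposition~\ref{prop:curvcalc0} directly: compute the number of triangles through an arbitrary edge of $J(n,k)$, exhibit an explicit perfect matching between the ``off-triangle'' neighbours, and then read off $\kappa(x,y)$. Once this curvature value is shown to be constant on edges, both Lichnerowicz sharpness and (for the special case $J(2n,n)$) Bonnet--Myers sharpness follow mechanically.

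First I would fix an edge: write $x = A \cup \{a\}$ and $y = A \cup \{b\}$ where $A \subset [n]$ has size $k-1$ and $a,b \in [n]\setminus A$ with $a \neq b$. A common neighbour $z \in N_{xy}$ is a $k$-subset with $|z \cap x| = |z \cap y| = k-1$. Splitting on whether $a \in z$ or $a \notin z$ yields two families, namely $z = (A\setminus\{p\}) \cup \{a,b\}$ with $p \in A$, and $z = A \cup \{q\}$ with $q \in [n]\setminus(A \cup \{a,b\})$. Counting gives $\#_\Delta(x,y) = (k-1)+(n-k-1) = n-2$.

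Next I would describe $N_x \setminus (N_{xy}\cup\{y\})$ and $N_y \setminus (N_{xy}\cup\{x\})$ explicitly. A short case analysis shows that the former consists of the sets $(A \setminus \{p\}) \cup \{a,q\}$ with $p \in A$ and $q \in [n]\setminus(A\cup\{a,b\})$, and analogously the latter consists of the sets $(A\setminus\{p\})\cup\{b,q\}$; both have cardinality $(k-1)(n-k-1) = k(n-k)-1-(n-2) = D-1-\#_\Delta(x,y)$, as required. The natural bijection
\[
(A\setminus\{p\}) \cup \{a,q\} \;\longleftrightarrow\; (A\setminus\{p\}) \cup \{b,q\}
\]
pairs sets whose intersection is $(A\setminus\{p\})\cup\{q\}$, of size $k-1$, so matched pairs are adjacent in $J(n,k)$. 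This is a perfect matching, so Proposition~\ref{prop:curvcalc0} yields equality
\[
\kappa(x,y) = \frac{2+\#_\Delta(x,y)}{D} = \frac{n}{k(n-k)}.
\]

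Finally, since $J(n,k)$ is edge-transitive under the action of $S_n$, this value is independent of the chosen edge, so $\inf_{x\sim y}\kappa(x,y) = \frac{n}{k(n-k)} = \lambda_1$, which is precisely Lichnerowicz sharpness. Specialising to $k=n$ with the ambient set $[2n]$, the diameter is $L = \min(n,n) = n$ and the curvature becomes $\frac{2n}{n^{2}} = \frac{2}{n} = \frac{2}{L}$, verifying Bonnet--Myers sharpness of $J(2n,n)$. The only delicate step is the triangle count together with the verification that the remaining degree exactly matches $(k-1)(n-k-1)$, which forces the matching to be a bijection onto the correct sets; everything else is a direct appeal to Proposition~\ref{prop:curvcalc0} and edge-transitivity.
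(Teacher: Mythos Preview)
Your proposal is correct and follows essentially the same approach as the paper's own proof: both arguments invoke edge-transitivity to fix a convenient edge, count $|N_{xy}|=n-2$, exhibit the natural perfect matching between the remaining neighbours, and apply Proposition~\ref{prop:curvcalc0}. The only cosmetic difference is that you use abstract labels $(A,a,b,p,q)$ whereas the paper fixes the concrete edge $x=\{1,\ldots,k\}$, $y=\{2,\ldots,k+1\}$.
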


\begin{proof}
  Without loss of generality, due to edge-transitivity, we may take
  $x = \{1,\ldots, k\}$ and $y = \{2,\ldots,k+1\}.$ Observe that
  $$
  N_{xy} = \left\{\{2,\ldots,k\}\cup\{i\}:i\in\{k+2,\ldots,n\} \right\} 
  \, \cup \, \left\{ \{1,\ldots,k+1\}\setminus\{i\}: i \in
    \{2,\ldots,k\} \right\},
  $$
  and
  \begin{eqnarray*}
  N_{x} \setminus \left(N_{xy} \cup\{y\}\right) &=& 
  \{(\{1,\ldots,k\}\setminus\{i\})\cup\{j\}:i\in\{2,\ldots k\},j\in\{k+2,\ldots,n\}\}, 
  \\
  N_{y} \setminus \left(N_{xy}\cup\{x\}\right) &=& 
 \{(\{2,\ldots,k+1\}\setminus\{i\})\cup\{j\}:i\in\{2,\ldots k\},j\in\{k+2,\ldots,n\}\}.
  \end{eqnarray*}
  Note that $|N_{xy}| = n-2$ and there is an obvious perfect matching
  between $N_{x} \setminus (N_{xy}\cup \{y\})$ and
  $N_{y} \setminus (N_{xy}\cup\{x\})$. Therefore, by Proposition
  \ref{prop:curvcalc0}, we have
  $$\kappa(x,y) = \frac{n}{k(n-k)}.$$
  In the particular case $J(2n,n)$, we obtain
  $$ 
  \kappa(x,y) = \frac{2n}{n(2n-n)} = \frac{2}{n} = \frac{2}{\min\{n,2n-n\}}, 
  $$
  showing that $J(2n,n)$ is Bonnet-Myers sharp.
\end{proof}

The graphs $J(2n,n)$ are self-centered and the antipole of the vertex
$A \subset \{1,\dots,2n\}$ is given by $\{1,\dots,2n\} \backslash A$.
Let us also investigate the structures of the $\mu$-graphs and
$1$-spheres of $J(2n,n)$. Since Johnson graphs are
distance-transitive, it suffices to consider the $\mu$-graph of
$x = \{1,\dots,n\}$ and $z = \{3,\dots,n+2\}$. Then we have
$$ N_{xz} = \{ \{1,3,\dots,n,n+1\}, \{1,3,\dots,n,n+2\},
\{2,3,\dots,n,n+1\}, \{2,3,\dots,n,n+2\} \}, $$ and the induced
subgraph is a quadrangle, that is the $\mu$-graph of $x$ and $z$ is
isomorphic to $CP(2)$. Finally, let us consider
$$ S_1(x) = \{ y_{ij}:=(\{1,\dots,n\} \backslash \{i\}) \cup \{j\}: i \in 
\{1,2,\dots,n\}, j \in \{n+1,n+2,\dots,2n\} \}. $$ There is a natural
identification of the vertices in $S_1(x)$ with the elements in the
set $\{1,2,\dots,n\} \times \{1,2,\dots,n\}$ via
$y_{ij} \mapsto (i,j-n)$. Note that in the induced subgraph $S_1(x)$
we have $y_{ij} \sim y_{kl}$ if and only if ($i=k$ and $j \neq l$) or
($i \neq k$ and $j=l$). This shows that the induced subgraph $S_1(x)$
is isomorphic to the Cartesian product $K_n \times K_n$, where the
vertex set of $K_n$ is identified with the set $\{1,2,\dots,n\}$. Note
that $K_n \times K_n$ is strongly regular with parameters
$(n^2,n,n-2,2)$.

\subsection{Demi-cubes $Q^{2n}_{(2)}$, $n \ge 3$} \label{subsec:ex_demicubes}

The halved cube graph, $\frac{1}{2}Q^{n}$, is the distance two
sub-graph of the hypercube $Q^{n}$, that is the graph with the
vertices of $Q^{n}$ formed by connecting any pair of vertices at
distance exactly two in the hypercube $Q^n$. The halved cube has two
isomorphic connected components and we shall denote either of these
components by $Q^{n}_{(2)}.$ The graph $Q^{n}_{(2)}$ is known as the
$n$-dimensional demi-cube.

Note that $Q^n_{(2)}$ is a regular graph of valency $D={n \choose
  2}=\frac{n(n-1)}{2}$ and diameter $L=\lfloor \frac{n}{2} \rfloor$.
Note also that the adjacency matrices of $Q^n$, $Q^n_{(2)}$ and
$\frac{1}{2}Q^n$ are related by
$$ A_{\frac{1}{2}Q^n} = \begin{pmatrix} A_{Q^n_{(2)}} & 0 \\ 0 & A_{Q^n_{(2)}} \end{pmatrix} = \frac{1}{2} \left( A_{Q^n}^2 - n {\rm{Id}} \right). $$
Since the eigenvalues of $Q^n$ are given by $n-2i$, $i=0,\dots,n$, the
second largest eigenvalue of $A_{Q^n_{(2)}}$ is
$$ \theta_1 = \frac{1}{2}\left( (n-2)^2-n \right) = \frac{1}{2}(n-4)(n-1) $$
and the smallest non-zero eigenvalue $\lambda_1$ of the Laplacian 
$\frac{1}{D} A_{Q^n_{(2)}} - {\rm{Id}}$ is
$$ \lambda_1 = 1 - \frac{1}{D}\theta_1 = \frac{4}{n}. $$

\begin{lemma}
Let $n\geq 2$ and $x,y$ be two adjacent vertices in $Q^n_{(2)}$. Then
$$\kappa(x,y) =  \frac{4}{n}.$$
Therefore $Q^n_{(2)}$ is Lichnerowicz sharp and $Q^{2n}_{(2)}$ is
Bonnet-Myers sharp.
\end{lemma}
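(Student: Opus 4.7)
The plan is to reduce the claim to a direct application of Proposition \ref{prop:curvcalc0}, and then read off the sharpness statements from the formula for $\lambda_1$ already computed in the paragraph preceding the lemma.

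First, I would set up coordinates. Realize $Q^n_{(2)}$ as the even-weight vectors in $\{0,1\}^n$, with two vertices adjacent iff they differ in exactly two coordinates. Since $Q^n_{(2)}$ is edge-transitive, it suffices to fix one edge, say $x = (0,\dots,0)$ and $y = e_1+e_2$. Neighbours of $x$ are the $\binom{n}{2}$ weight-two vectors $e_i+e_j$. A common neighbour $z \in N_{xy}$ must be a weight-two vector $z = e_i + e_j$ with $z \oplus y$ also of weight two; a short case check shows this forces exactly one of $i,j$ to lie in $\{1,2\}$, giving $\#_{\Delta}(x,y) = |N_{xy}| = 2(n-2)$.

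Next I would verify the matching condition (ii) of Proposition \ref{prop:curvcalc0}. The leftover neighbours of $x$ are
\[
N_x \setminus (N_{xy}\cup\{y\}) = \{\,e_i+e_j : 3 \le i<j \le n\,\},
\]
of cardinality $\binom{n-2}{2}$, and the same case analysis applied to $y$ gives
\[
N_y \setminus (N_{xy}\cup\{x\}) = \{\,y + e_i+e_j : 3 \le i<j \le n\,\}.
\]
The bijection $e_i+e_j \mapsto y + e_i + e_j$ pairs vertices whose symmetric difference is $y = e_1+e_2$ (weight two), so paired vertices are adjacent in $Q^n_{(2)}$; this is the required perfect matching. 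Proposition \ref{prop:curvcalc0} then yields
\[
\kappa(x,y) = \frac{2 + 2(n-2)}{\binom{n}{2}} = \frac{2n-2}{n(n-1)/2} = \frac{4}{n},
\]
as claimed.

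Finally, the two sharpness conclusions are immediate. Since $\kappa$ is constant equal to $4/n$ on all edges while the smallest positive Laplacian eigenvalue was computed in the paragraph above to be $\lambda_1 = 4/n$, we have $\inf_{x\sim y}\kappa(x,y) = \lambda_1$, i.e.\ $Q^n_{(2)}$ is Lichnerowicz sharp. For $Q^{2n}_{(2)}$ we have diameter $L = \lfloor 2n/2\rfloor = n$ and curvature $4/(2n) = 2/n = 2/L$, so the Bonnet--Myers inequality \eqref{eq:BM_est} is attained with equality and $Q^{2n}_{(2)}$ is Bonnet--Myers sharp. There is no serious obstacle here; the only mildly delicate step is the bookkeeping verifying that $N_{xy}$ has exactly $2(n-2)$ elements and that the residual neighbour sets admit the stated canonical matching, both of which are routine once one works in the $0/1$-vector model.
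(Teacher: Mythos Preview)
Your proof is correct and follows essentially the same approach as the paper: the same coordinate model, the same choice of edge $x=0$, $y=e_1+e_2$ via edge-transitivity, the same count $|N_{xy}|=2(n-2)$, the same perfect matching $e_i+e_j \leftrightarrow e_1+e_2+e_i+e_j$ for $3\le i<j\le n$, and the same appeal to Proposition~\ref{prop:curvcalc0} followed by the immediate sharpness conclusions.
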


\begin{proof}
  We may view the vertices of $Q^n_{(2)}$ as elements of
  $\{0,1\}^n$ that contain an even number of ones.  Two vertices
  are connected by an edge if their Hamming distance is equal to
  $2$. Let $e_i \in \{0,1\}^n$ be the $i$-th standard vector with
  precisely one non-zero entry at position $i$. Without loss of
  generality we may take $x = (0,\dots,0)$ and $y = e_1+e_2$.

  Then the common neighbours of $x$ and $y$ are given by $e_1+e_j$ and
  $e_2+e_j$ with $3 \le j \le n$, that is, we have
  $|N_{xy}| = 2(n-2)$.  Moreover, the vertices in
  $N_{x} \setminus (N_{xy}\cup \{y\})$ are given by
  $x_{ij}=e_i+e_j$, $3 \le i < j \le n$ and, similarly, the vertices
  in $N_{y} \setminus (N_{xy}\cup \{x\})$ are given by
  $y_{ij}=e_1+e_2+e_i+e_j$, $3 \le i < j \le n$. Obviously, the pairing
  $x_{ij} \sim y_{ij}$ provides a perfect matching between these sets of
  vertices. Therefore, by Proposition \ref{prop:curvcalc0}, we have
  $$ \kappa(x,y) = \frac{4}{n} = \lambda_1.$$
  This shows that $Q^n_{(2)}$ is Lichnerowicz sharp. For the graph $Q^{2n}_{(2)}$
  we have 
  $$ \kappa(x,y) = \frac{4}{2n} = \frac{2}{n} = \frac{2}{L}, $$
  that is, $Q^{2n}_{(2)}$ is Bonnet-Myers sharp.
\end{proof}

To identify the $\mu$-graphs of $Q^{2n}_{(2)}$, we can choose without
loss of generality the distance two vertices $x= (0,\dots,0)$ and
$z=e_1+e_2+e_3+e_4$. Let $y_{ij} := e_i + e_j$ with
$1 \le i < j \le 2n$.  Then the common neighbours of $x$ and $z$ are
the $6$ vertices $y_{12}$, $y_{13}$, $y_{14}$, $y_{23}$, $y_{24}$, and
$y_{34}$. Moreover, the vertex $y_{ij}$ is adjacent to all others in
the $\mu$-graph of $x$ and $z$, except for the vertex $y_{kl}$ with
$\{k,l\} = \{1,2,3,4\} \backslash \{i,j\}$. This shows that the
$\mu$-graph of $x$ and $z$ is isomorphic to $CP(3)$.

Next, let us consider the induced $1$-sphere $S_1(x)$. Its vertices
are given by $y_{ij}$, and $y_{ij}$ and $y_{kl}$ are adjacent if and only if
$| \{i,j\} \cap \{k,l\} | = 1$. This shows that $S_1(x)$ is isomorphic
to the strongly regular graph $J(2n,2)$ with parameters
$(n(2n-1),2(2n-2),2n-2,4)$.

Finally, observe that $Q^{2n}_{(2)}$ is self-centered: Identifying the vertices
of $Q^{2n}_{(2)}$ with vectors in $\{0,1\}^{2n}$, the antipole of $(x_1,\dots,x_{2n}) \in \{0,1\}^{2n}$ is $(1-x_1,\dots,1-x_{2n})$.

\subsection{The Gosset graph} \label{subsec:ex_gosset}

The Gosset graph is a regular graph of valency $D=27$ with 56 vertices
and diameter $L=3$. Its adjaceny matrix can be found in
\begin{center}
\url{http://www.distanceregular.org/graphs/gosset.html}
\end{center}

The Gosset graph can be understood as follows: Take two copies of
$K_8$, say $G, G'$ both isomorphic to $K_8$. Denote the vertex sets of
$G$ and $G'$ by $\{1,2,\dots,8\}$. The edges in $G$ and $G'$ can be
identified with sets $\{i,j\}$, $1 \le i < j \le 8$. However, to
distinguish between edges in $G$ and edges in $G'$, we denote them by
sets $\{i,j\}$ and $\{i,j\}'$, respectively. There are
${8 \choose 2} = 28$ edges in each of the graphs $G$ and $G'$, and each
edge represents a vertex of the Gosset graph. Pairs of edges in the
same copy are neighbours (as vertices in the Gosset graph) if and only if  they
have a vertex in common, that is, $\{i,j\} \sim \{k,l\}$ if and only if
$| \{i,j\} \cap \{k,l\} | = 1$.  Pairs of edges $\{i,j\}$ and
$\{k,l\}'$ are neighbours (as vertices in the Gosset graph if and only if
$\{i,j\} \cap \{k,l\} = \emptyset$. With this explicit description,
we can prove the following:

\begin{lemma}
Let $x,y$ be two adjacent vertices of the Gosset graph. Then
$$ \kappa(x,y) =  \frac{2}{3}, $$
and the Gosset graph is Bonnet-Myers sharp.
\end{lemma}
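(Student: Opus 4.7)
The plan is to apply Proposition~\ref{prop:curvcalc0} directly. Since the Gosset graph is $D = 27$-regular with diameter $L = 3$, Bonnet-Myers sharpness amounts to showing $\kappa(x,y) = 2/L = 2/3$ for every edge $\{x,y\}$, which the proposition reduces to verifying that every edge lies in exactly $\tfrac{2D}{L} - 2 = 16$ triangles and that there is a perfect matching between $N_x \setminus (N_{xy} \cup \{y\})$ and $N_y \setminus (N_{xy} \cup \{x\})$.

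Using the explicit model of the Gosset graph described just above the lemma, edges come in two symmetry types: within one copy, say $x = \{1,2\}$ and $y = \{1,3\}$ in $G$, and across the two copies, say $x = \{1,2\}$ and $y = \{3,4\}'$. (By the known edge-transitivity of the Gosset graph one case in fact suffices, but both are straightforward to treat directly.) In the same-copy case the common neighbours in $G$ are the six edges of $K_8$ meeting both $\{1,2\}$ and $\{1,3\}$, namely $\{1,4\}, \{1,5\}, \{1,6\}, \{1,7\}, \{1,8\}$ and $\{2,3\}$; the common neighbours in $G'$ are all $\binom{5}{2} = 10$ edges $\{k,l\}'$ with $\{k,l\} \subset \{4,5,6,7,8\}$. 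This gives the required 16 triangles. In the cross-copy case an analogous split yields 8 common neighbours in each copy, again totalling 16.

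For the perfect matching in the same-copy case I would pair $\{2,k\} \leftrightarrow \{3,k\}$ inside $G$ and $\{3,k\}' \leftrightarrow \{2,k\}'$ inside $G'$ for $k \in \{4,\dots,8\}$; each pair is adjacent via the within-copy rule $|\{i,j\} \cap \{k,l\}| = 1$. In the cross-copy case I would use the disjoint-labels rule to pair $\{1,3\} \leftrightarrow \{2,4\}'$, $\{1,4\} \leftrightarrow \{2,3\}'$, $\{2,3\} \leftrightarrow \{1,4\}'$, $\{2,4\} \leftrightarrow \{1,3\}'$, together with the complementary pairing $\{k,l\}' \leftrightarrow \{5,6,7,8\} \setminus \{k,l\}$ for the six 2-subsets $\{k,l\} \subset \{5,6,7,8\}$. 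Proposition~\ref{prop:curvcalc0} then delivers $\kappa(x,y) = (2 + 16)/27 = 2/3 = 2/L$, proving both the curvature value and Bonnet-Myers sharpness.

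The only real obstacle is bookkeeping — correctly enumerating the four relevant neighbour sets (one per copy, for each of $x$ and $y$) and applying the right adjacency rule (intersection of size one within a copy versus disjointness across copies). No delicate estimate is required: once the triangle count $16$ and the explicit perfect matching are in hand, Proposition~\ref{prop:curvcalc0} does all the heavy lifting.
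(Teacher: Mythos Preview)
Your approach is correct and essentially the same as the paper's: count that every edge lies in exactly $16$ triangles, exhibit a perfect matching between the remaining $10+10$ neighbours, and invoke Proposition~\ref{prop:curvcalc0} to get $\kappa(x,y)=(2+16)/27=2/3=2/L$. The paper appeals to distance-transitivity and treats only the same-copy edge $\{1,2\}\sim\{2,3\}$, while you cover both edge types explicitly; your enumerations and matchings check out in each case.
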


\begin{proof}
  Since the Gosset graph is distance-transitive we only need to
  calculate the curvature of one edge, say,
  $e=\{ x=\{1,2\},y=\{2,3\} \}$. Note that $e$ lies in precisely $m=16$
  triangles: There are $6$ common neighbours of $x$ and $y$ in $G$,
  namely, $\{1,3\}$, $\{2,4\}$, $\{2,5\}$, $\{2,6\}$, $\{2,7\}$,
  $\{2,8\}$, and there are $10$ common neighbours of $x$ and $y$ in
  $G'$, namely, $\{4,5\}'$, $\{4,6\}'$, $\{4,7\}'$, $\{4,8\}'$,
  $\{5,6\}'$, $\{5,7\}'$, $\{5,8\}'$, $\{6,7\}'$, $\{6,8\}'$,
  $\{7,8\}'$. Moreover, there is a perfect matching between the $10$
  neighbours of $x$ not in $B_1(y)$ and the $10$ neighbours of $y$ not
  in $B_1(x)$: The $10$ neighbours of $x$ not in $B_1(y)$ are
  $$ z_1=\{1,4\}, z_2=\{1,5\}, z_3=\{1,6\}, z_4=\{1,7\}, z_5=\{1,8\},$$
  and
  $$ z_6=\{3,4\}', z_7=\{3,5\}', z_8=\{3,6\}', z_9=\{3,7\}', z_{10}=\{3,8\}'. $$
  Similarly, the $10$ neighbours of $y$ not in $B_1(x)$ are
  $$ w_1=\{3,4\}, w_2=\{3,5\}, w_3=\{3,6\}, w_4=\{3,7\}, w_5=\{3,8\},$$
  and
  $$ w_6=\{1,4\}', w_7=\{1,5\}', w_8=\{1,6\}', w_9=\{1,7\}', w_{10}=\{1,8\}', $$
  and we match $z_j \sim w_j$, $j=1,2,\dots,10$. Applying Proposition
  \ref{prop:curvcalc0} again, we conlude that
  $$ \kappa(x,y) = \frac{2+m}{D} = \frac{18}{27} = \frac{2}{L}, $$
  finishing the proof.
\end{proof}

It is known that the induced $1$-spheres of the Gosset graph are
isomorphic to the Schl\"afli graph which is ${\rm{srg}}(27,16,10,8)$.
Moreover, the Gosset graph is self-centered since the vertices $\{i,j\}$
and $\{i,j\}'$ are antipoles of each other. Let us, finally, identify
the $\mu$-graphs. Again, by distance-transitivity of the Gosset graph,
it suffices to consider the $\mu$-graph of $x=\{1,2\}$ and
$z=\{3,4\}$. The common neighbours of $x$ and $y$ in the Gosset graph
and corresponding to edges in $G$ are $\{1,3\}$, $\{1,4\}$, $\{2,3\}$
and $\{2,4\}$. The common neighbours of $x$ and $y$ corresponding to
edges in $G'$ are $\{5,6\}'$, $\{5,7\}'$, $\{5,8\}'$, $\{6,7\}'$,
$\{6,8\}'$, $\{7,8\}'$. Together, these represent $10$ vertices of the
Gosset graph and the vertex $\{i,j\} \subset \{1,\dots,4\}$ is
adjacent to each of them except to itself and to the vertex
$\{1,\dots,4\} \backslash \{i,j\}$. Similarly, the vertex $\{k,l\}'$
with $\{k,l\} \subset \{5,\dots,8\}$ is adjacent to each of these
vertices except to itself and to the vertex $\{i,j\}'$ with
$\{i,j\} = \{5,\dots,8\} \backslash \{k,l\}$. This shows that the
$\mu$-graph of $x$ and $z$ is isomorphic to $CP(5)$.

\subsection{Revisiting our examples}
\label{sec:revisex}

Let us first mention a few common properties of the
$(D,L)$-Bonnet-Myers sharp graphs presented in Subsections
\ref{subsec:ex_hypcubes}-\ref{subsec:ex_gosset}. They are all
\begin{itemize}
\item self-centered, 
\item irreducible (with the exception of the hypercubes $Q^n$,
$n \ge 2$), and
\item distance-regular.
\end{itemize} 
A $D$-regular connected finite graph $G=(V,E)$ of diameter $L$ is
called \emph{distance-regular} if there are integers $b_j, c_j$ such
that for any two vertices $x,y \in V$ with $d(x,y) = j$ we have
$d_x^-(y) = c_j$ and $d_x^+(y) = b_j$. The sequence
$$ (b_0,\dots,b_{L-1};c_1=1,\dots,c_L\} $$
is called the \emph{intersection array} of the distance regular graph
$G$. Moreover, $\mu$ denotes the number of common neighbours of a pair
of vertices at distance $2$, i.e., $\mu=c_2$. Distance regular graphs
can be also defined as those graphs $G = (V,E)$ with the property
that, for any choice of integers $k,l,m \ge 0$, the cardinality of
$B_k(x) \cap B_l(y)$ for $x,y \in V$ with $d(x,y) = m$ depends only on
the integers $k,l,m$.

Further properties of these examples are listed in Table
\ref{table:BMsharp_examples} below. We observe that all the above
examples have \emph{symmetric} intersection arrays, that is, we have
$$ c_j = b_{L-j} \quad \text{for $1 \le j \le L$}. $$ 
Moreover, we always have
\begin{equation} \label{eq:DL-bc} 
b_0 = D, \quad b_1 = D+1-\frac{2D}{L} \quad \text{and} \quad c_2 = 
\frac{2(D-L)}{L(L-1)} + 2,
\end{equation}
and all $\mu$-graphs are isomorphic to the cocktail party graph
$CP(c_2/2)$. Furthermore, all $1$-spheres $S_1(x)$ in these graphs are
strongly regular with parameters $(\nu,k,\lambda,\mu)$. We say that
our examples are \emph{locally} ${\rm{srg}}(\nu,k,\lambda,\mu)$.

The parameters $(\nu,k,\lambda,\mu)$ are already determined by the
size of the $\mu$-graphs via the following general result:

\begin{proposition} \label{prop:cocktail-implies-str}
  Let $G=(V,E)$ be a self-centered $(D,L)$-Bonnet-Myers sharp graph such
  that every $\mu$-graph is isomorphic to the cocktail party graph
  $CP(m)$ with 
  $$ m = \frac{D-L}{L(L-1)} + 1. $$
  Suppose that the $1$-sphere $S_1(x)$ in $G$ is strongly regular. Then $S_1(x)$
  is 
  $$ {\rm{srg}}\left( D, \frac{2D}{L}-2, \frac{D-1}{L-1}-3, 
  \frac{2(D-L)}{L(L-1)}
  \right). $$
  Moreover, the adjacency matrix of $S_1(x)$ has second largest eigenvalue
  equals $\frac{(D-L)(L-2)}{L(L-1)}$.
\end{proposition}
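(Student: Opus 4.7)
The plan is to identify the four parameters of $S_1(x)$ from the geometric data one by one, and then apply the standard quadratic for the eigenvalues of a strongly regular graph. Throughout I will write $k$ for the valency parameter of the $\text{srg}$ (not to be confused with an index).

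First, $\nu = |S_1(x)| = D$ follows immediately from the $D$-regularity of $G$. For the valency $k$: any $y \in S_1(x)$ has degree in the induced subgraph $S_1(x)$ equal to $|N_x \cap N_y| = \#_\Delta(x,y)$, and by Proposition \ref{prop:antBMcomb} every edge of a self-centered Bonnet-Myers sharp graph lies in exactly $\frac{2D}{L} - 2$ triangles, so $k = \frac{2D}{L} - 2$. For the $\mu$-parameter: given non-adjacent $y_1, y_2 \in S_1(x)$, one has $d_G(y_1, y_2) = 2$ through $x$, so the $\mu$-graph on $N_{y_1 y_2}$ is by hypothesis isomorphic to $CP(m)$. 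The vertex $x$ sits inside this $\mu$-graph, and every vertex of $CP(m)$ has degree $2m-2$; hence the number of $z \in N_{y_1} \cap N_{y_2}$ adjacent to $x$ equals $2m-2 = \frac{2(D-L)}{L(L-1)}$, which is exactly the number of common neighbours of $y_1, y_2$ in the induced subgraph $S_1(x)$.

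Since $S_1(x)$ is assumed strongly regular, the parameter $\lambda$ is forced by the standard edge-counting identity $k(k-\lambda-1) = (\nu-k-1)\mu$; substituting the values above and simplifying yields $\lambda = \frac{D-1}{L-1} - 3$. (In the degenerate case $L = D$, one has $k = 0$ and $S_1(x)$ is $D$ isolated points, consistent with the paper's convention that $\lambda = *$ is indeterminate.)

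For the eigenvalue claim, the non-trivial eigenvalues of a strongly regular graph are the roots of $\theta^2 - (\lambda - \mu)\theta - (k - \mu) = 0$. A convenient route is to compute $k - \mu = \frac{2(D-L)(L-2)}{L(L-1)}$, which factors as $2 \cdot \frac{(D-L)(L-2)}{L(L-1)}$, and then verify that $\theta_1 = \frac{(D-L)(L-2)}{L(L-1)}$ together with $\theta_2 = -2$ have the correct sum (equal to $\lambda - \mu$) and product (equal to $-(k - \mu)$); this identifies them as the two non-principal eigenvalues. Since $\theta_1 \ge 0 > -2 = \theta_2$, it is the second largest eigenvalue, as claimed. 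The main obstacle is purely bookkeeping in these algebraic simplifications; there is no conceptual difficulty, and every step relies only on results already established earlier in the paper.
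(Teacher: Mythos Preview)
Your proof is correct and takes a genuinely different route from the paper's. The key divergence is in which of $\lambda$ and $\mu$ is computed directly and which is deduced from the identity $k(k-\lambda-1)=(\nu-k-1)\mu$. The paper computes $\lambda$ first, via a somewhat elaborate triangle count: it partitions all triangles meeting $S_1(x)$ into three classes $\mathcal{T}^-, \mathcal{T}^0, \mathcal{T}^+$ (according to whether the third vertex lies in $\{x\}$, in $S_1(x)$, or in $S_2(x)$), computes $|\mathcal{T}^+|$ from $|S_2(x)|$ and the edge count of $CP(m)$, and extracts $\lambda$ from the resulting linear relation; $\mu$ is then obtained from the identity. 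You instead observe that $\mu$ can be read off immediately: for non-adjacent $y_1,y_2\in S_1(x)$, the vertex $x$ itself sits inside the $\mu$-graph $\mathrm{Ind}_G(N_{y_1 y_2})\cong CP(m)$ and has degree $2m-2$ there, and those $2m-2$ neighbours are precisely the common neighbours of $y_1,y_2$ lying in $S_1(x)$. This is cleaner and avoids the auxiliary computation of $|S_2(x)|$ and the triangle bookkeeping. Similarly, for the eigenvalue the paper evaluates the radical formula directly, whereas you verify via Vieta that $-2$ and $\frac{(D-L)(L-2)}{L(L-1)}$ are the two non-principal roots; your approach is slicker given that one already expects $-2$ to appear.
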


The following proof will refer to Theorems \ref{onespheredegree} and \ref{recursionformulas} which appear later in Section \ref{sec:genfacts} but they do not depend on the current section, and hence the following arguments are still applicable.

\begin{proof}
  Assume that the induced subgraph $S_{1}(x)$ is strongly regular with
  parameters $(\nu,k,\lambda,\mu)$. Clearly $\nu = D$. By Theorem
  \ref{onespheredegree} in the next section, we have $k = \frac{2D}{L}
  - 2$, since every vertex is a pole in the case of a self-centered
  graph.

  We now calculate $d_x^-(z)$ for any $z\in S_{2}(x).$ Since the
  $\mu$-graph of $x,z$ is isomorphic to $CP(m)$ we have
  $$ d_x^-(z) = 2m = \frac{2(D-L)}{L(L-1)} + 2 = av_2^-(x). $$
  Let $y\in S_{1}(x).$ By Theorem \ref{recursionformulas}, we have
  $$ d_x^+(y) = D\left( 1 - \frac{2}{L} \right) + d_x^-(y) =
  D\left( 1 - \frac{2}{L} \right) + 1 =
  \frac{(L-2)D}{L} + 1 = av_1^+(x). $$ 
  Thus, using $av_1^+(x)|S_{1}(x)| = av_2^-(x)|S_{2}(x)|,$ we obtain
  $$ |S_{2}(x)| = D(L-1)\frac{(L-2)D+L}{2(D+L(L-2))}.$$ 
  Let $\mathcal{T}^-$ be the set of all triangles containing the
  vertex $x$, $\mathcal{T}^0$ be the set of all triangles in $S_1(x)$
  and and $\mathcal{T}^+$ be the set of all triangles containing two
  vertices in $S_1(x)$ and one vertex in $S_2(x)$. For $z\in S_{2}(x)$
  we have that the number of triangles in $\mathcal{T}^+$ containing
  $z$ is equal to the number of edges in $CP(m),$ which is
  $2m(m-1)$. Thus
  \begin{equation} \label{eq:T+} 
  |\mathcal{T}^+| = |S_{2}(x)|\cdot 2m(m-1) = 
  \frac{D}{L(L-1)}((L-2)D+L)\left(\frac{D}{L}-1\right). 
  \end{equation}
  Let $E(S_{1}(x))$ be the set of edges in $S_{1}(x).$ Since every
  edge in $G$ is contained in precisely $\frac{2D}{L}-2$ triangles we
  have
  $$ |E(S_{1}(x))|\cdot \left(\frac{2D}{L} - 2\right) = |\mathcal{T}^-| + 
  3 |\mathcal{T}^0| + |\mathcal{T}^+|, $$ 
  since every triangle in $\mathcal{T}^- \cup \mathcal{T}^+$ shares
  precisely one edge with $S_1(x)$ and every triangle in $\mathcal{T}^0$
  shares all three edges with $S_1(x)$. Moreover, since $\lambda$ agrees
  with the number of triangles in ${\mathcal T}^0$ containing a fixed edge
  in $E(S_1(X))$, we have
  $$ |E(S_{1}(x))|\cdot \lambda = 3 |\mathcal{T}^0|. $$
  Thus
  \begin{equation} \label{eq:calc_lambda} 
  |E(S_{1}(x))|\cdot \lambda = |E(S_{1}(x))|\cdot 
  \left(\frac{2D}{L}-2\right)- |\mathcal{T}^-|-|\mathcal{T}^+|. 
  \end{equation}
  Note that $|\mathcal{T}^-|$ is equal to the number of edges in
  $S_1(x)$, that is
  $$ |\mathcal{T}^-| = |E(S_{1}(x))| = \frac{|S_1(x)|}{2} \cdot k =
  \frac{D}{2}\left(\frac{2D}{L}-2\right) =
  D\left(\frac{D}{L}-1\right). $$ 
  Plugging this into \eqref{eq:calc_lambda} and using \eqref{eq:T+}
  leads to
  $$ \lambda = \frac{D-1}{L-1} - 3. $$
  Finally, we compute $\mu$ by using
  $\mu = \frac{k(k-\lambda - 1)}{\nu-k-1}$ (see \cite[p. 116]{BH}). This gives
  $$\mu = 2\frac{D-L}{L(L-1)}.$$

  The second largest adjacency eigenvalue of the induced strongly
  regular subgraph $S_1(x)$ is given by (see \cite[Theorem 9.1.2]{BH})
  $$ 
  \frac{1}{2}\left( \lambda - \mu + \sqrt{(\lambda-\mu)^2+4(k-\mu)} \right). 
  $$
  It is straightforward to check that
  $$ (\lambda-\mu)^2 + 4(k-\mu) = 
  \left( \frac{L^2 + D(L-2)}{L(L-1)} \right)^2, $$ 
  which implies 
  $$ \frac{1}{2}\left( \lambda - \mu + \sqrt{(\lambda-\mu)^2+4(k-\mu)} \right)
  = \frac{(D-L)(L-2)}{L(L-1)}. $$
\end{proof}

Applying Proposition \ref{prop:cocktail-implies-str} in our examples
and using the relation \eqref{eq:DL-bc} between the parameters $(D,L)$
and $(b_0,b_1,c_2)$, we conclude that all our distance-regular
examples of Bonnet-Myers sharp graphs are locally strongly regular
with parameters
$$ {\rm{srg}}\left(b_0,b_0-b_1-1,\frac{b_0(c_2-2)}{b_0-b_1+1}-3,c_2-2 \right). $$

We know from Theorem \ref{thm:lichn} that every Bonnet-Myers sharp graph is
also Lichnerowicz sharp. Table \ref{table:BMsharp_examples} contains
also information about the multiplicity $\dim E_{\lambda_1}$ of the
Lichnerowicz eigenvalue $\lambda_1 = \kappa(x,y) = \frac{2}{L}$ for
all $x,y \in V$, $x \neq y$. Note that Lichnerowicz sharpness means that the second largest eigenvalue $\theta_1$ of the adjacency matrix agrees with $b_1 -1 = D - \frac{2D}{L}$ and that there is a classification of all distance-regular graphs with second largest adjacency matrix eigenvalue $\theta_1$ equal to $b_1-1$ (see \cite[Theorem 4.4.11]{BCN89}). For the reader's convenience, we provide here the statement of this theorem.

\begin{theorem}[\cite{BCN89}] \label{thm:4.4.11} Let $G=(V,E)$ be a
  distance-regular graph with second largest eigenvalue
  $\theta_1=b_1-1$. Then at least one of the following holds:
\begin{itemize}
	\item[{\rm (i)}] $G$ is a strongly regular graph with smallest eigenvalue -2;
	\item[{\rm (ii)}] $\mu=1$, i.e., $G$ has numerical girth at least $5$;
	\item[{\rm (iii)}] $\mu=2$, and $G$ is a Hamming graph, a Doob graph, or a locally Petersen graph;
	\item[{\rm (iv)}] $\mu=4$, and $G$ is a Johnson graph;
	\item[{\rm (v)}] $\mu=6$, and $G$ is a demi-cube;
	\item[{\rm (vi)}] $\mu=10$, and $G$ is the Gosset graph.
\end{itemize}
\end{theorem}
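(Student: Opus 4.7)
The plan is to prove this classification by a careful analysis of the local (first neighbourhood) structure forced by the spectral condition $\theta_1 = b_1 - 1$. The guiding principle is that this equality pushes the graph to the extremal end of an interlacing/Biggs-Smith type inequality, and extremality translates into very restrictive combinatorial information about the $\mu$-graphs and the induced subgraph on any $1$-sphere.

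First I would show that $\theta_1 = b_1 - 1$ forces each local graph $\Delta(x) = \mathrm{Ind}_G(N_x)$ to have smallest eigenvalue at least $-2$. The standard route is interlacing of adjacency eigenvalues of $\Delta(x)$ with those of the whole graph: because $\theta_1$ sits exactly at the threshold $b_1-1 = a_1 + c_1 -1$, a direct computation of the quadratic form on indicator-type vectors supported on $N_x$ shows that any eigenvector of $\Delta(x)$ with eigenvalue less than $-2$ would lift to an eigenvector of $G$ with eigenvalue below $-1$ and above $\theta_1$, a contradiction. This is the step where Terwilliger's local eigenvalue machinery is most convenient.

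Next, I would invoke the Cameron--Goethals--Seidel--Shult classification of graphs with smallest eigenvalue $\geq -2$: such a graph is a generalised line graph or one of finitely many exceptional root system graphs ($E_6,E_7,E_8$). Applying this to every local graph $\Delta(x)$ of $G$, and using distance-regularity to propagate the local isomorphism type, the possibilities for $\Delta(x)$ become tightly constrained. In particular, an edge-counting argument with $\mu$-graphs shows that $\mu = c_2$ cannot be arbitrary: the $\mu$-graph is a coclique extension of cliques determined by $\Delta(x)$, and its size must match one of the allowed local patterns. Working through the root system possibilities then leaves only $\mu \in \{1,2,4,6,10\}$, unless the diameter is $2$, which is case (i) and is covered directly by Seidel's classification of strongly regular graphs with smallest eigenvalue $-2$.

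Finally, for each admissible value of $\mu$ I would apply the existing characterisation results: $\mu=1$ gives girth $\geq 5$ by definition; $\mu=2$ falls under Egawa's theorem on distance-regular graphs with $\mu=2$ (Hamming, Doob, locally Petersen); $\mu=4$ together with smallest local eigenvalue $-2$ and the local $\mathrm{srg}$ structure forces $G$ to be a Johnson graph by a theorem of Terwilliger; $\mu=6$ similarly forces a demi-cube; and $\mu=10$ pins $G$ down to the Gosset graph via its uniqueness among distance-regular graphs with those parameters. I expect the genuinely hard step to be the local eigenvalue bound in the first paragraph: obtaining smallest eigenvalue $\geq -2$ from $\theta_1 = b_1-1$ is where all the algebraic content is concentrated, and after that step the case analysis is essentially an application of a chain of known (but nontrivial) classification theorems.
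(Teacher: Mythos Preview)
The paper does not prove this theorem at all: it is quoted verbatim from \cite[Theorem 4.4.11]{BCN89} as a black-box classification, introduced with ``For the reader's convenience, we provide here the statement of this theorem'', and then invoked downstream without further argument. There is therefore no ``paper's own proof'' to compare your attempt against.

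That said, your outline is a fair summary of the strategy behind the actual proof in \cite{BCN89}. The key algebraic step is indeed that $\theta_1 = b_1 - 1$ forces the local graphs $\Delta(x)$ to have smallest eigenvalue at least $-2$ (this is where Terwilliger's inequality and the associated local eigenvalue bounds enter), after which Cameron--Goethals--Seidel--Shult pins down the local structure, and the $\mu$-graph analysis together with the individual characterisation theorems (Egawa for $\mu=2$, Terwilliger/Neumaier for the Johnson case, etc.) finishes the job. Your sketch is light on exactly how the interlacing argument produces the bound $\geq -2$ rather than some weaker bound --- this is genuinely delicate and in \cite{BCN89} goes through the machinery of feasible local eigenvalues and the tridiagonal representation of the intersection algebra --- but as a plan it captures the logical architecture correctly. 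If you intend to flesh this out, the first paragraph is where essentially all the work lies; the rest is bookkeeping against existing literature.
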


This classification contains all our examples as well as many others
which are not Bonnet-Myers sharp. In Section \ref{sec:curvandeig}, we will
identify all Lichnerowicz sharp graphs in this classification. It can
be checked from this classification that all \emph{distance regular}
Bonnet-Myers sharp graphs are just the ones given in our examples. In
Sections \ref{sec:transpgeod} and \ref{sec:antBMstrspher}, however, we
will follow a different route and prove a much stronger result for
Bonnet-Myers sharp graphs: a full classification of all
\emph{self-centered} Bonnet-Myers sharp graphs.

\begin{table}[h!]
\centering
\begin{tabular}{l|l|l|l|l|l|l} 
$G$ & $(D,L)$ & $|V|$ & $\dim E_{\lambda_1}$ & $\mu$-graph & $S_1(x)$ &
intersection array \\[.1cm] \hline &&&&&& \\[-.2cm] 

$Q^n$ & $(n,n)$ & $2^n$ & $n$ & $CP(1)$ & $n$ points & $c_j=j$ \\[.1cm] 

$CP(n)$ & $(2n-2,2)$ & $2n$ & $n$ & $CP(n-1)$ & $CP(n-1)$ & 
$c_2=2n-2$ \\[.1cm] 

$J(2n,n)$ & $(n^2,n)$ & $2n \choose n$ & $2n-1$ & $CP(2)$ & $K_n \times K_n$
& $c_j = j^2$ \\[.1cm] 

$Q^{2n}_{(2)}$ & $(2n^2-n,n)$ & $2^{2n-1}$ & $2n$ & $CP(3)$ & $J(2n,2)$ & $c_j=j(2j-1)$ \\[.1cm] 

Gosset & $(27,3)$ & $56$ & $7$ & $CP(5)$ & Schl\"afli & $(c_2,c_3)=(10,27)$
\end{tabular}
\caption{$(D,L)$-Bonnet-Myers sharp graphs from Subsections 
\ref{subsec:ex_hypcubes}--\ref{subsec:ex_gosset}} 
\label{table:BMsharp_examples}
\end{table}

\section{General facts about Bonnet-Myers sharp graphs}
\label{sec:genfacts}

\subsection{A useful inequality and its applications}

Henceforth, $\Delta = \Delta_G$ denotes the normalized Laplacian on a
graph $G=(V,E)$ defined in \eqref{eq:Deltanorm}. We start with the following
lemma.

\begin{lemma} \label{lem:W1_Delta}
	Let $G=(V,E)$ be a finite connected $D$-regular graph and
	$u,v \in V$, $p \in [0,1]$. Then for any function $f:V\rightarrow \IR$,
	$$ \sum_{z \in V} f(z) \left( \mu_u^p(z)-\mu_v^p(z) \right) = f(u) - f(v) + (1-p)\Delta f(u) - 
	(1-p)\Delta f(v). $$
	In particular, if $f \in \textrm{\rm{1}--{\rm Lip}}(V)$, then
	$$ W_1(\mu_u^p,\mu_v^p) \ge f(u) - f(v) + (1-p)\Delta f(u) - 
	(1-p)\Delta f(v), $$
        with equality iff $f$ is an optimal Kantorovich potential transporting
        $\mu_u^p$ to $\mu_v^p$.
\end{lemma}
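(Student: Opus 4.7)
The plan is to derive the identity by direct expansion, using only the definition of the probability measure $\mu^p_u$ together with the formula \eqref{eq:Deltanorm} for the normalized Laplacian on a $D$-regular graph, and then to obtain the inequality by invoking Kantorovich duality (Theorem \ref{Kantorovich}).

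First I would unwind the definition of $\mu_u^p$. Since $G$ is $D$-regular, $\mu_u^p(u)=p$, $\mu_u^p(z)=(1-p)/D$ for $z\sim u$, and $\mu_u^p(z)=0$ otherwise. Thus, for any $f:V\to\IR$,
\begin{equation*}
\sum_{z\in V} f(z)\,\mu_u^p(z) = p\,f(u) + \frac{1-p}{D}\sum_{z\sim u}f(z).
\end{equation*}
By the definition \eqref{eq:Deltanorm} of $\Delta$ in the $D$-regular case, $\frac{1}{D}\sum_{z\sim u}f(z) = f(u) + \Delta f(u)$, so the right-hand side simplifies to $f(u) + (1-p)\,\Delta f(u)$. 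Performing the same computation at $v$ and subtracting proves the first (equality) assertion of the lemma.

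For the second assertion, I would invoke Kantorovich duality (Theorem \ref{Kantorovich}) applied to the probability measures $\mu_u^p, \mu_v^p$. Since both measures have finite support, any $1$--Lipschitz function $f$ agrees on that support with a $1$--Lipschitz function of finite support (one can truncate $f$ on $B_1(u)\cup B_1(v)$ without changing the relevant sum and without violating $1$--Lipschitzness, by replacing $f$ with a standard McShane-type extension; equivalently one simply notes that the duality sum depends only on the values of $f$ on $B_1(u)\cup B_1(v)$). Hence
\begin{equation*}
W_1(\mu_u^p,\mu_v^p) \;\ge\; \sum_{z\in V} f(z)\bigl(\mu_u^p(z)-\mu_v^p(z)\bigr),
\end{equation*}
with equality precisely when $f$ attains the supremum in Theorem \ref{Kantorovich}, i.e.\ when $f$ is an optimal Kantorovich potential. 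Combining this with the identity from the first step yields the claimed inequality and its equality condition.

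The argument is essentially routine; the only minor subtlety is reconciling the class $\textrm{\rm{1}--{\rm Lip}}(V)\cap\ell_\infty(V)$ appearing in Kantorovich duality with a general $1$--Lipschitz $f$ used in the statement, but this is harmless because both $\mu_u^p$ and $\mu_v^p$ are finitely supported, so only the values of $f$ on $B_1(u)\cup B_1(v)$ matter. I do not anticipate any genuine obstacle.
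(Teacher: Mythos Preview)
Your proof is correct and follows essentially the same route as the paper: expand $\sum_z f(z)\mu_u^p(z)$ directly from the definition of $\mu_u^p$, rewrite it as $f(u)+(1-p)\Delta f(u)$ via \eqref{eq:Deltanorm}, subtract the analogous expression at $v$, and then invoke Kantorovich duality for the inequality and equality case. The only difference is that your discussion of truncation/McShane extension to reconcile with the $\ell_\infty$ hypothesis in Theorem~\ref{Kantorovich} is unnecessary here, since $G$ is assumed finite and hence every $f:V\to\IR$ is automatically bounded.
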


\begin{proof}
	The first statement is a simple calculation relating $\mu_u^p$ and $\mu_u^p$ to the Laplaction $\Delta$:
	\begin{eqnarray*}
		\sum_{z \in V} f(z) \left( \mu_u^p(z)-\mu_v^p(z) \right)
		&=& \left[ p f(u) + \frac{1-p}{D} \sum_{z \sim u} f(z) \right] - 
		\left[ p f(v) + \frac{1-p}{D} \sum_{w \sim v} f(w) \right] \\
		&=& f(u) + (1-p)\Delta f(u) - f(v) - (1-p)\Delta f(v).  
	\end{eqnarray*}
	
	In particular, the second statement follows immediately from Theorem \ref{Kantorovich} (Kantorovich Duality).
\end{proof}

The following result from \cite{MW17} will prove very useful in our investigations.

\begin{theorem}[\cite{MW17}]\label{ineq}
  Let $G = (V,E)$ be a finite connected $D$-regular graph with diameter $L$. Let
  $x,y\in V$ with $d(x,y) = L$, and $z$ be a vertex lying on a geodesic
  from $x$ to $y$ and $f \in \textrm{\rm{1}--\rm{Lip}}(V)$ satisfy 
  $f(y)-f(x) = L$. Then
  $$\Delta f(z) \leq 1 -\kappa(x,z)d(x,z).$$
\end{theorem}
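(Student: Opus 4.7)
The plan is to combine Lemma \ref{lem:W1_Delta}, which converts a lower bound on a Wasserstein distance into a Laplacian difference, with the definition of $\kappa$ via $W_1$ at the specific idleness $p = \tfrac{1}{D+1}$, using the geodesic/Lipschitz hypothesis only to pin down $f(z)-f(x)$ exactly.

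The crucial preliminary observation I would record is that, since $f$ is $1$-Lipschitz and
\[
L = f(y) - f(x) = \bigl(f(y) - f(z)\bigr) + \bigl(f(z) - f(x)\bigr) \le d(z,y) + d(x,z) = L,
\]
both Lipschitz bounds are saturated, so $f(z) - f(x) = d(x,z)$. This is the only place the geodesic assumption on $z$ is used.

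Next I would apply Lemma \ref{lem:W1_Delta} with $p = \tfrac{1}{D+1}$ and with $(u,v) = (z,x)$, obtaining
\[
W_1(\mu_z,\mu_x) \;\ge\; d(x,z) \;+\; \tfrac{D}{D+1}\bigl(\Delta f(z) - \Delta f(x)\bigr).
\]
On the other hand, unpacking the definition \eqref{kpklly} of $\kappa$ gives
\[
W_1(\mu_x,\mu_z) \;=\; d(x,z)\Bigl(1 - \tfrac{D}{D+1}\kappa(x,z)\Bigr).
\]
By the symmetry of $W_1$ these two sides agree; subtracting $d(x,z)$ and clearing the factor $\tfrac{D}{D+1}$ yields
\[
\Delta f(z) \;\le\; \Delta f(x) - \kappa(x,z)\,d(x,z).
\]

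To finish, I would simply note that since $f \in \textrm{1-Lip}(V)$, every neighbour $u$ of $x$ satisfies $f(u) - f(x) \le 1$, so $\Delta f(x) \le 1$, and substituting gives the stated estimate. There is no real obstacle; the only mild subtlety is bookkeeping the factor $\tfrac{D}{D+1}$ that relates Ollivier's original curvature $\kappa_{1/(D+1)}$ to the modified curvature $\kappa$ appearing in the statement.
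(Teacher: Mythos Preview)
Your proof is correct and follows essentially the same route as the paper: both apply Lemma~\ref{lem:W1_Delta} at idleness $p=\tfrac{1}{D+1}$, use the definition of $\kappa$ to rewrite $W_1(\mu_x,\mu_z)$, and finish with $\Delta f(x)\le 1$. The only cosmetic difference is that the paper applies the lemma to $-f$ with the pair $(x,z)$, whereas you apply it to $f$ with the pair $(z,x)$ and invoke the symmetry of $W_1$; these are equivalent manoeuvres.
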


A very similar result was stated in \cite[Theorem 4.1]{MW17} for the
specific function $f = d(x,\cdot)$. The proof is a straightforward
consequence of the alternative definition \eqref{eq:kappally_alt} of
Ollivier Ricci curvature. For the reader's convenience, we provide a
direct proof not making use of \eqref{eq:kappally_alt}.

\begin{proof}
  Observe that the negative function $-f$ lies in
  $\textrm{{1}--{\rm Lip}}(V)$, too. Choosing $p = \frac{1}{D+1}$,
  Lemma \ref{lem:W1_Delta} implies
  $$ W_{1}(\mu_{x},\mu_{z}) \ge f(z) - f(x) -\frac{D}{D+1} \Delta f(x) + \frac{D}{D+1} \Delta f(z). $$
  Thus
  $$\Delta f(z) \leq \frac{D+1}{D}W_{1}(\mu_{x},\mu_{z}) +
  \frac{D+1}{D}(f(x)-f(z))+\Delta f(x).$$ 
  Note that, since $f \in \textrm{\rm{1}--{\rm Lip}}(V)$ and since
  $f(y) - f(x) = L = d(y,x)$, $f(z) - f(x) = d(x,z)$ and
  $\Delta f(x) \leq 1.$ Therefore
  \begin{align*}
    \Delta f(z) & \leq \frac{D+1}{D}(W_{1}(\mu_{x},\mu_{z})-d(x,z))+1
    \\
                & = 1 - \kappa(x,z)d(x,z).
  \end{align*}
\end{proof}

\begin{lemma}\label{geosharp}
  Let $G = (V,E)$ be a $(D,L)$-Bonnet-Myers sharp graph and let
  $x,y\in V$ with $d(x,y) = L$.
  Let $z,w$ be two different vertices lying on a geodesic from $x$ to
  $y$ with $d(x,z)+d(z,w)+d(w,y) = L$. Then
  $$\kappa(z,w) = \frac{2}{L}.$$
\end{lemma}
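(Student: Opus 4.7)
The plan is to prove $\kappa(z,w) = 2/L$ by matching upper and lower bounds. The lower bound $\kappa(z,w) \ge 2/L$ is essentially free: Bonnet-Myers sharpness says $\inf_{a \sim b} \kappa(a,b) = 2/L$, and inequality \eqref{eq:infeqinf} extends this infimum to all pairs of distinct vertices, so $\kappa(a,b) \ge 2/L$ whenever $a \neq b$.

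For the matching upper bound, I would use the test function $f = d(x,\cdot)$. It is $1$-Lipschitz, and the hypothesis $d(x,z) + d(z,w) + d(w,y) = L$ guarantees that $f(y) - f(x) = L$ and $f(w) - f(z) = d(z,w)$, so $f$ is tight on each of the pairs $(x,y)$, $(x,z)$, $(y,w)$, and $(z,w)$. First, apply Theorem \ref{ineq} to $f$ to obtain $\Delta f(z) \le 1 - \kappa(x,z)d(x,z)$. To get a companion \emph{lower} bound on $\Delta f(w)$, apply Theorem \ref{ineq} a second time with the roles of $x$ and $y$ exchanged and with the $1$-Lipschitz function $-f$ (which still satisfies the hypothesis $(-f)(x) - (-f)(y) = L$ after the swap); this yields $\Delta f(w) \ge \kappa(y,w)d(y,w) - 1$. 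Plugging in the Bonnet-Myers-sharp bounds $\kappa(x,z), \kappa(y,w) \ge 2/L$ from the previous paragraph and using the identity $d(x,z) + d(y,w) = L - d(z,w)$ produces $\Delta f(w) - \Delta f(z) \ge -2d(z,w)/L$.

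To convert this Laplacian bound into a curvature bound, apply Lemma \ref{lem:W1_Delta} to the pair $(z,w)$ with the function $-f$ and $p = 1/(D+1)$. This produces
$$ W_1(\mu_z, \mu_w) \ge d(z,w) + \frac{D}{D+1}\bigl[\Delta f(w) - \Delta f(z)\bigr] \ge d(z,w)\left(1 - \frac{2D}{(D+1)L}\right). $$
Solving the identity $W_1(\mu_z, \mu_w) = d(z,w)\bigl(1 - \tfrac{D}{D+1}\kappa(z,w)\bigr)$ for $\kappa(z,w)$ yields $\kappa(z,w) \le 2/L$, which combined with the lower bound closes the proof.

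The main difficulty is handling a potentially non-adjacent pair $(z,w)$, for which the usual Bonnet-Myers-style inequality $\kappa(z,w) \le 2/d(z,w)$ is too weak (it only gives $\le 2/d(z,w) \ge 2/L$, going the wrong way). The trick is to use a single test function $f = d(x,\cdot)$ simultaneously as an optimal Kantorovich potential for three different pairs, $(x,z)$, $(y,w)$, and $(z,w)$, and to exploit the cancellation $d(x,z) + d(y,w) = L - d(z,w)$ forced by the distance decomposition, so that the endpoint terms drop out and only a clean multiple of $d(z,w)$ survives.
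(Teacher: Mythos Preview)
Your proof is correct but takes a different route from the paper.  The paper's argument is a direct application of the triangle inequality for $W_1$: write
\[
W_1(\mu_x,\mu_y) \le W_1(\mu_x,\mu_z) + W_1(\mu_z,\mu_w) + W_1(\mu_w,\mu_y),
\]
express each term via $\kappa_{1/(D+1)}$, and observe that if $\kappa(z,w) > 2/L$ then (together with the Bonnet-Myers lower bounds on the other two pieces) one would get $\kappa(x,y) > 2/L$, contradicting \eqref{eq:kapzw_ineq}.  Your approach instead invokes Theorem~\ref{ineq} twice (once for $z$ along $x \to y$, once for $w$ along $y \to x$ with $-f$) to bound $\Delta f(z)$ from above and $\Delta f(w)$ from below, and then feeds the difference into Lemma~\ref{lem:W1_Delta} for the pair $(z,w)$.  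Both arguments rest on Kantorovich duality and the global lower bound $\kappa \ge 2/L$; the paper's is shorter and never mentions $\Delta f$, while yours makes the test function $f = d(x,\cdot)$ explicit and effectively previews the Laplacian identity of Theorem~\ref{Ftrick}.  One minor point: when $z = x$ (or $w = y$) your appeal to Theorem~\ref{ineq} involves the undefined quantity $\kappa(x,x)$; the needed inequality $\Delta f(x) \le 1$ (resp.\ $\Delta f(y) \ge -1$) still holds directly from $1$-Lipschitzness, but you should note this corner case.
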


\begin{proof}
  We have to show that
  $\kappa_{\frac{1}{D+1}}(z,w) = \frac{D}{D+1}\frac{2}{L}$. The
  triangle inequality tells us that
  \begin{align*}
    W_{1}(\mu_{x},\mu_{y})  \leq & W_{1}(\mu_{x},\mu_{z})+W_{1}(\mu_{z},\mu_{w})+W_{1}(\mu_{w},\mu_{y})
    \\
    = & d(x,z)(1-\kappa_{\frac{1}{D+1}}(x,z))+d(z,w)(1-\kappa_{\frac{1}{D+1}}(z,w))+d(w,y)(1-\kappa_{\frac{1}{D+1}}(w,y))
    \\
    = & L - d(x,z)\kappa_{\frac{1}{D+1}}(x,z)-d(z,w)\kappa_{\frac{1}{D+1}}(z,w)-d(w,y)\kappa_{\frac{1}{D+1}}(w,y).
  \end{align*}
  Bonnet-Myers sharpness means that
  $$\kappa_{\frac{1}{D+1}}(u,v) \geq \frac{D}{D+1}\frac{2}{L},$$
  for all $u,v\in V, u\neq v$. Assume that
  $\kappa_{\frac{1}{D+1}}(z,w) > \frac{2D}{D+1}\frac{1}{L}$. Then
  $$ W_{1}(\mu_{x},\mu_{y}) < L - 
  \frac{D}{D+1}\frac{2}{L}(d(x,z)+d(z,w)+d(w,y)) = L - \frac{2D}{D+1},
  $$
  and so
  $$\kappa(x,y) > \frac{2}{L},$$
  which is a contradiction to \eqref{eq:kapzw_ineq}. Thus
  $$\kappa(z,w) = \frac{2}{L}.$$
\end{proof}

Note that we can choose $z=x$ or $w = y$ in the statement of the lemma above.
This, together with Theorem \ref{ineq}, leads to the following result:

\begin{theorem}\label{Ftrick}
  Let $G = (V,E)$ be a $(D,L)$-Bonnet-Myers sharp graph with diameter
  $L$. Let $x,y\in V$ with $d(x,y) = L$, and $z$ be a vertex lying on a
  geodesic from $x$ to $y$, and
  $f\in \textrm{\rm{1}--{\rm Lip}}$ satisfy $f(y)-f(x) = L$. Then
  \begin{equation} \label{eqn:Ftrick}
  \Delta f(z) = 1 -\frac{2d(x,z)}{L}.
  \end{equation}
  
\end{theorem}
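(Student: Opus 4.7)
The plan is to sandwich $\Delta f(z)$ between matching upper and lower bounds, both obtained by applying Theorem \ref{ineq}. First, I would apply Theorem \ref{ineq} directly with the pair $(x,y)$ and the given function $f$. This immediately yields
$$\Delta f(z) \le 1 - \kappa(x,z)\, d(x,z).$$
When $z \ne x$, Lemma \ref{geosharp} (applied to the two distinct vertices $x$ and $z$ on the geodesic from $x$ to $y$, which is legitimate since $d(x,x)+d(x,z)+d(z,y) = L$) gives $\kappa(x,z) = 2/L$, so the right-hand side becomes $1 - 2d(x,z)/L$. In the degenerate case $z=x$, the bound reads $\Delta f(x) \le 1$, which still coincides with the target value $1 - 2\cdot 0/L = 1$.

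For the matching lower bound, I would consider the function $g := -f$. Since $f$ is $1$-Lipschitz, so is $g$, and $g(x) - g(y) = L$. Therefore the hypotheses of Theorem \ref{ineq} are satisfied for $g$ with the roles of $x$ and $y$ interchanged, and $z$ still lies on a geodesic from $y$ to $x$ (geodesics are reversible). Applying the theorem gives
$$\Delta g(z) \le 1 - \kappa(y,z)\, d(y,z),$$
and Lemma \ref{geosharp} again yields $\kappa(y,z) = 2/L$ whenever $z \ne y$, while the case $z=y$ again works trivially. Rewriting in terms of $f$ and using $d(y,z) = L - d(x,z)$, this becomes
$$-\Delta f(z) \le 1 - \frac{2(L - d(x,z))}{L} = \frac{2 d(x,z)}{L} - 1,$$
that is, $\Delta f(z) \ge 1 - 2d(x,z)/L$.

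Combining the two inequalities forces equality, proving \eqref{eqn:Ftrick}. There is no real obstacle here: the proof is essentially a symmetrization trick, with the two curvature equalities $\kappa(x,z)=\kappa(y,z)=2/L$ coming for free from Bonnet-Myers sharpness via Lemma \ref{geosharp}. The only minor care needed is to check that the endpoint cases $z=x$ and $z=y$ are consistent; both reduce to the trivial bound $|\Delta f| \le 1$ on a $1$-Lipschitz function, which matches the asserted formula at those points.
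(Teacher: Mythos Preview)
Your proof is correct and follows essentially the same approach as the paper: apply Theorem~\ref{ineq} to $f$ (with base point $x$) and to $-f$ (with base point $y$), use Lemma~\ref{geosharp} to evaluate both curvatures as $2/L$, and conclude by sandwiching. Your handling of the endpoint cases $z=x$ and $z=y$ is a slight elaboration over the paper's version, which glosses over them.
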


\begin{proof}
From Lemma \ref{geosharp}, $\kappa(x,z) = \kappa(y,z) = \frac{2}{L}.$ We obtain, from Theorem \ref{ineq}
$$\Delta f(z) \leq  1 -\frac{2d(x,z)}{L},$$
and
$$\Delta (-f(z)) \leq  1 -\frac{2d(y,z)}{L} = 1 -\frac{2(L-d(x,z))}{L} = \frac{2d(x,z)}{L} - 1 .$$
Thus 
$$\Delta f(z) = 1 -\frac{2d(x,z)}{L},$$
as required. 
\end{proof}

\begin{theorem}\label{lieongeos}
  Let $G = (V,E)$ be a $(D,L)$-Bonnet-Myers sharp graph and $x,y\in V$
  with $d(x,y) = L$. Then any vertex
  $u\in V$ lies on a geodesic from $x$ to $y$, that is, we have
  $[x,y] = V$.
\end{theorem}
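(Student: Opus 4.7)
The plan is to exploit Theorem \ref{Ftrick} by applying it to two complementary distance functions and combining the outputs into a single auxiliary function $g$ whose zero-set is exactly $[x,y]$. A short non-negativity argument then forces $[x,y]$ to be closed under taking neighbours, which gives the conclusion via connectedness of $G$.

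More concretely, first I would set $f(u):=d(x,u)$ and $\tilde f(u):=d(y,u)$, both of which are $1$-Lipschitz. Since $f(y)-f(x)=L$ and $\tilde f(x)-\tilde f(y)=L$, Theorem \ref{Ftrick} applies (with the roles of $x$ and $y$ swapped in the second instance) and yields, for every $z \in [x,y]$,
\begin{equation*}
\Delta f(z) = 1 - \frac{2 d(x,z)}{L}, \qquad \Delta \tilde f(z) = 1 - \frac{2 d(y,z)}{L}.
\end{equation*}
Adding these and using $d(x,z)+d(y,z)=L$ for $z\in[x,y]$, the two expressions cancel to give $\Delta(f+\tilde f)(z)=0$ on $[x,y]$.

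Next I would introduce $g:=f+\tilde f - L$. By the triangle inequality $g\ge 0$ on all of $V$, with equality precisely on $[x,y]$; moreover $\Delta g = \Delta f + \Delta \tilde f$ vanishes on $[x,y]$ (the constant $L$ is annihilated by $\Delta$). For any $z\in[x,y]$ the identity
\begin{equation*}
\Delta g(z) = \frac{1}{D}\sum_{w\sim z} \bigl(g(w)-g(z)\bigr) = \frac{1}{D}\sum_{w\sim z} g(w)
\end{equation*}
collapses, since $g(z)=0$, to a sum of non-negative terms equal to zero. Hence $g(w)=0$ for every neighbour $w$ of $z$, i.e.\ every neighbour of $z$ lies in $[x,y]$. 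Thus $[x,y]$ is closed under taking neighbours, and connectedness of $G$ together with $x\in[x,y]$ forces $[x,y]=V$.

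The main obstacle is conceptual rather than technical: applied to $f(u)=d(x,u)$ alone, Theorem \ref{Ftrick} gives only the bounded value $1-2d(x,z)/L$ on $[x,y]$, from which no contradiction with $u\notin[x,y]$ can easily be drawn. The trick is to use Theorem \ref{Ftrick} twice with swapped endpoints so that the linear contributions cancel, producing a harmonic-type statement for $g$ on $[x,y]$ that is strong enough to propagate membership of $[x,y]$ across the whole graph. Once this symmetrisation is noticed, the remaining steps are routine triangle-inequality and positivity manipulations.
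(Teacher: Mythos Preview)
Your proof is correct and is essentially the same as the paper's: both apply Theorem~\ref{Ftrick} to the two distance functions $d(x,\cdot)$ and $d(\cdot,y)$ (the paper uses $L-d(\cdot,y)$) and combine them into the non-negative function $d(x,\cdot)+d(\cdot,y)-L$, which vanishes exactly on $[x,y]$ and has vanishing Laplacian there. The only cosmetic difference is that the paper phrases the propagation step as a contradiction via a ``closest bad vertex'', whereas you argue directly that $[x,y]$ is closed under taking neighbours; the underlying computation is identical.
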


\begin{proof}
  Let $f : = d(x,\cdot),\: g:= L - d(\cdot, y)$. We must show that
  $f = g$. Note that
  $$ f(z)-g(z) = d(x,z) + d(z,y) - L \geq 0. $$
  Thus $f\geq g$. Suppose $f \neq g$. Then there exists a vertex $z$
  closest to $x$ with $f(z)>g(z)$ with $z \neq x$. Hence there exists
  a vertex $w\sim z$ with $d(x,w)< d(x,z)$. By our assumptions we have
  $f(w) = g(w)$. Therefore $w$ is on a geodesic from $x$ to $y$. Thus,
  by Theorem \ref{Ftrick}, we have $\Delta f(w) = \Delta g(w)$.
  However $f(z)>g(z)$ and so
  \begin{eqnarray*}
    D \Delta f(w) &=& \sum_{u\sim w} (f(u)-f(w)) \\
                  &=& f(z) - f(w) + \sum_{\substack{u\sim w \\ u\neq z}}(f(u)-f(w)) \\
                  &>& g(z) - g(w) + \sum_{\substack{u\sim w \\ u\neq z}}(g(u)-g(w)) \\
                  &=& D \Delta g(w)
  \end{eqnarray*}
  which is a contradiction. Thus $f = g$, completing the proof.
\end{proof}

\begin{corollary} 
  Let $G = (V,E)$ be Bonnet-Myers sharp. Then every vertex in $V$ has
  at most one antipole.
\end{corollary}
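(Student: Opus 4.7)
The plan is to derive the uniqueness of antipoles as an almost immediate consequence of Theorem \ref{lieongeos}. Suppose for contradiction that some vertex $x \in V$ has two distinct antipoles $y_1, y_2 \in V$, meaning $d(x,y_1) = d(x,y_2) = L$ while $y_1 \neq y_2$. The idea is to use one antipole to force the other into a rigid geodesic structure and obtain a contradiction about distances.

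Specifically, I would apply Theorem \ref{lieongeos} to the antipodal pair $(x,y_1)$. This gives $[x,y_1] = V$, so in particular $y_2 \in [x,y_1]$. By the definition of the interval, this means
\begin{equation*}
d(x,y_2) + d(y_2, y_1) = d(x,y_1) = L.
\end{equation*}
Since $d(x,y_2) = L$ (as $y_2$ is also an antipole of $x$), this forces $d(y_2,y_1) = 0$, hence $y_1 = y_2$, contradicting our assumption.

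There is essentially no obstacle here — all the heavy lifting has already been done in Theorem \ref{lieongeos}, whose proof used the sharp Laplacian identity of Theorem \ref{Ftrick} applied to $f = d(x,\cdot)$ and $g = L - d(\cdot,y)$. The corollary merely exploits the symmetry: once \emph{every} vertex lies on a geodesic from $x$ to any antipole $y_1$, a second antipole $y_2$ must also lie on such a geodesic, and being at the maximal distance $L$ from $x$ forces it to coincide with the endpoint $y_1$. Thus the proof is a one-line deduction from the preceding theorem.
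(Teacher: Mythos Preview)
Your proof is correct and takes essentially the same approach as the paper: both apply Theorem \ref{lieongeos} to place one antipole on a geodesic between $x$ and the other, then derive a contradiction from the distance equation. The only cosmetic difference is that the paper swaps the roles of $y_1$ and $y_2$ and phrases the contradiction as $d(x,y_1) < d(x,y_2)$, whereas you deduce $d(y_1,y_2)=0$ directly.
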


\begin{proof}
  Assume $x \in V$ has two different antipoles $y_1$ and $y_2$. Then
  $y_1$ lies on a geodesic from $x$ to $y_2$, by Theorem
  \ref{lieongeos} and, since $y_1 \neq y_2$,
  $$ d(x,y_1) < d(x,y_2) = {\rm diam}(G), $$
  which contradicts to the assumption that $y_1$ is an antipole of $x$.
\end{proof}

\begin{theorem}\label{onespheredegree}
	Let $G= (V,E)$ be $(D,L)$-Bonnet-Myers sharp graph and $x \in V$ be a pole. Then we have for every edge $\{x,y\} \in E$:
	\begin{itemize}
		\item[(a)] The edge $\{x,y\}$ lies in precisely $\frac{2D}{L}-2$ triangles, or, in other words, the induced subgraph $S_1(x)$ is $\left( \frac{2D}{L}-2 \right)$-regular;
		\item[(b)] There is a perfect matching between the sets $N_x \backslash (N_{xy} \cup \{y\})$ and
		$N_y \backslash (N_{xy} \cup \{x\})$;
		\item[(c)] There is an optimal transport plan $\pi$ transporting $\mu_x$ to $\mu_y$ which is based on triangles and a perfect matching (see Definition \ref{def:plan_tpm}) with the cost $$\textup{cost}(\pi)=\frac{1}{D+1}\left(D+1-\frac{2D}{L}\right).$$
	\end{itemize}
\end{theorem}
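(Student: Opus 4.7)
The plan is to squeeze all three conclusions out of a single identity for $\Delta f$ with $f=d(x,\cdot)$. Since $x$ is a pole, fix an antipole $\bar x \in V$ with $d(x,\bar x)=L$. By Theorem \ref{lieongeos} we have $[x,\bar x]=V$, so every $z\in V$ satisfies $d(x,z)+d(z,\bar x)=L$; in particular every vertex lies on a geodesic from $x$ to $\bar x$. The function $f=d(x,\cdot)$ is $1$-Lipschitz and satisfies $f(\bar x)-f(x)=L$, so Theorem \ref{Ftrick} yields the global identity
\[
 \Delta f(z) \;=\; 1 - \frac{2\,d(x,z)}{L}, \qquad z\in V.
\]

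To prove (a), I would evaluate this identity at $y\in N_x$, where $d(x,y)=1$, to get $\Delta f(y)=1-\tfrac{2}{L}$. On the other hand $\Delta f(y)$ can be computed combinatorially by partitioning $N_y$ into $\{x\}$, the common neighbours $N_{xy}$, and $N_y\setminus(N_{xy}\cup\{x\})$. Vertices in the last set are neighbours of $y$ that are \emph{not} adjacent to $x$; since $[x,\bar x]=V$ forces $d(x,\cdot)\le 2$ on $B_1(y)$, each such vertex sits at distance exactly $2$ from $x$. Writing $m=|N_{xy}|=\#_\Delta(x,y)$ and summing, the expression collapses to $\Delta f(y)=(D-2-m)/D$; equating with $1-\tfrac{2}{L}$ gives $m=\tfrac{2D}{L}-2$, proving (a).

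For (b) and (c) I would combine this with Proposition \ref{prop:curvcalc0}. Applying Lemma \ref{geosharp} to the vertices $x,y$ on the geodesic from $x$ to $\bar x$ (using $d(x,x)+d(x,y)+d(y,\bar x)=L$) yields $\kappa(x,y)=\tfrac{2}{L}$. With $m=\tfrac{2D}{L}-2$ just computed, the upper bound $\kappa(x,y)\le (2+m)/D$ of Proposition \ref{prop:curvcalc0} is attained as an equality, and the characterisation of the equality case in that proposition forces the existence of a perfect matching between $N_x\setminus(N_{xy}\cup\{y\})$ and $N_y\setminus(N_{xy}\cup\{x\})$, which is (b). The transport plan $\pi$ constructed explicitly in the proof of Proposition \ref{prop:curvcalc0} (namely diagonal on $N_{xy}\cup\{x,y\}$, and matched unit-length transports along the matching elsewhere) is then an optimal transport plan based on triangles and a perfect matching in the sense of Definition \ref{def:plan_tpm}, and its cost equals $(D-1-m)/(D+1)=\tfrac{1}{D+1}\left(D+1-\tfrac{2D}{L}\right)$, giving (c).

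The only subtle point, and essentially the whole engine of the argument, is the step forcing the remaining neighbours of $y$ to have $f$-value exactly $2$ rather than merely $\ge 2$: this is where the pole hypothesis on $x$ and the strong consequence $[x,\bar x]=V$ of Theorem \ref{lieongeos} enter crucially, turning an inequality $\Delta f(y)\le\cdots$ into an equality that pins down $m$ exactly. Once this identification is made, parts (b) and (c) are immediate bookkeeping via Proposition \ref{prop:curvcalc0}.
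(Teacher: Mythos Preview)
Your proof is correct and follows essentially the same approach as the paper: both use Theorem~\ref{Ftrick} for $f=d(x,\cdot)$ to compute $\Delta f(y)$ and extract the triangle count, then deduce the matching from the resulting curvature equality. Your derivation of (b) and (c) via the equality case of Proposition~\ref{prop:curvcalc0} is a slightly cleaner repackaging of the paper's contradiction argument. One small quibble: the fact that neighbours of $y$ outside $N_{xy}\cup\{x\}$ have $d(x,\cdot)=2$ is just the triangle inequality plus the definition of $N_{xy}$, not a consequence of $[x,\bar x]=V$; the pole hypothesis enters only through the applicability of Theorem~\ref{Ftrick} (via Theorem~\ref{lieongeos}), not in pinning down those distances.
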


\begin{proof}
	Let $x\in V,$ and define $f(w) = d(x,w)- \frac{L}{2}.$ By \eqref{eqn:Ftrick}, we have $\Delta f +\frac{2}{L} f = 0.$ Let $y\in S_{1}(x).$ We need to show that $d_{x}^{0}(y) = \frac{2D}{L}-2.$ Now since $f(z)-f(y) = 0$ if $z\sim y,$ $z\in S_{1}(x),$
	\begin{align*}
	0 = \Delta f(y) + \frac{2}{L} f(y) & = \frac{1}{D}\left[ (f(x)-f(y))+ \sum_{z\sim y, z \in S_{2}(x)}(f(z)-f(y))\right]+\frac{2}{L} f(y)
	\\
	& =\frac{1}{D}(d_{x}^{+}(y)-1)+\frac{2}{L}\left(1-\frac{L}{2}\right)
	\\
	& = \frac{-1}{D} + \frac{1}{D}d_{x}^{+}(y)- 1 +\frac{2}{L}.
	\end{align*}
	Rearranging gives
	$$d_{x}^{+}(y) = D+1 - \frac{2D}{L}$$
	and, therefore,
	$$d_{x}^{0}(y) = D - d^{+}_{x}(y)-d^{-}_{x}(y)= D - (D+1 -\frac{2D}{L}) -1 = \frac{2D}{L} - 2.$$
	
	We now show that there is an optimal transport plan transporting $\mu_{x}$ to $\mu_{y}$ which is based on triangles and a perfect matching of the neighbours of $x$ and $y$. We will prove this indirectly. Suppose that there is no optimal transport plan based on triangles and a perfect matching. Thus, in any optimal transport plan, there must be some mass in $N_{xy}\setminus N_x$ that travels with the distance more than $1$, so the total cost is
	$$W_{1}(\mu_{x},\mu_{y}) > \frac{1}{D+1}(D+1-\frac{2D}{L}),$$
	and thus
	$$\kappa(x,y) = \frac{D+1}{D}\kappa_{\frac{1}{D+1}}(x,y) < \frac{2}{L},$$
	which contradicts to the fact that $G$ is Bonnet-Myers sharp (see Lemma \ref{geosharp}).
	
	Therefore, there is an optimal transport plan $\pi$ based on triangles and a perfect matching with the cost:
	$$ \textup{cost}(\pi)=W_{1}(\mu_{x},\mu_{y})= \frac{1}{D+1}(D+1-\frac{2D}{L}). $$
\end{proof}

We now give relations between the in, out and spherical degrees of vertices inside Bonnet-Myers sharp graphs.

\begin{theorem}\label{recursionformulas}
	Let $G= (V,E)$ be a $(D,L)$-regular Bonnet-Myers sharp graph and $x \in V$ be a pole. Let $y\in S_{k}(x),$ where $k\in \mathbb{N}.$ Then
	\begin{eqnarray}
	d_{x}^{+}(y) - d_{x}^{-}(y) &=& D\left(1-\frac{2k}{L}\right), \label{eq:rec1} \\
	2d_{x}^{+}(y) + d_{x}^{0}(y) &=& 2D\left(1-\frac{k}{L}\right), \label{eq:rec2} \\
	2d_{x}^{-}(y) + d_{x}^{0}(y) &=& \frac{2kD}{L}. \label{eq:rec3}
	\end{eqnarray}
\end{theorem}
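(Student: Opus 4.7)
The plan is to apply Theorem \ref{Ftrick} to the distance function $f = d(x,\cdot)$ and then simply unpack the Laplacian $\Delta f(y)$ combinatorially using the partition of neighbours into in, spherical, and out neighbours.

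Concretely, since $x$ is a pole, it admits an antipole $\overline{x}$ with $d(x,\overline{x}) = L$. The function $f := d(x,\cdot)$ is $1$-Lipschitz and satisfies $f(\overline{x}) - f(x) = L$. By Theorem \ref{lieongeos}, every vertex $y \in V$ lies on a geodesic from $x$ to $\overline{x}$, so Theorem \ref{Ftrick} applies with $z = y$ and yields, for $y \in S_k(x)$,
\begin{equation*}
\Delta f(y) \;=\; 1 - \frac{2k}{L}.
\end{equation*}

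On the other hand, splitting the sum defining $\Delta f(y)$ according to whether a neighbour $z$ of $y$ satisfies $d(x,z) = k-1$, $k$, or $k+1$ (i.e.\ is an in, spherical, or out neighbour of $y$ relative to $x$), we get
\begin{equation*}
\Delta f(y) \;=\; \frac{1}{D}\sum_{z \sim y}\bigl(d(x,z) - d(x,y)\bigr) \;=\; \frac{1}{D}\bigl(d_x^+(y) - d_x^-(y)\bigr).
\end{equation*}
Equating the two expressions for $\Delta f(y)$ gives \eqref{eq:rec1}. Combining \eqref{eq:rec1} with the trivial identity $d_x^-(y) + d_x^0(y) + d_x^+(y) = D$ (valid because $G$ is $D$-regular), by addition and subtraction respectively, yields \eqref{eq:rec2} and \eqref{eq:rec3}.

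There is no real obstacle here: the result is essentially a direct bookkeeping consequence of Theorem \ref{Ftrick} combined with the fact (Theorem \ref{lieongeos}) that every vertex of a Bonnet-Myers sharp graph lies on a geodesic between any pole and its antipole. The only point requiring care is noting that a pole $x$ genuinely possesses an antipole $\overline{x}$, which is immediate from the definition of a pole.
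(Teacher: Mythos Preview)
Your proof is correct and essentially identical to the paper's: both apply Theorem~\ref{Ftrick} (together with Theorem~\ref{lieongeos} to ensure every $y$ lies on a geodesic) to the distance function, compute $\Delta f(y)$ combinatorially as $\tfrac{1}{D}(d_x^+(y)-d_x^-(y))$, and then combine with $d_x^+ + d_x^0 + d_x^- = D$. The only cosmetic difference is that the paper shifts $f$ by $-L/2$ to phrase the result as an eigenfunction equation $\Delta f + \tfrac{2}{L}f = 0$, which is immaterial since the Laplacian annihilates constants.
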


\begin{proof}
	Define $f(w) = d(x,w)- \frac{L}{2}.$ By \eqref{eqn:Ftrick}, we have $\Delta f +\frac{2}{L} f = 0.$ Thus
	\begin{align*}
	\Delta f(y) + \frac{2}{L} f(y) & = \frac{1}{D}\left[  \sum_{z\sim y}(f(z)-f(y))\right]+\frac{2}{L} f(y)
	\\
	& =\frac{1}{D}\left[  d_{x}^{+}(y)- d_{x}^{-}(y)\right]+\frac{2}{L} \left(k-\frac{L}{2}\right).
	\end{align*}
	Rearranging gives 
	$$d_{x}^{+}(y) - d_{x}^{-}(y) = D\left(1-\frac{2k}{L}\right).$$
	The rest of the formula are obtained by using $D = d_{x}^{+}(y) + d_{x}^{0}(y) + d_{x}^{-}(y) = D $ and algebraic manipulation.
\end{proof}

We end this subsection with the proof of Theorem \ref{thm:DL_rel}.
\begin{theoremdivis}
Any $(D,L)$-Bonnet-Myers sharp graph satisfies $L \le D$. Moreover $L$ must divide $2D$. 
\end{theoremdivis}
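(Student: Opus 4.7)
My plan is to derive both conclusions as immediate integrality/non-negativity consequences of Theorem \ref{onespheredegree}, which we may now invoke freely.

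First, I observe that since $G$ is $(D,L)$-Bonnet-Myers sharp with $L = \mathrm{diam}(G)$, there must exist at least one pole $x \in V$ (pick any pair of vertices realizing the diameter). Then $x$ has $D$ neighbours, so in particular some edge $\{x,y\} \in E$ with $x$ a pole exists.

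Next, I apply Theorem \ref{onespheredegree}(a) to this edge: it states that $\{x,y\}$ is contained in exactly $\frac{2D}{L} - 2$ triangles. Equivalently, the number $d_x^0(y)$ of common neighbours of $x$ and $y$ equals $\frac{2D}{L} - 2$. Since $d_x^0(y)$ is by definition a non-negative integer, the quantity $\frac{2D}{L} - 2$ must be a non-negative integer. Non-negativity gives $\frac{2D}{L} \ge 2$, i.e.\ $L \le D$, and integrality of $\frac{2D}{L}$ is exactly the statement that $L$ divides $2D$.

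This is essentially a one-line argument once Theorem \ref{onespheredegree} is in hand; there is no serious obstacle. The only thing worth flagging is the need to confirm that a pole exists at all, which is automatic from the definition of $L = \mathrm{diam}(G)$, and the need to note that the hypothesis of Theorem \ref{onespheredegree} (being a pole) is applicable to at least one endpoint of some edge, which is immediate since poles have degree $D \ge 1$.
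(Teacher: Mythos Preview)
Your proof is correct. For the divisibility claim your argument and the paper's are essentially the same: the paper invokes \eqref{eq:rec3} with $k=1$, which (since $d_x^-(y)=1$ for $y\in S_1(x)$) is exactly the statement of Theorem~\ref{onespheredegree}(a) that you use.

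For $L\le D$, however, your route is genuinely simpler than the paper's. The paper first establishes $L\mid 2D$, then argues by contradiction: if $L>D$ one is forced into $L=2D$, and applying \eqref{eq:rec2} at level $k=L-1$ gives $2d_x^+(y)+d_x^0(y)=1$, hence $d_x^+(y)=0$ for every $y\in S_{L-1}(x)$, contradicting that the antipole of $x$ has a neighbour in $S_{L-1}(x)$. You bypass this entirely by observing that the triangle count $\frac{2D}{L}-2$ from Theorem~\ref{onespheredegree}(a) is non-negative, giving $L\le D$ in one stroke. Your argument extracts both conclusions from a single quantity, whereas the paper treats the two conclusions via separate equations; your version is the cleaner of the two.
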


\begin{proof}[Proof of Theorem \ref{thm:DL_rel}]
Let $x$ be a pole of $G$. Choosing $k=1$ in Theorem \ref{recursionformulas}, we conclude 
from \eqref{eq:rec3} that $L$ must divide $2D$. Therefore, in the case $L > D$, we must have
$L=2D$. Choosing $k=L-1$ in \eqref{eq:rec2} would lead to
$$ 2d_{x}^+(y) + d_x^0(y) = 2D \left( 1 - \frac{L-1}{L} \right) = 2D - (L-1) = 1, $$
which would imply $d_{x}^+(y) = 0$ and $d_x^0(y) = 1$, which cannot be since some $y \in S_{L-1}(x)$ must be a neighbour of the antipole of $x$. Therefore, we have ruled out $L=2D$ and we conclude $L \le D$.
\end{proof}

\subsection{Self-centered Bonnet-Myers sharp graphs}
\label{sec:self-centBMsh}
This subsection provides two immediate consequences of the results from the previous subsection under the extra assumption of self-centeredness, i.e. every vertex is a pole.

\begin{theorem}\label{constcurv}
  Let $G=(V,E)$ be a self-centered $(D,L)$-Bonnet-Myers sharp
  graph. Then we have for any pair $z,w$ of different vertices
  $$ \kappa(z,w) = \frac{2}{L}, $$
  that is, $G$ has constant Ollivier-Ricci curvature $\frac{2}{L}$.
\end{theorem}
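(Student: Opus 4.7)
The plan is to use the results already established in the previous subsection, namely Lemma \ref{geosharp} combined with Theorem \ref{lieongeos}, and to exploit self-centeredness to reduce every pair of distinct vertices to a configuration covered by these results.

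First I would fix an arbitrary pair $z, w \in V$ with $z \neq w$. Since $G$ is self-centered, $z$ admits an antipole $\overline{z} \in V$ with $d(z, \overline{z}) = L$. Theorem \ref{lieongeos} asserts that $[z, \overline{z}] = V$, so in particular $w$ lies on some geodesic from $z$ to $\overline{z}$. In other words, $z$ and $w$ are two different vertices on a geodesic of length $L$ from $z$ to $\overline{z}$, and they satisfy
\[
d(z,z) + d(z,w) + d(w,\overline{z}) \;=\; 0 + d(z,w) + \bigl(L - d(z,w)\bigr) \;=\; L.
\]

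Now I would apply Lemma \ref{geosharp} directly to the endpoints $x := z$, $y := \overline{z}$ and to the pair of intermediate vertices $z, w$ (the lemma allows one of these to coincide with an endpoint). The lemma then yields $\kappa(z,w) = \tfrac{2}{L}$, which is exactly the claim. Since $z, w$ were arbitrary distinct vertices, the curvature is constant equal to $\tfrac{2}{L}$ on all pairs of distinct vertices.

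There is essentially no hard step: the whole proof is a two-line invocation of Theorem \ref{lieongeos} (to place $w$ on a geodesic of maximal length starting at $z$) followed by Lemma \ref{geosharp} (to deduce the exact curvature on that geodesic). The only subtlety to mention is that one must verify that the hypotheses of Lemma \ref{geosharp} tolerate the degenerate case where one of the two intermediate points coincides with an endpoint of the geodesic, which is clear from the statement since only $z \neq w$ is required.
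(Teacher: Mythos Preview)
Your proof is correct and matches the paper's own argument essentially line for line: take an antipole $\overline{z}$ of $z$ (self-centeredness), invoke Theorem \ref{lieongeos} to put $w$ on a geodesic from $z$ to $\overline{z}$, and apply Lemma \ref{geosharp}. The extra care you take in checking the distance condition and the degenerate endpoint case is fine but not needed beyond what the paper already observes.
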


\begin{proof}
Let $z'$ be the antipole of $z$ in $G$. Then $w$ lies on a geodesic from $z$ to $z'$, by Theorem \ref{lieongeos} and Lemma \ref{geosharp} yields
$$ \kappa(z,w) = \frac{2}{L}. $$
\end{proof}

\begin{corollary} \label{cor:numtriangle}
Let $G=(V,E)$ be a self-centered $(D,L)$-Bonnet-Myers sharp graph. Then we have for every edge $\{x,y\} \in E$:
\begin{itemize}
	\item[(a)] The edge $\{x,y\}$ lies in precisely $\frac{2D}{L}-2$ triangles, or, in other words, the induced subgraph $S_1(x)$ is $\left( \frac{2D}{L}-2 \right)$-regular;
	\item[(b)] There is a perfect matching between the sets $N_x \backslash (N_{xy} \cup \{y\})$ and
	$N_y \backslash (N_{xy} \cup \{x\})$;
	\item[(c)] There is an optimal transport plan $\pi$ transporting $\mu_x$ to $\mu_y$ which is based on triangles and a perfect matching with the cost $$\textup{cost}(\pi)=\frac{1}{D+1}\left(D+1-\frac{2D}{L}\right).$$
\end{itemize}
\end{corollary}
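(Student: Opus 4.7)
The plan is to deduce Corollary \ref{cor:numtriangle} as an immediate consequence of Theorem \ref{onespheredegree}. That theorem already establishes the three conclusions (a), (b), and (c) for every edge $\{x,y\}$ incident to a vertex $x$ that is a pole. The only additional hypothesis available here, compared to Theorem \ref{onespheredegree}, is self-centeredness, which by the definition given in Subsection \ref{sec:graph_notation} simply asserts that every vertex of $G$ is a pole. So the corollary is really a statement of the form ``since the pole hypothesis in Theorem \ref{onespheredegree} is now satisfied at every vertex, the conclusions propagate to every edge.''

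Concretely, for an arbitrary edge $e = \{x,y\} \in E$, I would invoke Theorem \ref{onespheredegree} at the vertex $x$, which is a pole by self-centeredness. This immediately yields: (a) $e$ lies in exactly $\frac{2D}{L}-2$ triangles, hence $S_1(x)$ is $\left(\frac{2D}{L}-2\right)$-regular; (b) a perfect matching exists between $N_x \setminus (N_{xy} \cup \{y\})$ and $N_y \setminus (N_{xy} \cup \{x\})$; and (c) there is an optimal transport plan $\pi$ from $\mu_x$ to $\mu_y$ based on triangles and this perfect matching, with cost $\frac{1}{D+1}\left(D+1-\frac{2D}{L}\right)$. Since $e$ was arbitrary, the three statements hold at every edge.

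There is essentially no main obstacle to overcome, and no routine calculation is required: the work has already been carried out in the proof of Theorem \ref{onespheredegree} (which relied on the Laplacian identity $\Delta f + \frac{2}{L} f = 0$ for $f = d(x,\cdot) - \frac{L}{2}$ from Theorem \ref{Ftrick}, together with the Bonnet-Myers-sharpness lower bound on the Wasserstein cost). The only minor consistency observation worth noting is that the statements (a), (b), (c) are symmetric in the two endpoints of the edge, so applying Theorem \ref{onespheredegree} at either $x$ or $y$ (both are poles) gives compatible information — in particular the triangle count $\frac{2D}{L}-2$ is an intrinsic edge invariant, consistent with both $S_1(x)$ and $S_1(y)$ being regular of that valency.
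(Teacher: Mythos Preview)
Your proposal is correct and matches the paper's own proof essentially verbatim: the paper simply observes that the corollary follows immediately from Theorem~\ref{onespheredegree}, since in a self-centered graph every vertex is a pole.
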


\begin{proof}
It follows immediately from Theorem \ref{onespheredegree}, since every vertex of a self-centered graph is a pole.
\end{proof}

\section{Curvatures and eigenvalues}
\label{sec:curvandeig}

The following result agrees with Theorem \ref{thm:lichn} and it
provides additional information about the choice of a suitable
eigenfunction.

\begin{theoremlichn}
  Every $(D,L)$-Bonnet-Myers sharp graph $G = (V,E)$ is Lichnerowicz
  sharp with Laplace eigenfunction $f = d(x,\cdot) - \frac{L}{2}$,
  where $x$ is a pole of $G$.
\end{theoremlichn}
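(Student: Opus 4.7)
The plan is to produce an explicit Laplace eigenfunction with eigenvalue $2/L$ and then invoke the discrete Lichnerowicz inequality (Theorem \ref{thm:Lich}) to pin this down as $\lambda_1$. Since $G$ is $(D,L)$-Bonnet-Myers sharp, $\inf_{u \sim v} \kappa(u,v) = 2/L$, so Theorem \ref{thm:Lich} already gives $\lambda_1 \ge 2/L$; the content of the statement is therefore the matching upper bound $\lambda_1 \le 2/L$, and the claim about the concrete eigenfunction.

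First I would pick a pole $x \in V$ (which exists because $G$ has finite diameter $L$) and let $\bar x$ denote its antipole, i.e.\ $d(x,\bar x)=L$. Define $f : V \to \IR$ by $f(w) = d(x,w) - L/2$. Then $f$ is $1$-Lipschitz by the triangle inequality, and $f(\bar x) - f(x) = L - 0 = L$, which is exactly the normalization hypothesis of Theorem \ref{Ftrick}. The crucial input now is Theorem \ref{lieongeos}: because $G$ is Bonnet-Myers sharp, every vertex $z \in V$ lies on some geodesic from $x$ to $\bar x$. Hence Theorem \ref{Ftrick} applies to \emph{every} $z \in V$ and yields
\begin{equation*}
\Delta f(z) \;=\; 1 - \frac{2 d(x,z)}{L} \;=\; -\frac{2}{L}\Bigl(d(x,z) - \frac{L}{2}\Bigr) \;=\; -\frac{2}{L} f(z).
\end{equation*}
Thus $f$ satisfies $\Delta_G f + \frac{2}{L} f = 0$ on all of $V$, and it is non-constant since $f(x) = -L/2 \neq L/2 = f(\bar x)$. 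Consequently $\lambda = 2/L$ is a positive Laplace eigenvalue, so $\lambda_1 \le 2/L$.

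Combining with $\lambda_1 \ge \inf_{u \sim v}\kappa(u,v) = 2/L$ (Theorem \ref{thm:Lich} and Bonnet-Myers sharpness) gives $\lambda_1 = 2/L = \inf_{u \sim v}\kappa(u,v)$, proving both Lichnerowicz sharpness and that $f = d(x,\cdot) - L/2$ is a corresponding eigenfunction. There is no real obstacle here: essentially all the work has already been done in Theorems \ref{ineq}, \ref{Ftrick} and \ref{lieongeos}. The only thing to check carefully is that the hypotheses of Theorem \ref{Ftrick} are in fact available for every $z$ (rather than only for $z$ on \emph{some} fixed geodesic), and this is exactly what Theorem \ref{lieongeos} (the ``$[x,\bar x] = V$'' property of BM-sharp graphs) guarantees.
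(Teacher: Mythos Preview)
Your proof is correct and follows essentially the same approach as the paper: use Theorem~\ref{lieongeos} to ensure every vertex lies on a geodesic from $x$ to its antipole, apply Theorem~\ref{Ftrick} to conclude $\Delta f + \tfrac{2}{L} f = 0$ globally, and then combine the resulting upper bound $\lambda_1 \le \tfrac{2}{L}$ with the Lichnerowicz lower bound from Theorem~\ref{thm:Lich}. The paper additionally cites Lemma~\ref{geosharp} explicitly, but this is already absorbed into the proof of Theorem~\ref{Ftrick}, so your presentation is equivalent.
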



\begin{proof}
  Let ${\rm diam}(G) = L$ and $x,y\in V$ satisfy $d(x,y) = L$. Let
  $f = d(x,\cdot)-\frac{L}{2}$. By Theorem \ref{lieongeos} every vertex $z\in V$
  lies on a geodesic from $x$ to $y$. Thus, by Lemma \ref{geosharp},
  $\kappa(x,z) = \frac{2}{L}$. Then, by Theorem \ref{Ftrick}, we have
  $$ \Delta f +\frac{2}{L}f
  = 0. $$
  Therefore $\lambda_{1}\leq \frac{2}{L}$. By the Discrete Lichnerowicz Theorem \ref{thm:Lich}
  and Bonnet-Myers sharpness, we have
  $$ \lambda_{1}\geq \inf_{\substack{u,v\in V\\ u\neq v}} \kappa(u,v) 
  = \frac{2}{L}. $$ 
  Thus $\lambda_{1} = \frac{2}{L} = \inf_{u,v\in V, u\neq v} \kappa(u,v)$,
  completing the proof.
\end{proof}

We also have the following general relation between
eigenfunctions, curvature and Kantorovich potentials:

\begin{theorem}
  Let $G=(V,E)$ be a finite connected $D$-regular graph. Let
  $f: V \rightarrow \IR$ be a Laplace eigenfunction, which is also an optimal Kantorovich potential
  transporting $\mu_u^p$ to $\mu_v^p$ for some $u,v \in V$, $u \neq v$,
  $p \in (\frac{1}{2},1)$. Then $\lambda=\kappa_{LLY}(u,v)$ with
  $\kappa_{LLY}$ defined in \eqref{eq:kappaLLY}.
\end{theorem}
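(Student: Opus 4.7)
The plan combines the equality case of Lemma \ref{lem:W1_Delta} (valid because $f$ is an optimal Kantorovich potential) with Kantorovich--Rubinstein complementary slackness and the concavity of $q \mapsto \kappa_q(u,v)$. The hypothesis $p > \frac{1}{2}$ enters through a marginal-counting argument that forces $\pi(u,v) > 0$ for any optimal transport plan $\pi$; this pins down $f(u) - f(v)$ at its maximal possible value $d(u,v)$ and makes the subsequent chain of equalities close up.

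To establish the Lipschitz saturation $f(u) - f(v) = d(u,v)$, let $\pi$ be an optimal transport plan from $\mu_u^p$ to $\mu_v^p$, and set $a := \pi(u,v)$. The marginal identities $\sum_y \pi(u,y) = p$ and $\sum_x \pi(x,v) = p$, combined with $\sum_{x,y}\pi(x,y) = 1$ and $\pi \ge 0$, give by inclusion--exclusion the bound $a \ge 2p - 1 > 0$. Complementary slackness for Kantorovich duality (any optimal potential--plan pair satisfies $f(x) - f(y) = d(x,y)$ whenever $(x,y)$ lies in the support of $\pi$) then yields $f(u) - f(v) = d(u,v)$. Combined with $\Delta f = -\lambda f$ and the equality form of Lemma \ref{lem:W1_Delta}, this gives
\[ W_1(\mu_u^p, \mu_v^p) = (f(u)-f(v))\bigl(1-(1-p)\lambda\bigr) = d(u,v)\bigl(1-(1-p)\lambda\bigr), \]
whence $\kappa_p(u,v) = (1-p)\lambda$.

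For the upper bound $\kappa_{LLY}(u,v) \le \lambda$, I would reapply the inequality form of Lemma \ref{lem:W1_Delta} with the same $f$ but arbitrary $q \in [0,1]$; using $f(u) - f(v) = d(u,v)$ this yields $\kappa_q(u,v) \le (1-q)\lambda$ for every $q$, and hence $\kappa_{LLY}(u,v) \le \lambda$ in the limit $q \to 1$. For the matching lower bound, the map $q \mapsto \kappa_q(u,v)$ is concave (by joint convexity of $W_1$ in its two measure arguments and the affine dependence of $(\mu_u^q, \mu_v^q)$ on $q$) with $\kappa_1(u,v) = 0$, and a standard chord inequality then shows that $q \mapsto \kappa_q(u,v)/(1-q)$ is non-decreasing on $[0,1)$ with limit $\kappa_{LLY}(u,v)$; evaluating at $q = p$ gives $\lambda = \kappa_p(u,v)/(1-p) \le \kappa_{LLY}(u,v)$. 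The principal obstacle is the first step: without the saturation $f(u) - f(v) = d(u,v)$ the identity from Lemma \ref{lem:W1_Delta} only reads $d(u,v)(1 - \kappa_p(u,v)) = (f(u)-f(v))(1-(1-p)\lambda)$, which cannot by itself equate $\lambda$ with $\kappa_{LLY}(u,v)$, and the strict bound $p > \frac{1}{2}$ is precisely what makes the inclusion--exclusion force $\pi(u,v) > 0$.
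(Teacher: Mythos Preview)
Your proof is correct and follows the paper's approach closely through the main steps: the equality case of Lemma \ref{lem:W1_Delta}, the marginal argument forcing $\pi(u,v) \ge 2p-1 > 0$, complementary slackness to get $f(u)-f(v)=d(u,v)$, and the resulting identity $\kappa_p(u,v)=(1-p)\lambda$. The paper does exactly this, citing \cite[Lemma 3.1]{Idle} for the complementary slackness step where you spell out the inclusion--exclusion directly.

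The one genuine difference is the final passage from $\kappa_p(u,v)/(1-p)=\lambda$ to $\kappa_{LLY}(u,v)=\lambda$. The paper simply invokes \cite[Corollary 3.4]{CK2018}, which asserts that $\kappa_q(u,v)/(1-q)$ is constant on $(\tfrac{1}{2},1)$ and equal to $\kappa_{LLY}(u,v)$. You instead give a self-contained two-sided argument: the upper bound $\kappa_{LLY}(u,v)\le\lambda$ by reapplying Lemma \ref{lem:W1_Delta} at every idleness $q$, and the lower bound via concavity of $q\mapsto\kappa_q(u,v)$ (which follows from the Kantorovich dual as a supremum of affine functionals) together with $\kappa_1=0$, yielding monotonicity of $\kappa_q/(1-q)$. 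Your route is slightly longer but more elementary, avoiding any appeal to the piecewise-linear structure of the idleness function; the paper's citation is shorter but imports a stronger fact than is strictly needed here.
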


\begin{proof}
  Since $f$ is an optimal Kantorovich potential transporting $\mu_u^p$
  to $\mu_v^p$ and $\Delta f+\lambda f=0$, Lemma \ref{lem:W1_Delta}
  yields
  $$ W_1(\mu_u^p,\mu_v^p) = \sum_{z \in V} f(z) \left( \mu_u^p(z)-\mu_v^p(z) 
  \right) = (1-(1-p)\lambda) ( f(u) - f(v)). $$ 
  Moreover, since $p>\frac{1}{2}$, every optimal transport plan $\pi$
  from $\mu_u^p$ to $\mu_v^p$ must satisfy $\pi(u,v)>0$, which then
  implies by complementary slackness that $f(u)-f(v)=d(u,v)$ (see,
  e.g., \cite[Lemma 3.1]{Idle}, which states this fact for neighbours
  $x,y \in V$, but it is also true for arbitrary pairs of different
  vertices).
  
  Substituting $f(u)-f(v)=d(u,v)$ in the above equation yields
  $$ \kappa_p(u,v) = 1- \frac{W_1(\mu_u^p,\mu_v^p)}{d(u,v)}
  =(1-p)\lambda, $$
  which implies
  $$ \frac{\kappa_p(u,v)}{1-p} = \lambda. $$
  Since $p \in (\frac{1}{2},1)$, we conclude from \cite[Corollary 3.4]{CK2018} 
  $$ \kappa_{LLY}(u,v) = \lim_{p \to 1} \frac{\kappa_p(u,v)}{1-p} = 
  \frac{\kappa_p(u,v)}{1-p}, $$ 
  finishing the proof.
\end{proof}

The following result can be derived via similar arguments (but a
different logic in its proof):

\begin{theorem} 
  Let $G=(V,E)$ be Lichnerowicz sharp with a Laplace eigenfunction
  $f \in \textrm{\rm{1}--{\rm Lip}}(V)$ associated to the eigenvalue
  $\lambda_1 = \inf_{x \sim y} \kappa(x,y)$. Then, for any pair of
  different vertices $u,v \in V$ with $f(u)-f(v) = d(u,v)$, $f$ is an
  optimal Kantorovich potential transporting $\mu_u$ to $\mu_v$.
\end{theorem}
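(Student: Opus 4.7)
The plan is to sandwich $W_1(\mu_u,\mu_v)$ between two matching bounds: a lower bound coming from Lemma~\ref{lem:W1_Delta} applied to the eigenfunction $f$, and an upper bound coming from Lichnerowicz sharpness (together with \eqref{eq:infeqinf} to propagate curvature bounds from neighbours to arbitrary pairs). Equality in Lemma~\ref{lem:W1_Delta} then is precisely what we want to conclude.

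Concretely, I would first apply Lemma~\ref{lem:W1_Delta} with the idleness $p = \tfrac{1}{D+1}$ to the 1-Lipschitz function $f$. Using $\Delta f = -\lambda_1 f$ and the hypothesis $f(u)-f(v)=d(u,v)$, the lemma rearranges to
$$ W_1(\mu_u,\mu_v) \;\ge\; \bigl(f(u)-f(v)\bigr) - \tfrac{D}{D+1}\lambda_1\bigl(f(u)-f(v)\bigr) \;=\; d(u,v)\left(1 - \tfrac{D}{D+1}\lambda_1\right). $$
The crucial structural input provided by the lemma is that equality holds in this inequality if and only if $f$ is an optimal Kantorovich potential transporting $\mu_u$ to $\mu_v$; this is the equivalence I will try to trigger.

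For the matching upper bound, I would rewrite the definition of Ollivier Ricci curvature as
$$ W_1(\mu_u,\mu_v) \;=\; d(u,v)\left(1 - \tfrac{D}{D+1}\kappa(u,v)\right), $$
and then invoke Lichnerowicz sharpness together with \eqref{eq:infeqinf} to obtain
$$ \lambda_1 \;=\; \inf_{x\sim y}\kappa(x,y) \;=\; \inf_{z\ne w}\kappa(z,w) \;\le\; \kappa(u,v). $$
Substituting this into the identity above yields $W_1(\mu_u,\mu_v) \le d(u,v)\bigl(1-\tfrac{D}{D+1}\lambda_1\bigr)$, which exactly matches the lower bound. Hence equality must hold in Lemma~\ref{lem:W1_Delta}, and $f$ is an optimal Kantorovich potential as desired.

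I do not expect any genuine obstacle here; the argument is essentially a two-inequality squeeze. The only subtle point worth flagging is that Lichnerowicz sharpness is stated in Theorem~\ref{thm:Lich} as the infimum of $\kappa$ over \emph{adjacent} pairs, whereas the target pair $(u,v)$ need not be adjacent (indeed the interesting case is typically $d(u,v)>1$). This is exactly why \eqref{eq:infeqinf} is needed in the chain of equalities above; without it the upper bound on $W_1(\mu_u,\mu_v)$ could not be derived for non-adjacent $u,v$.
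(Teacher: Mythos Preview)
Your proposal is correct and follows essentially the same approach as the paper: both apply Lemma~\ref{lem:W1_Delta} at idleness $p=\tfrac{1}{D+1}$ to get the lower bound $W_1(\mu_u,\mu_v)\ge (1-(1-p)\lambda_1)d(u,v)$, and both obtain the matching upper bound from Lichnerowicz sharpness combined (implicitly in the paper, explicitly in your write-up via \eqref{eq:infeqinf}) with $\inf_{x\sim y}\kappa(x,y)\le\kappa(u,v)$. The only cosmetic difference is that the paper phrases the squeeze in terms of $\kappa(u,v)\le\lambda_1$ rather than in terms of $W_1$, which is an algebraically equivalent reformulation.
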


\begin{proof}
  Assume $f \in \textrm{\rm{1}--{\rm Lip}}(V)$, $\Delta f + \lambda_1 f = 0$ and
  $p = \frac{1}{D+1}$. Lemma \ref{lem:W1_Delta} yields
  $$ W_1(\mu_u,\mu_v) \ge (1-(1-p)\lambda_1) ( f(u) - f(v)), $$
  with equality iff $f$ is an optimal Kantorovich potential
  transporting $\mu_u$ to $\mu_v$. This is equivalent to
  $$ \kappa_p(u,v) = 1- \frac{W_1(\mu_u,\mu_v)}{d(u,v)}
  \le (1-p)\lambda_1 \frac{f(u)-f(v)}{d(u,v)} = (1-p)\lambda_1, $$
  using $f(u)-f(v) = d(u,v)$. This, in turn, is equivalent to
  \begin{equation} \label{eq:kappalambda1} 
  \kappa(u,v) = \frac{\kappa_p(u,v)}{1-p} \le \lambda_1. 
  \end{equation}
  Our assumption $\lambda_1 = \inf_{x \sim y} \kappa(x,y)$ then implies 
  equality in \eqref{eq:kappalambda1} and, therefore, $f$ is
  an optimal Kantorovich potential transporting $\mu_u$ to $\mu_v$.
\end{proof}



Finally, let us identify all Lichnerowicz sharp graphs within an
interesting family of distance regular graphs. More precisely, as
mentioned in Section \ref{sec:examples}, there is a classification of all
distance-regular graphs with second largest adjacency eigenvalue
$\theta_1=b_1-1$ (see Theorem \ref{thm:4.4.11}). This class of
graphs comprises all strongly regular graphs with smallest adjacency
eigenvalue $-2$.  In this subclass, we have the following Lichnerowicz
sharp graphs.

\begin{theorem} \label{thm:lichstrreg}
  The Lichnerowicz sharp strongly regular graphs with smallest
  adjacency eigenvalue $-2$ are precisely the following ones: The
  cocktail party graphs $CP(n)$, $n \ge 2$, the lattice graphs
  $L_2(n) \cong K_n \times K_n$, $n \ge 3$, the triangular graphs
  $T(n) \cong J(n,2)$, $n \ge 5$, the demi-cube $Q^5_{(2)}$, and the
  Schl\"afli graph.
\end{theorem}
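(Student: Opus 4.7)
The plan is to combine Seidel's classical classification of strongly regular graphs with smallest adjacency eigenvalue $-2$ with a clean equivalence, special to the $s=-2$ case, between Lichnerowicz sharpness and a combinatorial matching condition.

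First, I would establish the following reduction. For a strongly regular graph with parameters $(\nu,k,\lambda,\mu)$ and smallest eigenvalue $s=-2$, the standard relations $rs = \mu-k$ and $r+s = \lambda-\mu$ for the two non-trivial eigenvalues $r,s$ force $r = (k-\mu)/2$ and $k = 2\lambda - \mu + 4$. Consequently,
$$
\lambda_1 \;=\; 1 - \frac{r}{k} \;=\; \frac{k+\mu}{2k} \;=\; \frac{2+\lambda}{k}.
$$
By Proposition \ref{prop:curvcalc0}, every edge $\{x,y\}$ satisfies $\kappa(x,y) \le (2+\lambda)/k = \lambda_1$, with equality precisely when there is a perfect matching between $N_x \setminus (N_{xy}\cup\{y\})$ and $N_y \setminus (N_{xy}\cup\{x\})$. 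Combined with \eqref{eq:infeqinf}, Lichnerowicz sharpness is thus equivalent to the existence of such a matching at every edge of the graph.

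Next I would invoke Seidel's classification: the connected strongly regular graphs with smallest eigenvalue $-2$ are precisely $CP(n)$, $L_2(n)$, $T(n)$, the Petersen graph, the Shrikhande graph (sharing its parameters with $L_2(4)$), the three Chang graphs (sharing their parameters with $T(8)$), the demi-cube $Q^5_{(2)}$, and the Schl\"afli graph. This reduces the problem to a finite case analysis. For the five families appearing in the theorem I would exhibit the matching explicitly. The cases $CP(n)$, $Q^5_{(2)}$ and $T(n)\cong J(n,2)$ have already been handled in Subsections \ref{subsec:ex_cocktailparty}, \ref{subsec:ex_demicubes} and \ref{subsec:ex_johnson} respectively. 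For $L_2(n)=K_n\times K_n$ one may exhibit the obvious row/column matching, or, by Theorem \ref{LLYcartprods}, directly compute $\kappa = \frac{n-1}{2(n-1)}\cdot\frac{n}{n-1}=\frac{n}{2(n-1)}=\lambda_1$. For the Schl\"afli graph, $2$-transitivity of its automorphism group reduces the verification to one edge, which can be handled either through its realization as the $1$-sphere of the Gosset graph (cf.\ Subsection \ref{subsec:ex_gosset}) or via the $27$ lines on a cubic surface.

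The main obstacle is ruling out the three exceptional families. The Petersen graph is immediate: $\lambda=0$ and girth $5$ prevent any edge between $N_x\setminus\{y\}$ and $N_y\setminus\{x\}$, so no matching exists and $\kappa(x,y) < \lambda_1$. For the Shrikhande graph, in contrast to $L_2(4)$ whose $1$-sphere is $2K_3$, the $1$-sphere is a $6$-cycle, and a short Hall-condition check on an explicit edge shows that the three vertices of $N_x\setminus(N_{xy}\cup\{y\})$ cannot be simultaneously matched into $N_y\setminus(N_{xy}\cup\{x\})$. The three Chang graphs can be treated similarly using their explicit description as Seidel switchings of $T(8)$: in each one identifies an edge whose associated bipartite graph between the non-common-neighbor sets violates Hall's condition, and hence the graph fails to be Lichnerowicz sharp. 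This finite verification, the least elegant part of the argument, completes the classification.
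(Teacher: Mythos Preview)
Your argument is correct and takes a genuinely different route from the paper. The paper's proof is purely computational: it invokes the classification of strongly regular graphs with smallest eigenvalue $-2$ (as you do), then simply tabulates $\lambda_1$ and $\inf_{x\sim y}\kappa(x,y)$ for each case, obtaining the curvature values with the aid of the curvature calculator \cite{CKLLS17}, and reads off which graphs are Lichnerowicz sharp from the resulting table.

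Your approach is more conceptual. The key observation you make, which the paper does not isolate, is that for strongly regular graphs with smallest eigenvalue $-2$ the identity $k=2\lambda-\mu+4$ forces $\lambda_1=(2+\lambda)/k$, which is \emph{exactly} the upper bound $(2+\#_\Delta(x,y))/D$ from Proposition~\ref{prop:curvcalc0}. This collapses Lichnerowicz sharpness to the purely combinatorial condition that every edge admit a perfect matching between the non-common neighbours, and lets you settle each case by hand rather than by machine. The reference to \eqref{eq:infeqinf} is unnecessary, incidentally: since $\kappa(x,y)\le\lambda_1$ holds at every edge, the infimum over edges equals $\lambda_1$ iff equality holds at every edge. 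Your treatment of the exceptional graphs is accurate; for the Shrikhande graph, for instance, in the standard Cayley model on $\mathbb{Z}_4^2$ with connection set $\{\pm(1,0),\pm(0,1),\pm(1,1)\}$ the vertex $(0,1)\in N_{(0,0)}\setminus(N_{(0,0)(1,0)}\cup\{(1,0)\})$ has no neighbour in $N_{(1,0)}\setminus(N_{(0,0)(1,0)}\cup\{(0,0)\})$, so Hall's condition fails outright. The Chang graphs remain a finite check either way, but your framework at least makes clear \emph{what} must be checked. What the paper's approach buys is brevity and uniformity; what yours buys is an explanation of why the answer comes out as it does.
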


\begin{proof}
  The theorem follows directly from Table
  \ref{table:strongly_reg_Lichsharp_examples} below, where the
  curvatures $\inf_{x \sim y} \kappa(x,y)$ were determined with the
  help of the curvature calculator \cite{CKLLS17} at
  \begin{center}
  \url{http://www.mas.ncl.ac.uk/graph-curvature/}
  \end{center}
  Note that Chang stands for any one of the three Chang graphs. For
  the classification of all strongly regular graphs with smallest
  adjacency eigenvalue $-2$, see \cite[Theorem 9.2.1]{BH}.

\begin{table}[h!]
\centering
\begin{tabular}{l|l|l|l|l}
$G$ & $(\nu,k,\lambda,\mu)$ & $\theta_1$ & $\lambda_1$ & $\displaystyle{\inf_{x \sim y} \kappa(x,y)}$ \\[.1cm] 
\hline &&&& \\[-.2cm]

$CP(n)$ & $(2n,2n-2,2n-4,2n-2)$ & $0$ & $1$ & $1$ \\[.1cm]

$K_n \times K_n$ & $(n^2,2(n-1),N-2,2)$ & $n-2$ & $\frac{n}{2(n-1)}$ & $\frac{n}{2(n-1)}$ \\[.1cm]

Shrikhande & $(16,6,2,2)$ & $2$ & $\frac{2}{3}$ & $\frac{1}{3}$ \\[.1cm]

$J(n,2)$ & $\left({n \choose 2},2(n-2),n-2,4\right)$ & $n-4$  & $\frac{n}{2(n-2)}$ & $\frac{n}{2(n-2)}$ \\[.1cm]

Chang & $(28,12,6,4)$ & $4$ & $\frac{2}{3}$ & $\frac{1}{3}$ \\[.1cm]

Petersen & $(10,3,0,1)$ & $1$ & $\frac{2}{3}$ & $0$ \\[.1cm]

$Q^5_{(2)}$ & $(16,10,6,6)$ & $2$ & $\frac{4}{5}$ & $\frac{4}{5}$ \\[.1cm]

Schl\"afli & $(27,16,10,8)$ & $4$ & $\frac{3}{4}$ & $\frac{3}{4}$ \\[.1cm]
\end{tabular}
\caption{Strongly regular graphs with smallest eigenvalue equals $-2$, with their smallest positive Laplace eigenvalue $\lambda_1$ and the infimum of their Ollivier Ricci curvatures}
\label{table:strongly_reg_Lichsharp_examples}
\end{table}
\end{proof}

In the classification Theorem \ref{thm:4.4.11}, we can disregard all examples with $\mu=1$ (that is, all vertices at distance $2$ have precisely one neighbour in common), since none of them can be
Lichnerowicz sharp due to the following result:

\begin{theorem} \label{thm:mu1_notlich}
  A distance-regular graph with second largest adjacency eigenvalue $\theta_1=b_1-1$ and $\mu=1$ cannot be Lichnerowicz sharp.
\end{theorem}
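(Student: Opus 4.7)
The plan is to derive a contradiction between the Lichnerowicz sharpness assumption, which by Proposition \ref{prop:curvcalc0} forces a strong combinatorial matching structure at every edge, and the hypothesis $\mu = 1$, which forbids such a structure.

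First I would translate the spectral hypothesis into the Laplace language: since $\Delta_G = D^{-1}A_G - \mathrm{Id}$, the condition $\theta_1 = b_1 - 1$ gives $\lambda_1 = 1 - (b_1 - 1)/D = (D + 1 - b_1)/D$. On the other hand, distance-regularity implies that every edge $\{x,y\}$ lies in exactly $a_1 = D - 1 - b_1$ triangles, so Proposition \ref{prop:curvcalc0} yields
$$ \kappa(x,y) \le \frac{2 + a_1}{D} = \frac{D + 1 - b_1}{D} = \lambda_1 $$
for every edge. If $G$ were Lichnerowicz sharp, then $\inf_{x \sim y}\kappa(x,y) = \lambda_1$, and combined with the upper bound above this would force $\kappa(x,y) = \lambda_1$ for every edge. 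The equivalence (a)$\Leftrightarrow$(b) in Proposition \ref{prop:curvcalc0} would then guarantee, at every edge $\{x,y\}$, a perfect matching between $N_x \setminus (N_{xy} \cup \{y\})$ and $N_y \setminus (N_{xy} \cup \{x\})$.

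Next I would extract a contradiction from this matching using $\mu = 1$. The diameter of $G$ must be at least $2$ for $\mu$ to be defined, so $b_1 \ge 1$ and the matching sets, of common cardinality $s = b_1$, are non-empty. Pick any edge $\{x,y\}$ and any matched pair $x_i \sim y_i$ with $x_i \in N_x \setminus (N_{xy} \cup \{y\})$ and $y_i \in N_y \setminus (N_{xy} \cup \{x\})$. By construction $y_i \not\sim x$, so $d(x, y_i) = 2$, and $y$ is a common neighbour of $x$ and $y_i$ because $x \sim y \sim y_i$. But $x_i$ is also a common neighbour, since $x_i \sim x$ and $x_i \sim y_i$, and $x_i \neq y$ by definition of the matching set. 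This produces two distinct common neighbours of the pair $\{x,y_i\}$ at distance two, contradicting $\mu = 1$.

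The crux of the argument is therefore the coincidence $\lambda_1 = (2+a_1)/D$, which forces Lichnerowicz-sharp distance-regular graphs with $\theta_1 = b_1 - 1$ to realise the curvature upper bound of Proposition \ref{prop:curvcalc0} edge by edge; once the matching structure is in hand, the incompatibility with $\mu = 1$ is immediate. No subtle obstacle arises beyond checking that the matching sets are non-empty, which reduces to excluding the trivial complete-graph case where $\mu$ is undefined.
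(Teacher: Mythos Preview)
Your proof is correct, but it takes a different route from the paper's.

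The paper works directly with a pair $x,z$ at distance $2$: setting $\alpha = a_1 = D - 1 - b_1$, it estimates the Wasserstein distance $W_1(\mu_x,\mu_z)$ from below (using that only one vertex, the unique common neighbour $y$, can stay fixed) and obtains $\kappa(x,z) \le (1 + \alpha/2)/D = \lambda_1/2 < \lambda_1$. Together with $\inf_{u\sim v}\kappa(u,v) = \inf_{u\neq v}\kappa(u,v) \le \kappa(x,z)$, this rules out Lichnerowicz sharpness.

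Your argument stays entirely at the level of edges: you exploit the numerical coincidence $\lambda_1 = (2+a_1)/D$, which identifies $\lambda_1$ with the upper bound of Proposition~\ref{prop:curvcalc0}, so that Lichnerowicz sharpness forces the perfect-matching condition at every edge; then a single matched pair produces a second common neighbour of a distance-$2$ pair, contradicting $\mu=1$. The paper's approach has the advantage of yielding a quantitative gap ($\inf\kappa \le \lambda_1/2$) and does not invoke the equivalence (a)$\Leftrightarrow$(b) of Proposition~\ref{prop:curvcalc0}; your approach has the advantage of making the combinatorial obstruction completely explicit and never needing to compute a Wasserstein distance at distance two.
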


\begin{proof}
  Let $G=(V,E)$ be a distance-regular graph of vertex degree $D$ and
  satisfying $\mu=1$, and $x,z \in V$ with $d(x,z) = 2$. We denote the
  unique common neighbour of $x$ and $z$ by $y$. Then we have
  $0 \le d_x^0(z)=:\alpha \le D-1$ and $b_1 = d_x^+(z) =
  D-1-\alpha$. The second largest adjacency eigenvalue
  is then $\theta_1=b_1-1=D-2-\alpha$ and, consequently, the smallest positive Laplace eigenvalue is
  $$ \lambda_1 = 1 - \frac{D-2-\alpha}{D} = \frac{2+\alpha}{D} > 0. $$
  Lichnerowicz' Theorem tells us that
  $$ \inf_{u \sim v} \kappa(u,v) = \inf_{u \neq v} \kappa(u,v) \le \lambda_1 = \frac{2+\alpha}{D}. $$
  Let us now estimate $\kappa(x,z)$. We have
  $$ W_1(\mu_x,\mu_z) \ge \frac{1}{D+1}\left( 2 + 2(D-1-\alpha) + \alpha \right) 
  = \frac{2D-\alpha}{D+1}, $$ and, therefore,
  $$ \kappa_{\frac{1}{D+1}}(x,z) 
  = 1 - \frac{W_1(\mu_x,\mu_z)}{2} \le \frac{1+\frac{\alpha}{2}}{D+1}. $$ 
  This implies that
  $$ \inf_{u \sim v} \kappa(u,v) \le \kappa(x,z) = \frac{D+1}{D} \kappa_{\frac{1}{D+1}}(x,z) \le 
  \frac{1+\frac{\alpha}{2}}{D} = \frac{\lambda_1}{2} < \lambda_1. $$
  This shows that $G$ cannot be Lichnerowicz sharp.
\end{proof}

Using the previous two results, the following theorem provides a complete
classification of all Lichnerowicz sharp distance-regular graphs with
second largest adjacency eigenvalue $\theta_1=b_1-1$:

\begin{theorem} The Lichnerowicz sharp distance-regular graphs with
  second largest adjacency eigenvalue $\theta_1=b_1-1$ are precisely the
  following ones: 
  \begin{enumerate}
  \item the cocktail party graphs $CP(n)$ (also Bonnet-Myers sharp);
  \item the Hamming graphs $H(n,d) = (K_n)^d$ (only Bonnet-Myers sharp if $n=2$, that is $H(n,d) = Q^d$);
  \item the Johnson graphs $J(n,k)$ (only Bonnet-Myers sharp if $n=2k$);
  \item the demi-cubes $Q^n_{(2)}$ (only Bonnet-Myers sharp if $n$ is even);
  \item the Schl\"afli graph (not Bonnet-Myers sharp);
  \item the Gosset graph (also Bonnet-Myers sharp).
  \end{enumerate}
\end{theorem}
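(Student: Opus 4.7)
Plan: Apply Theorem \ref{thm:4.4.11}, which partitions the class of distance-regular graphs with second largest adjacency eigenvalue $\theta_1=b_1-1$ into six (mildly overlapping) subclasses, and determine Lichnerowicz sharpness separately in each subclass, using the earlier results of this section whenever possible. The Bonnet-Myers parenthetical annotations in the statement are then obtained by directly comparing the resulting $\inf \kappa$ with the Bonnet-Myers bound $\tfrac{2}{L}$ in each case.

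Easy cases. Case (ii), the $\mu=1$ graphs, is excluded wholesale by Theorem \ref{thm:mu1_notlich}. Case (i) --- strongly regular with smallest adjacency eigenvalue $-2$ --- is settled by Theorem \ref{thm:lichstrreg}, whose list of Lichnerowicz sharp members is $CP(n)$, $L_2(n)\cong H(n,2)$, $T(n)\cong J(n,2)$, $Q^5_{(2)}$ and the Schl\"afli graph, each sitting inside a claimed family. Cases (iv) and (v) follow from Subsections \ref{subsec:ex_johnson} and \ref{subsec:ex_demicubes}, where it was already shown that $\inf\kappa = \lambda_1$ for every Johnson graph and every demi-cube. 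Case (vi), the Gosset graph, is Bonnet-Myers sharp (Subsection \ref{subsec:ex_gosset}), hence Lichnerowicz sharp by Theorem \ref{thm:lichn}.

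Case (iii). Here $\mu=2$ and $G$ is a Hamming graph, a Doob graph, or locally Petersen. For the Hamming graph $H(n,d) = (K_n)^d$ I would argue by induction on $d$: the base $K_n = J(n,1)$ has $\inf\kappa=\lambda_1=\tfrac{n}{n-1}$, and the inductive step applies Theorem \ref{LLYcartprods} together with the normalized Laplacian spectrum of a Cartesian product (cf.\ the chain of equalities \eqref{eqn:lich_cart} in Remark \ref{rem:lichcartsharp}) to conclude
\[
\inf_{u\sim v}\kappa^{H(n,d)}(u,v) \;=\; \lambda_1^{H(n,d)} \;=\; \frac{n}{d(n-1)},
\]
which equals $\tfrac{2}{d}$ iff $n=2$. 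For a Doob graph $G$ --- by definition a Cartesian product of at least one Shrikhande factor with several copies of $K_4$ --- Table \ref{table:strongly_reg_Lichsharp_examples} supplies the data $(\inf\kappa,\lambda_1)=(\tfrac13,\tfrac23)$ for Shrikhande and $(\tfrac43,\tfrac43)$ for $K_4$. Substituting into the Cartesian product minima appearing in \eqref{eqn:lich_cart} gives $\inf_{u\sim v}\kappa^G(u,v)=\tfrac{2}{T}<\tfrac{4}{T}=\lambda_1^G$ (where $T$ is the total valency), so no Doob graph is Lichnerowicz sharp. Finally, there are only finitely many distance-regular locally Petersen graphs (the Conway-Smith, Doro and Hall graphs); for each one, I would verify $\inf\kappa<\lambda_1$ by a direct curvature computation, e.g.\ using the curvature calculator invoked already in the proof of Theorem \ref{thm:lichstrreg}.

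Main obstacle. Essentially every step reduces to an already established result except the elimination of distance-regular locally Petersen graphs. Since these graphs are \emph{not} Cartesian products of smaller pieces, the product argument used for Doob graphs does not apply, and one needs an intrinsic bound. A cleaner alternative would be to combine Proposition \ref{prop:curvcalc0} --- using the fact that each Petersen neighbourhood places every edge in exactly three triangles --- with the Lichnerowicz eigenvalue $\lambda_1 = 1-\tfrac{\theta_1}{D}$ computed from the intersection array, and show that the absence of a perfect matching of the remaining neighbours (forced by the girth $5$ of Petersen) makes the inequality in \eqref{eq:kappa_est} strict enough to prevent Lichnerowicz sharpness. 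Avoiding the case check in favour of such a uniform combinatorial obstruction is the main work still required.
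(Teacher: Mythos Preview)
Your approach is correct and mirrors the paper's proof: both apply Theorem~\ref{thm:4.4.11} case by case, use Theorems~\ref{thm:lichstrreg} and~\ref{thm:mu1_notlich} for cases (i) and (ii), and invoke the earlier subsections for Johnson graphs, demi-cubes and the Gosset graph. The paper handles case (iii) less theoretically than you do---it simply records the remaining families (Hamming, Doob, the three locally Petersen graphs) in Table~\ref{table:Lichsharp_examples} with curvature values computed by the calculator---whereas your explicit Cartesian-product arguments for Hamming and Doob are self-contained and arguably cleaner. One small correction: the three distance-regular locally Petersen graphs are the $(7,2)$-Kneser, Conway--Smith and Hall graphs (not Doro); the paper disposes of them exactly as you propose, by direct calculator check, and your hoped-for uniform combinatorial obstruction is not pursued there.
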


\begin{proof}
  The theorem is an immediate consequence of the classification
  Theorem \ref{thm:4.4.11}, Theorems \ref{thm:lichstrreg} and
  \ref{thm:mu1_notlich}, and Table
  \ref{table:Lichsharp_examples} below. As before, the curvatures
  $\inf_{x \sim y} \kappa(x,y)$ were determined with the help of the
  curvature calculator \cite{CKLLS17} at
  \begin{center}
  \url{http://www.mas.ncl.ac.uk/graph-curvature/}
  \end{center}
  Note that the Doob graphs are given by
  ${\rm Doob}^{n,m} = K_4^n \times {\rm Shk}^m$ with $n,m \ge 1$,
  where ${\rm Shk}$ denotes the Shrikhande graph, and the
  $(7,2)$-Kneser, Conway-Smith graph and Hall graph are the three
  locally Petersen graphs.
 
\begin{table}[h!]
\centering
\begin{tabular}{l|l|l|l|l|l|l}
$G$ & $|V|$ & $D$ & $L$ & $\theta_1=b_1-1$ & $\lambda_1$ & $\displaystyle{\inf_{x \sim y} \kappa(x,y)}$ \\[.1cm] 
\hline &&&&& \\[-.2cm]

$(K_n)^d$ & $n^d$ & $d(n-1)$ & $d$ & $n(d-1)-d$ & $\frac{n}{d(n-1)}$ & $\frac{n}{d(n-1)}$ \\[.1cm]

${\rm Doob}^{n,m}$ & $4^{n+2m}$ & $3(n+2m)$ & $n+2m$ & $3(n+2m)-4$ & $\frac{4}{3(n+2m)}$ & $\frac{2}{3(n+2m)}$ \\[.1cm]

$(7,2)$-Kneser & $21$ & $10$ & $2$ & $3$ & $\frac{7}{10}$ & $\frac{1}{2}$ \\[.1cm]  

Conway-Smith & $63$ & $10$ & $4$ & $5$ & $\frac{1}{2}$ & $-\frac{1}{10}$ \\[.1cm]

Hall & $65$ & $10$ & $3$ & $5$ & $\frac{1}{2}$ & $-\frac{1}{10}$ \\[.1cm]

$J(n,k)$ & $n \choose k$ & $k(n-k)$ & $\min(k,n-k)$ & $k(n-k)-n$ & $\frac{n}{k(n-k)}$ & $\frac{n}{k(n-k)}$ \\[.1cm]

$Q^n_{(2)}$ & $2^{n-1}$ & $n \choose 2$ & $\lfloor \frac{n}{2}\rfloor$ & $\frac{(n-4)(n-1)}{2}$ & $\frac{4}{n}$ & $\frac{4}{n}$ \\[.1cm]

Gosset & $56$ & $27$ & $3$ & $9$ & $\frac{2}{3}$ & $\frac{2}{3}$

\end{tabular}
\caption{Distance-regular graphs with second largest eigenvalue $\theta_1=b_1-1$ not yet considered and taken from Theorem \ref{thm:4.4.11}}
\label{table:Lichsharp_examples}
\end{table}
\end{proof}

\section{Transport geodesics of self-centered Bonnet-Myers sharp graphs}
\label{sec:transpgeod}

This section together with the next one is dedicated to the proof that
the examples in Subsections \ref{subsec:ex_hypcubes}-\ref{subsec:ex_gosset}
and suitable Cartesian products of them are the only
self-centered Bonnet-Myers sharp graphs. Of crucial importance in this proof are transport geodesic techniques. In view of this result, it is
natural to ask the following:

\begin{question} 
Are there any Bonnet-Myers sharp graphs which are not self-centered?
\end{question}

We assume that all Bonnet-Myers sharp graphs are self-centered, but
this is currently still an open problem. Let us now start to introduce
the relevant tools to achieve the above mentioned goal.

\subsection{Concatenation of transport maps}
\label{sec:concat_tramap}

Let $G=(V,E)$ be a simple, connected, $D$-regular graph and $\mu_0,\mu_1$ be probability measures on $V$. A transport plan $\pi \in \Pi(\mu_0,\mu_1)$ is induced by 
a \emph{transport map} $T: {\rm supp}(\mu_0) \to {\rm supp}(\mu_1)$ if 
$\mu_1(T(x)) = \mu_0(x)$ for all $x \in V$ and
$$ \pi(x,y)  = \begin{cases} \mu_0(x), & \text{if $x \in {\rm supp}(\mu_0)$ and $y = T(x)$}, \\ 0, & \text{otherwise.} \end{cases} $$
We define the \emph{cost} of a transport map $T$ as the cost of its induced transport plan $\pi: V \times V \to [0,1]$:
$$ {\rm cost}(T) := {\rm cost}(\pi) = \sum_{x \in {\rm supp}(\mu_0)} d(x,T(x)) \mu_0(x). $$
$T$ is called an \emph{optimal transport map} from $\mu_0$ to $\mu_1$ if its induced plan $\pi \in \Pi(\mu_0,\mu_1)$ is an optimal transport plan. The existence of optimal transport maps for given probability measures $\mu_0, \mu_1$ is known as the \emph{Monge Problem}. 

Let $T_1, T_2$ be transport maps from $\mu_0$ to $\mu_1$ and from $\mu_1$ to $\mu_2$, respectively. These transport maps can be concatenated to a transport map from $\mu_0$ to $\mu_2$ via 
$$ T_2 \circ T_1: {\rm supp}(\mu_0) \to {\rm supp}(\mu_2). $$
The following fact about concatenation will be useful henceforth. 

\begin{proposition}[transport geodesic]\label{prop:transgeod}
Let $G=(V,E)$ be a simple, connected $D$-regular graph and $\mu_0,\mu_1\dots,\mu_k$ be probability measures on $V$. For $1 \le j \le k$, let $T_j$ be a transport map from $\mu_{j-1}$ to $\mu_j$, and $T^j$ be the concatenated map
$$ T^j := T_j \circ T_{j-1} \circ \cdots \circ T_1: {\rm supp}(\mu_0) \to
{\rm supp}(\mu_j). $$
Then we have
\begin{equation} \label{eq:concat}
{\rm cost}(T^k) \le \sum_{j=1}^k {\rm cost}(T_j).
\end{equation}

Assume that we have equality in \eqref{eq:concat}.  
Then, for each $z \in {\rm supp}(\mu_0)$, the sequence of vertices
$$ z, T^1(z), T^2(z), \cdots, T^k(z) $$
lies on a geodesic from $z$ to $T^k(z)$, that is,
\begin{equation} \label{eq:eqdisttramap}
d(z,T^k(z)) = d(z,T^1(z)) + d(T^1(z),T^2(z)) + \cdots + d(T^{k-1}(z),T^k(z)). 
\end{equation}

Such sequence of vertices $z, T^1(z), T^2(z), \cdots, T^k(z)$ is hence called a \textbf{\textit{transport geodesic}}. 
\end{proposition}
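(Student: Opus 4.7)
The plan is to prove part (1) by the iterated triangle inequality together with mass-conservation bookkeeping, and then derive part (2) by observing that equality in a weighted sum of nonnegative terms forces pointwise equality for each atom. First I would write the cost of the concatenated map as
\begin{equation*}
\mathrm{cost}(T^k) = \sum_{z \in \mathrm{supp}(\mu_0)} d(z, T^k(z))\,\mu_0(z),
\end{equation*}
and apply the triangle inequality in $G$ iteratively to obtain, pointwise for every $z \in \mathrm{supp}(\mu_0)$,
\begin{equation*}
d(z, T^k(z)) \le \sum_{j=1}^{k} d(T^{j-1}(z), T^{j}(z)),
\end{equation*}
where $T^0$ denotes the identity on $\mathrm{supp}(\mu_0)$.

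The key bookkeeping step is to rewrite the $j$-th summand, after averaging against $\mu_0$, as $\mathrm{cost}(T_j)$. To this end I would first observe that the defining relation $\mu_j(T_j(w)) = \mu_{j-1}(w)$ combined with the marginal condition on the induced plan forces $T_j$ to be a bijection from $\mathrm{supp}(\mu_{j-1})$ onto $\mathrm{supp}(\mu_j)$: two distinct preimages of the same point $y$ would each contribute mass $\mu_j(y)$ to the marginal at $y$, doubling it and producing a contradiction. By induction $T^{j-1}$ is then a bijection from $\mathrm{supp}(\mu_0)$ onto $\mathrm{supp}(\mu_{j-1})$ preserving masses, so reindexing via $w = T^{j-1}(z)$ yields
\begin{equation*}
\sum_{z \in \mathrm{supp}(\mu_0)} d(T^{j-1}(z), T^{j}(z))\,\mu_0(z) = \sum_{w \in \mathrm{supp}(\mu_{j-1})} d(w, T_j(w))\,\mu_{j-1}(w) = \mathrm{cost}(T_j).
\end{equation*}
Summing over $j$ and combining with the pointwise triangle inequality yields \eqref{eq:concat}.

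For the second assertion, if \eqref{eq:concat} holds with equality, then the slack in the pointwise triangle inequality, weighted by $\mu_0(z) \ge 0$, must sum to zero. Since every individual slack is nonnegative and every weight is nonnegative, each slack with $\mu_0(z) > 0$ must vanish, which is exactly the transport-geodesic identity \eqref{eq:eqdisttramap} for every $z \in \mathrm{supp}(\mu_0)$.

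The main obstacle I expect is the bijectivity argument for $T_j$, since the proposition does not postulate it outright and one has to extract it carefully from the mass-preservation clause in the definition of a transport map together with the marginal constraint; once that is in place the rest is a routine triangle-inequality and nonnegative-summation argument.
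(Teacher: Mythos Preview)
Your proposal is correct and follows essentially the same route as the paper: pointwise triangle inequality on $d(z,T^k(z))$, swap the order of summation, reindex via $w=T^{j-1}(z)$ to recognise $\mathrm{cost}(T_j)$, and read off the equality case from nonnegativity of the slack. The paper performs the reindexing step without comment, whereas you justify it by extracting bijectivity of $T_j$ from the mass-preservation condition and the marginal constraint; this extra care is welcome (and indeed the pushforward identity $T^{j-1}_*\mu_0=\mu_{j-1}$ is all that is really needed for the change of variables).
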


\begin{proof}
	Setting $T^0 = {\rm Id}_{{\rm supp}(\mu_0)}$, we have
\begin{align*}
{\rm cost}(T^k) &= \sum_{z \in {\rm supp}(\mu_0)} d(z,T^k(z)) \mu_0(z)\\
&\stackrel{\Delta}{\le} \sum_{z \in {\rm supp}(\mu_0)} \left( \sum_{j=1}^k d(T^{j-1}(z),T^j(z)) \right) \mu_0(z)\\
&= \sum_{j=1}^k \left( \sum_{z \in {\rm supp}(\mu_0)}  d(T^{j-1}(z),T_j \circ T^{j-1}(z)) \mu_0(z)  
\right)\\ 
&= \sum_{j=1}^k \left( \sum_{x \in {\rm supp}(\mu_{j-1})}  d(x,T_j(x)) \mu_{j-1}(x)   \right) = \sum_{j=1}^k {\rm cost}(T_j),
\end{align*}
with equality if and only if \eqref{eq:eqdisttramap} for all $z \in {\rm supp}(\mu_0)$.
\end{proof}

\subsection{Transport geodesics of a Self-centered Bonnet-Myers sharp graph} \label{subsection: transgeod}

In this subsection and henceforth, we always assume that our $(D,L)$-Bonnet-Myers sharp graph $G=(V,E)$ has the extra condition of self-centeredness.

Let us start with a full-length (i.e. of length $L$) geodesic $g$, and denote the vertices along this geodesic by
$$g: \qquad x_0 \sim x_1 \sim x_2 \sim x_3 \sim \cdots \sim x_L. $$

For every $1\le j \le L$, since $x_{j-1}\sim x_j$ consider an optimal transport map $T_j$ from $\mu_{x_{j-1}}$ to $\mu_{x_{j}}$ based on triangles and a perfect matching (see Theorem \ref{cor:numtriangle}(c)), that is:
$$T_j: B_1(x_{j-1}) \rightarrow B_1(x_{j})$$ is a bijective function and satisfies 
\begin{enumerate}
	\item $x=T_j(x)$ if $x\in B_1(x_{j-1})\cap B_1(x_{j})$, and
	\item $x\sim T_j(x)$ if $x\in B_1(x_{j-1})\setminus B_1(x_{j})$.
\end{enumerate}
For simplicity, we will write 1. and 2. together as $x\simeq T_j(x)$, where the symbol $\simeq$ means ``adjacent or equal to''. 

Moreover, for each $z \in B_1(x_0)$, we define $z(0):=z$ and for $1 \le j \le L$,
$$ z(j) := T^j(z) := T_j \circ \cdots \circ T_1(z) \in B_1(x_j).$$

Note that, in particular, we have
$x_0(1) = x_0(0) = x_0$ by condition 1.

\begin{proposition} \label{prop:transgeod_BMsharp}
Let $G=(V,E)$ be a self-centered $(D,L)$-Bonnet-Myers sharp. Given a full-length geodesic $g$ and maps $T_j$ and $T^j$ (for $1\le j\le L$) defined as above. 
Then for every $z\in B_1(x_0)$, the sequence of vertices $$z(0) \simeq z(1) \simeq z(2)\simeq \cdots \simeq z(L)$$ is a transport geodesic. Since this transport geodesic follows closely the geodesic $g$, we call it a {\bf{\textit{transport geodesic along $\boldsymbol{g}$}}} and denote it by $g_z$.
\end{proposition}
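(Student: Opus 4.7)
The plan is to apply Proposition \ref{prop:transgeod} directly: it suffices to verify that equality holds in the cost inequality
$$ \mathrm{cost}(T^L) \;\le\; \sum_{j=1}^{L} \mathrm{cost}(T_j), $$
since then the proposition produces, for every $z \in \mathrm{supp}(\mu_{x_0}) = B_1(x_0)$, the required transport geodesic $z(0) \simeq z(1) \simeq \cdots \simeq z(L)$.

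To obtain equality I would squeeze the cost of $T^L$ between two quantities that turn out to coincide. First, since each $T_j$ is optimal and based on triangles and a perfect matching, Corollary \ref{cor:numtriangle}(c) gives $\mathrm{cost}(T_j) = \tfrac{1}{D+1}\left(D+1 - \tfrac{2D}{L}\right)$, so
$$ \sum_{j=1}^{L} \mathrm{cost}(T_j) \;=\; L - \tfrac{2D}{D+1}. $$
On the other hand, $T^L$ is a (bijective) transport map from $\mu_{x_0}$ to $\mu_{x_L}$, hence its cost is bounded below by $W_1(\mu_{x_0},\mu_{x_L})$. Since $G$ is self-centered Bonnet-Myers sharp, Theorem \ref{constcurv} yields $\kappa(x_0,x_L) = \tfrac{2}{L}$, so by the relation $\kappa = \tfrac{D+1}{D}\kappa_{1/(D+1)}$ and $d(x_0,x_L)=L$,
$$ W_1(\mu_{x_0},\mu_{x_L}) \;=\; L\left(1 - \tfrac{D}{D+1}\cdot\tfrac{2}{L}\right) \;=\; L - \tfrac{2D}{D+1}. $$

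Combining these,
$$ L - \tfrac{2D}{D+1} \;=\; W_1(\mu_{x_0},\mu_{x_L}) \;\le\; \mathrm{cost}(T^L) \;\le\; \sum_{j=1}^{L} \mathrm{cost}(T_j) \;=\; L - \tfrac{2D}{D+1}, $$
so all inequalities are equalities. Proposition \ref{prop:transgeod} then delivers the transport geodesic property for every $z \in B_1(x_0)$, completing the proof.

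There is no real obstacle here; the only points requiring care are: (i) that $T^L$ genuinely transports $\mu_{x_0}$ to $\mu_{x_L}$ (which is automatic from the chain of bijections $T_j : B_1(x_{j-1}) \to B_1(x_j)$ preserving the uniform mass $\tfrac{1}{D+1}$), and (ii) that self-centeredness is really used, since it is precisely what allows us to invoke Theorem \ref{constcurv} to evaluate $\kappa(x_0,x_L)$ between the endpoints of $g$, which are at distance $L$ but need not a priori be neighbours.
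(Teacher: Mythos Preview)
Your proof is correct and follows essentially the same route as the paper: squeeze $\mathrm{cost}(T^L)$ between $W_1(\mu_{x_0},\mu_{x_L})$ and $\sum_j \mathrm{cost}(T_j)$, using Corollary~\ref{cor:numtriangle}(c) for the upper bound, and then invoke Proposition~\ref{prop:transgeod}. The only cosmetic difference is that the paper obtains the lower bound on $W_1(\mu_{x_0},\mu_{x_L})$ from the general inequality $\kappa(x_0,x_L)\le \tfrac{2}{L}$ (valid for any pair at distance $L$, see \eqref{eq:kapzw_ineq}) rather than from Theorem~\ref{constcurv}; since self-centeredness is already needed for Corollary~\ref{cor:numtriangle}(c), nothing is gained or lost either way.
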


\begin{rem} \label{rem: pm_not_uniq} The definition of a transport
  geodesic $g_z$ depends on a full-length geodesic $g$ and sets of
  transport maps $\{T_j\}_{j=1}^L$. Each $T_j$ is a priori not
  uniquely defined, since the definition of $T_j$ is based on
  triangles and a perfect matching, the latter of which is not
  necessarily unique. We will see later (cf. Remark \ref{rem:
    pm_uniq}) that in fact the maps $\{T_j\}_{j=1}^L$ are already
  uniquely determined by $g$ in the case of self-centered Bonnet-Myers
  sharp graphs.
\end{rem}

\begin{proof}
Note that $T^L$ induces a transport plan from $\mu_{x_0}$ to $\mu_{x_L}$, and together with Theorem \ref{cor:numtriangle}(c), we have
$$ W_1(\mu_{x_0},\mu_{x_L}) \le {\rm cost}(T^L) \le \sum_{j=1}^L {\rm cost}(T_j) = L \cdot\frac{1}{D+1}\left(D+1-\frac{2D}{L}\right)
= L - \frac{2D}{D+1}. $$
On the other hand,
$$ \frac{2}{L} \ge \kappa(x_0,x_L) = \frac{D+1}{D} \kappa_{\frac{1}{D+1}}(x_0,x_L) =
\frac{D+1}{D} \left( 1 - \frac{W_1(\mu_{x_0},\mu_{x_L})}{L} \right) $$
implies that
$$ L - \frac{2D}{D+1} \le W_1(\mu_{x_0},\mu_{x_L}). $$
Bringing these inequalities together, we conclude that
\begin{equation} \label{eq:cost_fullplan}
{\rm cost}(T^L) = \sum_{j=1}^L {\rm cost}(T_j)=L - \frac{2D}{D+1}.
\end{equation}

Then by Proposition \ref{prop:transgeod}, for every $z\in B_1(x_0)$, the sequence of vertices $z,T^1(z),T^2(z),\cdots T^L(z)$ is a transport geodesic. This sequence is indeed the same as $$z(0) \simeq z(1) \simeq z(2)\simeq \cdots \simeq z(L)$$ since by definition $z(j)=T^j(z)$ and $z(j-1)\simeq T_j(z(j-1))=z(j)$ for all $j$.
\end{proof}

\begin{proposition} \label{prop: ext_transgeod}
Given the same setup as in Proposition \ref{prop:transgeod_BMsharp}. Then for every $z\in B_1(x_0)$, the corresponding transport geodesic $g_z$, namely $$g_z:\quad z(0) \simeq z(1) \simeq \cdots \simeq z(L)$$ has the length
\begin{equation*}
\ell(g_z):=d(z(0),z(L))=\begin{cases}
L-1 \quad\textup{, if } z(0)=x_0 \textup{ or } z(L)=x_L\\
L-2 \quad\textup{, otherwise}.
\end{cases}
\end{equation*}
As an immediate consequence, every geodesic $g_z$ can be extended to the geodesic
$${\rm ext}(g_z):\quad x_0\simeq z(0) \simeq z(1) \simeq \cdots \simeq z(L)\simeq x_L.$$
\end{proposition}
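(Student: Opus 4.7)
The plan is to combine a triangle-inequality argument for each individual $g_z$ with a global counting identity derived from the cost formula \eqref{eq:cost_fullplan}. By Proposition \ref{prop:transgeod_BMsharp}, $\ell(g_z)=\sum_{j=1}^L d(z(j-1),z(j))$ with each summand in $\{0,1\}$, so $\ell(g_z)\le L$ and the deficit $L-\ell(g_z)$ counts the indices $j$ at which $T_j$ fixes $z(j-1)$.

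First I would establish the universal lower bound $\ell(g_z)\ge L-2$: concatenating the steps $x_0\simeq z(0)$ and $z(L)\simeq x_L$ with $g_z$ produces an $x_0$-to-$x_L$ walk of length at most $\ell(g_z)+2$, so $d(x_0,x_L)=L$ forces the bound. Moreover, whenever the total length of this concatenation equals $L$, the extended walk is automatically a geodesic, which is precisely the extension claim. Next, for $z=x_0$: since $x_0\in\{x_0,x_1\}\subseteq N_{x_0 x_1}\cup\{x_0,x_1\}$, the map $T_1$ fixes $x_0$, so $z(1)=z(0)=x_0$ and $\ell(g_{x_0})\le L-1$; combined with the triangle inequality applied with a zero first step, this gives $\ell(g_{x_0})=L-1$. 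Symmetrically, define $z_L:=(T^L)^{-1}(x_L)$; since $T_L$ fixes $x_L$ (by the analogous membership), the bijectivity of $T_L$ forces $z_L(L-1)=x_L=z_L(L)$, and the same argument gives $\ell(g_{z_L})=L-1$. Note that $z_L\ne x_0$, since $\ell(g_{x_0})=L-1<L$ implies $T^L(x_0)\ne x_L$.

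To pin down the remaining $D-1$ vertices, I would sum $\ell(g_z)$ over all $z\in B_1(x_0)$. Using that $T^{j-1}$ is a bijection $B_1(x_0)\to B_1(x_{j-1})$ and Corollary \ref{cor:numtriangle}(a) (the identity set $N_{x_{j-1}x_j}\cup\{x_{j-1},x_j\}$ of $T_j$ has size $\tfrac{2D}{L}$, since every edge lies in exactly $\tfrac{2D}{L}-2$ triangles), the sum collapses:
\[ \sum_{z\in B_1(x_0)}\ell(g_z)\;=\;\sum_{j=1}^L\Bigl(D+1-\tfrac{2D}{L}\Bigr)\;=\;(D+1)L-2D. \]
The two distinguished vertices account for $2(L-1)$; each remaining $z$ contributes at least $L-2$; and $2(L-1)+(D-1)(L-2)=(D+1)L-2D$, so equality is forced and every remaining $\ell(g_z)$ equals $L-2$. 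The extension claim in each of the three cases then follows from the first step, since the extended walk attains total length exactly $L=d(x_0,x_L)$.

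The main subtlety I would expect is correctly identifying the identity set of each $T_j$ and verifying that $x_0$ and $z_L$ are the only vertices at which the transport "skips" a step; once those two preimages in $B_1(x_0)$ are isolated, the cost identity does the rest.
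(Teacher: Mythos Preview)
Your argument is correct and follows essentially the same route as the paper's proof: triangle-inequality lower bounds on each $\ell(g_z)$, combined with the exact value of $\sum_{z}\ell(g_z)$, force equality throughout. The only cosmetic differences are that you compute the sum by counting fixed points of each $T_j$ (via Corollary~\ref{cor:numtriangle}(a)) rather than citing the cost formula~\eqref{eq:cost_fullplan} directly, and you pin down $\ell=L-1$ for the two special vertices $x_0$ and $z_L=(T^L)^{-1}(x_L)$ by a separate upper-bound argument rather than letting the sum identity handle them as well.
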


\begin{proof}
Since $z(0)\in B_1(x_0)$ and $z(L)\in B_1(x_L)$, triangle inequality gives 
\begin{align*}
L=d(x_0,x_L) &\le d(x_0,z(0))+d(z(0),z(L))+d(z(L),x_L) \\
&=\mathbbm{1}_{\{x_0\not=z(0)\}}+\ell(g_z)+\mathbbm{1}_{\{x_L\not=z(L)\}}.
\end{align*}

Note also that $z(0)=x_0$ and $z(L)=x_L$ cannot happen simultaneously. Otherwise, it means that $\ell(g_{x_0})=L$ which would imply that the geodesic $g_{x_0}$ contains all distinct vertices $x_0(0)\sim x_0(1) \sim \cdots \sim x_0(L)$, contradicting to the repetition $x_0=x_0(0)=x_0(1)$. Therefore, 
\begin{equation} \label{eq:estimate_gz}
\ell(g_z)\ge\begin{cases}
L-1 \quad\textup{, if } z(0)=x_0 \textup{ or } z(L)=x_L\\
L-2 \quad\textup{, otherwise}.
\end{cases}
\end{equation}
and
\begin{equation} \label{eq:estimate_gz_sum}
\sum\limits_{z \in B_1(x_0)} \ell(g_z) \ge 2(L-1)+(D-1)(L-2).
\end{equation}

On the other hand, from \eqref{eq:cost_fullplan}, $T^L$ has the cost of $L - \frac{2D}{D+1}$. It follows that
\begin{align*}
\frac{1}{D+1} \sum_{z \in B_1(x_0)} \ell(g_z) 
&=\sum_{z \in B_1(x_0)} d(z,T^L(z)) \frac{1}{D+1}\\
&={\rm cost}(T^L)=L - \frac{2D}{D+1}\\
&=\frac{1}{D+1} \bigg(2(L-1)+(D-1)(L-2)\bigg).
\end{align*}
which implies that the equality holds true in \eqref{eq:estimate_gz_sum}, and also in \eqref{eq:estimate_gz} as desired.
\end{proof}

\subsection{Antipoles of intervals in a self-centered Bonnet-Myers sharp graph}

We still assume that our graph $G=(V,E)$ is a self-centered $(D,L)$-Bonnet-Myers sharp graph. Henceforth
we will use the following notation related to intervals: Given an interval
$[x,y] \subset V$ in $G$ and a vertex $z \in [x,y]$, we call a vertex
$\overline{z} \in [x,y]$ an \emph{antipole of $z$ w.r.t. $[x,y]$} if
$d(x,y) = d(z,\overline{z})$. Note that antipoles were already
introduced for graphs and this definition simply means that $z$ and
$\overline{z}$ are antipoles of the induced subgraph of $[x,y]$. We
now focus on identifying antipoles w.r.t. intervals via the method of transport
geodesics.

\begin{theorem} \label{thm:uniq_ant}
Let $G=(V,E)$ be a self-centered $(D,L)$-Bonnet-Myers sharp, and given a full-length geodesic $g$:
$$g: \quad x_0 \sim x_1 \sim \cdots \sim x_L.$$ Then for any $2\le k \le L$, $x_1$ has a unique antipole w.r.t. the interval $[x_0,x_k]$, which we will then denote as $\textup{ant}_{[x_0,x_k]}(x_1)$. In fact, we show that 
$$ \textup{ant}_{[x_0,x_k]}(x_1)=x_0(k)=T^k(x_0) \in B_1(x_k) $$ 
for any fixed $\{T_j,T^j\}_{j=1}^L$ defined in Subsection \ref{subsection: transgeod}.
\end{theorem}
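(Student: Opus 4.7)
The plan is to prove Theorem~\ref{thm:uniq_ant} by strong induction on $k$, handling both existence of $x_0(k)$ as an antipole of $x_1$ in $[x_0, x_k]$ and uniqueness of this antipole in a unified argument. As preliminary work, I would extend the reasoning of Propositions~\ref{prop:transgeod_BMsharp} and~\ref{prop: ext_transgeod} to the sub-geodesic $x_0 \sim x_1 \sim \cdots \sim x_k$ of length $k$. Using $\kappa(x_0,x_k) = 2/L$ from Theorem~\ref{constcurv} and the same cost-averaging computation, I would deduce $\mathrm{cost}(T^k) = \sum_{j=1}^k \mathrm{cost}(T_j)$, so each sequence $z(0) \simeq z(1) \simeq \cdots \simeq z(k)$ is a transport geodesic. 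Specialising to $z = x_0$ (and noting that the unique identity step of the full-length transport geodesic $g_{x_0}$ occurs at $j=1$), I obtain $d(x_0, x_0(k)) = k-1$. Combined with $x_0(k) \in B_1(x_k)$ and the triangle inequality this yields $x_0(k) \sim x_k$, $x_0(k) \ne x_k$, and $x_0(k) \in [x_0, x_k]$; furthermore, since $x_0(k-1) \in B_1(x_{k-1}) \setminus B_1(x_k)$ (else $T_k$ would fix $x_0(k-1)$, contradicting $d(x_0, x_0(k))=k-1$), the matching structure of $T_k$ forces $x_0(k) \in B_1(x_k) \setminus B_1(x_{k-1})$.

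Next I would characterise any antipole $\overline{x}_1$ of $x_1$ in $[x_0, x_k]$. The containment $\overline{x}_1 \in [x_0, x_k]$ and $d(x_1, \overline{x}_1) = k$ force, by elementary triangle inequalities, $\overline{x}_1 \sim x_k$, $\overline{x}_1 \ne x_k$, and $d(x_0, \overline{x}_1) = k - 1$. Crucially, $\overline{x}_1 \notin B_1(x_{k-1})$: otherwise $d(x_1, \overline{x}_1) \le d(x_1, x_{k-1}) + 1 = k-1$, contradicting the antipole condition. Hence every antipole lies in $B_1(x_k) \setminus B_1(x_{k-1})$, exactly where $T_k$ restricts to a matching bijection onto $B_1(x_{k-1}) \setminus B_1(x_k)$.

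The base case $k = 2$ proceeds by direct computation: $T_2(x_0) = x_0(2)$ lies in $N_{x_0 x_2}$ but not in $B_1(x_1)$, because $x_0 \in B_1(x_1) \setminus B_1(x_2)$ is matched by $T_2$ to a neighbour of $x_0$ in $B_1(x_2) \setminus B_1(x_1)$; this yields $d(x_1, x_0(2)) = 2$. For uniqueness, any antipole $\overline{x}_1 \in B_1(x_2) \setminus B_1(x_1)$ has preimage $T_2^{-1}(\overline{x}_1) \in B_1(x_1) \setminus B_1(x_2)$, which a careful triangle-inequality argument forces to coincide with $x_0$. For the inductive step $k \ge 3$, given any antipole $\overline{x}_1$, I apply $T_k^{-1}$ to obtain $y := T_k^{-1}(\overline{x}_1) \in B_1(x_{k-1}) \setminus B_1(x_k)$ with $y \sim \overline{x}_1$. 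By bounding $d(x_0, y)$ and $d(x_1, y)$ through the antipole properties of $\overline{x}_1$, I would verify that $y$ is itself the antipole of $x_1$ in $[x_0, x_{k-1}]$. The inductive hypothesis then forces $y = x_0(k-1)$, and consequently $\overline{x}_1 = T_k(y) = x_0(k)$, simultaneously yielding uniqueness and $d(x_1, x_0(k)) = k$.

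The main obstacle lies in the inductive step: rigorously verifying that $y = T_k^{-1}(\overline{x}_1)$ really is an antipole of $x_1$ in $[x_0, x_{k-1}]$, i.e., that $d(x_0, y) = k - 2$ (so $y \in [x_0, x_{k-1}]$ via $d(y, x_{k-1}) = 1$) and $d(x_1, y) = k - 1$. The upper bounds follow readily from $y \sim \overline{x}_1$ combined with $d(x_0, \overline{x}_1) = k - 1$ and $d(x_1, \overline{x}_1) = k$, but the matching lower bounds require delicate use of the combinatorial constraints $y \sim x_{k-1}$, $y \not\sim x_k$, $y \ne x_{k-1}$ supplied by Corollary~\ref{cor:numtriangle} together with the geodesic structure imposed by Bonnet-Myers sharpness.
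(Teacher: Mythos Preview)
Your inductive scheme establishes only uniqueness, not existence. The step ``given any antipole $\overline{x}_1$, \dots\ consequently $\overline{x}_1 = T_k(y) = x_0(k)$, simultaneously yielding \dots\ $d(x_1, x_0(k)) = k$'' is circular for existence: you have deduced $d(x_1,x_0(k))=k$ only under the assumption that some antipole $\overline{x}_1$ exists in $[x_0,x_k]$. Nothing in the inductive hypothesis (which concerns $[x_0,x_{k-1}]$) produces such an antipole in $[x_0,x_k]$; the implication you prove runs the wrong way (antipole at level $k$ $\Rightarrow$ its $T_k$-preimage is an antipole at level $k-1$), and its converse is exactly what existence requires. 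An attempt to go forward from $d(x_1,x_0(k-1))=k-1$ via $x_0(k-1)\sim x_0(k)$ gives only $d(x_1,x_0(k))\le k$, and none of the local constraints you list rules out $d(x_1,x_0(k))\in\{k-2,k-1\}$. The paper resolves this by a genuinely global step you omit: self-centeredness supplies an antipole of $x_1$ at the top level $k=L$, uniqueness forces it to be $x_0(L)$, and then prepending $x_1$ to the full transport geodesic $x_0=x_0(1)\sim x_0(2)\sim\cdots\sim x_0(L)$ yields a geodesic of length $L$ from $x_1$ to $x_0(L)$, from which $d(x_1,x_0(k))=k$ can be read off for every $k$ at once.

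Your uniqueness argument is also more fragile than you suggest. Showing that $y=T_k^{-1}(\overline{x}_1)$ satisfies $d(x_0,y)=k-2$ cannot be done by ``triangle inequalities together with the combinatorial constraints $y\sim x_{k-1}$, $y\not\sim x_k$, $y\ne x_{k-1}$'' alone: these give only $d(x_0,y)\in\{k-2,k-1,k\}$. One must invoke the extended transport geodesic ${\rm ext}(g_b)$ for $b=(T^{k-1})^{-1}(y)$ and count identity steps to force $d(x_0,y)=k-2$. But once you are using ${\rm ext}(g_a)$ anyway, the paper's direct approach is cleaner: pull $\overline{x}_1$ all the way back via $a=(T^k)^{-1}(\overline{x}_1)\in B_1(x_0)$, use that $x_0\simeq a(0)\simeq a(1)\simeq\cdots\simeq a(k)=\overline{x}_1$ is a geodesic with $d(x_0,\overline{x}_1)=k-1$, and combine with $d(a(1),\overline{x}_1)\ge k-1$ (since $a(1)\in B_1(x_1)$) to force $a(0)=a(1)=x_0$, hence $a=x_0$ and $\overline{x}_1=x_0(k)$, with no induction needed.
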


\begin{proof}
First, fix a set of transport maps $\{T_j,T^j\}_{j=1}^L$ associated to $g$. Suppose that there exists $z\in [x_0,x_k]$ which is an antipole of $x_1$ w.r.t. $[x_0,x_k]$, that is $z\in [x_0,x_k]$ and $d(x_1,z)=d(x_0,x_k)=k$. Since $x_1\sim x_0$ and $d(x_1,z)=k$, we have $d(x_0,z)\ge k-1$. Since $z\in [x_0,x_k]$ and $z\not=x_k$, we must have $d(x_0,z)= k-1$ and $d(z,x_k)=1$. Since $z\in B_1(x_k)$, there is a unique $a\in B_1(x_0)$ such that $a(k)=z$, that is $a=(T^k)^{-1}(z)$ (because $T^k$ is a bijective map). By Proposition \ref{prop: ext_transgeod}, $$ x_0\simeq a(0) \simeq a(1) \simeq ... \simeq \equalto{a(k)}{z}$$ 
is part of the geodesic ${\rm ext}(g_a)$, so it is also a geodesic. Therefore it satisfies
\begin{equation} \label{eqn: x0_z}
k-1=d(x_0,z)=d(x_0,a(0))+d(a(0),a(1))+d(a(1),z).
\end{equation}
On the other hand, since $d(x_1,z)=k$ and $a(1) \in B_1(x_1)$, triangle inequality gives $d(a(1),z)\ge k-1$. Equation (\ref{eqn: x0_z}) then implies $x_0=a(0)=a(1)$, which means $a=x_0$. Thus $z=a(k)=x_0(k)$. So far we have shown that, for every $2\le k \le L$, $x_0(k)$ is the only candidate for an antipole of $x_1$ w.r.t. $[x_0,x_k]$. It remains to show that $x_0(k)$ is in fact the antipole of $x_1$ w.r.t $[x_0,x_k]$.

In particular, when $k=L$, the antipole of $x_1$ w.r.t. $[x_0,x_L]=V$ exists by the assumption that $G$ is self-centered. Denote this antipole by $\overline{x}_1$. By the previous argument, $x_0(L)$ must be $\overline{x}_1$, $d(x_1,x_0(L))=L$.

Consider the transport geodesic $g_{x_0}$:
$$g_{x_0}: \quad \equalto{x_0(0)}{x_0}= x_0(1)\simeq x_0(2) \simeq \cdots \simeq  \equalto{x_0(L)}{\overline{x}_1}$$

Since $g_{x_0}$ has length $L-1$ (by Proposition \ref{prop: ext_transgeod}), all the ``$\simeq$'' in $g_{x_0}$ must be strict ``$\sim$''. Thus $g_{x_0}$ can be written as 
$$g_{x_0}: \quad \equalto{x_0(0)}{x_0}= x_0(1)\sim x_0(2) \sim \cdots \sim  \equalto{x_0(L)}{\overline{x}_1}.$$
Moreover, since $g_{x_0}$ has length $L-1$ and $d(x_1,\overline{x}_1)=L$, the geodesic $g_{x_0}$ can then be extended (by adding $x_1$ to the left) to another geodesic $g'$: $$g': \quad x_1\sim \equalto{x_0(0)}{x_0}= x_0(1)\sim x_0(2) \sim \cdots \sim  \equalto{x_0(L)}{\overline{x}_1}.$$ 
Consequently, we can read off from the geodesic $g'$ that for every $k\in\{2,...,L\}$
\begin{enumerate}
	\item $d(x_1,x_0(k))=k$,
	\item $x_0(k)\in [x_0,x_k]$, because $k=d(x_0,x_k)\le d(x_0,x_0(k))+d(x_0(k),x_k)\le (k-1)+1$.
\end{enumerate}
Therefore, $x_0(k)$ is the unique antipole of $x_1$ w.r.t. $[x_0,x_k]$ as desired.
\end{proof}

Let us first discuss an immediate consequence of Theorem
\ref{thm:uniq_ant}. Note that the theorem implies that there is a
well-defined antipole map
$$ \textup{ant}_{[x,y]}: [x,y] \cap B_1(x) \to [x,y] \cap B_1(y). $$
Existence and uniqueness of antipoles for neighbours of $x$
w.r.t. $[x,y]$ implies the following result:

\begin{corollary} \label{cor: ant_biject} Let $G = (V,E)$ be a
  self-centered Bonnet-Myers sharp graph, $x, y\in V$ be two different
  vertices. Then the antipole map
  $$ \textup{ant}_{[x,y]}: [x,y] \cap B_1(x) \to [x,y] \cap B_1(y) $$ 
  is bijective and, consequently,
  $$ \left| [x,y] \cap B_1(x)\right| = \left| [x,y] \cap B_1(y) \right|. $$ 
\end{corollary}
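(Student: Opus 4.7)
My plan is to reduce the statement to the uniqueness result of Theorem \ref{thm:uniq_ant} by extending an arbitrary geodesic from $x$ to $y$ into a full-length one, and then to deduce bijectivity from the symmetry of the antipole relation.

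First, I would verify that $\textup{ant}_{[x,y]}$ is well-defined on $[x,y]\cap B_1(x)$. The case $w=x$ yields the antipole $y$ trivially, and if $d(x,y)=1$ the statement is immediate, so assume $w \sim x$, $w \in [x,y]$, and $d(x,y)=k \ge 2$. Using self-centeredness, pick an antipole $\bar x$ of $x$ with $d(x,\bar x)=L$; Theorem \ref{lieongeos} then yields $y \in [x,\bar x]$, so concatenating a geodesic $x \sim w \sim \cdots \sim y$ with a geodesic from $y$ to $\bar x$ produces a full-length geodesic
\[
g \colon \; x=x_0 \sim w=x_1 \sim \cdots \sim y=x_k \sim \cdots \sim x_L=\bar x.
\]
Applying Theorem \ref{thm:uniq_ant} to $g$ gives existence and uniqueness of the antipole of $w=x_1$ inside $[x_0,x_k]=[x,y]$, and identifies it as $x_0(k) \in B_1(x_k)=B_1(y)$, so $\textup{ant}_{[x,y]}(w) \in [x,y]\cap B_1(y)$ as required.

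Next, I would observe that the antipole relation on $[x,y]$ is symmetric: the defining condition $z, \bar z \in [x,y]$ with $d(z,\bar z) = d(x,y)$ is symmetric in $z$ and $\bar z$, and $[x,y]=[y,x]$. Running the construction above with the roles of $x$ and $y$ interchanged shows that every $w' \in [x,y] \cap B_1(y)$ similarly admits a unique antipole $w \in [x,y] \cap B_1(x)$, so the symmetrically-defined map $\textup{ant}_{[y,x]}\colon [x,y]\cap B_1(y) \to [x,y]\cap B_1(x)$ serves as a two-sided inverse to $\textup{ant}_{[x,y]}$. Bijectivity follows, and the cardinality equality is immediate. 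The only potentially subtle point is the extension to a full-length geodesic, but this is a direct application of self-centeredness together with Theorem \ref{lieongeos}; no real obstacle arises, and the corollary is essentially a packaging of Theorem \ref{thm:uniq_ant} with the symmetry of the antipole condition.
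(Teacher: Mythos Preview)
Your proof is correct and follows essentially the same approach as the paper, which treats the corollary as an immediate consequence of the existence and uniqueness in Theorem~\ref{thm:uniq_ant}. You have simply spelled out the details the paper leaves implicit: extending a geodesic through $w$ to a full-length one (via self-centeredness and Theorem~\ref{lieongeos}) so that Theorem~\ref{thm:uniq_ant} applies, and then using the symmetry of the antipole relation to produce the inverse map.
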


\begin{rem} \label{rem:ant_toggle}
Let $x,y \in V$ be two different vertices and $x' \in [x,y] \cap B_1(x)$
with its antipole $y' = \textup{ant}_{[x,y]}(x')$. Observe that then
$x,y \in [x',y']$ and $y = \textup{ant}_{[x',y']}(x)$. 
\end{rem}

Another immediate consequence of Theorem \ref{thm:uniq_ant} is the following corollary.
\begin{corollary} \label{cor: mu_CP}
Let $G = (V,E)$ be a self-centered Bonnet-Myers sharp graph. Then all $\mu$-graphs of $G$ are cocktail party graphs. 
\end{corollary}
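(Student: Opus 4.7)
The plan is to exhibit the cocktail party structure directly from the uniqueness of antipoles (Theorem~\ref{thm:uniq_ant}) applied with parameter $k=2$. Fix $x,y\in V$ with $d(x,y)=2$. Since $[x,y]=\{x,y\}\cup N_{xy}$, to show that the induced subgraph on $N_{xy}$ is a cocktail party graph it suffices to produce, for every $z\in N_{xy}$, a unique vertex $\overline{z}\in N_{xy}\setminus\{z\}$ with $z\not\sim\overline{z}$, since then the complement of the $\mu$-graph is a perfect matching (in particular $|N_{xy}|$ is even and the $\mu$-graph is $CP(|N_{xy}|/2)$).

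First I would check that any length-$2$ geodesic $x\sim z\sim y$ extends to a full-length geodesic of length $L$. By self-centeredness, $x$ has an antipole $\overline{x}$ with $d(x,\overline{x})=L$, and by Theorem~\ref{lieongeos} every vertex of $V$ lies on a geodesic from $x$ to $\overline{x}$. In particular $d(z,\overline{x})=L-1$ and $d(y,\overline{x})=L-2$, hence $d(z,y)+d(y,\overline{x})=1+(L-2)=L-1=d(z,\overline{x})$, so $y\in[z,\overline{x}]$. Concatenating a geodesic from $y$ to $\overline{x}$ of length $L-2$ with the edges $x\sim z\sim y$ gives a full-length geodesic beginning $x\sim z\sim y$.

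Setting $x_0=x$, $x_1=z$, $x_2=y$ in this full-length geodesic and applying Theorem~\ref{thm:uniq_ant} with $k=2$, one obtains a unique antipole $\overline{z}=\textup{ant}_{[x,y]}(z)\in[x,y]$ with $d(z,\overline{z})=2$. Since $z$ is adjacent to both $x$ and $y$, the antipole $\overline{z}$ cannot equal $x$ or $y$, so $\overline{z}\in N_{xy}$, and $d(z,\overline{z})=2$ gives $z\not\sim\overline{z}$. Conversely, any $w\in N_{xy}\setminus\{z\}$ with $z\not\sim w$ lies in $[x,y]$ and satisfies $d(z,w)=2$ (via $x$), so by uniqueness $w=\overline{z}$. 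Thus $z$ has exactly one non-neighbour in $N_{xy}\setminus\{z\}$, which is precisely the cocktail party structure, and since the choice of $x,y$ with $d(x,y)=2$ was arbitrary, every $\mu$-graph of $G$ is a cocktail party graph.

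The only step that is not immediate is the extension of a length-$2$ geodesic to a full-length one, but as shown above this follows directly from Theorem~\ref{lieongeos} together with self-centeredness; the rest is a bookkeeping exercise with intervals and the uniqueness statement of Theorem~\ref{thm:uniq_ant}.
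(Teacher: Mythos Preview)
Your proof is correct and follows essentially the same approach as the paper: extend $x\sim z\sim y$ to a full-length geodesic using self-centeredness and Theorem~\ref{lieongeos}, then invoke Theorem~\ref{thm:uniq_ant} with $k=2$ to obtain a unique vertex in $N_{xy}$ at distance~$2$ from $z$. You provide more explicit detail than the paper on why the extension exists and on translating uniqueness of the antipole into the cocktail party structure, but the argument is the same.
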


\begin{proof}
  Let $x,y \in V$ with $d(x,y)=2$ and $z \in N_{xy}$. Since $G$ is
  self-centered Bonnet-Myers sharp, $x=x_0$ has an antipole $x_L \in V$, and we can find a geodesic $g$ from $x_0$ to $x_L$ passing through $z=x_1$ and $y=x_2$ by Theorem \ref{lieongeos}:
$$ g: \quad \equalto{x_0}{x} \sim \equalto{x_1}{z} \sim \equalto{x_2}{y} \sim \cdots \sim x_L. $$
Applying Theorem \ref{thm:uniq_ant} with $k=2$, we conclude that there
is a unique $z' \in N_{xy}$ satisfying $d(z,z')=2$. This shows that the
$\mu$-graph of $x$ and $y$ is a cocktail party graph.
\end{proof}

\begin{rem}
  The fact that all $\mu$-graphs of $G$ are cocktail party graphs
  allows us to naturally introduce a \emph{switching map}, defined as
  follows. Consider a pair $x,y\in V$ with $d(x,y)=2$. Then the
  switching map $\sigma_{xy}:N_{xy}\to N_{xy}$ is defined by
  $\sigma_{xy}(z):={\rm ant}_{[x,y]}(z)$ and satisfies
  $\sigma_{xy}^2 = {\rm Id}_{N_{xy}}$.
\end{rem}

\begin{rem} \label{rem: pm_uniq} Recall from Remark \ref{rem: pm_not_uniq}
  that for general Bonnet-Myers sharp graphs the perfect matchings
  defining the maps $T_j$ are not necessarily unique. However, under the additional condition of self-centeredness, the fact that all $\mu$-graphs of $G$ are
  cocktail party graphs implies the uniqueness of these perfect matchings and the
  associated transport maps $T_j$. Therefore, the definition of a
  transport geodesic $g_z$ depends only on the geodesic
  $g$.

  In particular, the transport geodesic $g_{x_0}$ containing all antipoles
  of $x_1$ w.r.t. increasing intervals $[x_0,x_k]$ (see Theorem \ref{thm:uniq_ant}) can be also understood 
  as been generated via the following recursive process of switching maps,
  as illustrated in Figure \ref{fig:transport_geodesic}:
  \begin{eqnarray*}
  x_0(2) &= \sigma_{x_0x_2}(x_1), \\
  x_0(3) &= \sigma_{x_0(2)x_3}(x_2), \\
  &\vdots \\
  x_0(k) &= \sigma_{x_0(k-1)x_k}(x_{k-1}), \\
  & \vdots \\
  x_0(L) &= \sigma_{x_0(L-1)x_L}(x_{L-1}). 
  \end{eqnarray*} 
\end{rem}

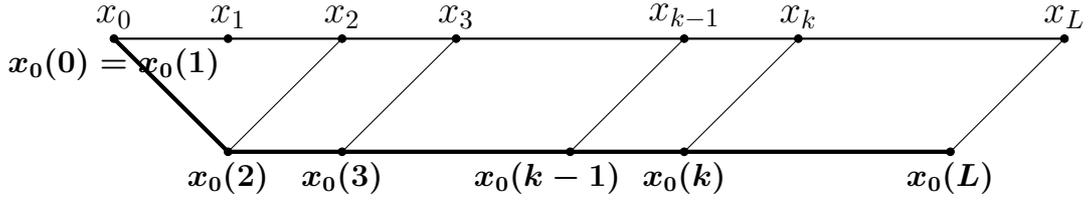
\begin{figure}[h!]
	\centering
	\begin{tikzpicture}[scale=1]
	
	\draw[thick] (0,0)--(12.5,0);
	\draw[ultra thick] (0,0)--(1.5,-1.5)--(11,-1.5);
	\draw[](11,-1.5)--(12.5,0) ;
	\draw[] (3,0)--(1.5,-1.5);
	\draw[] (4.5,0)--(3,-1.5);
	\draw[] (7.5,0)--(6,-1.5);	
	\draw[] (9,0)--(7.5,-1.5);	
	
	\draw[fill] (0,0) circle [radius=0.05];
	\node[above] at (0,0) {\Large $x_0$};	
	\node[below] at (0,0) {\large $\boldsymbol {x_0(0)=x_0(1)}$};	
	
	\draw[fill] (1.5,0) circle [radius=0.05];
	\node[above] at (1.5,0) {\Large $x_1$};
	
	\draw[fill] (3,0) circle [radius=0.05];
	\node[above] at (3,0) {\Large $x_2$};
	
	\draw[fill] (4.5,0) circle [radius=0.05];
	\node[above] at (4.5,0) {\Large $x_3$};
	
	\draw[fill] (7.5,0) circle [radius=0.05];
	\node[above] at (7.5,0) {\Large $x_{k-1}$};
	
	\draw[fill] (9,0) circle [radius=0.05];
	\node[above] at (9,0) {\Large $x_{k}$};	
	
	\draw[fill] (12.5,0) circle [radius=0.05];
	\node[above] at (12.5,0) {\Large $x_L$};
	
	\draw[fill] (1.5,-1.5) circle [radius=0.05];
	\node[below] at (1.5,-1.5) {\large $\boldsymbol{x_0(2)}$};
	
	\draw[fill] (3,-1.5) circle [radius=0.05];
	\node[below] at (3,-1.5) {\large $\boldsymbol{x_0(3)}$};
	
	\draw[fill] (6,-1.5) circle [radius=0.05];
	\node[below] at (5.7,-1.5) {\large $\boldsymbol{x_0(k-1)}$};
	
	\draw[fill] (7.5,-1.5) circle [radius=0.05];
	\node[below] at (7.5,-1.5) {\large $\boldsymbol{x_0(k)}$};
	
	\draw[fill] (11,-1.5) circle [radius=0.05];
	\node[below] at (11,-1.5) {\large $\boldsymbol{x_0(L)}$};
	
	\end{tikzpicture}
	\caption
	{Transport geodesic $g_{x_0}$ (along $g$) shown in \textbf{bold}} 
	\label{fig:transport_geodesic}
\end{figure}

\section{Self-centered Bonnet-Myers sharp implies strongly spherical}
\label{sec:antBMstrspher}

The ultimate goal of this section is to prove that all self-centered
Bonnet-Myers sharp graphs are strongly spherical (as stated in Theorem
\ref{thm_spherical} below). Let us recall the definition of self-centeredness,
antipodal, and strongly spherical (introduced in Definition \ref{def:strspher} and in Subsection \ref{sec:graph_notation}). For a finite connected graph $G=(V,E)$:\\
  $\bullet$ $G$ is \text{self-centered} if for every $x\in V$ there exists $\overline{x}\in V$ such that $d(x,\overline{x})=\textup{diam}(G).$\\
  $\bullet$ $G$ is \text{antipodal} if for every $x\in V$ there exists $\overline{x}\in V$ such that $[x,\overline{x}]=V$. The vertex $\overline{x}$ is then called an \emph{antipode} of $x$.\\
  $\bullet$ $G$ is \text{strongly spherical} if $G$ is antipodal, and the induced subgraph of every interval of $G$ is antipodal.

\begin{rem} It is important to notice the distinction between the notions ``antipole'' and ``antipode''. Here are basic facts about antipodes:
\begin{itemize}
\item Antipodes are also antipoles: Let $\overline{x}$ be an antipode
  of $x$ in $G$, that is, $[x,\overline{x}]=V$. We choose arbitrary
  $y,z \in V$ such that $d(y,z) = \textup{diam}(G)$. Then we have by
  $y,z \in [x,\overline{x}]$ and the triangle inequality
\begin{multline*}
  \textup{diam}(G) \ge d(x,\overline{x}) = \frac{1}{2} (d(x,y) +
  d(y,\overline{x})) +
  \frac{1}{2} (d(x,z) + d(z,\overline{x})) \\
  = \frac{1}{2} (d(y,x) + d(x,z)) + \frac{1}{2} (d(y,\overline{x}) +
  d(\overline{x},z)) \ge d(y,z) = \textup{diam}(G).
\end{multline*}
\item Antipodes are necessarily unique: Assume $\overline{x}_1$ and $\overline{x}_2$
are antipodes of $x$. Then $\overline{x}_2$ lies on a geodesic
from $x$ to $\overline{x}_1$. Since $d(x,\overline{x}_1)=d(x,\overline{x}_2)$,
this implies $\overline{x}_1=\overline{x}_2$.
\end{itemize}
\end{rem}

For the reader's convenience, let us start with a brief overview of
the proof that self-centered Bonnet-Myers sharp graphs are strongly
spherical. Note first that every self-centered Bonnet-Myers sharp graph
coincides with the interval of any pair of antipoles
(by Theorem \ref{lieongeos}). Therefore, it suffices to prove that every interval $[x,y]$, $x,y \in V$, of a self-centered Bonnet-Myers sharp graph
$G=(V,E)$ is antipodal.  This proof is divided into the
following four steps:

\begin{description}
\item[Step 1:] Let $x' \in [x,y] \cap S_1(x)$ with antipole
  $y' = \textup{ant}_{[x,y]}(x')$. We prove for every $z \in [x,y] \cap B_1(x)$
that $z \in [x',y']$ (see Theorem \ref{thm_spherical_step1}).
\item[Step 2:] Let $x' \in [x,y] \cap S_1(x)$ with antipole
  $y' = \textup{ant}_{[x,y]}(x')$. We prove for every $z \in [x,y]$
that $z \in [x',y']$ (see Theorem \ref{thm_spherical_step2}).
\item[Step 3:] Let $x' \in [x,y] \cap S_1(x)$ with antipole
  $y' = \textup{ant}_{[x,y]}(x')$. We prove that $[x,y] = [x',y']$ (see
Corollary \ref{cor_clockwise_spherical}).
\item[Step 4:] Let $x' \in [x,y]$. We prove that there exists $y' \in [x,y]$
such that $[x,y] = [x',y']$ (see Theorem \ref{thm_spherical}).
\end{description}

Let us now start to prove each of these steps in order. Recall that
the existence of antipoles of vertices in $[x,y] \cap B_1(x)$ w.r.t. $[x,y]$ is
guaranteed by Corollary \ref{cor: ant_biject}.

\begin{theorem}\label{thm_spherical_step1}
Let $G=(V,E)$ be self-centered Bonnet-Myers sharp. Let $x,y \in V$ be two different vertices, and consider any $x'\in [x,y]\cap S_1(x)$ with its antipole $y'=\textup{ant}_{[x,y]}(x')$. Then every $z \in [x,y]\cap B_1(y)$ satisfies $z\in[x',y']$.
\end{theorem}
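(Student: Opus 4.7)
Set $L':=d(x,y)$. Since $x'\sim x$ and $d(x',y')=L'=d(x,y)$ with $y'\in[x,y]$, we immediately have $d(x',y)=L'-1$, $d(x,y')=L'-1$ and $d(y,y')=1$. The plan is to split $z\in[x,y]\cap B_1(y)$ into cases via the value of $d(x',z)$, handle the easy cases by triangle inequalities together with uniqueness of antipoles, and treat the hard case using transport--geodesic techniques. If $z=y$, then $d(x',z)+d(z,y')=(L'-1)+1=L'=d(x',y')$, so $z\in[x',y']$; the case $z=y'$ is vacuous. Otherwise $z\in[x,y]\cap S_1(y)\setminus\{y'\}$, so $d(x,z)=L'-1$ and $d(z,y)=1$, and triangle inequalities give $d(x',z)\in\{L'-2,L'-1,L'\}$, with $d(x',z)=L'$ ruled out by antipole uniqueness (Theorem \ref{thm:uniq_ant}): else $z$ would be a second antipole of $x'$ in $[x,y]$ and hence equal to $y'$. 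If $d(x',z)=L'-2$, the bounds $d(x',z)+d(z,y')\ge d(x',y')=L'$ and $d(z,y')\le d(z,y)+d(y,y')=2$ together force $d(z,y')=2$, and so $z\in[x',y']$.

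The heart of the proof is the remaining subcase $d(x',z)=L'-1$, where the task reduces to showing $d(z,y')=1$, i.e. $z\sim y'$. To this end I would fix a full-length geodesic $g\colon x_0=x\sim x_1=x'\sim\cdots\sim x_{L'}=y\sim\cdots\sim x_L=\bar x$ through $x,x',y$ and an antipole $\bar x$ of $x$, which exists by self-centeredness together with Theorem \ref{lieongeos}. By Theorem \ref{thm:uniq_ant} the antipole satisfies $y'=x_0(L')$, and Remark \ref{rem: pm_uniq} gives the explicit recursive description $y'=\sigma_{x_0(L'-1),\,y}(x_{L'-1})$: the vertex $y'$ is the cocktail-party partner of $x_{L'-1}$ in the $\mu$-graph $N_{x_0(L'-1),\,y}$, which is a cocktail-party graph by Corollary \ref{cor: mu_CP}, in which $x_{L'-1}$ and $y'$ form the antipodal pair corresponding to $x_{L'-1}$. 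The strategy is to show that $z$ also belongs to this $\mu$-graph, i.e. that $z\sim x_0(L'-1)$. Once this is established, since $z\ne x_{L'-1}$ (otherwise $d(x',z)=L'-2$ rather than $L'-1$) and $z\ne y'$, the cocktail-party structure forces $z\sim y'$, as desired.

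The main obstacle is therefore establishing the adjacency $z\sim x_0(L'-1)$. My approach is to exploit a second full-length geodesic passing through $x,z,y$ (which exists since $z\in[x,\bar x]$, and $z\in[x,y]\cap S_1(y)$ lets us start it with $x\sim\cdots\sim z=x_{L'-1}^{\mathrm{new}}\sim y$), perform the analogous antipole/switching construction along it, and then compare the two resulting cocktail-party descriptions by exploiting the uniqueness of the perfect matchings along geodesics (Remark \ref{rem: pm_uniq}) together with bijectivity of the antipole map $\textup{ant}_{[x,y]}$ (Corollary \ref{cor: ant_biject}). Controlling how this second transport geodesic interacts with the vertex $x_0(L'-1)$ coming from the first geodesic, and reading off the adjacency $z\sim x_0(L'-1)$ from that interaction, is where the delicate combinatorial work is concentrated.
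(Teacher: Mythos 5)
Your reduction of the theorem to a single hard case is correct: the subcases $z=y$, $z=y'$, $d(x',z)=d(x,y)$ and $d(x',z)=d(x,y)-2$ are all handled properly by triangle inequalities and uniqueness of the antipole, and what remains is to show that $z\in[x,y]\cap S_1(y)$ with $d(x',z)=d(x,y)-1$ must satisfy $z\sim y'$. But that remaining subcase is where the entire content of the theorem lives, and your proposal does not prove it: you reduce it to the adjacency $z\sim x_0(L'-1)$, propose to obtain that adjacency by comparing the switching constructions along two geodesics, and then explicitly defer the comparison as ``where the delicate combinatorial work is concentrated.'' That work is the proof; without it the argument is a plan, not a demonstration. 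Moreover, the intermediate claim itself is fragile: $x_0(L'-1)={\rm ant}_{[x_0,x_{L'-1}]}(x_1)$ depends on the choice of the interior vertices $x_2,\dots,x_{L'-1}$ of $g$, and $z\sim x_0(L'-1)$ can genuinely fail for some choices. For instance, in $J(6,3)$ with $x=\{1,2,3\}$, $x'=\{1,2,4\}$, $y=\{4,5,6\}$, $z=\{3,4,5\}$, taking $x_2=\{1,4,6\}$ gives $x_0(2)=\{1,3,6\}$, which is not adjacent to $z$, even though the conclusion $z\sim y'=\{3,5,6\}$ is true. So you would also need to prove that a suitable geodesic exists, and your second-geodesic device does not obviously supply one: the geodesic through $x,z,y$ cannot pass through $x'$ (since $d(x',z)=d(x,z)$), so the antipole recursion along it concerns ${\rm ant}_{[x,y]}(x_1^{\rm new})$ for some $x_1^{\rm new}\neq x'$, and no mechanism is given for transferring information back to $x'$ and $y'$.

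For comparison, the paper closes exactly this gap by a global counting argument rather than a local adjacency argument. It introduces $A=[x,y]\cap B_1(x)$, $Z=[x,y]\cap B_1(y)$, partitions them by distance to $x'$ and to $y'$ respectively, and uses the bijection $F=(T^k)^{-1}|_Z:Z\to A$ induced by the transport maps. A geodesic-extension argument shows $A_2\sqcup\{x,x'\}\subseteq F\left(Z_2\sqcup\{y,y'\}\right)$, whence $|A_2|\le|Z_2|$; swapping the roles of $x$ and $y$ gives the reverse inequality, and the resulting equality $F(Z_1)=A_1$ forces every $z$ with $d(x',z)=d(x,y)-1$ to lie in $Z_1$, i.e.\ to be adjacent to $y'$. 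It is this symmetry-and-cardinality step, absent from your proposal, that resolves the hard case; I would encourage you either to import it or to find a genuine substitute before regarding the proof as complete.
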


We start with the set-up and introduce particular sets $A,A_1,A_2,Z,Z_1,Z_2$ and a function $F$ which will be important for the proof of the above theorem.

Let $k=d(x,y)$. We re-label the vertices as $x=x_0$ and $y=x_k$ and
$x'=x_1$ and $y'=\overline{x}_1$, as illustrated in Figure \ref{fig:intervalxy}. Keep in mind that
$\overline{x}_1=\textup{ant}_{[x_0,x_k]}(x_1)$ and $x_0\sim x_1$ and
$x_k\sim \overline{x}_1$.

\begin{figure}[h!]
	\centering
	\begin{tikzpicture}[scale=1]
		
	\draw[thick] (0,0) to [out=80,in=180] (4,2);
	\draw[thick] (0,0) to [out=-80,in=180] (4,-2);
	\draw[thick] (4,2) to [out=0,in=100] (8,0);
	\draw[thick] (4,-2) to [out=0,in=-100] (8,0);
	\draw[dashed] (0,0) to [out=10,in=175] (6,0.3);
	\draw[thick] (6,0.3) to [out=-5,in=170] (8,0);
	
	\draw[fill] (0,0) circle [radius=0.05];
	\node[left] at (0,0) {\LARGE $x$};
	\node[left] at (0.2,-0.5) {\Large $\boldsymbol{x_0}$};
	
	\draw[fill] (8,0) circle [radius=0.05];
	\node[right] at (8,0) {\LARGE $y$};
	\node[right] at (8,-0.5) {\Large $\boldsymbol{x_k}$};
	
	\draw[fill] (1.5,1.65) circle [radius=0.05];
	\node[left] at (1.5,1.75) {\LARGE $x'$};
	\node[left] at (1.6,1.15) {\Large $\boldsymbol{x_1}$};
	
	\draw[fill] (6.5,-1.65) circle [radius=0.05];
	\node[below right] at (6.5,-1.65) {\Large $y'=\textup{ant}_{[x,y]}(x')$};
	\node[below right] at (6.5,-2.35) {\Large $\boldsymbol{\overline{x}_1=\textup{\bf ant}_{[x_0,x_k]}(x_1)}$};
	
	\draw[fill] (6,0.3) circle [radius=0.05];
	\node[above] at (6,0.3) {\LARGE $z$};
	
	
	\end{tikzpicture}
	\caption
	{The interval $[x,y]$ with the re-labelled vertices in \textbf{bold}, and $z\in [x_0,x_k]\cap B_1(x_k)$.
	} 
	\label{fig:intervalxy}
\end{figure}
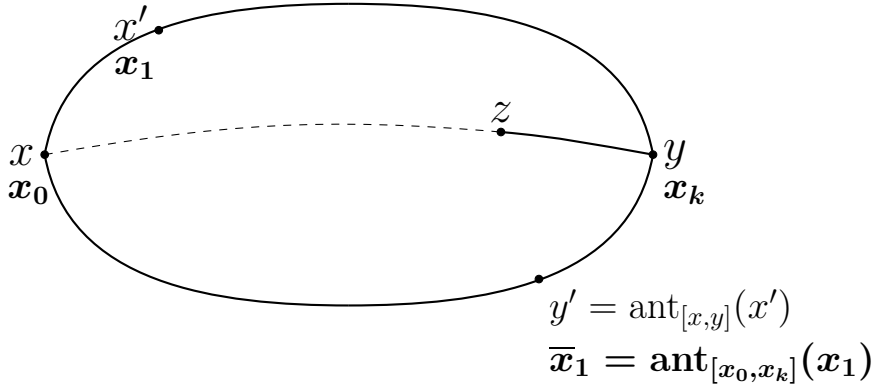
We define the following sets 
\begin{align*}
& A:=[x_0,x_k]\cap B_1(x_0), && Z:=[x_0,x_k]\cap B_1(x_k),\\
& A_1:=A\cap S_1(x_1)\setminus\{x_0\}, && Z_1:=Z\cap S_1(\overline{x}_1)\setminus\{x_k\},\\
& A_2:=A\cap S_2(x_1), && Z_2:=Z\cap S_2(\overline{x}_1).
\end{align*}
Note that the sets $A$ and $Z$ can be partitioned into $$A=\{x_0,x_1\}\sqcup A_1 \sqcup A_2 \qquad \textup{and} \qquad Z=\{x_k,\overline{x}_1\}\sqcup Z_1 \sqcup Z_2.$$

Now fix an arbitrary full-length geodesic $g$ from $x_0$ to $x_L$ (the antipole of $x_0$) which passes through $x_1$ and $x_k$ (this can be done since $x_1\in [x_0,x_k]$ and $x_k\in [x_0,x_L]$ by Theorem \ref{lieongeos}), namely
$$g: \quad x_0\sim x_1 \sim x_2 \sim \cdots \sim x_k \sim x_{k+1} \sim \cdots \sim x_L.$$ 
Consider the transport map $T^k:B_1(x_0)\rightarrow B_1(x_k)$ introduced in
Subsection \ref{subsection: transgeod}. Recall that $T^k$ is
bijective. Then define a function $F:Z\rightarrow A$ to be $F(z):=(T^k)^{-1}(z)$ for all $z\in Z\subset B_1(x_k)$. Lemma
\ref{lemma_FZ} below guarantees that $F(Z) \subseteq A$, hence $F$ is well-defined.

In order to conclude Theorem \ref{thm_spherical_step1}, we need to
prove that $\forall z\in Z:\ z\in [x_1,\overline{x}_1]$, which is
divided into Lemma \ref{lemma_FZ2} (dealing with the case
$z \in Z_2 \sqcup \{x_k,\overline{x_1}\}$) and Lemma \ref{lemma_FZ1}
(dealing with the case $z \in Z_1$).

\begin{lemma} \label{lemma_FZ}
$F(Z) \subseteq A$ and $F: Z \to A$ is bijective.
\end{lemma}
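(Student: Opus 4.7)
The plan is to prove the inclusion $F(Z) \subseteq A$ first, and then derive bijectivity almost for free from the injectivity of $(T^k)^{-1}$ together with the cardinality identity $|A| = |Z|$ supplied by Corollary \ref{cor: ant_biject}.

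For the inclusion, I would fix $z \in Z$, set $a := F(z) = (T^k)^{-1}(z) \in B_1(x_0)$, and invoke Proposition \ref{prop: ext_transgeod}, which guarantees that the extended transport geodesic
\[
\textup{ext}(g_a):\qquad x_0 \simeq a(0) \simeq a(1) \simeq \cdots \simeq a(L) \simeq x_L
\]
is an honest geodesic of length $L$ from $x_0$ to $x_L$. Since any subpath of a geodesic is again a geodesic, the prefix from $x_0$ to $a(k) = T^k(a) = z$ is a geodesic, and it passes through $a = a(0)$. Hence $a$ lies on a geodesic from $x_0$ to $z$, giving the identity $d(x_0, a) + d(a, z) = d(x_0, z)$.

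The remaining step is a short distance computation: combining this identity with $z \in [x_0, x_k]$ (so that $d(x_0, z) + d(z, x_k) = k$) and the triangle inequality $d(a, x_k) \le d(a, z) + d(z, x_k)$, one obtains $d(x_0, a) + d(a, x_k) \le k$. Since $d(x_0, a) + d(a, x_k) \ge d(x_0, x_k) = k$ is automatic, equality holds, so $a \in [x_0, x_k] \cap B_1(x_0) = A$. Bijectivity is then immediate: $F$ is injective as a restriction of the bijection $(T^k)^{-1} : B_1(x_k) \to B_1(x_0)$, and Corollary \ref{cor: ant_biject} gives $|A| = |Z|$, forcing the injection $F : Z \to A$ between finite equinumerous sets to be surjective.

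I do not foresee a serious obstacle here. The real work has already been carried out in the transport-geodesic machinery of Section \ref{sec:transpgeod}: once one knows that $\textup{ext}(g_a)$ is a geodesic passing through both $a$ and $z$, the inclusion $a \in [x_0, z]$ is automatic, and the transfer from $[x_0, z]$ to $[x_0, x_k]$ is elementary because $z$ already lies in $[x_0, x_k]$. The bijectivity half of the lemma is essentially a counting argument that costs nothing beyond Corollary \ref{cor: ant_biject}.
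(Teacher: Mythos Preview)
Your proposal is correct and follows essentially the same route as the paper's proof: both use Proposition~\ref{prop: ext_transgeod} to place $a$ on a geodesic from $x_0$ to $z=a(k)$, then combine this with $z\in[x_0,x_k]$ to force $a\in[x_0,x_k]$, and finish bijectivity via injectivity of $(T^k)^{-1}$ together with $|A|=|Z|$ from Corollary~\ref{cor: ant_biject}. The only cosmetic difference is that the paper phrases the inclusion step as ``extending the geodesic by appending $x_k$'' whereas you spell it out via the triangle inequality; these are the same argument.
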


\begin{lemma} \label{lemma_FZ2}
$F(Z_2\sqcup\{x_k,\overline{x}_1\})=A_2\sqcup \{x_0,x_1\}$ and $\forall z\in Z_2\sqcup\{x_k,\overline{x}_1\}:\ z\in [x_1,\overline{x}_1]$.
\end{lemma}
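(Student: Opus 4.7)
The plan is to treat the special vertices $\overline{x}_1$ and $x_k$ first, then handle generic $z \in Z_2$ via transport-geodesic analysis, and finally deduce the set-equality in part (a) from bijectivity. For $\overline{x}_1$, Theorem \ref{thm:uniq_ant} identifies $\overline{x}_1 = x_0(k) = T^k(x_0)$, so $F(\overline{x}_1) = x_0 \in \{x_0, x_1\}$, and $\overline{x}_1 \in [x_1, \overline{x}_1]$ trivially. For $x_k$, the inclusion $x_k \in [x_1, \overline{x}_1]$ follows from $d(x_1, x_k) + d(x_k, \overline{x}_1) = (k-1) + 1 = k$. To identify $F(x_k)$, I would reverse the geodesic $g$ and extend it beyond $x_0$ to a full-length geodesic starting at $x_k$ (using self-centeredness and Theorem \ref{lieongeos}). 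Applying Theorem \ref{thm:uniq_ant} in this reversed setting, together with the uniqueness of optimal transport maps from Remark \ref{rem: pm_uniq}, gives $F(x_k) = \overline{x}_{k-1}$, the antipole of $x_{k-1}$ with respect to $[x_0, x_k]$. A distance computation symmetric to the one following Theorem \ref{thm:uniq_ant} then yields $d(x_0, \overline{x}_{k-1}) = 1$ and $d(x_1, \overline{x}_{k-1}) = 2$, so $\overline{x}_{k-1} \in A_2$.

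For a generic $z \in Z_2$, set $a = F(z) \in A$ and consider the transport geodesic $a \simeq a(1) \simeq \cdots \simeq a(k) = z$. By Proposition \ref{prop:transgeod} this is a genuine geodesic of length $d(a,z)$, since the concatenated cost $\sum_j \mathrm{cost}(T_j)$ equals $W_1(\mu_{x_0}, \mu_{x_k})$ (by Corollary \ref{cor:numtriangle}(c) together with Bonnet-Myers sharpness applied to $\kappa(x_0,x_k) = 2/L$). Using $d(z, \overline{x}_1) = 2$ and $d(z, x_k) = 1$, the triangle inequality forces $d(x_1, z) \in \{k-2, k-1, k\}$, so $z \in [x_1, \overline{x}_1]$ is equivalent to $d(x_1, z) = k - 2$. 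The main technical claim is $T^k(A_1) = Z_1$: for $a \in A_1 \subseteq N_{x_0 x_1}$ we have $T_1(a) = a$, and iterating the unique switching-map formulation of each $T_j$ from Remark \ref{rem: pm_uniq} together with the cocktail-party structure of $\mu$-graphs (Corollary \ref{cor: mu_CP}) should show $T^k(a) \sim \overline{x}_1$ and $T^k(a) \neq x_k$, i.e., $T^k(a) \in Z_1$. Combining this with $F(\overline{x}_1) = x_0$, $F(x_k) = \overline{x}_{k-1} \in A_2$, and the bijectivity of $F : Z \to A$ from Lemma \ref{lemma_FZ}, the set-equality in part (a) follows by taking complements inside $Z$ and $A$.

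For part (b) with $z \in Z_2$: using (a), $a = F(z) \in A_2 \cup \{x_0, x_1\}$, hence $d(a, x_1) \le 2$. The transport-geodesic length satisfies $d(a, z) \le k - 1$ (with equality precisely in the extreme cases analogous to Proposition \ref{prop: ext_transgeod}), and combining the triangle inequality $d(x_1,z) \le d(x_1,a) + d(a,z)$ with the matching lower bound $d(x_1, z) \ge d(x_1, \overline{x}_1) - d(\overline{x}_1, z) = k - 2$ pins down $d(x_1, z) = k - 2$.

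The main obstacle will be the rigorous verification of $T^k(A_1) \subseteq Z_1$. The cocktail-party $\mu$-graph structure yields a well-defined ``parallel transport'' of $a \in A_1$ along $g$ via the uniquely determined switching maps (Remarks \ref{rem: pm_uniq} and \ref{rem:ant_toggle}), but tracing this transport through $k$ steps and showing that the terminal vertex lies in $S_1(\overline{x}_1)$ rather than $S_2(\overline{x}_1)$ demands careful bookkeeping of the consecutive switching actions and how they interact with the distance structure between $a$ and the vertices $x_j$ along the geodesic $g$.
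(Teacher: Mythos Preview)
Your approach has two genuine gaps.

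First, the triangle-inequality argument for part (b) does not close. With $a = F(z) \in A_2$ you have $d(x_1,a) = 2$, and the transport-geodesic bound from the extended geodesic $\gamma$ in \eqref{eq:geodgamma} only gives $d(a,z) = k-2$ (not $\le k-4$). Hence $d(x_1,z) \le d(x_1,a) + d(a,z) = k$, which together with the lower bound $k-2$ leaves the possibilities $k-2,\,k-1,\,k$ open and does not force $z \in [x_1,\overline{x}_1]$.

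Second, and more seriously, you have inverted the logical order. You try to establish $T^k(A_1) \subseteq Z_1$ directly (essentially the content of Lemma~\ref{lemma_FZ1}) and then deduce the present lemma by complements; but you acknowledge this step is only a sketch, and indeed the ``parallel transport'' through $k$ switching maps is exactly what the paper avoids. The paper proceeds in the opposite direction: it starts from $a \in A_2 \sqcup \{x_1\}$, and the key observation you are missing is to look at $a(1) = T_1(a)$ rather than $a = a(0)$. Since $T_1: B_1(x_0) \to B_1(x_1)$, one always has $a(1) \in B_1(x_1)$, and for $a \in A_2 \sqcup \{x_1\}$ one checks $d(a(1),x_k) \in \{k-1,k-2\}$ from $\gamma$. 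Then the path
\[
x_1 \simeq a(1) \simeq \cdots \simeq z \simeq x_k \sim \overline{x}_1
\]
has length at most $1 + (k-2) + 1 = k = d(x_1,\overline{x}_1)$, so it is a geodesic. This simultaneously yields $z \in [x_1,\overline{x}_1]$ and $z \in Z_2 \sqcup \{x_k\}$ in one stroke. The equality $F(Z_2 \sqcup \{x_k,\overline{x}_1\}) = A_2 \sqcup \{x_0,x_1\}$ then follows by the symmetry argument (swap the roles of $x_0 \leftrightarrow x_k$ and $x_1 \leftrightarrow \overline{x}_1$) to get $|Z_2| \le |A_2|$ as well, hence $|A_2| = |Z_2|$. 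With this route there is no need to compute $F(x_k)$ explicitly, nor to trace transport of $A_1$.
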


\begin{lemma} \label{lemma_FZ1}
$F(Z_1)=A_1$ and $\forall z\in Z_1:\ z\in [x_1,\overline{x}_1]$.
\end{lemma}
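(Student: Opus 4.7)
The lemma splits into two claims and I would dispatch them in order. For $F(Z_1) = A_1$, I would appeal directly to the preceding two lemmas: Lemma \ref{lemma_FZ} gives a bijection $F : Z \to A$, and Lemma \ref{lemma_FZ2} shows that $F$ restricts to a bijection between $Z_2 \sqcup \{x_k, \overline{x}_1\}$ and $A_2 \sqcup \{x_0, x_1\}$. Because the decompositions $Z = \{x_k, \overline{x}_1\} \sqcup Z_1 \sqcup Z_2$ and $A = \{x_0, x_1\} \sqcup A_1 \sqcup A_2$ are disjoint, bijectivity on the complements forces $F(Z_1) = A_1$.

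For the membership claim $z \in [x_1, \overline{x}_1]$, I would fix an arbitrary $z \in Z_1$, set $w := F(z) \in A_1$, and exploit the extended transport geodesic of $w$. Proposition \ref{prop: ext_transgeod} provides
$$ {\rm ext}(g_w) : \quad x_0 \simeq w(0) \simeq w(1) \simeq \cdots \simeq w(L) \simeq x_L, $$
which is a genuine graph geodesic of length $L$ from $x_0$ to $x_L$ passing through $w = w(0)$ and $z = w(k)$. Since $w \in A_1 \subset S_1(x_0)$ the position of $w$ on this geodesic is $d(x_0, w) = 1$; since $z \in [x_0, x_k]$ with $d(z, x_k) = 1$ the position of $z$ is $d(x_0, z) = k - 1$. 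Reading off positions on a common geodesic gives $d(w, z) = k - 2$, and then the triangle inequality yields $d(x_1, z) \le d(x_1, w) + d(w, z) = 1 + (k-2) = k - 1$, while $z \in S_1(\overline{x}_1)$ and $d(x_1, \overline{x}_1) = k$ give the reverse bound $d(x_1, z) \ge k - 1$. Thus $d(x_1, z) = k - 1$, which combined with $d(z, \overline{x}_1) = 1$ shows $z \in [x_1, \overline{x}_1]$.

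I expect the only delicate step to be the identification $d(w,z) = k-2$: the raw string $w(0), w(1), \ldots, w(L)$ may have repetitions, so one cannot just count indices along it. The argument instead relies on Proposition \ref{prop: ext_transgeod}, which guarantees that ${\rm ext}(g_w)$ is an actual length-$L$ geodesic of $G$, so graph distances between its entries are determined by their known distances to $x_0$. Once this positional fact is accepted, both halves of the lemma follow from bijective bookkeeping and the triangle inequality.
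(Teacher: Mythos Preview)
Your proof is correct, but for the second claim you take a longer route than the paper. The paper dispatches the first claim exactly as you do, by complementing the bijection established in Lemmas~\ref{lemma_FZ} and~\ref{lemma_FZ2}. For the membership $z\in[x_1,\overline{x}_1]$, however, the paper avoids transport geodesics entirely: since $z\in Z_1\subset [x_0,x_k]$ and $z\neq\overline{x}_1$, the \emph{uniqueness} of the antipole $\overline{x}_1={\rm ant}_{[x_0,x_k]}(x_1)$ from Theorem~\ref{thm:uniq_ant} forces $d(x_1,z)\le k-1$ directly; combined with $z\sim\overline{x}_1$ this gives $d(x_1,z)+d(z,\overline{x}_1)\le k$, and we are done in two lines.

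Your approach instead computes $d(w,z)=k-2$ via positions on the extended transport geodesic ${\rm ext}(g_w)$, then reaches $d(x_1,z)\le k-1$ through the triangle inequality via $w\in S_1(x_1)$. This is valid, and your caveat about repetitions in the string $w(0),\dots,w(L)$ is well-taken and correctly handled by appealing to the genuine geodesic property of ${\rm ext}(g_w)$. The trade-off is that your argument actually requires $F(Z_1)=A_1$ as input (to get $w\in S_1(x_1)$), whereas the paper's argument for the second claim is self-contained and would work for any $z\in Z$ with $z\sim\overline{x}_1$ and $z\neq\overline{x}_1$, independent of where $F$ sends it. Both are fine; the paper's version is simply more economical.
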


Now we will prove the above three lemmas in order, and then conclude Theorem \ref{thm_spherical_step1}.

\begin{proof}[Proof of Lemma \ref{lemma_FZ}]
First we show that $F(z)\in A$ for all $z\in Z$. Let $a:=F(z)\in B_1(x_0)$, that is $a=a(0)$ and $z=a(k)$. By Proposition \ref{prop: ext_transgeod} we know that $$ x_0\simeq a(0) \simeq a(1) \simeq ... \simeq a(k)$$ is a geodesic (as a part of ${\rm ext}(g_a)$). Moreover, since $a(k)=z\in [x_0,x_k]$, this geodesic can be extended to another geodesic $\gamma$, namely 
\begin{equation} \label{eq:geodgamma}
\gamma:\quad  x_0\simeq \equalto{a(0)}{a} \simeq a(1) \simeq ... \simeq \equalto{a(k)}{z}\simeq x_k.
\end{equation}
Therefore, $a$ must lie in the interval $[x_0,x_k]$, which means $a\in A$
and we have $F(Z) \subseteq A$.   

Next, note that the function $F:Z\rightarrow A$, which is a
restriction of $(T^k)^{-1}$, must be injective (because $T^k$ is
bijective). Note also that $|A|=|Z|$ because of Corollary \ref{cor:
  ant_biject}. Therefore, $F$ must be bijective.
\end{proof}


\begin{proof}[Proof of Lemma \ref{lemma_FZ2}]

  A main feature of the following proof is to show
  $A_2 \sqcup \{x_0,x_1\} \subseteq F(Z_2 \sqcup
  \{x_k,\overline{x}_1\})$. For that reason we start with an element
  $a\in A_2\sqcup \{x_0,x_1\}$. Then there exists a uniqe $z \in Z$ with
$F(z)=a$. Consequently, $z=a(k)$ and $z \in [x_0,x_k]$. Consider the following two cases.

\smallskip

\underline{Case $a=x_0$:} From Theorem \ref{thm:uniq_ant},  we have $a=x_0=(T^k)^{-1}(\overline {x}_1)=F(\overline {x}_1)$, so $z=\overline {x}_1$ and
$z \in [x_1,\overline{x}_1]$.

\smallskip

\underline{Case $a\in A_2 \sqcup \{x_1\}$:} As in the proof of
Lemma \ref{lemma_FZ}, we have the following geodesic $\gamma$ of length $k$ (referred to the one in \eqref{eq:geodgamma}):
$$ \gamma: \quad  x_0\sim \equalto{a(0)}{a} \simeq a(1) \simeq ... \simeq \equalto{a(k)}{z} \simeq x_k.
$$
From this geodesic $\gamma$ and an observation that
$$ d(x_0,a(1)) = \begin{cases} 1, & \text{if $a=x_1$ (and therefore also $a(1)=x_1$),} \\
  2, & \text{if $a \in A_2$ (and therefore $a(1)\neq a(0)$),} \end{cases} $$ we conclude
\begin{equation} \label{eq:da1xk} 
d(a(1),x_k) = \begin{cases} k-1, & \text{if $a=x_1$,} \\
k-2, & \text{if $a \in A_2$.} \end{cases} 
\end{equation}
Now we extend the geodesic
$$ a(1) \simeq ... \simeq \equalto{a(k)}{z} \simeq x_k
$$
to
$$ x_1 \simeq a(1) \simeq ... \simeq \equalto{a(k)}{z} \simeq x_k \sim \overline{x}_1, 
$$
which is, again a geodesic because of \eqref{eq:da1xk} (and recall that $d(x_1,\overline{x}_1) = k$). We can then read off from the above geodesic that $z = x_k$ or $z \in S_2(\overline{x}_1) \cap [x_0,x_k] = Z_2$ and $z \in [x_1,\overline{x}_1]$.

We conclude from both cases that $A_2\sqcup \{x_0,x_1\}\subseteq F(Z_2\sqcup\{x_k,\overline{x}_1\})$. Since $F$ is bijective, it follows that $|A_2| \le |Z_2|$. 
By switching the roles between $x_0$ and $x_k$ and between the antipoles $x_1$ and $\overline{x}_1$ w.r.t. $[x_0,x_k]$, we obtain the opposite inequality $|Z_2| \le |A_2|$. Therefore, we have $|Z_2| = |A_2|$, and thus $A_2\sqcup \{x_0,x_1\} = F(Z_2\sqcup\{x_k,\overline{x}_1\})$, as desired. 

Consequently, if we consider any $z\in Z_2\sqcup\{x_k,\overline{x}_1\}$, then $a\in A_2\sqcup \{x_0,x_1\}$ falls into one of the above cases, in which we have shown $z\in [x_1,\overline{x}_1]$.
\end{proof}

\begin{proof}[Proof of Lemma \ref{lemma_FZ1}]
  Since $F: Z \to A$ is bijective and
  $F(Z_2\sqcup\{x_k,\overline{x}_1\}) = A_2\sqcup \{x_0,x_1\}$, we
  conclude $F(Z_1) = A_1$.

Moreover, consider $z\in Z_1$. It follows that $z\sim \overline{x}_1$ and $d(x_1,z)\le k-1$, because $z\not=\overline{x}_1=\textup{ant}_{[x_0,x_k]}(x_1)$. Therefore
$$d(x_1,z)+d(z,\overline{x}_1) = d(x_1,z)+1 \le (k-1)+1=k,$$ which means $z\in[x_1,\overline{x}_1]$.
\end{proof}

\begin{proof}[Proof of Theorem \ref{thm_spherical_step1}]
  Recalling the original set-up and notation, we only need to show
  that $z \in [x_1,\overline{x}_1]$.  This follows immediately from
  Lemma $\ref{lemma_FZ2}$ and Lemma \ref{lemma_FZ1}.
\end{proof}

The next theorem generalized Theorem \ref{thm_spherical_step1} by removing the restriction $z\in B_1(y)$.

\begin{theorem}\label{thm_spherical_step2}
Let $G=(V,E)$ be self-centered Bonnet-Myers sharp. Let $x,y \in V$ be two different vertices, and consider any $x'\in [x,y]\cap S_1(x)$ with its antipole $y'=\textup{ant}_{[x,y]}(x')$. Then every $z \in [x,y]$ satisfies $z\in[x',y']$.
\end{theorem}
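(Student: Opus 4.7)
The plan is to reduce Theorem~\ref{thm_spherical_step2} to Theorem~\ref{thm_spherical_step1} by extending the base and then running a nested induction.

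I would first observe that Theorem~\ref{thm_spherical_step1} admits a symmetric analogue: every $z \in [x,y] \cap B_1(x)$ also lies in $[x',y']$. By Corollary~\ref{cor: ant_biject} we have $y' \in [x,y] \cap S_1(y)$, and by Remark~\ref{rem:ant_toggle} the antipole map toggles in the sense that $\textup{ant}_{[x,y]}(y') = x'$. Applying Theorem~\ref{thm_spherical_step1} to the same interval $[y,x] = [x,y]$ with the roles of the endpoints (and of $x'$ and $y'$) swapped, we conclude that every $z \in [x,y] \cap B_1(x)$ satisfies $z \in [y',x'] = [x',y']$. Combined with Theorem~\ref{thm_spherical_step1} itself, this gives the enlarged base
\[
[x,y] \cap \bigl(B_1(x) \cup B_1(y)\bigr) \;\subseteq\; [x',y'].
\]

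For the remaining vertices $z \in [x,y]$ with $d(x,z) \ge 2$ and $d(z,y) \ge 2$, the plan is a double induction. The inner induction runs on $m := d(z,y)$, with the base case $m \le 1$ already covered above. For $m \ge 2$ I would pick a neighbour $z^* \in S_1(z) \cap [z,y]$; then $z^* \in [x,y]$ and $d(z^*, y) = m-1$, so by the inner induction hypothesis $z^* \in [x',y']$, that is,
\[
d(x',z^*) + d(z^*,y') \;=\; k := d(x,y).
\]
The goal is to upgrade this to $d(x',z) + d(z,y') = k$.

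The main obstacle is exactly this last step. Since $z \sim z^*$, the triangle inequality alone only locates $d(x',z) + d(z,y')$ in $\{k, k+1, k+2\}$, so the two possibilities $k+1$ and $k+2$ must be excluded. For this I would invoke an outer induction on $k = d(x,y)$ by applying Theorem~\ref{thm_spherical_step2} to the strictly smaller sub-interval $[x, z^*]$, whose diameter $d(x,z^*) = d(x,z) + 1 = k - m + 1$ is strictly less than $k$. The inductive conclusion on $[x, z^*]$ places $z$ in some interval $[\tilde x, \tilde z]$ with $\tilde z = \textup{ant}_{[x, z^*]}(\tilde x)$ for a suitably chosen $\tilde x \in S_1(x) \cap [x, z^*]$. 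The technical heart of the proof will be to exploit the cocktail party structure of the $\mu$-graphs (Corollary~\ref{cor: mu_CP}) together with the switching maps $\sigma_{\cdot\cdot}$ of Remark~\ref{rem: pm_uniq} to choose $\tilde x$ so that a short chain of switchings identifies $\tilde z$ with a vertex whose distance from $y'$ is pinned down, thereby forcing $d(x',z) + d(z,y') = k$ and completing the induction.
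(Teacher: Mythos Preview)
Your double-induction scaffolding matches the paper's, but the choice of auxiliary vertex is the wrong one and the ``technical heart'' you defer to is where the actual content lives.

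The paper also runs an outer induction on $k=d(x,y)$ and an inner one on $d(z,y)$, but it picks the auxiliary vertex $z_1\in [z,y]\cap S_1(y)$ adjacent to $y$, not your $z^*\in S_1(z)\cap[z,y]$ adjacent to $z$. This is not a cosmetic difference. Because $z_1\sim y$ and $y'\sim y$, one gets a \emph{finite} trichotomy $d(z_1,y')\in\{0,1,2\}$. The case $z_1=y'$ is trivial. In the case $z_1\sim y'$, one sets $a_1=\textup{ant}_{[x,y]}(z_1)$, observes via Theorem~\ref{thm_spherical_step1} that $a_1,z_1\in[x',y']$, whence $y'=\textup{ant}_{[a_1,z_1]}(x')$; now the inner hypothesis applies to the interval $[a_1,z_1]$ \emph{with the same pair $(x',y')$} and smaller $d(z,z_1)=j-1$. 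In the case $d(z_1,y')=2$ one checks $d(x',z_1)=k-2$, which forces $x'\in[x,z_1]$; only then can the outer hypothesis be applied to $[x,z_1]$ \emph{with the same $x'$}, producing $y''=\textup{ant}_{[x,z_1]}(x')$ and $z\in[x',y'']$, after which a transport-geodesic argument (Theorem~\ref{thm:uniq_ant}) gives $y''\sim y'$ and hence $z\in[x',y']$.

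Your scheme breaks precisely here. With $z^*$ chosen adjacent to $z$ you get no bound on $d(z^*,y')$, so there is no case split. More seriously, to invoke the outer hypothesis on $[x,z^*]$ in a way that says anything about $[x',y']$ you would need $x'\in[x,z^*]$, and nothing in your setup guarantees this. You acknowledge this by passing to an arbitrary $\tilde x\in S_1(x)\cap[x,z^*]$ with antipole $\tilde z$, but then the conclusion $z\in[\tilde x,\tilde z]$ has no a priori relation to $[x',y']$. The proposed ``short chain of switchings'' to pin down $d(\tilde z,y')$ is not a mechanism: the switching maps $\sigma_{\cdot\cdot}$ act within $\mu$-graphs of fixed pairs at distance~$2$, and there is no evident way to thread them from $\tilde z$ (which lives near $z^*$, deep in the interval) out to $y'$ (which lives near $y$). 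This is exactly the long-range control that the paper obtains only through the transport-geodesic machinery of Theorem~\ref{thm:uniq_ant}, and only after first arranging that $x'$ stays in the sub-interval.

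In short: move the auxiliary vertex to $S_1(y)$, run the trichotomy on $d(z_1,y')$, and in the hardest case use that $x'\in[x,z_1]$ together with the transport-geodesic description of antipoles to get $y''\sim y'$. Without that, the inductive step does not close.
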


\begin{proof}[Proof of Theorem \ref{thm_spherical_step2}]
Let $d_1=d(x,y)$ and $d_2=d(z,y)$ (note that $0\le d_2\le d_1$). We will prove the statement of the theorem by induction on $d_1$ and $d_2$.

\smallskip

\underline{Base step:} For any value of $d_1$, the cases $d_2=0,1$ are both covered by Theorem \ref{thm_spherical_step1}.

\smallskip

\underline{Inductive step:} Assume that the statement is true for $d_1=k-1$ and all $d_2$, and assume that the statement is true for $d_1=k$ and $d_2=j-1$ for some $2\le j\le k-1$. Now consider $d(x,y)=k$ and $z\in [x,y]\cap S_j(y)$. Choose an arbitrary $z_1\in [z,y]\cap S_1(y)$. Hence $x,z,z_1,y$ lies in a geodesic , see Figure \ref{fig:intervalxy_induction}. 
In particular, $z\in [x,z_1]$.

\begin{figure}[h!]
	\centering
	\begin{tikzpicture}[scale=1]
	
	\draw[thick] (0,0) to [out=80,in=180] (4,2);
	\draw[thick] (0,0) to [out=-80,in=180] (4,-2);
	\draw[thick] (4,2) to [out=0,in=100] (8,0);
	\draw[thick] (4,-2) to [out=0,in=-100] (8,0);
	\draw[dashed] (0,0) to [out=10,in=175] (6,0.3);
	\draw[thick] (4,0.4) to [out=0,in=170] (8,0);
	
	\draw[fill] (0,0) circle [radius=0.05];
	\node[left] at (0,0) {\LARGE $x$};
	
	\draw[fill] (8,0) circle [radius=0.05];
	\node[right] at (8,0) {\LARGE $y$};
	
	\draw[fill] (0.8,1.3) circle [radius=0.05];
	\node[above] at (0.8,1.3) {\LARGE $x'$};
	
	\draw[fill] (7.2,-1.3) circle [radius=0.05];
	\node[right] at (7.2,-1.4) {\LARGE $y'$ \Large $=\textup{ant}_{[x,y]}(x')$};
	
	\draw[fill] (4,0.4) circle [radius=0.05];
	\node[above] at (4,0.4) {\LARGE $z$};
	
	\draw[fill] (7,0.18) circle [radius=0.05];
	\node[above] at (7,0.18) {\LARGE $z_1$};
	
	
	\end{tikzpicture}
	\caption
	{The interval $[x,y]$ and $z\in [x,y]$ with $d(z,y)=j\ge2$ and $z_1\in [z,y]\cup S_1(y)$.
	} 
	\label{fig:intervalxy_induction}
\end{figure}
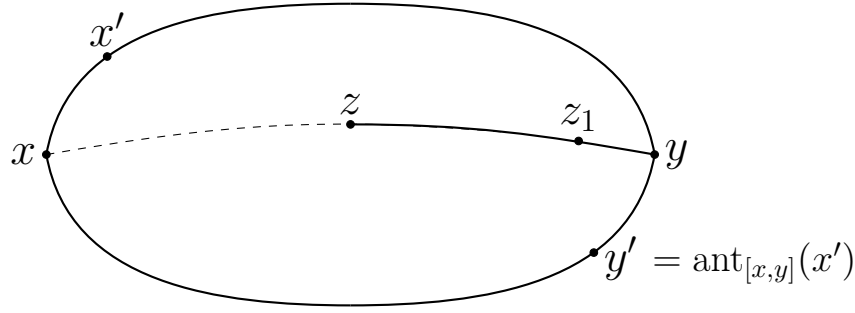

Now consider the following three cases whether $d(z_1,y')$ is 0, 1, or 2.

\smallskip

\underline{Case $z_1=y'$:} It follows immediately that $z\in [x,z_1]=[x,y'] \subseteq [x',y']$ where the last inclusion is due to $x\in[x',y']$.

\smallskip

\underline{Case $z_1 \sim y'$:} Since $z_1 \in [x,y]$, by Theorem \ref{thm:uniq_ant} there is a unique $a_1=\textup{ant}_{[x,y]}(z_1) \in [x,y]$. Since $a_1 \in [x,y] \cap B_1(x)$ and
$z_1 \in [x,y] \cap B_1(y)$, by Theorem \ref{thm_spherical_step1}, 
$z_1, a_1 \in[x',y']$.

The fact that $a_1,z_1\in[x',y']$ and that $d(a_1,z_1)=d(x,y)=d(x',y')$ altogether implies that $a_1$ must be the unique antipole $\textup{ant}_{[x',y']}(z_1)$ by Corollary \ref{cor: ant_biject} since $z_1 \sim y'$. This is illustrated in Figure
\ref{fig:induction_case2}. By Remark \ref{rem:ant_toggle}, it implies that $y'=\textup{ant}_{[a_1,z_1]}(x')$.

Observe also that $z\in [x,z_1]\subset [a_1,z_1]$ with $d(z,z_1)=j-1$.

We are now in a position to apply the induction hypothesis for the
interval $[a_1,z_1]$ (instead of $[x,y]$) and $z \in [a_1,z_1]$ with
$d(z,z_1)=j-1$. Note that $d(a_1,z_1) = k$. Note also that
$x' \in [a_1,z_1] \cap S_1(a_1)$ and
$y' = \textup{ant}_{[a_1,z_1]}(x')$. Then the induction hypothesis
implies $z \in [x',y']$, finishing this case.



\begin{figure}[h!]
	\centering
	\begin{tikzpicture}[scale=1]
	
	\draw[] (0,0) to [out=80,in=180] (4,2);
	\draw[] (0,0) to [out=-80,in=180] (4,-2);
	\draw[] (4,2) to [out=0,in=100] (8,0);
	\draw[] (4,-2) to [out=0,in=-100] (8,0);
	\draw[dashed] (0,0) to [out=10,in=175] (6,0.3);
	\draw[] (4,0.4) to [out=0,in=170] (8,0);
	\draw[] (0,0) to [out=-30,in=175] (1, -0.3);
	
	\draw[very thick] (0.8,1.3) to [out=-120,in=135] (1, -0.3) to [out=-40,in=-160] (7.2,-1.3) to [out=60,in=-45] (7,0.18) to [out=135,in=20] (0.8,1.3);
	
	\draw[fill] (0,0) circle [radius=0.05];
	\node[left] at (0,0) {\LARGE $x$};
	
	\draw[fill] (8,0) circle [radius=0.05];
	\node[right] at (8,0) {\LARGE $y$};
	
	\draw[fill] (0.8,1.3) circle [radius=0.05];
	\node[above] at (0.8,1.3) {\LARGE $x'$};
	
	\draw[fill] (7.2,-1.3) circle [radius=0.05];
	\node[right] at (7.2,-1.4) {\LARGE $y'$ \Large $=\textup{ant}_{[x,y]}(x')$};
	
	\draw[fill] (4,0.4) circle [radius=0.05];
	\node[above] at (4,0.4) {\LARGE $z$};
	
	\draw[fill] (7,0.18) circle [radius=0.05];
	\node[above] at (7,0.18) {\LARGE $z_1$};
	
	\draw[fill] (1,-0.3) circle [radius=0.05];
	\node[below right] at (1,-0.3) {\LARGE $a_1$\large $=\textup{ant}_{[x,y]}(z_1)$};
	
	
	\end{tikzpicture}
	\caption
	{Picture for Case $z_1\sim y'$. The \textbf{bold} cycle represents the fact that $a_1$ and $z_1$ are antipoles w.r.t. not only $[x,y]$ but also $[x',y']$.}
	\label{fig:induction_case2}
\end{figure}
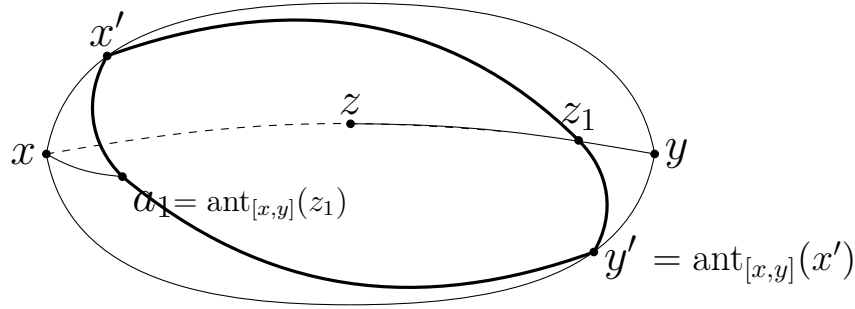

\smallskip

\underline{Case $d(z_1, y')=2$:} 
Since $z_1 \in [x,y] \cap B_1(y)$, by Theorem \ref{thm_spherical_step1}, we have $z_1\in[x',y']$. The condition $d(z_1, y')=2$ then implies that $d(x',z_1)=d(x',y')-2=k-2$. 
It follows that $$d(x,x')+d(x',z_1)+d(z_1,y)=1+(k-2)+1=k=d(x,y)$$ which means
that $x'$ and $z_1$ lie on a geodesic from $x$ to $y$. Let us denote this
geodesic by $g^*$:
$$g^*: \quad x\sim x'\sim\cdots\sim z_1\sim y.$$ 
In particular, $x'\in [x,z_1]$. Then $y'':=\textup{ant}_{[x,z_1]}(x')$ exists by
Corollary \ref{cor: ant_biject}. The situation is illustrated in Figure \ref{fig:induction_case3}.

\begin{figure}[h!]
	\centering
	\begin{tikzpicture}[scale=1]
	
	\draw[] (0,0) to [out=80,in=180] (4,2);
	\draw[] (0,0) to [out=-80,in=180] (4,-2);
	\draw[] (4,2) to [out=0,in=100] (8,0);
	\draw[] (4,-2) to [out=0,in=-100] (8,0);
	\draw[dashed] (0,0) to [out=10,in=175] (6,0.3);
	\draw[] (4,0.4) to [out=0,in=170] (8,0);
	
	\draw[very thick] (0,0) to [out=80,in=-140] (0.8,1.3) to [out=20,in=135] (7,0.18) to [out=-90,in=30] (6.2,-0.8) to [out=-170,in=-40] (0,0);
	
	\draw[fill] (0,0) circle [radius=0.05];
	\node[left] at (0,0) {\LARGE $x$};
	
	\draw[fill] (8,0) circle [radius=0.05];
	\node[right] at (8,0) {\LARGE $y$};
	
	\draw[fill] (0.8,1.3) circle [radius=0.05];
	\node[above] at (0.8,1.3) {\LARGE $x'$};
	
	\draw[fill] (7.2,-1.3) circle [radius=0.05];
	\node[right] at (7.2,-1.5) {\Large $y'=\textup{ant}_{[x,y]}(x')$};
	
	\draw[fill] (4,0.4) circle [radius=0.05];
	\node[above] at (4,0.4) {\LARGE $z$};
	
	\draw[fill] (7,0.18) circle [radius=0.05];
	\node[above] at (7,0.18) {\LARGE $z_1$};
	
	\draw[fill] (6.2,-0.8) circle [radius=0.05];
	\node[right] at (6.2,-0.8) {\Large $y''$};
	
	
	\end{tikzpicture}
	\caption
	{Picture for Case $d(z_1,y')=2$. The \textbf{bold} cycle represents $y''=\textup{ant}_{[x,z_1]}(x')$. } 
	\label{fig:induction_case3}
\end{figure}
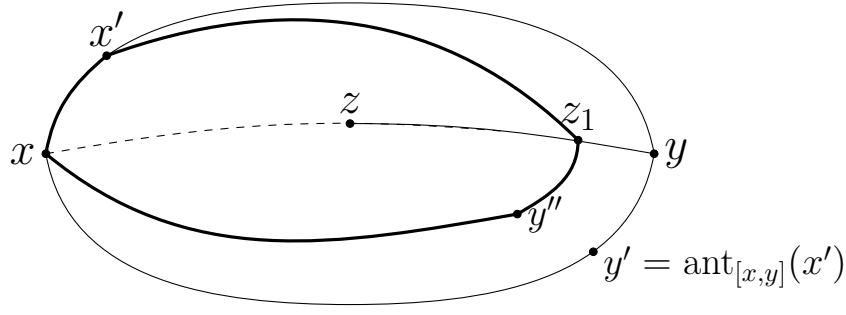

Next we apply the induction hypothesis for the
interval $[x,z_1]$ (instead of $[x,y]$) and $z \in [x,z_1]$ with
$d(z,z_1)=j-1$. Note that $d(x,z_1) = k-1$. Note also that
$x' \in [x,z_1] \cap S_1(x)$ and
$y'' = \textup{ant}_{[x,z_1]}(x')$. Then the induction hypothesis
implies $z \in [x',y'']$.

So far we have that $d(x',z) + d(z,y'') = d(x',y'') = d(x,z_1) = k-1$. It remains to show that $d(y'',y') = 1$ which would imply
$$ k = d(x',y') \le d(x',z) + d(z,y'') + d(y'',y') = (k-1)+1 = k, $$
that is $z \in [x',y']$, as desired.  

To prove $d(y'',y')=1$ we use transport geodesic
techniques. Therefore, we relabel the vertices of the geodesic $g^*$
and extend $g^*$ to a full-length geodesic $g$ in $G$ starting from $x=x_0$ 
as follows:
$$ g: \quad \equalto{x}{x_0}\sim \equalto{x'}{x_1}\sim \cdots \sim \equalto{z_1}{x_{k-1}}\sim \equalto{y}{x_k} \sim x_{k+1} \sim \cdots \sim x_L,$$ 
and consider the transport geodesic along $g$ starting at $x_0$.

Theorem \ref{thm:uniq_ant} guarantees that
$x_0(m)=\textup{ant}_{[x_0,x_m]}(x_1)$ for all $2\le m\le L$. In
particular, we have $y''= \textup{ant}_{[x_0,x_{k-1}]}(x_1) = x_0(k-1)$
and $y' = \textup{ant}_{[x_0,x_k]}(x_1) = x_0(k)$. Therefore, $y'' = x_0(k-1)$
and $y' = x_0(k)$ must be adjacent vertices (as illustrated in Figure \ref{fig:transport_geodesic_g}), thus completing the proof.

\begin{figure}[h!]
	\centering
	\begin{tikzpicture}[scale=1]
	
		\draw[thick] (0,0)--(13.5,0);
	\draw[ultra thick] (0,0)--(1.5,-1.5)--(7.5,-1.5);
	\draw[dotted] (7.5,-1.5)--(12,-1.5);
	\draw[dotted](12,-1.5)--(13.5,0) ;
	\draw[] (3,0)--(1.5,-1.5);
	\draw[] (4.5,0)--(3,-1.5);
	\draw[] (7.5,0)--(6,-1.5);	
	\draw[] (9,0)--(7.5,-1.5);	
	
	\draw[fill] (0,0) circle [radius=0.05];
	\node[above] at (0,0) {\Large $x_0$};	
	
	\draw[fill] (1.5,0) circle [radius=0.05];
	\node[above] at (1.5,0) {\Large $x_1$};
	
	\draw[fill] (3,0) circle [radius=0.05];
	\node[above] at (3,0) {\Large $x_2$};
	
	\draw[fill] (4.5,0) circle [radius=0.05];
	\node[above] at (4.5,0) {\Large $x_3$};
	
	\draw[fill] (7.5,0) circle [radius=0.05];
	\node[above] at (7.5,0) {\Large $x_{k-1}$};
	
	\draw[fill] (9,0) circle [radius=0.05];
	\node[above] at (9,0) {\Large $x_{k}$};	
	
	\draw[fill] (13.5,0) circle [radius=0.05];
	\node[above] at (13.5,0) {\Large $x_L$};
	
	\draw[fill] (1.5,-1.5) circle [radius=0.05];
	\node[below] at (1.5,-1.5) {\large $\bf x_0(2)$};
	
	\draw[fill] (3,-1.5) circle [radius=0.05];
	\node[below] at (3,-1.5) {\large $\bf x_0(3)$};
	
	\draw[fill] (6,-1.5) circle [radius=0.05];
	\node[below] at (5.7,-1.5) {\large $\boldsymbol{\equalto{x_0(k-1)}{y''}}$};
	
	\draw[fill] (7.5,-1.5) circle [radius=0.05];
	\node[below] at (7.5,-1.5) {\large $\boldsymbol{\equalto{x_0(k)}{y'}}$};
	
	\draw[] (0,0)--(10.5,0);
	
	\draw[] (0,0)--(1.5,-1.5)--(7.5,-1.5);

	\end{tikzpicture}
	\caption
	{Transport geodesic $g_{x_0}$ and the antipoles of $x_1$ w.r.t. increasing intervals $[x_0,x_m]$.}
	\label{fig:transport_geodesic_g}
\end{figure}
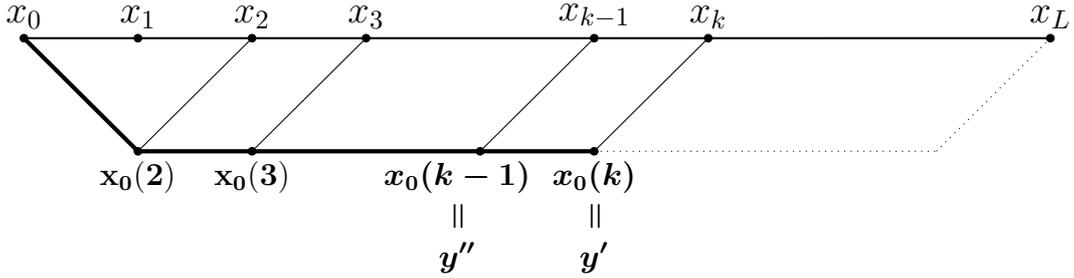

\end{proof}

An immediate but important consequence of the above theorem is the following corollary.

\begin{corollary}\label{cor_clockwise_spherical}
Let $G=(V,E)$ be self-centered Bonnet-Myers sharp. Let $x,y \in V$ be two different vertices, and consider any $x'\in [x,y]\cap S_1(x)$ with its antipole $y'={\rm ant}_{[x,y]}(x')$. Then $[x',y']=[x,y]$.
\end{corollary}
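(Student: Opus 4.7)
The plan is to derive the corollary as a direct consequence of Theorem \ref{thm_spherical_step2} by applying it twice, once in each direction, using the symmetry of the antipole relation provided by Remark \ref{rem:ant_toggle}.

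First, I would establish the inclusion $[x,y] \subseteq [x',y']$. This is immediate from Theorem \ref{thm_spherical_step2}: given $x' \in [x,y] \cap S_1(x)$ with antipole $y' = \textup{ant}_{[x,y]}(x')$, every $z \in [x,y]$ satisfies $z \in [x',y']$.

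For the reverse inclusion $[x',y'] \subseteq [x,y]$, I would invoke Remark \ref{rem:ant_toggle}, which tells us that $x, y \in [x',y']$ and that $y = \textup{ant}_{[x',y']}(x)$. Moreover, $d(x,x') = 1$ gives $x \in [x',y'] \cap S_1(x')$. Thus the hypotheses of Theorem \ref{thm_spherical_step2} are met with the roles of the pair $(x,y)$ replaced by $(x',y')$ and of the pair $(x',y')$ replaced by $(x,y)$. Applying Theorem \ref{thm_spherical_step2} in this toggled configuration yields that every $z \in [x',y']$ lies in $[x,y]$, i.e.\ $[x',y'] \subseteq [x,y]$.

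Combining both inclusions yields $[x,y]=[x',y']$. I do not foresee any obstacles here, since both inclusions reduce to direct invocations of the previously established Theorem \ref{thm_spherical_step2} and the symmetry observation of Remark \ref{rem:ant_toggle}; the corollary is really a ``toggling'' of Theorem \ref{thm_spherical_step2}.
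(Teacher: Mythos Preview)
Your proposal is correct and essentially identical to the paper's proof: both obtain $[x,y]\subseteq[x',y']$ directly from Theorem~\ref{thm_spherical_step2}, then invoke Remark~\ref{rem:ant_toggle} to swap the roles of $(x,y)$ and $(x',y')$ and apply Theorem~\ref{thm_spherical_step2} again for the reverse inclusion. Your version is slightly more explicit in verifying $x\in[x',y']\cap S_1(x')$, but the argument is the same.
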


\begin{proof}[Proof of Corollary \ref{cor_clockwise_spherical}]
Theorem \ref{thm_spherical_step2} can be rephrased as $[x,y]\subseteq [x',y']$. 
Since $y=\textup{ant}_{[x',y']}(x)$ by Remark \ref{rem:ant_toggle}, we can interchange the roles of $x,y$ and $x',y'$ to obtain the opposite inclusion $[x',y']\subseteq [x,y]$. Therefore $[x',y']=[x,y]$, as desired.
\end{proof}

Now, we are ready to conclude the ultimate result of this section by using Corollary \ref{cor_clockwise_spherical} inductively.

\begin{theorem}\label{thm_spherical}
Let $G=(V,E)$ be self-centered Bonnet-Myers sharp. Then for two different vertices $x,y \in V$, the induced subgraph of the interval $[x,y]$ is antipodal. Therefore $G$ is strongly spherical. 
\end{theorem}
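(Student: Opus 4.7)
The plan is to prove by induction on $k = d(x,z)$ the following strengthened claim: for every pair of distinct vertices $x,y \in V$ and every $z \in [x,y]$ with $d(x,z) = k$, there exists $\overline{z} \in [x,y]$ such that $[z, \overline{z}] = [x, y]$. The key engine of the induction is Corollary \ref{cor_clockwise_spherical}, which, given any neighbour $x'$ of $x$ in $[x,y]$, lets us replace the endpoint $x$ by $x'$ (and $y$ by $y' = \textup{ant}_{[x,y]}(x')$) without changing the interval. Since that corollary is stated only for $z \in [x,y] \cap S_1(x)$, the task is essentially to iterate it.

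For the base cases: when $k=0$ we take $\overline{z} = y$; when $k=1$, we have $z \in [x,y] \cap S_1(x)$ and Corollary \ref{cor_clockwise_spherical} directly gives $\overline{z} = \textup{ant}_{[x,y]}(z)$ with $[z, \overline{z}] = [x,y]$. For the inductive step with $k \ge 2$: given $z \in [x,y]$ with $d(x,z) = k$, pick any neighbour $x' \in S_1(x) \cap [x,z]$; then $x' \in [x,y] \cap S_1(x)$ and $d(x',z) = k-1$. Setting $y' = \textup{ant}_{[x,y]}(x')$, Corollary \ref{cor_clockwise_spherical} yields $[x',y'] = [x,y]$, so in particular $z \in [x',y']$. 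The inductive hypothesis applied to the interval $[x',y']$ and the vertex $z$ (now at distance $k-1$ from the new left endpoint $x'$) produces $\overline{z} \in [x',y']$ with $[z,\overline{z}] = [x',y']$, and since $[x',y'] = [x,y]$ this is the desired antipode.

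To conclude that $G$ is strongly spherical: self-centeredness combined with Theorem \ref{lieongeos} shows that the diameter-antipole $\overline{x}$ of any $x \in V$ satisfies $[x,\overline{x}] = V$, so $G$ itself is antipodal. For each interval $[x,y]$, the claim above gives every $z \in [x,y]$ a vertex $\overline{z} \in [x,y]$ with $[z,\overline{z}]_G = [x,y]$; because all $G$-geodesics from $z$ to $\overline{z}$ therefore lie entirely inside the induced subgraph $H = \textup{Ind}_G([x,y])$, we obtain $d_H(z,\overline{z}) = d_G(z,\overline{z})$ and $[z,\overline{z}]_H = [z,\overline{z}]_G = [x,y]$, so $H$ is antipodal.

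The main (minor) obstacle is choosing the right induction invariant: one must carry the full interval identity $[z,\overline{z}] = [x,y]$ through the induction (rather than merely the distance equality $d(z,\overline{z}) = d(x,y)$), so that the inductive hypothesis for the smaller interval endpoint $(x',z)$ transports cleanly back to the original interval via $[x',y'] = [x,y]$. Once that observation is in place, the proof reduces to a direct single-step swap followed by invoking the induction hypothesis, with no further calculation required.
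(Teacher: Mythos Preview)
Your proof is correct and follows essentially the same approach as the paper: both argue by induction on the distance from the reference endpoint, using Corollary~\ref{cor_clockwise_spherical} for the base case and to shift the endpoint $(x,y) \to (x',y')$ in the inductive step, then apply the hypothesis at distance $k-1$ in the new interval $[x',y']=[x,y]$. Your treatment is slightly more careful in one respect: you explicitly verify that the antipodality passes to the \emph{induced subgraph} $H=\textup{Ind}_G([x,y])$ by checking that $G$-geodesics between $z$ and $\overline{z}$ lie in $[x,y]$, whence $d_H=d_G$ on the relevant pairs and $[z,\overline{z}]_H=[x,y]$; the paper leaves this implicit.
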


\begin{proof}[Proof of Theorem \ref{thm_spherical}]
Let $x' \in [x,y]$. The existence of a vertex $y' \in [x,y]$ satisfying
$$ [x,y] = [x',y'] $$
is proved via induction on $d(x,x')$. 

\smallskip

\underline{Base step:} The case $d(x,x')=0$ is trivial and $d(x,x')=1$
is covered by Corollary \ref{cor_clockwise_spherical}.

\smallskip

\underline{Inductive step:} We assume the statement of the Theorem is
true for all $d(x,x') \le m-1$ with $2 \le m \le {\rm{diam}}(G)$. Let
$x' \in [x,y]$ with $d(x,x') = m$. We choose a vertex
$x_1 \in [x,y] \cap S_1(x)$ such that $x_1,x'$ lie on a geodesic from
$x$ to $y$. By the induction hypothesis, there exists $y_1 \in [x,y]$
such that
$$ 
[x,y] = [x_1,y_1]. 
$$
Since $d(x_1,x') = m-1$, the induction hypothesis again implies the existence
of $y' \in [x_1,y_1] = [x,y]$ such that
$$
[x_1,y_1] = [x',y'] = [x,y]. 
$$
This finishes the proof. 
\end{proof}

\section{Classification of Bonnet-Myers sharp graphs with extremal
  diameters}
\label{sec:BMextrdiam}

In this section we show that there are no Bonnet-Myers sharp graphs of
extremal diameter (that is $L=2$ and $L=D$) which are not
self-centered. In other words, the only Bonnet-Myers sharp graphs
with diameter $L=2$ are cocktail party graphs and the only Bonnet-Myers
sharp graphs with diameter $L=D$ are hypercubes. 

\subsection{Characterisation of sharpness for $L = 2$}

\begin{theorem}
  Let $G=(V,E)$ be a $(D,2)$-Bonnet Myers sharp graph. Then $G$ is
  isomorphic to a cocktail party graph $CP(n)$ for $n \ge 2$.
\end{theorem}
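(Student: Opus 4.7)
The plan is to first establish that $G$ must be self-centered, and then to exploit Theorem \ref{lieongeos} to deduce that every vertex has exactly one non-neighbour, which is precisely the defining property of a cocktail party graph.

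First I would rule out non-pole vertices. If some $x_0 \in V$ were not a pole, then since $\diam(G) = 2$, every other vertex would lie within distance $1$ of $x_0$, making $x_0$ universal; $D$-regularity would then force $G \cong K_{D+1}$, which has diameter $1$, a contradiction. Thus $G$ is self-centered.

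Now fix any $x \in V$ and let $\bar{x}$ be its (unique, by the corollary to Theorem \ref{lieongeos}) antipole, so $d(x,\bar{x}) = 2$. Theorem \ref{lieongeos} gives
\[
V = [x,\bar{x}] = \{x,\bar{x}\} \cup (N_x \cap N_{\bar{x}}),
\]
the second equality simply unwinding the interval definition with $d(x,\bar{x}) = 2$. Hence $\bar{x}$ is the only vertex at distance $2$ from $x$, i.e., $x$ has exactly one non-neighbour (namely $\bar{x}$), and $|V| = D+2$. Since $x \mapsto \bar{x}$ is a fixed-point-free involution on $V$, $|V| = 2n$ is even, so $D = 2n-2$ and the non-edges of $G$ are precisely the $n$ antipodal pairs $\{x,\bar{x}\}$. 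This is exactly $CP(n)$; and $n \ge 2$ follows from $D \ge 2$, which is Theorem \ref{thm:DL_rel} applied with $L = 2$.

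The only mildly subtle point is the initial self-centeredness reduction, which requires noticing that a non-pole vertex in a diameter-$2$ graph must be universal. Once Theorem \ref{lieongeos} is brought to bear, the cocktail party structure is essentially forced by a trivial count: $V$ collapses to the two antipoles together with their common neighbourhood, so each vertex misses exactly one other vertex.
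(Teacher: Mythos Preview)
Your proof is correct. The first step—ruling out non-pole vertices by observing that a non-pole in a diameter-$2$ graph would be universal, forcing $G=K_{D+1}$—is exactly what the paper does. From that point on, however, the paper simply invokes the full classification Theorem~\ref{thm:main} of self-centered Bonnet-Myers sharp graphs and reads off that the only diameter-$2$ members of the list are cocktail party graphs. Your argument is considerably more elementary: you apply Theorem~\ref{lieongeos} directly to see $V=[x,\bar{x}]=\{x,\bar{x}\}\cup(N_x\cap N_{\bar{x}})$, deduce that each vertex has a unique non-neighbour, and identify the cocktail party structure by hand. This avoids the entire machinery of Sections~\ref{sec:transpgeod} and~\ref{sec:antBMstrspher} (transport geodesics, strongly spherical characterisation, and the external classification result from~\cite{Koo}) that underlies Theorem~\ref{thm:main}. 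The paper's route is shorter to write once Theorem~\ref{thm:main} is available, but yours is logically self-contained within Section~\ref{sec:genfacts}.
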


\begin{proof}
  Let us first show that all Bonnet-Myers sharp graphs of diameter
  $L=2$ are necessarily self-centered: if there were a
  $(D,2)$-Bonnet-Myers sharp graph which is not self-centered, it
  would have a vertex adjacent to all other vertices and, by
  $D$-regularity, would have to be the complete graph $K_{D+1}$, which
  does not have diameter $2$. Now we employ the classification Theorem
  \ref{thm:main} for self-centered Bonnet-Myers sharp graphs and
  conclude that all Bonnet-Myers sharp graphs of diameter $L=2$ are
  cocktail party graphs $CP(n)$, $n \ge 2$.
\end{proof}

\subsection{Characterisation of sharpness for $L = D$}

We now show that the only Bonnet-Myers sharp graphs with diameter
equal to their degree are the hypercubes.

\begin{lemma}\label{l:match}
  Let $G=(V,E)$ be a $D$-regular graph.  Suppose an edge
  $\{x,y\} \in E$ is contained in no triangle and satisfies
  $\kappa(x,y) \geq \frac{2}{D}$. Then, every pair of adjacent edges
  $w\sim x\sim y$ is contained in a 4-cycle.
\end{lemma}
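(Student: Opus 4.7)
The plan is to deduce this directly from Proposition \ref{prop:curvcalc0}, which exactly characterizes equality in the curvature upper bound $\kappa(x,y) \le (2+\#_\Delta(x,y))/D$ in terms of a perfect matching between the non-common neighbours of $x$ and $y$.

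First I would observe that since $\{x,y\}$ lies in no triangle, we have $\#_\Delta(x,y)=0$ and $N_{xy}=\emptyset$. Proposition \ref{prop:curvcalc0} then gives the upper bound $\kappa(x,y)\le 2/D$, which combined with the hypothesis $\kappa(x,y)\ge 2/D$ forces equality $\kappa(x,y)=2/D$. Invoking the equivalence (a) $\Leftrightarrow$ (b) in Proposition \ref{prop:curvcalc0}, there exists a perfect matching between $N_x\setminus(N_{xy}\cup\{y\})=N_x\setminus\{y\}$ and $N_y\setminus(N_{xy}\cup\{x\})=N_y\setminus\{x\}$.

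Now let $w\sim x\sim y$ with $w\ne y$, so $w\in N_x\setminus\{y\}$. The perfect matching supplies a vertex $z\in N_y\setminus\{x\}$ with $w\sim z$. It remains to check that the four vertices $w,x,y,z$ are distinct so that $w\sim x\sim y\sim z\sim w$ is a genuine $4$-cycle. By construction $z\ne x$ and $w\ne y$; moreover $w\ne z$, since otherwise $w\in N_x\cap N_y=N_{xy}=\emptyset$, contradicting the triangle-freeness of $\{x,y\}$. This produces the required $4$-cycle through $w\sim x\sim y$.

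There is no real obstacle here: the lemma is essentially a restatement of the equality case of Proposition \ref{prop:curvcalc0} specialised to a triangle-free edge, together with the trivial distinctness check that uses the no-triangle hypothesis a second time to rule out $w=z$.
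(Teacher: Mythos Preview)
Your proof is correct and follows exactly the route the paper intends: the paper's own proof is the single line ``This follows immediately from Proposition \ref{prop:curvcalc0},'' and you have simply unpacked that reference with the obvious details (triangle-freeness gives $\#_\Delta(x,y)=0$, forcing equality in \eqref{eq:kappa_est}, hence the perfect matching of (b), which yields the $4$-cycle).
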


\begin{proof}
  This follows immediately from Proposition \ref{prop:curvcalc0}. 
\end{proof}

We recall the definitions of the small sphere structure and the
non-clustering property from \cite{LMP2} which have been the key
concepts to prove the rigidity result under sharpness of Bonnet-Myers
in the Bakry-\'Emery $\infty$-curvature (see \cite{LMP2}).

\begin{definition}\label{def: SSP NCP}
  Let $G=(V,E)$ be a $D$-regular graph and let $x \in V$.
  \begin{enumerate}
  \item[(SSP)] We say $x$ satisfies the \emph{small sphere property}
    (SSP) if
    \begin{align*}
      |S_2(x)| \leq {D \choose 2}.
    \end{align*}
  \item[(NCP)] We say $x$ satisfies the \emph{non-clustering property}
    (NCP) if, whenever $d_x^-(z) = 2$ holds for all $z \in S_2(x)$,
    one has that for all distinct $y_1,y_2\in S_1(x)$ there is at most one
    $z \in S_2(x)$ satisfying $y_1\sim z \sim y_2$.
  \end{enumerate}
\end{definition}

\begin{rem}
  For the arguments below, it is useful to understand structural
  properties of the hypercube $Q^n$. We view the vertices of $Q^n$ as
  the elements of $\{0,1\}^n$ which are connected if their Hamming
  distance is equal to $1$, and assume without loss of generality that
  $x=(0,0,...,0)$. Then for $1\le k \le n$, 
  $$ S_k(x) = \{(a_i)_{i} \in \{0,1\}^n | \ \sum_i a_i=k \}, $$ 
  which gives $\#S_k(x) = {D \choose k}$. In particular, $Q^n$
  satisfies (SSP).

  Moreover, for distinct $y_1,y_2\in S_k(x)$ we always have
  $d(y_1,y_2) \ge 2$. In the case $d(y_1,y_2)=2$, the entries
  of $y_1$ and $y_2$ differ in precisely two places and, consequently,
  $y_1,y_2$ has precisely one common neighbour in $S_{k-1}(x)$ (if $k \ge 1$)
  and one common neighbour in $S_{k+1}(x)$ (if $k \le n-1$). Therefore,
  $Q^n$ satisfies also (NCP).
\end{rem}

\begin{lemma}\label{l:SSPNCP}
  Let $G=(V,E)$ be a $D$-regular graph.  If $x \in V$ belongs to no
  triangle and if $\kappa(x,y) \geq 2/D$ for all $y\sim x$, then $x$
  satisfies $(SSP)$ and $(NCP)$.
\end{lemma}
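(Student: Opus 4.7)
The plan is to extract matching information at every edge through $x$ from the curvature hypothesis via Proposition \ref{prop:curvcalc0}, and then to finish both (SSP) and (NCP) by a double count of edges between $S_1(x)$ and $S_2(x)$.

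Since $x$ lies in no triangle, for every neighbor $y$ of $x$ the edge $\{x,y\}$ is triangle-free, so $\#_\Delta(x,y)=0$. Proposition \ref{prop:curvcalc0} then yields $\kappa(x,y)\le 2/D$, and combined with the hypothesis $\kappa(x,y)\ge 2/D$ this forces equality together with the existence of a perfect matching $M_y$ between $N_x\setminus\{y\}$ and $N_y\setminus\{x\}$ (using $N_{xy}=\emptyset$). From these matchings I would deduce the key intermediate fact that $d_x^-(z)\ge 2$ for every $z\in S_2(x)$: picking any $y\in S_1(x)\cap N(z)$, the vertex $z\in N_y\setminus\{x\}$ is matched by $M_y$ to some $w\in N_x\setminus\{y\}$ with $w\sim z$, and $w$ is a second neighbor of $z$ in $S_1(x)$.

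For (SSP), I would count edges between $S_1(x)$ and $S_2(x)$ in two ways. Triangle-freeness at $x$ implies every $y\in S_1(x)$ has its $D-1$ other neighbors in $S_2(x)$, contributing $D(D-1)$ such edges in total. Dually, by the previous paragraph,
\[
D(D-1)=\sum_{z\in S_2(x)} d_x^-(z) \;\ge\; 2\,|S_2(x)|,
\]
which immediately gives $|S_2(x)|\le \binom{D}{2}$, establishing (SSP).

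For (NCP), I assume $d_x^-(z)=2$ for every $z\in S_2(x)$, so equality holds in the count above and $|S_2(x)|=\binom{D}{2}$. I would then consider the map $\phi:S_2(x)\to \binom{S_1(x)}{2}$ sending each $z$ to its unique pair of $S_1(x)$-neighbors. By Lemma \ref{l:match}, every distinct pair $y_1,y_2\in S_1(x)$ is contained in a $4$-cycle $y_1\sim x\sim y_2\sim z\sim y_1$ for some $z$; triangle-freeness at $x$ forces $z\in S_2(x)$, so $\phi$ is surjective. Since the two sets have equal cardinality $\binom{D}{2}$, $\phi$ is bijective, which means each pair $\{y_1,y_2\}$ has exactly one common neighbor in $S_2(x)$, giving (NCP). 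The only mildly subtle step is the matching-based proof that $d_x^-(z)\ge 2$; once this is in place the two conclusions reduce to pure counting.
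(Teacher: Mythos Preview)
Your proof is correct and follows essentially the same approach as the paper: both obtain $d_x^-(z)\ge 2$ from the matching/4-cycle property at edges through $x$ (the paper cites Lemma~\ref{l:match}, which you inline via Proposition~\ref{prop:curvcalc0}), then double-count edges between $S_1(x)$ and $S_2(x)$ to get (SSP), and combine the surjectivity onto pairs with the cardinality bound to get (NCP). Your NCP argument via the explicit bijection $\phi$ is in fact a bit cleaner than the paper's terse final sentence.
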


\begin{proof}
  We first show $|S_2(x)| \leq {D \choose 2}$.  Let $z \in S_2(x)$ and
  let $y\in V$ s.t. $x \sim y \sim z$. Due to Lemma~\ref{l:match}, $z$
  is connected to at least two vertices from $S_1(x)$. By double
  counting, we have
  $$ 2 |S_2(x)| \le \sum_{z \in S_2(x)} d_x^-(z) = \sum_{y \in S_1(x)} d_x^+(y)
  \le (D-1) |S_1(x)| = (D-1)D, $$ which implies
  $|S_2(x)| \leq {D \choose 2}$. Therefore $x$ satisfies (SSP). 

  Next we prove (NCP) at $x$: For all distinct $y_1, y_2 \in S_1(x)$
  the pair of adjacent edges $y_1 \sim x \sim y_2$ is contained in a
  $4$-cycle by Lemma~\ref{l:match}, which means that there is
  $z \in S_2(x)$ with $y_1 \sim z \sim y_2$. Since
  $|S_2(x)| \le {D \choose 2}$, there is at most one such $z$ for each such pair
  $y_1, y_2 \in S_1(x)$.
\end{proof}

Now we state the main theorem of this section.

\begin{theorem}
  Let $G=(V,E)$ be $(D,L)$-Bonnet-Myers sharp with $L=D$. Then $G$ is
  the hypercube $Q^D$.
\end{theorem}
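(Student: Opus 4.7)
The plan is to reduce the problem to hypercube rigidity by showing that $G$ is triangle-free and satisfies the small sphere and non-clustering properties at every vertex.

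First, fix any pole $x \in V$ (which exists because ${\rm diam}(G) = L$). Theorem \ref{onespheredegree}(a) gives that each edge incident to $x$ lies in $2D/L - 2 = 0$ triangles. Applying Theorem \ref{recursionformulas} with $L = D$, one computes for every $y \in S_k(x)$ that
$$ d_x^-(y) = k, \quad d_x^0(y) = 0, \quad d_x^+(y) = D - k. $$
In particular every sphere $S_k(x)$ is an independent set and each edge of $G$ connects consecutive spheres. For any three pairwise-adjacent vertices $u,v,w$, the distances to $x$ pairwise differ by at most one, so at least two of them must coincide; but then the coincident pair would be an edge inside some sphere $S_k(x)$, contradicting $d_x^0(\cdot) = 0$. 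Hence $G$ is triangle-free.

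Next, Lemma \ref{l:SSPNCP} applies at every vertex $v \in V$, since $v$ lies in no triangle and every edge satisfies $\kappa \geq 2/D$ by Bonnet-Myers sharpness. Thus every vertex of $G$ satisfies (SSP) and (NCP). The telescoping relation $|S_{k-1}(x)|(D-k+1) = |S_k(x)|\cdot k$ obtained from the sphere degree counts yields $|S_k(x)| = \binom{D}{k}$, so $|V| = 2^D$, matching the cardinality of $Q^D$.

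Finally, construct an isomorphism $\phi: V \to \{0,1\}^D$ by setting $\phi(x) = \mathbf{0}$ and labelling the $D$ neighbors of $x$ by the standard basis vectors $e_1, \ldots, e_D$. Property (NCP) at $x$, together with the count $|S_2(x)| = \binom{D}{2}$, forces each pair $\{y_i, y_j\} \subset S_1(x)$ to share a unique common neighbor in $S_2(x)$, which is labelled $e_i + e_j$. Propagating through the spheres $S_k(x)$ via (NCP) at the lower-sphere vertices and the sphere cardinalities extends this to a bijective labelling of each $S_k(x)$ by the $k$-element subsets of $\{1,\ldots,D\}$ that is compatible with adjacency; alternatively, once (SSP), (NCP) and triangle-freeness are in place, one may invoke the rigidity argument of \cite{LMP2} directly. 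The principal obstacle is making this inductive isomorphism watertight: one has to verify that the labellings chosen at successive sphere levels are globally consistent, that every hypercube edge is realized by an edge of $G$ and conversely, and that (NCP) applied at non-pole vertices admits only the unique hypercube completion. This combinatorial bookkeeping, rather than the curvature input, is where the technical bulk of the argument lies.
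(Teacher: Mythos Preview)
There is a genuine gap at the very first step after fixing the pole $x$. You claim that Theorem \ref{recursionformulas} with $L=D$ gives $d_x^-(y)=k$, $d_x^0(y)=0$, $d_x^+(y)=D-k$ for every $y\in S_k(x)$. But with $L=D$ the three relations \eqref{eq:rec1}--\eqref{eq:rec3} collapse to the single independent constraint
\[
2d_x^-(y)+d_x^0(y)=2k,
\]
together with $d_x^+(y)+d_x^0(y)+d_x^-(y)=D$. This only yields $d_x^-(y)\le k$; the values $d_x^-(y)=k-1$, $d_x^0(y)=2$ are perfectly consistent with the recursion formulas. So you have not shown that spheres around $x$ are independent sets, and hence your argument that $G$ is triangle-free collapses. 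Everything downstream---the global application of Lemma \ref{l:SSPNCP} at \emph{every} vertex, the sphere cardinalities $|S_k(x)|=\binom{D}{k}$, and the labelling construction---rests on this unproven claim.

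The paper avoids this trap by running an honest induction on the radius $N$: the hypothesis that $B_N(x)$ is already isomorphic to the $N$-ball of $Q^D$ is what supplies $d_x^0(z)=0$ for $z\in S_N$, and then a nontrivial counting argument (classifying vertices of $S_{N+1}$ by their backward degree and comparing $|M|$ against $|E(S_{N+1},S_N)|$) is needed to force $d_x^-(z)=N+1$, hence $d_x^0(z)=0$, for all $z\in S_{N+1}$. Only after this does one know (SSP) and (NCP) on $B_{N-1}$ and invoke \cite[Theorem~6.2]{LMP2}---with an additional gluing trick to manufacture the bipartiteness that theorem requires. Your proposal skips precisely this inductive core, which is where the real work lies.
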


\begin{proof}
  Let $x$ be a pole. We write $B_N:= B_N(x)$ and $S_N:=S_N(x)$. 

  By Theorem~\ref{onespheredegree}(a), $x$ is not contained in any
  triangles, therefore, $B_1(x)$ is isomorphic to the $1$-ball in
  $Q^D$.

  Now suppose, the $N$-ball $B_N$ is isomorphic to the $N$-ball of the
  hypercube with $1 \leq N < D$. We want to show that the $(N+1)$-ball
  $B_{N+1}$ is then isomorphic to the $(N+1)$-ball of the hypercube,
  which would prove the theorem by induction.  

  By \eqref{eq:rec2} in Theorem~\ref{recursionformulas} and the fact
  that $d_x^0(z) = 0$ for all $z \in S_N$ (because of the structure of
  the $N$-ball in the hypercube), we observe $d_x^+(z)= D-N$ for all
  $z\in S_N$. Let $M:=\{\{v,w\}\subset S_N: d(v,w)=2\}$. Since the
  $N$-ball $B_N$ is isomorphic to the $N$-ball of the hypercube, we
  have 
  \begin{equation} \label{eq:Mest}
  |M| \geq {D \choose N-1}{D - N + 1 \choose 2}.
  \end{equation}
  Again, due to the structure of the $N$-ball in the hypercube, any
  pair $\{v,w\} \in M$ cannot have any common neighbours in $S_N$ and
  can have at most one common neighbour in $S_{N-1}$. Therefore, due to
  Lemma~\ref{l:match}, since $\kappa \geq \frac{2}{D}$, there exists
  $p:M \to S_{N+1}$ satisfying $v \sim p(\{v,w\}) \sim w$ for all
  $\{v,w\} \in M$.

  By \eqref{eq:rec3} in Theorem~\ref{recursionformulas}, every
  $z \in S_{N+1}$ satisfies $d_x^-(z) \le N+1$. We classify the
  vertices in $S_{N+1}$ by their backwards degree.  Let $a_s$ be the number
  of $z\in S_{N+1}$ with $d_x^-(z)=s$. Remark $a_s = 0$ for $s > N+1$.
  Therefore, the set $E(S_{N+1},S_N)$ of all edges joining $S_N$ and
  $S_{N+1}$, satisfies
  $$
  |E(S_{N+1},S_N)| = \sum_{s\leq N+1} s a_s.
  $$
  If $d_x^-(z) = s$ for some $z \in S_{N+1}$, then there are at most
  ${s \choose 2}$ pairs $\{v,w\} \in M$ with $p(\{v,w\})=z$. Thus,
  \begin{equation} \label{eq:ME_est}
   |M| \leq \sum_{N + 1 \geq s \geq 2} a_s {s\choose 2} \leq \frac{N}2
  \sum_{s\leq N+1} sa_s = \frac N 2 |E(S_{N+1},S_N)|.
   \end{equation}
   Note that the second inequality in \eqref{eq:ME_est} is an equality
   iff $a_s = 0$ for all $s < N+1$. Therefore, using \eqref{eq:Mest} and
   \eqref{eq:ME_est},
   \begin{align*}
     |E(S_{N+1},S_N)| \geq \frac 2 N |M| \geq \frac 2 N {D \choose N-1}{D - N + 1 \choose 2} = {D \choose N} (D-N)= |E(S_{N+1},S_N)|
   \end{align*}
   where the last equality follows since $|S_N| = {D \choose N}$ and
   since every $z \in S_N$ satisfies $d_x^+(z) = D-N$. Therefore, we
   have sharpness everywhere which means $a_s=0$ if $s \neq N+1$,
   i.e., $d_x^-(z)=N+1$ for all $z \in S_{N+1}$.  This implies from
   \eqref{eq:rec3} in Theorem \ref{recursionformulas} that
   \begin{equation} 
     \label{eq:SN+1-} d_x^0(z)=0 \quad \text{and} \quad
     d_x^+(z) = D-N-1 \quad \text{for all $z \in S_{N+1}$}
   \end{equation}
   and $|S_{N+1}| = {D \choose N+1}$.

   Since $B_N$ is isomorphic to the $N$-ball of the hypercube, any
   $y \in B_{N-1}$ is not contained in a triangle of $G$. Thus, we can
   apply Lemma~\ref{l:SSPNCP} and conclude that $(SSP)$ and $(NCP)$
   are satisfied for all $y \in B_{N-1}$. Using this fact and
   \eqref{eq:SN+1-}, we can apply \cite[Theorem~6.2]{LMP2} (with
   $k=N+1$) and conclude that $B_{N+1}$ is isomorphic to the
   $(N+1)$-ball of the hypercube $Q^D$. 

   Note the following slight subtlety in this last argument:
   \cite[Theorem~6.2]{LMP2} requires \emph{bipartiteness}of $G$, which is a
   priori not known. Instead, we apply this theorem to a modification
   of $G$. This can be done by the following gluing process of the
   induced graph $B_{N+1}(x)$ with an $(L-N-1)$-ball of the hypercube
   $Q^D$: Let $B'_{L-N-1}(x')$ be an $(L-N-1)$-ball of a hypercube
   $Q^D$ centered at $x'$ and $S'_{L-N-1}(x')$ be the corresponding
   $(L-N-1)$-sphere. Since $|S_{N+1}(x)| = {D \choose N+1} = |S'_{L-N-1}(x')|$
   and 
   $$ d_x^+(z) = D-N-1 = d_{x'}^-(z') \quad \text{and} \quad 
   d_x^0(z)= 0 = d_{x'}^0(z') $$ 
   for all $z \in S_{N+1}(x)$ and $z' \in S'_{L-N-1}(x')$, we can glue
   these two graphs via a bijective identification of the vertex sets
   of $S_{N+1}(x)$ and $S'_{L-N-1}(x')$. This guarantees that the new
   graph is bipartite and $D$-regular. 

   The proof is now finished by the induction principle.
\end{proof}

\section{Bonnet-Myers sharp graphs and Bakry-{\'E}mery curvature}
\label{sec:BM-BE-curvature}

\subsection{Bakry-\'Emery curvature}
\label{sec:BE-curvature}

Bakry-\'Emery curvature is a notion based on a fundamental identity in
Riemannian Geometry, called \emph{Bochner's Formula}, involving the
Laplace-Beltrami operator. This definition allows to introduce
Bakry-\'Emery curvature also on other spaces with a well-defined
Laplacian. The (normalized) Laplacian in our particular discrete
setting of a graph $G$ was given in \eqref{eq:Deltanorm}. In this
section, we will recall some fundamental properties which will be
relevant for relating Bonnet-Myers sharpness in the sense of Ollivier
Ricci curvature and Bakry-\'Emery curvature. More general details
about Bakry-\'Emery curvature can be found in \cite{CLP2018}. We
start with Bakry-\'Emery's $\Gamma$-calculus:

\begin{definition}[$\Gamma$ and $\Gamma_{2}$
  operators]\label{defn:GammaGamma2}
  Let $G=(V,E)$ be a finite simple graph.  For any two functions
  $f,g: V\to \mathbb{R}$, we define
  $\Gamma(f,g): V\rightarrow \mathbb{R}$ and
  $\Gamma_2(f,g): V\rightarrow \mathbb{R}$ by
  \begin{align*}
    2\Gamma(f,g)&:=\Delta(fg)-f\Delta g-g\Delta f;\\
    2\Gamma_2(f,g)&:=\Delta\Gamma(f,g)-\Gamma(f,\Delta g)-\Gamma(\Delta f,g).
  \end{align*}
  We write $\Gamma(f):=\Gamma(f,f)$ and
  $\Gamma_2(f,f):=\Gamma_2(f)$, for short.
\end{definition}

\begin{definition}[Bakry-\'Emery curvature]\label{defn:BEcurvature}
  Let $G=(V,E)$ be a finite simple graph. Let
  $\mathcal{K}\in \mathbb{R}$ and
  $\mathcal{N}\in (0,\infty)\cup\{\infty\}$. We say that a vertex
  $x\in V$ satisfies the \emph{curvature-dimension inequality}
  $CD(\mathcal{K},\mathcal{N})$ if, for any $f:V\to \mathbb{R}$, we
  have
  \begin{equation}\label{eq:CDineq}
    \Gamma_2(f)(x)\geq \frac{1}{\mathcal{N}}
    (\Delta f(x))^2+\mathcal{K}\Gamma(f)(x).
  \end{equation}
  We call $\mathcal{K}$ a lower Ricci curvature bound of $G$ at $x$,
  and $\mathcal{N}$ a dimension parameter. The graph $G=(V,E)$
  satisfies $CD(\mathcal{K},\mathcal{N})$ (globally), if all its
  vertices satisfy $CD(\mathcal{K},\mathcal{N})$. Let
  $\mathcal{K}_{G,x}(\infty)$ be the largest real number such that the
  vertex $x$ satisfies $CD(\mathcal{K}_{G,x}(\infty),\infty)$.
\end{definition}

We now recall results from \cite{CLP2018} which we will need for the
rest of this section. Note that the Bakry-\'Emery curvature
$\mathcal{K}_{G,x}$ in \cite{CLP2018} is based on the non-normalized
Laplacian which, in the case of $D$-regular graphs, can be easily
translated into the normalized setting presented here. Henceforth, we
will denote the Bakry-\'Emery curvature associated to the normalized
Laplacian by $\mathcal{K}^{\rm n}_{G,x}$ (for $D$-regular graphs, we
have $\mathcal{K}^{\rm n}_{G,x} = \frac{1}{D} \mathcal{K}_{G,x})$.

Let $G = (V,E)$ be a $D$-regular graph. Theorem 3.1 of \cite{CLP2018}
tells us that
\begin{equation}\label{upperbound}
  \mathcal{K}^{\rm n}_{G,x}(\infty) \leq \frac{2}{D} + \frac{\#_{\Delta}(x)}{D^{2}} = \frac{3+D-av_1^+(x)}{2D},
\end{equation} 
for every $x\in V$, where $av_1^+(x)$ was defined in
\eqref{eq:averdeg}.

We say, as in \cite{CLP2018}, that a $D$-regular graph $G = (V,E)$ is
\emph{$\infty$-curvature sharp} at $x \in V$ if \eqref{upperbound} holds true
with equality.

We now recall a method from \cite{CLP2018} that allows us to check if
a graph $G=(V,E)$ is $\infty$-curvature sharp at a vertex $x$.
Let $x \in V$ be an \emph{$S_{1}$-out regular} vertex, that is,
$d_{x}^{+}(y)$ is constant for all $y \sim x$.
Let $\{ y_1,\dots, y_d \}$ be the vertices of $S_1(x)$. We now define
two relevant (weighted) Laplacians $\Delta_{S_1(x)}$ and $\Delta_{S_1'(x)}$
on functions $f: S_1(x) \to \IR$ as follows: 
$$ \Delta_{S_1(x)}f(y_i) = \sum_{y_j: y_j \sim y_i} (f(y_j) - f(y_i)), $$
that is, $\Delta_{S_1(x)}$ be the non-normalized Laplacian of the
induced subgraph $S_1(x)$. Let $S_1'(x)$ be the graph with the same
vertex set $\{ y_1,\dots, y_d \}$ and an edge between $y_i$ and $y_j$
iff $|\{ z \in S_2(x) \mid y_i \sim z \sim y_j \}| \ge 1$, where
$\sim$ describes adjacency in the original graph $G$. We introduce the
following weights $w_{y_i y_j}'$ on the edges of $S_1'(x)$:
$$ w_{y_i y_j}' = \sum_{z \in S_2(x)} \frac{w_{y_i z}w_{z y_j}}{d_x^-(z)}. $$
Where $w_{u v} = 1$ if $u\sim v$ and $0$ otherwise.

The corresponding weighted Laplacian is then given by
$$ \Delta_{S_1'(x)} f(y_i) = \sum_{j: j \neq i} w_{y_i y_j}' (f(y_j) - f(y_i)). $$
Let $S_1''(x) = S_1(x) \cup S_1'(x)$, i.e., the vertex set of
$S_1''(x)$ is $\{ y_1,\dots, y_d \}$ and the edge set is the union
of the edge sets of $S_1(x)$ and $S_1'(x)$. Then the sum
$\Delta_{S_1(x)} + \Delta_{S_1'(x)}$ can be understood as the weighted
Laplacian $\Delta_{S_1''(x)}$ on $S_1''(x)$ with weights
$w'' = w + w'$. Note that all our Laplacians $\Delta$ are defined on
functions on the vertex set of $S_1(x)$. Let
$\lambda_{1}(\Delta_{S_1''(x)})$ denote the smallest non-zero
eigenvalue of $\Delta_{S_1''(x)}.$

Theorem 9.1 of \cite{CLP2018} tells us that an $S_1$-out regular
vertex $x$ in a $D$-regular graph $G$ is $\infty-$curvature sharp if
and only if $\lambda_{1}(\Delta_{S_1''(x)})\geq \frac{D}{2}$.

On a different note, we also provide the following general result on Cartesian product, which will be useful in the next subsection.

\begin{lemma} \label{lem:BEcart}
  Let $G_i = (V_i,E_i)$, $i=1,2$, be two connected, simple $D_i$-regular graphs
with diameters $L_i$, respectively. Assume we have
\begin{equation} \label{eq:Kiest}
\mathcal{K}_{G_i,x_i}^{\rm n}(\infty) \le \frac{1}{D_i} + \frac{1}{L_i}
\end{equation}
at $x_i \in V_i$, $i=1,2$. Then we have
\begin{equation} \label{eq:Kprodest}
\mathcal{K}_{G_1 \times G_2,(x_1,x_2)}^{\rm n}(\infty) \le
\frac{1}{D_1+D_2} + \frac{1}{L_1+L_2}.
\end{equation}
Moreover, if \eqref{eq:Kiest} holds with equality for $i=1,2$ and we
have $\frac{D_1}{L_1} = \frac{D_2}{L_2}$, then \eqref{eq:Kprodest}
holds also with equality.
\end{lemma}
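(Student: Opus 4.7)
Setting $\alpha_i:=D_i/(D_1+D_2)$ so that $\alpha_1+\alpha_2=1$, the plan is first to establish \eqref{eq:Kprodest} by testing the $CD(\mathcal K,\infty)$ inequality at $(x_1,x_2)$ against functions depending on only one coordinate of the product, and then to obtain the equality statement by combining this with the standard tensorization of the $CD(\,\cdot\,,\infty)$ condition on Cartesian products. The structural starting point is the decomposition of the normalized Laplacian on the product,
$$\Delta_{G_1\times G_2}=\alpha_1\,\Delta_{G_1}^{(1)}+\alpha_2\,\Delta_{G_2}^{(2)},$$
where $\Delta_{G_i}^{(i)}$ denotes the normalized Laplacian of $G_i$ acting in the $i$-th coordinate.

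For the inequality, I would take a function $\tilde f(x_1,x_2):=f(x_1)$ depending only on the first coordinate. A direct computation from Definition~\ref{defn:GammaGamma2} gives $\Gamma_{G_1\times G_2}(\tilde f)=\alpha_1\,\widetilde{\Gamma_{G_1}(f)}$ and $\Gamma_{2,G_1\times G_2}(\tilde f)=\alpha_1^2\,\widetilde{\Gamma_{2,G_1}(f)}$, so the $CD(\mathcal K,\infty)$ inequality at $(x_1,x_2)$ restricted to such $\tilde f$ reduces to $\alpha_1\,\Gamma_{2,G_1}(f)(x_1)\ge\mathcal K\,\Gamma_{G_1}(f)(x_1)$ for every $f$, forcing
$$\mathcal K^{\rm n}_{G_1\times G_2,(x_1,x_2)}(\infty)\le\alpha_i\,\mathcal K^{\rm n}_{G_i,x_i}(\infty),\qquad i=1,2.$$
Without loss of generality assume $D_1/L_1\le D_2/L_2$. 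Plugging $i=1$ and the hypothesis \eqref{eq:Kiest} into this bound gives
$$\mathcal K^{\rm n}_{G_1\times G_2,(x_1,x_2)}(\infty)\le\alpha_1\!\left(\tfrac{1}{D_1}+\tfrac{1}{L_1}\right)=\tfrac{1}{D_1+D_2}+\tfrac{D_1/L_1}{D_1+D_2},$$
and the elementary inequality $\tfrac{D_1/L_1}{D_1+D_2}\le\tfrac{1}{L_1+L_2}$ is, after clearing denominators, just $D_1L_2\le D_2L_1$, which is our standing assumption. This proves \eqref{eq:Kprodest}.

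For the equality statement, the assumption $D_1/L_1=D_2/L_2$ together with the mediant identity $\tfrac{D_1}{L_1}=\tfrac{D_2}{L_2}=\tfrac{D_1+D_2}{L_1+L_2}$ forces both $\alpha_i\,\mathcal K^{\rm n}_{G_i,x_i}(\infty)$ to equal $\tfrac{1}{D_1+D_2}+\tfrac{1}{L_1+L_2}$, so it remains to prove the matching lower bound
$$\mathcal K^{\rm n}_{G_1\times G_2,(x_1,x_2)}(\infty)\ge\min\bigl(\alpha_1\,\mathcal K^{\rm n}_{G_1,x_1}(\infty),\;\alpha_2\,\mathcal K^{\rm n}_{G_2,x_2}(\infty)\bigr).$$
This is the tensorization of $CD(\,\cdot\,,\infty)$ on Cartesian products and is the main technical point of the proof. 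My strategy for it is to expand, for an arbitrary $h\colon V_1\times V_2\to\mathbb R$,
$$2\Gamma_{2,G_1\times G_2}(h)=\alpha_1^2\cdot 2\Gamma_{2,G_1}^{(1)}(h)+\alpha_2^2\cdot 2\Gamma_{2,G_2}^{(2)}(h)+\alpha_1\alpha_2\,M(h),$$
with mixed term
$$M(h):=\Delta_{G_1}^{(1)}\Gamma_{G_2}^{(2)}(h)+\Delta_{G_2}^{(2)}\Gamma_{G_1}^{(1)}(h)-2\Gamma_{G_1}^{(1)}(h,\Delta_{G_2}^{(2)}h)-2\Gamma_{G_2}^{(2)}(h,\Delta_{G_1}^{(1)}h),$$
and to verify, by a short algebraic telescoping (writing $a=h(x_1,x_2)$, $b=h(y_1,x_2)$, $c=h(x_1,y_2)$, $d=h(y_1,y_2)$ and setting $u=b-a$, $v=c-a$, $w=d-b-c+a$, one finds $(v+w)^2-v^2+(u+w)^2-u^2-2(u+v)w=2w^2$), that
$$M(h)(x_1,x_2)=\frac{1}{D_1D_2}\sum_{\substack{y_1\sim x_1\\ y_2\sim x_2}}\bigl(h(y_1,y_2)-h(y_1,x_2)-h(x_1,y_2)+h(x_1,x_2)\bigr)^2\ge 0.$$
Combined with the factor-wise $CD$ bounds $\Gamma_{2,G_i}^{(i)}(h)\ge\mathcal K^{\rm n}_{G_i,x_i}(\infty)\,\Gamma_{G_i}^{(i)}(h)$ and the non-negativity of $\Gamma_{G_i}^{(i)}(h)$, the identity $M(h)\ge 0$ yields the tensorization lower bound and closes the equality case. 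The algebraic non-negativity of $M(h)$ is the heart of the argument; everything else is routine manipulation.
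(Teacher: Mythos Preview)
Your proof is correct. The paper's own proof is much shorter because it simply invokes the exact Cartesian-product formula for Bakry--\'Emery $\infty$-curvature from \cite[equation~(7.26)]{CLP2018},
\[
\mathcal{K}^{\rm n}_{G_1\times G_2,(x_1,x_2)}(\infty)
=\frac{1}{D_1+D_2}\,\min_{i=1,2}\,D_i\,\mathcal{K}^{\rm n}_{G_i,x_i}(\infty),
\]
and then applies the hypothesis together with the elementary estimate $\min_i(1+D_i/L_i)\le 1+(D_1+D_2)/(L_1+L_2)$; the equality case follows by the same one-line computation under $D_1/L_1=D_2/L_2$.

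What you do is essentially re-prove this cited formula from scratch: your one-variable test functions give the $\le$ direction, and your tensorization argument (expanding $\Gamma_2$ on the product and verifying $M(h)\ge 0$ via the second-difference identity) gives the $\ge$ direction. Both halves are carried out correctly, and the subsequent elementary inequality is identical to the paper's. So the routes differ only in that the paper outsources the product formula to the literature while you supply a self-contained derivation; your version is longer but requires no external input beyond the definitions.
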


\begin{proof}
	Let $G_i$ be $D_i$-regular with diameter $L_i$, $i = 1,2$ and $x_i
	\in V_i$ be the vertices satisfying
	$$ {\mathcal K}^{\rm n}_{G_i,x_i}(\infty) \le \frac{1}{D_i} +
	\frac{1}{L_i}. $$
	Then we have, using \cite[equation (7.26)]{CLP2018},
	\begin{eqnarray*} 
		{\mathcal K}^{\rm n}_{G_1 \times G_2,(x_1,x_2)}(\infty) &=& \frac{1}{D_1+D_2}
		\min_{i=1,2} D_i {\mathcal K}^{\rm n}_{G_i,x_i}(\infty) \\
		&\le&  \frac{1}{D_1+D_2} \min_{i=1,2} \left( 1+ \frac{D_i}{L_i} \right) \\
		&\le& \frac{1}{D_1+D_2} \left(1 + \frac{D_1+D_2}{L_1+L_2} \right) \\
		&=&  \frac{1}{D_1+D_2} + \frac{1}{L_1+L_2}.
	\end{eqnarray*}
	It is easy to see that in the case of equality in \eqref{eq:Kiest} for
	$i=1,2$, the same calculation leads to equality in \eqref{eq:Kprodest}.
\end{proof}

\subsection{The Bakry-\'Emery curvature of Bonnet-Myers sharp graphs}
\label{sec:BEcurvBMsharp}

As a consequence of Lemma \ref{lem:BEcart}, the following proposition show that the $\infty$-curvature sharpness is also preserved under taking Cartesian products of Bonnet-Myers sharp graphs (of the same ratios $\frac{D_i}{L_i}$).

\begin{proposition} \label{prop:cartprodBEcurv}
	Let $G_i=(V_i,E_i)$, $i=1,2$, be two $(D_i,L_i)$-Bonnet-Myers sharp graphs with
	${\mathcal K}^{\rm n}_{G_i,x_i}(\infty) = \frac{1}{D_i}+ \frac{1}{L_i}$ at $x_i \in V_i$. Assume furthermore that  $\frac{D_1}{L_1} = \frac{D_2}{L_2}$.
	Then the Cartesian product $G_1 \times G_2$ is also Bonnet-Myers sharp with
	$$ {\mathcal K}^{\rm n}_{G_1\times G_2,(x_1,x_2)}(\infty) = \frac{1}{D_1+D_2}+ \frac{1}{L_1+L_2}. $$
\end{proposition}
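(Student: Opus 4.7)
The plan is to combine the two main tools already established in the paper: Theorem \ref{thm:cartprod} for handling Bonnet-Myers sharpness of Cartesian products, and Lemma \ref{lem:BEcart} for handling the Bakry-\'Emery $\infty$-curvature bound in the equality case. Since the hypotheses are tailor-made to invoke exactly these two results, the proof will be essentially a direct citation argument with only some bookkeeping of vertex degrees and diameters.

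First I would observe that the Cartesian product $G_1 \times G_2$ is $(D_1+D_2)$-regular and has diameter $L_1+L_2$; these are standard facts about Cartesian products. Combined with the hypothesis $\frac{D_1}{L_1} = \frac{D_2}{L_2}$ and the assumption that each $G_i$ is $(D_i,L_i)$-Bonnet-Myers sharp, Theorem \ref{thm:cartprod} yields immediately that $G_1 \times G_2$ is $(D_1+D_2, L_1+L_2)$-Bonnet-Myers sharp.

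Next I would verify the Bakry-\'Emery $\infty$-curvature value. The hypothesis gives equality $\mathcal{K}^{\rm n}_{G_i,x_i}(\infty) = \frac{1}{D_i} + \frac{1}{L_i}$ for $i=1,2$, and the ratios $\frac{D_1}{L_1} = \frac{D_2}{L_2}$ match. Applying Lemma \ref{lem:BEcart} in its equality case directly gives
\[
\mathcal{K}^{\rm n}_{G_1\times G_2,(x_1,x_2)}(\infty) = \frac{1}{D_1+D_2} + \frac{1}{L_1+L_2},
\]
as required.

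There is no real obstacle here: the proposition is designed precisely so that the hypotheses trigger the equality clauses of both Theorem \ref{thm:cartprod} and Lemma \ref{lem:BEcart}. The only mild subtlety is ensuring we are invoking the \emph{equality} part of Lemma \ref{lem:BEcart} rather than just the inequality, which requires the two assumptions of equality at $x_i$ together with the ratio condition $\frac{D_1}{L_1} = \frac{D_2}{L_2}$ --- all of which are explicitly given in the statement.
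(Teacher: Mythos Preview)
Your proposal is correct and follows essentially the same approach as the paper: the paper's proof likewise invokes the ratio condition $\frac{D_1}{L_1}=\frac{D_2}{L_2}$ (implicitly via Theorem \ref{thm:cartprod}) to conclude Bonnet-Myers sharpness of the product, and then cites Lemma \ref{lem:BEcart} in its equality case for the Bakry-\'Emery curvature value. Your version is simply a slightly more explicit rendering of the same two-step argument.
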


\begin{proof}
	The condition $\frac{D_1}{L_1} = \frac{D_2}{L_2}$ guarantees that the
	Cartesian product $G_1 \times G_2$ is, again, Bonnet-Myers sharp. The
	statement about the Bakry-\'Emery $\infty$-curvature at $(x_1,x_2) \in
	V_1 \times V_2$ follows immediately from Lemma \ref{lem:BEcart}.
\end{proof}

For Bonnet-Myers sharp graphs, we have the following $\infty$-curvature estimate at poles.

\begin{theorem} \label{thm:curv_relation}
  Let $G=(V,E)$ be a $(D,L)$-Bonnet-Myers sharp graph. Then we have at
  every pole $x \in V$:
  \begin{equation} \label{eq:be-est} {\mathcal K}^{\rm
      n}_{G,x}(\infty) \le \frac{1}{D} + \frac{1}{L}.
  \end{equation}
  Moreover, equality in \eqref{eq:be-est} is equivalent to the fact
  that $G$ is Bakry-\'Emery $\infty$-curvature sharp at $x$.
\end{theorem}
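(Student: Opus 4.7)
The plan is to observe that the theorem follows almost immediately by combining the general upper bound \eqref{upperbound} for the normalized Bakry-\'Emery $\infty$-curvature with the triangle count at a pole provided by Theorem \ref{onespheredegree}(a).

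First I would recall that Theorem \ref{onespheredegree}(a) tells us that every edge $\{x,y\}$ incident to a pole $x$ of a $(D,L)$-Bonnet-Myers sharp graph is contained in exactly $\frac{2D}{L} - 2$ triangles. Using the double-counting identity \eqref{eq:triangles}, this gives
$$ \#_{\Delta}(x) = \frac{1}{2} \sum_{y \sim x} \#_{\Delta}(x,y) = \frac{1}{2} \cdot D \cdot \left( \frac{2D}{L} - 2 \right) = \frac{D^2}{L} - D. $$

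Next I would plug this into the general upper bound \eqref{upperbound}, which says
$$ \mathcal{K}^{\rm n}_{G,x}(\infty) \le \frac{2}{D} + \frac{\#_\Delta(x)}{D^2}. $$
A direct computation yields
$$ \frac{2}{D} + \frac{\#_\Delta(x)}{D^2} = \frac{2}{D} + \frac{1}{L} - \frac{1}{D} = \frac{1}{D} + \frac{1}{L}, $$
establishing \eqref{eq:be-est}. The equivalence with $\infty$-curvature sharpness at $x$ is then just the definition of $\infty$-curvature sharpness, namely that \eqref{upperbound} holds with equality.

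There is no substantive obstacle here: all the real work sits in Theorem \ref{onespheredegree}, which supplies the constant $\frac{2D}{L}-2$ uniformly over all edges at a pole. The only thing to double-check is that the identity $\frac{2+\#_\Delta(x,y)}{D}$ (from Proposition \ref{prop:curvcalc0}) and the triangle count at a pole are consistent, which they are by construction. Finally, Theorem \ref{thm:BMsharp-BEeq} follows as an immediate corollary by taking the infimum over all vertices and noting that any Bonnet-Myers sharp graph has at least one pole.
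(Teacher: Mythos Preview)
Your proof is correct and essentially identical to the paper's own argument. The only cosmetic difference is that the paper plugs into the second form of \eqref{upperbound} via $av_1^+(x)$ (computed from \eqref{eq:rec1} in Theorem \ref{recursionformulas}), whereas you plug into the first form via $\#_\Delta(x)$ (computed from Theorem \ref{onespheredegree}(a)); these are equivalent since $d_x^0(y)=\#_\Delta(x,y)$ and $d_x^+(y)=D-1-d_x^0(y)$ for $y\sim x$.
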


\begin{proof}
  Let $x \in V$ be a pole of $G$. Using \eqref{eq:rec1} in Theorem
  \ref{recursionformulas}, we have for every $y \in S_1(x)$:
  $$ d_x^+(y) = 1 + D \left( 1-\frac{2}{L} \right). $$
  This shows that $x$ is $S_1$-out regular with
  $av_1^+(x) =d_x^+(y) = 1 + D -\frac{2D}{L} $.  We know from
  \eqref{upperbound} that
  \begin{equation} \label{eq:BEcurv_est} 
  {\mathcal K}^{\rm n}_{G,x}(\infty) \le \frac{3+D-av_1^+(x)}{2D} = \frac{1}{2D} \left( 2 + \frac{2D}{L} \right)  = \frac{1}{D} + \frac{1}{L}. 
  \end{equation}
  Equality is equivalent to Bakry-\'Emery $\infty$-curvature
  sharpness.
\end{proof}

Since every graph has a pole, Theorem \ref{thm:curv_relation} immediately implies Theorem \ref{thm:BMsharp-BEeq}.

In case of self-centered Bonnet-Myers sharp graphs, Theorem \ref{thm:curv_relation} can be strengthened, where inequality \eqref{eq:be-est} becomes equality at all vertices, resulting in Theorem \ref{thm:aBM-BEcurv}:

\begin{theorembmbesharp} \label{thm:curv_rel_strong}
  Let $G$ be a self-centered $(D,L)$-Bonnet-Myers sharp graph. Then $G$ is Bakry-\'Emery $\infty$-curvature sharp at all vertices
  $x \in V$ and
  \begin{equation} \label{eq:curv1D1L} 
  {\mathcal K}_{G,x}^{\rm n}(\infty) = \frac{1}{D} + \frac{1}{L}. 
  \end{equation}
\end{theorembmbesharp}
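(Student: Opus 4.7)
The plan is to combine the classification Theorem \ref{thm:main} with Proposition \ref{prop:cartprodBEcurv} and reduce to a case-by-case check of the five irreducible building blocks. Since $G$ is self-centered, every vertex $x \in V$ is a pole, so Theorem \ref{thm:curv_relation} already yields the upper bound
\[
{\mathcal K}^{\rm n}_{G,x}(\infty) \le \frac{1}{D}+\frac{1}{L}
\]
at every $x \in V$, together with the fact that equality is equivalent to Bakry-\'Emery $\infty$-curvature sharpness at $x$. Thus the whole task reduces to establishing $\infty$-curvature sharpness at every vertex.

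By the classification Theorem \ref{thm:main}, $G$ decomposes as a Cartesian product $G = G_1 \times \cdots \times G_k$, where each factor is one of: a hypercube $Q^n$, a cocktail party graph $CP(n)$, a Johnson graph $J(2n,n)$, an even-dimensional demi-cube $Q^{2n}_{(2)}$, or the Gosset graph, subject to the compatibility condition \eqref{eq:cartprod_cond0}. Since each of these five building blocks is itself a self-centered $(D_i,L_i)$-Bonnet-Myers sharp graph, a straightforward induction using Proposition \ref{prop:cartprodBEcurv} shows that it suffices to verify ${\mathcal K}_{G_i,x_i}^{\rm n}(\infty) = \tfrac{1}{D_i}+\tfrac{1}{L_i}$ at every vertex $x_i$ of each factor $G_i$ separately. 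Moreover, all five building blocks are vertex-transitive, so a single vertex per block suffices.

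To verify sharpness at a single vertex $x$ of a building block, I would invoke the spectral criterion of \cite[Theorem~9.1]{CLP2018}: since $x$ is $S_1$-out regular (which follows from \eqref{eq:rec1} of Theorem \ref{recursionformulas}, yielding $d_x^+(y) = D+1-\tfrac{2D}{L}$ for all $y \sim x$), $\infty$-curvature sharpness at $x$ is equivalent to the eigenvalue bound
\[
\lambda_1(\Delta_{S_1''(x)}) \ge \frac{D}{2}.
\]
The weighted graph $S_1''(x)$ is determined by the induced subgraph $S_1(x)$ together with the $\mu$-graphs of $G$ at $x$, both of which have been explicitly identified for all five building blocks in Subsection \ref{sec:revisex} and Table \ref{table:BMsharp_examples}: the $\mu$-graphs are cocktail party graphs $CP(c_2/2)$, and the $1$-spheres are strongly regular with the parameters computed in Proposition \ref{prop:cocktail-implies-str}. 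In particular, for any pair $y_1, y_2 \in S_1(x)$ the weight $w'_{y_1 y_2}$ reduces to $\tfrac{1}{c_2}$ times the number of common neighbours of $y_1$ and $y_2$ in $S_2(x)$, and $\Delta_{S_1''(x)}$ becomes a nonnegative combination of adjacency-type operators on highly symmetric graphs whose spectra are classically known.

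The main obstacle is the explicit spectral calculation for each of the five building blocks. For the hypercube $Q^n$, $S_1(x)$ is edgeless and $S_1'(x)$ is (a scalar multiple of) $K_n$, so the bound is immediate. For the cocktail party, Johnson, and demi-cube cases, $S_1(x)$ is strongly regular with smallest adjacency eigenvalue $-2$ (Theorem \ref{thm:lichstrreg}), and $S_1'(x)$ arises from $\mu$-graphs isomorphic to $CP(m)$, so $\Delta_{S_1''(x)}$ becomes a linear combination of adjacency operators on well-understood strongly regular graphs, and $\lambda_1(\Delta_{S_1''(x)}) \ge \tfrac{D}{2}$ can be checked from their known spectra. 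The Gosset graph case is the most delicate, since $S_1(x)$ is the Schl\"afli graph; here one uses the explicit adjacency spectrum of the Schl\"afli graph together with the $CP(5)$-structure of the $\mu$-graphs to verify the bound. Once the bound holds for each building block, Proposition \ref{prop:cartprodBEcurv} propagates sharpness to the Cartesian product, completing the proof.
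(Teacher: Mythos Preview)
Your overall strategy matches the paper's: reduce via the classification Theorem~\ref{thm:main} and Proposition~\ref{prop:cartprodBEcurv} to the irreducible building blocks, then verify the spectral criterion $\lambda_1(\Delta_{S_1''(x)}) \ge D/2$ from \cite[Theorem~9.1]{CLP2018}. The difference lies in how this spectral bound is established.

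You propose five separate spectral computations, one per building block, each using the known spectra of the relevant strongly regular $1$-spheres. The paper instead carries out a \emph{single uniform computation} valid for all building blocks at once (with $L \ge 2$), exploiting exactly the common structural data you cite: every $\mu$-graph is $CP(m)$ with $m = \frac{D-L}{L(L-1)}+1$, and every $S_1(x)$ is strongly regular with the parameters of Proposition~\ref{prop:cocktail-implies-str}. Concretely, the paper computes the weights $w'_{yy'}$ explicitly (for non-adjacent $y,y' \in S_1(x)$ exactly one common neighbour lies in $S_2(x)$, namely the antipole of $x$ in the cocktail party $\mu$-graph $N_{yy'} \cong CP(m)$; for adjacent $y,y'$ one counts triangles through the edge), and then writes $\Delta_{S_1''(x)}$ as a polynomial in $A$, ${\rm Id}$, and $J$. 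Since these commute, $\lambda_1(\Delta_{S_1''(x)})$ is read off directly from the second-largest eigenvalue of $A$, which is also given by Proposition~\ref{prop:cocktail-implies-str}. The final inequality reduces to $\frac{1}{L(L-1)^2}(L(L-2)+D)(D-L) \ge 0$, which holds since $L \le D$ and $L \ge 2$.

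Your case-by-case plan would work and is in principle more elementary, since it avoids the algebraic manipulation with $A$, ${\rm Id}$, $J$; but the paper's uniform argument is cleaner and explains \emph{why} all five families succeed simultaneously: the verification depends only on $(D,L)$ and the strongly regular parameters of $S_1(x)$, not on the individual combinatorics of each family.
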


\begin{proof} 
In view of Proposition \ref{prop:cartprodBEcurv}, it suffices to prove this theorem only for the graphs in the list of Theorem \ref{thm:main}.  We therefore start with a graph $G= (V,E)$ in the list of Theorem \ref{thm:main} and prove \eqref{eq:curv1D1L} for every vertex. Without loss of generality, we can assume $L \ge 2$ since $L=1$ implies $G = K_2$ which follows immediately from ${\mathcal K}_{K_2,x}^{\rm n}(\infty)=2$.

  Table \ref{table:BMsharp_examples} in Subsection \ref{sec:revisex}
  confirms that these graphs satisfy all the assumption of Proposition
  \ref{prop:cocktail-implies-str}, that is, all $\mu$-graphs of $G$
  are cocktail party graphs $CP(m)$ with
  $$ m = \frac{D-L}{L(L-1)} + 1, $$
  and all $1$-spheres of $G$ are strongly regular.

  Let $x \in V$. Since every vertex of $G$ is a pole, we
  know from \eqref{eq:BEcurv_est} that
  $$ {\mathcal K}_{G,x}(\infty) \le \frac{1}{D} + \frac{1}{L} $$
  with equality iff $x$ is $\infty$-curvature sharp. We have already
  seen in the proof of Theorem \ref{thm:curv_relation} that $x$ is
  $S_1$-out regular. So it only remains to show
  $\lambda_{1}(\Delta_{S_1''(x)})\geq \frac{D}{2}$.

  By Proposition \ref{prop:cocktail-implies-str}, the strongly regular induced
  $S_{1}(x)$ has parameters
  $$
  (\nu,k,\lambda,\mu) = 
  (D,\frac{2D}{L}-2,\frac{D-1}{L-1}-3,2\frac{D-L}{L(L-1)}). 
  $$ 

  Let $A$ denote the adjacency matrix of $S_{1}(x).$ Then, by Theorem
  \ref{onespheredegree},
  $\Delta_{S_{1}(x)} = A - (\frac{2D}{L}-2{\rm )Id}$. Let $y \sim
  x$. Then, by \eqref{eq:rec1} in Theorem \ref{recursionformulas},
  $d_{x}^{+}(y) = \frac{(L-2)D}{L}+1$. Since every $\mu$-subgraph of
  $G$ is $CP(m)$, we have
  $d_{x}^{-}(z) = 2m = \frac{2}{L-1}(\frac{D}{L}+L-2)$ for all
  $z\in S_{2}(x)$.  

  Let us first calculate the adjacency matrix $A'$ of the weighted
  graph $S_1'(x)$. Recall that the entries $\omega_{yy'}'$ of $A'$ are
  given by
  $$ \omega_{yy'}' = \sum_{z \in S_2(x), y \sim z \sim y'} \frac{1}{d_x^-(z)} = 
  \frac{1}{2m} \left| \{ z \in S_2(x) \mid y \sim z \sim y' \} \right|. $$
  Assume first that $y$ and $y'$ are not neighbours in the induced
  $S_1(x)$. There is a unique antipole of $x$ in
  $\mu(y_1,y_2)$, which is a vertex in $S_2(x)$. Therefore, we have
  $$ y,y' \in S_1(x), y \not\sim y' \quad \Rightarrow \quad \omega_{yy'}' = 
  \frac{1}{2m}. $$ 
  Now assume that $y \sim y'$. The edge $\{y,y'\}$ lies in precisely
  $\frac{2D}{L}-2$ triangles, one of them is $\{x,y,y'\}$ and there
  are precisely $\lambda = \frac{D-1}{L-1}-3$ triangles in the induced
  $S_1(x)$. The rest of triangles are in $1-1$ correspondence to
  vertices $z \in S_2(x)$ with $z \sim y$ and $z \sim y'$. Therefore
  we have
  $$ \left| \{ z \in S_2(x) \mid y \sim z \sim y' \} \right| = 
  \left(\frac{2D}{L} -2\right) - 1 - \left(\frac{D-1}{L-1} -3 \right)
  = \frac{2D}{L} - \frac{D-1}{L-1}. $$ 
  This implies that
  $$  y,y' \in S_1(x), y \sim y' \quad \Rightarrow \quad \omega_{yy'}' = 
  \left( \frac{2D}{L} - \frac{D-1}{L-1} \right) \frac{1}{2m}. $$ 
  Since the adjacency matrix $A^c$ of the complement of the induced
  $S_1(x)$ can be written as $A^c = J - {\rm Id} - A$, where $J$ is
  the all-one matrix, we have for the weighted adjacency matrix $A'$
  of $S_1'(x)$
  $$ A' = \frac{1}{2m} \left( \left( \frac{2D}{L} - \frac{D-1}{L-1} \right) A +
  A^c \right) = \frac{1}{2m} \left( \left( \frac{2D}{L} - \frac{D-1}{L-1} - 
  1 \right) A - {\rm Id} + {\rm J}  \right). $$
  Note that
  \begin{eqnarray*}
    \Delta_{S_1(x)} &=& A - \left( \frac{2D}{L} - 2 \right) {\rm Id}, \\
    \Delta_{S_1'(x)} &=& A' - {\rm diag}(v'), 
  \end{eqnarray*}
  with $v' = A' {\bf 1}$ where ${\bf 1}$ is the all-one vector. Since
  $A$ is the adjacency matrix of a
  $\left(\frac{2D}{L}-2\right)$-regular graph of size $D$, $v'$ is a constant
  vector with all entries equal to
  $$ \frac{1}{2m} \left( \frac{2D}{L} - \frac{D-1}{L-1} - 1 \right) 
  \left( \frac{2D}{L} - 2 \right) - \frac{1}{2m} + \frac{D}{2m}. $$
  Plugging this information into the formula for
  $\Delta_{S_1''(x)} = \Delta_{S_1(x)} + \Delta_{S_1'(x)}$ gives,
  $$\Delta_{S_1''(x)} = \frac{1}{2m}\left(\left(\frac{2D}{L}-\frac{D-1}{L-1}+2m-1\right)\left(A-\left(\frac{2D}{L}-2\right) {\rm Id}\right)-D\cdot {\rm Id}+{\rm J}\right).$$
  Observe that the three matrices $A,{\rm Id},{\rm J}$ pairwise
  commute.

  By Proposition \ref{prop:cocktail-implies-str}, the second largest
  eigenvalue of $A$ is $\frac{(D-L)(L-2)}{L(L-1)}$. Note that the
  eigenvector $w$ of the second largest eigenvalue is orthogonal to
  ${\bf 1}$ and, therefore, ${\rm J}w = 0$. Thus to complete the claim
  it remains to show that
  $$ \lambda_1(\Delta_{S_1''(x)}) = 
  \frac{-1}{2m}\left(\left(\frac{2D}{L}-\frac{D-1}{L-1}+2m-1\right)
  \underbrace{\left(\frac{(D-L)(L-2)}{L(L-1)}
        -\left(\frac{2D}{L}-2\right)\right)}_{= -
      \frac{D-L}{L-1}}-D\right)\geq \frac{D}{2}.$$ 
  Multiplying the whole expression by $2m$, we need to show that
  $$ \left( \left(\frac{2D}{L}-\frac{D-1}{L-1}+2m-1\right) \frac{D-L}{L-1} 
    +D\right) - mD \ge 0, $$ 
  which simplifies, after inserting $m = \frac{D-L}{L(L-1)} + 1$ into
  the expression, to
  $$ \frac{1}{L(L-1)^2} (L(L-2)+D)(D-L) \ge 0, $$
  which is obviously true since $L\leq D$ and $L \geq 2$.
\end{proof}

\subsection{A conjecture about Bakry-\'Emery curvature}

In this subsection, let us revisit the following conjecture mentioned
in the Introduction:

\begin{conjectureBM}
  Let $G=(V,E)$ be a connected, simple $D$-regular graph with diameter
  $L$. We then have
  \begin{equation} \label{eq:conj} \inf_{x \in V }{\mathcal K}^{\rm
      n}_{G,x}(\infty) \le \frac{1}{D} + \frac{1}{L}.
 \end{equation}
\end{conjectureBM}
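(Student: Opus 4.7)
The plan is to reduce the conjecture to a combinatorial triangle-counting statement via the pointwise upper bound \eqref{upperbound}: $\mathcal{K}^{\rm n}_{G,x}(\infty) \le \frac{3+D-av_1^+(x)}{2D}$. Since $av_1^+(x) = D-1-\frac{2\#_\Delta(x)}{D}$, the target bound $\frac{1}{D}+\frac{1}{L}$ is equivalent to $\#_\Delta(x) \le \frac{D(D-L)}{L}$. Thus it suffices to exhibit a single vertex $x$ with at most $\frac{D(D-L)}{L}$ triangles through it. Corollary~\ref{cor:numtriangle}(a) shows that this bound is met with equality at every pole of a self-centered Bonnet-Myers sharp graph, strongly suggesting that the natural choice of $x$ is a pole of $G$.

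Concretely, I would first fix a pole $x$ together with an antipole $y$ at distance $L$. The heuristic is that if $\#_\Delta(x)$ were too large, the sphere $S_1(x)$ would be so densely clustered that subsequent spheres could not grow enough to accommodate a vertex at distance $L$. To turn this into a proof, one would track the averaged degrees $av_k^-(x)$, $av_k^0(x)$, $av_k^+(x)$ through the sphere partition $V = \bigsqcup_{k=0}^L S_k(x)$ together with the flow identities $|S_k(x)| \, av_k^+(x) = |S_{k+1}(x)| \, av_{k+1}^-(x)$, and extract the required inequality $av_1^0(x) \le \frac{2(D-L)}{L}$ from the constraint that a geodesic of length $L$ from $x$ to $y$ must exist.

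A complementary strategy is global averaging: prove the total-triangle bound $\#_\Delta(G) \le \frac{|V| D(D-L)}{3L}$, from which pigeonhole extracts a vertex with few triangles. This inequality is again saturated exactly on the Bonnet-Myers sharp graphs from Theorem~\ref{thm:main}. A spectral attempt would rely on the identity $6\#_\Delta(G) = \operatorname{tr}(A_G^3)$ together with the constraint that diameter $L$ forces a spread in the spectrum of $A_G$. A third option is to mimic (and strengthen) the test-function proof of the weaker Bonnet-Myers bound $\frac{2}{L}$ in \cite{LMP} by integrating the $CD(K,\infty)$ inequality against a cleverly chosen $f$.

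The main obstacle, in my view, is precisely the refinement from $\frac{2}{L}$ to $\frac{1}{D}+\frac{1}{L}$. The \cite{LMP} argument uses only $d(x,\cdot)$ as a test function and cannot yield anything sharper than $\frac{2}{L}$; the conjectured bound genuinely uses the vertex degree $D$ and is nontrivial only in the regime $L < D$. Closing this gap seems to demand either a new test function --- for instance one built from the pair $d(x,\cdot), d(y,\cdot)$ for antipodal $x,y$ --- or a delicate local-to-global combinatorial argument extending the transport-geodesic techniques of Section~\ref{sec:transpgeod} beyond the Bonnet-Myers-sharp setting.
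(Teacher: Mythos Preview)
The statement you are attempting to prove is Conjecture~\ref{conj:BM}, which is explicitly \emph{open} in the paper. There is no proof to compare against: the paper only records supporting evidence (complete graphs, strongly regular graphs, triangle-free graphs, demi-cubes, Johnson graphs, Bonnet-Myers sharp graphs, Cartesian products) and the weaker general bound \eqref{eq:almostconj}, which carries precisely the extra triangle term $\frac{1}{2D^2}\max_x \#_\Delta(x)$ that your proposal aims to remove. Your write-up is not a proof either, and you say as much in your final paragraph; it is a menu of strategies together with an honest acknowledgment of the obstacle.

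A few concrete remarks on the strategies. Your reduction via \eqref{upperbound} is arithmetically correct: exhibiting a single vertex $x$ with $\#_\Delta(x)\le \frac{D(D-L)}{L}$ would indeed settle the conjecture. But this is a \emph{sufficient} condition, strictly stronger than the conjecture itself; it is conceivable that every vertex carries more than $\frac{D(D-L)}{L}$ triangles while the actual curvature (which can sit strictly below the bound \eqref{upperbound}) still satisfies \eqref{eq:conj}. So a failure of the triangle-counting route would not disprove the conjecture. The ``pole heuristic'' is suggestive but, absent a curvature lower bound, there is no mechanism forcing $av_1^0(x)\le \frac{2(D-L)}{L}$ at a pole; Theorem~\ref{recursionformulas} uses Bonnet-Myers sharpness essentially. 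The global-averaging version $\#_\Delta(G)\le \frac{|V|D(D-L)}{3L}$ is likewise stronger than what you need and, as far as I can see, not known. In short: the observations are sound, but none of the outlined approaches is carried through, and the paper itself leaves the problem open.
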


A simple argument provides the following general estimate. The
challenge of the conjecture is thus to remove the final term in
\eqref{eq:almostconj}.

\begin{theorem}
  Let $G=(V,E)$ be a $D$-regular graph of diameter $L$. Then we have
  \begin{equation} \label{eq:almostconj} 
  \inf_{x \in V} {\mathcal K}^{\rm n}_{G,x}(\infty) \le \frac{1}{D} + \frac{1}{L} + \frac{1}{2D^2} \max_{x \in V} \#_\Delta(x). 
  \end{equation}
\end{theorem}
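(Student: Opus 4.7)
The plan is to recognize that the asserted inequality is simply the arithmetic mean of two upper bounds on $\inf_{x\in V}{\mathcal K}^{\rm n}_{G,x}(\infty)$ which are both already available in the paper. Specifically, I would combine the pointwise upper bound \eqref{upperbound}, namely
$$ {\mathcal K}^{\rm n}_{G,x}(\infty)\;\le\;\frac{2}{D}+\frac{\#_\Delta(x)}{D^2}\qquad\text{for every }x\in V, $$
with the combinatorial Bonnet-Myers theorem for normalized Bakry-\'Emery $\infty$-curvature from \cite{LMP}, quoted as \eqref{eq:BM_BE}:
$$ \inf_{x\in V}{\mathcal K}^{\rm n}_{G,x}(\infty)\;\le\;\frac{2}{L}. $$

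First I would deduce from the pointwise bound the global inequality
$$ \inf_{x\in V}{\mathcal K}^{\rm n}_{G,x}(\infty)\;\le\;\frac{2}{D}+\frac{\max_{x\in V}\#_\Delta(x)}{D^2}, $$
by simply majorising $\#_\Delta(x)$ by its maximum and then taking the infimum over $x$ on the left. Call this bound $B_1$, and call the Bonnet-Myers bound $B_2=2/L$.

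Since the same real number $\inf_{x\in V}{\mathcal K}^{\rm n}_{G,x}(\infty)$ is bounded above by both $B_1$ and $B_2$, it is bounded above by their arithmetic mean, and a direct computation gives
$$ \frac{B_1+B_2}{2}\;=\;\frac{1}{2}\!\left(\frac{2}{D}+\frac{\max_{x\in V}\#_\Delta(x)}{D^2}+\frac{2}{L}\right)\;=\;\frac{1}{D}+\frac{1}{L}+\frac{1}{2D^2}\max_{x\in V}\#_\Delta(x), $$
which is exactly the right-hand side of the statement. There is no real obstacle here; the only thing to check is that \eqref{upperbound} is genuinely pointwise (it is, since it follows directly from the local $CD(\mathcal{K},\infty)$ analysis at a single vertex) and that the Bonnet-Myers bound $2/L$ from \cite{LMP} is being invoked in the normalized convention matching our ${\mathcal K}^{\rm n}$. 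With those two ingredients in hand, the proof is a one-line averaging.
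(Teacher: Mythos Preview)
Your proof is correct and is exactly what the paper does: the paper's one-line proof ``a combination of the inequalities \eqref{eq:BM_BE} and \eqref{upperbound}'' means precisely the averaging of the two upper bounds that you carried out.
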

 
\begin{proof} The proof is a combination of the inequalities
  \eqref{eq:BM_BE} and \eqref{upperbound}.
\end{proof}

Here is a list of examples providing supporting evidence for this conjecture:

\begin{enumerate}
\item All graphs with $D \le L$: This is an immediate consequence of
  $$ \inf_{x \in V} {\mathcal K}_{G,x}^{\rm n}(\infty) \le \frac{2}{L} $$
  proved in \cite[Corollary 2.2]{LMP}.
\item All Bonnet-Myers sharp graphs: This follows immediately from Theorem
  \ref{thm:curv_relation}.
\item All strongly regular graphs: Note that a strongly regular graph $G=(V,E)$
  with parameters $(\nu,D,\lambda,\mu)$ satisfies, as all vertices $x \in V$,
  $$ \#_\Delta(x) = \frac{D \lambda}{2} \le \frac{D (D-2)}{2}, $$
  since $\lambda \le D-2$ ($G$ cannot be the complete graph). Using
  \eqref{upperbound}, this implies
  $$ {\mathcal K}_{G,x}^{\rm n}(\infty) \le \frac{2}{D} + \frac{D-2}{2D} =
  \frac{1}{D} + \frac{1}{2}. $$ 
\item All complete graphs: Note that the complete graph $G=K_n$ has
  degree $D = n-1$ and Bakry-\'Emery $\infty$-curvature (see
  \cite[Example 5.17]{CLP2018})
  $$ {\mathcal K}_{G,x}^{\rm n}(\infty) = \frac{D+3}{2D} \le \frac{1}{D} + 1 $$
  in all vertices $x$.
\item All demi-cube graphs: The even-dimensional demi-cubes $Q^{2n}_{(2)}$ satisfies ${\mathcal K}_{G,x}^{\rm n}(\infty) = \frac{1}{D}+\frac{1}{L}$
for all vertices $x$ (due to Theorem \ref{thm:aBM-BEcurv} as it is self-centered Bonnet-Myers sharp). On the other hand, the odd-dimensional demi-cube $Q^{2n+1}_{(2)}$ has Bakry-\'Emery $\infty$-curvature $$ K_{G,x}^{\rm n}(\infty) \le \frac{3+D-av_1^+(x)}{2D} = \frac{1}{n} = \frac{1}{L} < \frac{1}{D} + \frac{1}{L}, $$ where the upper bound $\frac{1}{D} + \frac{1}{L}$ will never be achieved.
\item All Johnson graphs: The Johnson graph $G=J(n,k)$ has the
  following Bakry-\'Emery $\infty$-curvature (see \cite[Example
  9.7]{CLP2018}) in all vertices $x$
  $$ {\mathcal K}^{\rm n}_{G,x}(\infty) = \frac{n+2}{2k(n-k)} 
  \le \frac{1}{D} + \frac{1}{L}, $$
  with vertex degree $D= k(n-k)$ and diameter $L=\min\{ k,n-k\}$.
\item All triangle-free graphs: Since $\#_\Delta(x)=0$ for all $x\in V$, \eqref{eq:almostconj} implies that $$ \inf_{x \in V} {\mathcal K}^{\rm n}_{G,x}(\infty) \le \frac{1}{D} + \frac{1}{L}.$$
\item Cartesian products: If \eqref{eq:conj} holds for the
  graphs $G_i=(V_i,E_i)$, $i=1,2$, then \eqref{eq:conj} holds also for
  the Cartesian product $G_1 \times G_2$ due to Lemma
  \ref{lem:BEcart}.
\end{enumerate}




\bigskip

{\bf{Acknowlegdements:}} The authors are grateful to David Bourne for
many useful discussions and contributions. All authors would
also like to thank the University of Science and Technology of China,
Hefei, for its hospitality. DC, SL and NP enjoyed the opportunity for
further discussions during the 2017 conference ``Analysis and Geometry
on Graphs and Manifolds'' at the University of Potsdam,
Germany. DC and FM would also like to thank the Max Planck
Institute for Mathematics, Bonn, for the opportunity to participate in
the 2017 event ``Metric Measure Spaces and Ricci Curvature''. Finally, FM wants to thank the German National Merit Foundation for financial support, and SK wants to thank Thai Institute for the Promotion of Teaching Science and Technology for his scholarship.

\end{document}